\newcommand\Item[1][]{%
  \ifx\relax#1\relax  \item \else \item[#1] \fi
  \abovedisplayskip=0pt\abovedisplayshortskip=0pt~\vspace*{-\baselineskip}}
\numberwithin{equation}{section}
\newtheorem{theorem}{Theorem} % Independent numbering for Theorem
\newtheorem{lemma}{Lemma}[section] % Lemma numbered by section
\newtheorem{corollary}{Corollary}[section] % Corollary numbered by section
\theoremstyle{definition}
\theoremstyle{definition}
\newtheorem{remark}{Remark}[section] % Remark numbered by section
\DeclareMathOperator{\Prob}{\mathbf{P}}
\DeclareMathOperator{\G}{\widehat{G}}
\DeclareMathOperator{\Out}{Out}
\DeclareMathOperator{\E}{\widehat{E}}
\DeclareMathOperator{\F}{\widehat{F}}
\title[Generalized percolation games on $\mathbb{Z}^{2}$, and ergodicity of associated PCA]{Generalized percolation games on the $2$-dimensional square lattice, and ergodicity of associated probabilistic cellular automata}
\date{}
\author{Dhruv Bhasin, Sayar Karmakar, Moumanti Podder, Souvik Roy}
\address{Dhruv Bhasin, Indian Institute of Science Education and Research (IISER) Pune, Dr.\ Homi Bhabha Road, Pashan, Pune 411008, Maharashtra, India.}
\address{Sayar Karmakar, University of Florida, 230 Newell Drive, Gainesville, Florida 32605, USA.}
\address{Moumanti Podder, Indian Institute of Science Education and Research (IISER) Pune, Dr.\ Homi Bhabha Road, Pashan, Pune 411008, Maharashtra, India.}
\address{Souvik Roy, Indian Statistical Institute, 203 Barrackpore Trunk Road, Kolkata 700108, West Bengal, India.}
\email{bhasin.dhruv@students.iiserpune.ac.in}
\email{sayarkarmakar@ufl.edu}
\email{moumanti@iiserpune.ac.in}
\email{souvik.2004@gmail.com}
\begin{document}
\bibliographystyle{plainnat}

\begin{abstract}
We consider a highly generalized set-up in which each vertex of the infinite $2$-dimensional square lattice graph (whose set of vertices is $\mathbb{Z}^{2}$, with each vertex $(x,y)$ adjacent to each of $(x+1,y)$ and $(x,y+1)$) is assigned, independent of all else, a label that reads \emph{trap} with probability $p$, \emph{target} with probability $q$, and \emph{open} with the remaining probability $(1-p-q)$, and additionally, each edge is assigned, independent of all else, a label that reads \emph{trap} with probability $r$ and \emph{open} with probability $(1-r)$. This model encompasses the \emph{seemingly} more general model where, in addition to all the vertex-labels and edge-labels described above, an edge can also be labeled as a \emph{target}, since assigning the label of target to an edge going from $(x,y)$ to either $(x+1,y)$ or $(x,y+1)$ is equivalent to marking the \emph{vertex} $(x,y)$ as a trap. A \emph{percolation game} is played on this random board, involving two players and a token. The players take turns to make \emph{moves}, where a \emph{move} involves relocating the token from where it is currently located, say some vertex $(x,y) \in \mathbb{Z}^{2}$, to any one of $(x+1,y)$ and $(x,y+1)$. A player wins if she is able to move the token \emph{to} a vertex labeled as a target, or force her opponent to either move the token \emph{to} a vertex labeled as a trap or \emph{along} an edge labeled as a trap. We seek to find a \emph{regime}, in terms of values of the parameters $p$, $q$ and $r$, in which the probability of this game resulting in a draw equals $0$. We further consider special cases of this game, such as when each edge is assigned, independently, a label that reads \emph{trap} with probability $r$, \emph{target} with probability $s$, and \emph{open} with probability $(1-r-s)$, but the vertices are left unlabeled, and various regimes of values of $r$ and $s$ are explored in which the probability of draw is guaranteed to be $0$. We show that the probability of draw in each such game equals $0$ if and only if a suitably defined \emph{probabilistic cellular automaton} (PCA) is \emph{ergodic}, following which we implement the technique of \emph{weight functions} or \emph{potential functions} to investigate the regimes in which said PCA is ergodic. We mention here that one of the main results of \cite{holroyd2019percolation} follows as a special case of our main result. Moreover, our result shows that a phase transition happens at the origin (i.e.\ at $(p,q,r)=(0,0,0)$ in case of generalized percolation games, and at $(r,s)=(0,0)$ in case of bond percolation games) in the sense that, the probability of draw equals $1$ at $(p,q,r)=(0,0,0)$ (respectively, at $(r,s)=(0,0)$), whereas in \emph{every} neighbourhood around $(0,0,0)$ (respectively, $(0,0)$), there exists some value of $(p,q,r)$ (respectively, $(r,s)$) for which the probability of draw equals $0$.
\end{abstract}

\subjclass[2020]{91A46, 91A43, 60K35, 82B43, 05C57, 37B15, 37A25, 68Q80} 

\keywords{percolation; percolation games on lattices; two-player combinatorial games; probabilistic cellular automata; ergodicity; probability of draw; weight function; potential function}

\maketitle

\section{Introduction}\label{sec:intro}
The study of \emph{oriented Bernoulli bond percolation} on $\mathbb{Z}^{d}$ (i.e.\ the infinite graph whose vertex set is $\mathbb{Z}^{d}$ and in which two vertices $\mathbf{x}=(x_{1},x_{2},\ldots,x_{d})$ and $\mathbf{y}=(y_{1},y_{2},\ldots,y_{d})$ are adjacent if and only if the Euclidean distance between them is $1$) is now more than six decades old. For each $i \in \{1,2,\ldots,d\}$, let $\mathbf{e}_{i} \in \mathbb{Z}^{d}$ denote the vertex in which the $i$-th coordinate equals $1$, while all other coordinates equal $0$. For each $\mathbf{x}\in \mathbb{Z}^{d}$, let the edge between $\mathbf{x}$ and $\mathbf{x}+\mathbf{e}_{i}$, denoted henceforth as $\big(\mathbf{x},\mathbf{x}+\mathbf{e}_{i}\big)$, be directed \emph{from} $\mathbf{x}$ \emph{towards} $\mathbf{x}+\mathbf{e}_{i}$. Each such directed edge is now labeled \emph{closed} with probability $r$, and \emph{open} with probability $(1-r)$, for some $r \in [0,1]$. The question of interest is: for what values of $r$ does there exist, with positive probability, an infinite path that begins from the origin $\mathbf{0}$ and consists only of open, directed edges? In other words, for which values of $r$ does \emph{percolation happen}? Introduced in \cite{broadbent1957percolation}, this problem has since been studied extensively in \cite{griffeath1981basic}, \cite{durrett1983supercritical}, \cite{durrett1984oriented} etc. The existence of a \emph{critical value} $r_{c}(d) \in (0,1)$, such that percolation happens for $r < r_{c}(d)$ and percolation does \emph{not} happen when $r > r_{c}(d)$, has been well-known. However, the exact value of $r_{c}(d)$ is seldom known. For instance, in \cite{bollabas1997approximate}, it has been shown, via rigorous methods, that $r_{c}(2) < 0.647$, while Monte Carlo simulations suggest that $r_{c}(2) \approx 0.6445$. 

The work in this paper began in an attempt to understand what happens when we incorporate an \emph{adversarial element} into the set-up of oriented bond percolation mentioned above. In other words, we now imagine a \emph{percolation game} on $\mathbb{Z}^{2}$ with two players and a token that is placed at the origin, $(0,0)$, at the beginning of the game. The two players take turns to make \emph{moves}, and each such move constitutes a \emph{round} of the game. Each player, when it is her turn to make a move, tries to relocate the token from where it is currently located, say $(x,y) \in \mathbb{Z}^{2}$, to one of $(x+1,y)$ and $(x,y+1)$, but relocation is only allowed along edges that are open. A player loses if she is unable to move, i.e.\ her opponent has succeeded in moving the token to some vertex $(x,y)$ such that both the directed edges $\big((x,y),(x+1,y)\big)$ and $\big((x,y),(x,y+1)\big)$ are closed. The game may continue indefinitely (i.e.\ neither of the two players is able to clinch the victory within a finite number of rounds of the game), in which case we say that the game has resulted in a draw. Here, we ask the question: for what values of $r$ does this game result in a draw with positive probability? Note how closely \emph{this} question ties in with the question regarding the occurrence of percolation that we posed in the previous paragraph. The existence of an \emph{infinite, open path} starting at the origin, i.e.\ an infinite path consisting of open, directed edges, is necessary for even the remotest possibility of a draw, but the existence of an \emph{arbitrary} infinite, open path does not, necessarily, ensure the occurrence of a draw. In particular, for draw to happen, there must exist an infinite, open path that acts as an `equilibrium path' -- i.e.\ a path from which neither of the two players is willing to deviate under the common belief of rationality. We show, in this paper, that for $r$ as low as $0.157176$, the probability of draw in the game described above is $0$. A comparison of this result with the estimate of $r_{c}(2)$ mentioned in the previous paragraph reveals how fascinating the study of such games is, and how vastly the inclusion of an adversarial flavour impacts the original percolation process. 

Even though the set-up described in the previous paragraph is what the work in this paper commenced with, the contents of this paper do not remain limited to this one-parameter percolation game. The first step towards generalizing this game can be brought about by classifying the edges of our graph into the following \emph{three} categories, instead of two: 
\begin{enumerate}
\item a player is \emph{penalized} if she moves the token along a directed edge that has been labeled as a \emph{trap} (this is the equivalent of the label `closed' in the previous set-up), which happens with probability $r$,
\item a player is \emph{rewarded} if she moves the token along a directed edge that has been labeled as a \emph{target}, which happens with probability $s$, 
\item and a player is neither rewarded nor punished if she moves the token along a directed edge that has been labeled \emph{open}, which happens with the remaining probability $(1-r-s)$.
\end{enumerate}
In other words, a player, when it is her turn to move, relocates the token, as before, from where it is currently located, say $(x,y) \in \mathbb{Z}^{2}$, to one of $(x+1,y)$ and $(x,y+1)$, but \emph{this} game can come to an end in one of \emph{two} different ways: either one of the players succeeds in moving the token along an edge that has been labeled as a target, and thereby wins the game, or one of the players is forced to move the token along an edge that has been labeled as a trap, and thereby loses the game. Note that the previously described one-parameter percolation game is a special case of this two-parameter percolation game, obtained by setting $s=0$, and, as mentioned in the previous paragraph, we have shown in this paper that the probability of draw in the one-parameter percolation game equals $0$ whenever $r \geqslant 0.157176$. For the two-parameter set-up, we establish two different regimes of values of the parameter-pair $(r,s)$ for which the probability of draw is guaranteed to be $0$: 
\begin{enumerate}
\item where $r=s > 0.10883$, and
\item \label{simplified_regime_2_r,s} where each of $r$ and $s$ is sufficiently small (how small they need to be comes out as a consequence of the various steps involved in the main, technical part of the proof, outlined in \S\ref{sec:bond_1_wt_fn_steps} -- but it suffices for us to take each of $r$ and $s$ to be less than or equal to $1/50$), and $3(1-s)(2s-s^{2})^{2}(1-6s+3s^{2}) \geqslant 4r$.
\end{enumerate}
These results have been stated formally in Theorem~\ref{thm:two-parameter} (the regime of values of $(r,s)$ covered by \eqref{simplified_regime_2_r,s} above is slightly smaller, i.e.\ the inequality is stronger, than what we actually deduce in \ref{bond_regime_1} of Theorem~\ref{thm:two-parameter} -- the stronger, and simpler, inequality has been stated here for the sake of simplicity and lucid reading).

Our endeavour to generalize the notion of percolation games does not end here -- we go a step further and consider a \emph{three-parameter} set-up which, in some sense, combines the notion of \emph{site percolation} and \emph{bond percolation}, as follows. Each vertex $(x,y) \in \mathbb{Z}^{2}$ is now assigned, independent of all else, a label that reads \emph{trap} with probability $p$, \emph{target} with probability $q$, and \emph{open} with the remaining probability $(1-p-q)$. Additionally, each edge is assigned, independent of all else, a label that reads \emph{trap} with probability $r$ and \emph{open} with the remaining probability $(1-r)$. The permitted moves by the players are the same as those described in the previous paragraph, but now, a player can clinch the victory in one of \emph{three} possible ways:
\begin{enumerate} 
\item either she succeeds in moving the token to a vertex labeled as a target,
\item or she is able to force her opponent to move the token to a vertex labeled as a trap,
\item or her opponent is compelled to move the token along an edge labeled as a trap.
\end{enumerate}
As before, the game continues for as long as one of the players does not emerge the victor, and the result is a draw if the game continues indefinitely. We establish a regime of values of $(p,q,r)$, described in detail in Theorem~\ref{thm:three-parameter}, for which the probability of draw in this game is guaranteed to equal $0$. 

It is straightforward to see that draw is the only outcome possible when $p=q=r=0$, and it happens with probability $0$ when $p+q=1$ or when $r=1$. As indicated in Chapter 7 of \cite{PCA_survey_old}, an important and challenging question concerns itself with what happens to the probability of draw (and, as we shall introduce subsequently, to the ergodicity of a suitably defined probabilistic cellular automaton) for arbitrarily small values of the probabilities of trap and target (for both vertices and edges), i.e.\ when each of $p$, $q$ and $r$ is arbitrarily small. The crucial contribution of Theorem~\ref{thm:three-parameter} (as also discussed in \S\ref{subsubsec:contribution_PCA}) lies in its coverage of a significant subset of values of $(p,q,r)$ in a small neighbourhood of $(0,0,0)$, and pictorial illustrations of what this subset looks like have been provided in Figure~\ref{fig:3D-Plots} following the statement of Theorem~\ref{thm:three-parameter}. Theorem~\ref{thm:three-parameter} provides four such regions, or \emph{regimes}, each specified by some inequalities involving $p,q$, and $r$, such that the probability of draw equals $0$ whenever $(p,q,r)$ belongs to one of these regions. As these inequalities are somewhat involved, we provide here some simpler and stronger inequalities (in other words, some \emph{sub-regimes} of the regimes covered by Theorem~\ref{thm:three-parameter}) to elucidate said contribution of the theorem to the reader. \footnote{The actual conditions differ only with respect to second or higher order terms (i.e.\ terms of the form $p^{i}q^{j}r^{k}$ where $i, j, k \in \mathbb{N}_{0}$ and $i+j+k\geqslant 2$).} A simpler (and stronger) version of the inequalities in \eqref{three_cond_universal} and \eqref{three_cond_1} of Theorem~\ref{thm:three-parameter} is given by
\begin{equation}\label{three_cond_1_simplified}
p \leqslant \frac{q}{2} \quad \text{and} \quad p+q \geqslant 2r, 
\end{equation}
and likewise, a simpler and stronger version of the inequalities in \eqref{three_cond_universal} and \eqref{three_cond_4} of Theorem~\ref{thm:three-parameter} is given by
 \begin{equation}\label{three_cond_4_simplified}
q + r \leqslant p \quad \text{and} \quad 5q \geqslant 4r.
\end{equation}
It follows from \eqref{three_cond_1_simplified} that the probability of draw is $0$ when $q = \epsilon$, $p = \alpha \epsilon$ and $r \leqslant (1+\alpha)\epsilon/2$, where $\alpha \leqslant 1/2$ and $\epsilon$ is an arbitrarily small positive real. Similarly, \eqref{three_cond_4_simplified} reveals that the probability of draw is $0$ when $p=\epsilon$, $q = \alpha\epsilon$ and $r = \beta \epsilon$ where $\alpha+\beta \leqslant 1$ and $5 \alpha \geqslant 4 \beta$, and $\epsilon$ is an arbitrarily small positive real. To demonstrate, pictorially, the $3$-dimensional region that all $(p,q,r)$ satisfying one of \eqref{three_cond_1_simplified} and \eqref{three_cond_4_simplified} cover, we refer the reader to Figure~\ref{fig:simplified} (in each of (A) and (B) of Figure~\ref{fig:simplified}, we take each of $p$, $q$ and $r$ to be bounded above by $1/50=0.02$, although this could be optimized further, since the claims made in Theorem~\ref{thm:three-parameter} assuredly hold when $p$, $q$ and $r$ are this small). 
Furthermore, the inequalities \eqref{three_cond_universal} and \eqref{three_cond_2} of Theorem~\ref{thm:three-parameter} together imply, as a special case, that draw happens with probability $0$ when $p=q=r=\epsilon$ for $\epsilon > 0$ arbitrarily small. Yet another contribution of Theorem~\ref{thm:three-parameter} stems from the fact that, setting $r=0$, the inequalities in \eqref{three_cond_universal}, \eqref{three_cond_1} and \eqref{three_cond_4} together imply that the probability of draw equals $0$ whenever at least one of $p$ and $q$ is strictly positive -- thereby allowing Theorem 1 of \cite{holroyd2019percolation} to be derived as a special case of Theorem~\ref{thm:three-parameter}. These instances go to show how Theorem~\ref{thm:three-parameter} establishes that the probability of draw is $0$ for a fairly wide region of values of $(p,q,r)$ arbitrarily close to $(0,0,0)$. Since the probability of draw is $1$ at $(0,0,0)$, our result seems to indicate (and proves it under certain (relative) restrictions on $p$, $q$ and $r$, such as when $p=q=r$) that a phase transition happens exactly at $p=q=r=0$. %for $p=\epsilon$, $q = \alpha\epsilon$ and $r = \beta \epsilon$, with $\epsilon$ arbitrarily small,

\begin{figure}[htbp]
    \centering
    \begin{subfigure}[b]{0.45\textwidth}
        \centering
        \includegraphics[width=\textwidth]{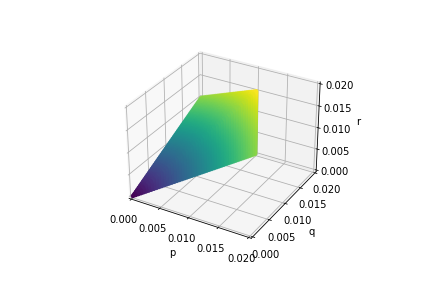}
        \caption{3D plot of \eqref{three_cond_1_simplified}
        \label{subfig:simplified-1}}
    \end{subfigure}
    \hfill
    \begin{subfigure}[b]{0.45\textwidth}
        \centering
        \includegraphics[width=\textwidth]{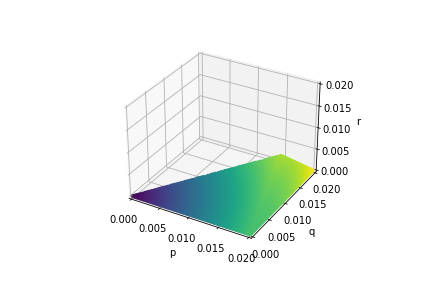}
        \caption{3D plot of \eqref{three_cond_4_simplified}
        \label{subfig:simplified-2}}
    \end{subfigure}
    \caption{Regions covered by \eqref{three_cond_1_simplified} and \eqref{three_cond_4_simplified}}
    \label{fig:simplified}
\end{figure}

\subsection{The probabilistic cellular automata we study in this paper} Percolation games form one of the two chief components of this paper, the other being a class of \emph{probabilistic cellular automata} (henceforth abbreviated as \emph{PCAs}). Informally speaking, each game considered in this paper gives rise to a set of game rules, as follows. Once an assignment of labels to the vertices and / or edges of $\mathbb{Z}^{2}$ has been realized, each vertex $(x,y) \in \mathbb{Z}^{2}$ can be classified into one of three categories or \emph{states} depending on the outcome of the game that begins from $(x,y)$: either this game is won by the player who plays the first round, or this game is lost by the player who plays the first round, or this game results in a draw. The state of each $(x,y)$ can be determined from the states of $(x+1,y)$ and $(x,y+1)$, the labels assigned to the directed edges $\big((x,y),(x+1,y)\big)$ and $\big((x,y),(x,y+1)\big)$, and the label assigned to $(x,y)$ itself. This gives rise to a collection of recurrence relations that are referred to as the \emph{game rules}. If we form an \emph{alphabet} consisting of the above-mentioned three possible states that a vertex can belong to, and we let $\{(x+1,y),(x,y+1)\}$ be, loosely speaking, the \emph{neighbourhood} of $(x,y)$, for each $(x,y) \in \mathbb{Z}^{2}$, then the game rules can be interpreted as the \emph{stochastic update rules} associated with a PCA (see \S\ref{subsubsec:general_PCAs} for a discussion on PCAs in general). Moreover, it can be shown that the probability of a percolation game resulting in a draw equals $0$ if and only if the PCA arising from its game rules is \emph{ergodic} (see \S\ref{subsubsec:general_PCAs} for the definition of ergodicity). Establishing ergodicity of PCAs via rigorous, analytical proofs is usually a difficult feat to achieve, but we are able to accomplish this for some regimes of values of the underlying parameter(s) using the relatively novel and relatively-less-explored technique of \emph{weight functions} or \emph{potential functions}, introduced in \cite{holroyd2019percolation}.

It is worthwhile to note here, even though we have not yet formally introduced PCAs and the definition of ergodicity, that one of the main contributions of this paper lies in establishing the ergodicity of certain classes of \emph{elementary PCAs} in regimes (defined in terms of the values of the underlying parameter(s)) where existing, older methods for proving ergodicity do \emph{not} turn out to be useful. In particular, in both Theorem~\ref{thm:three-parameter} and in the regime described by \ref{bond_regime_1} of Theorem~\ref{thm:two-parameter}, each of the parameter(s) is allowed to be as close to $0$ as one desires, as long as at least one of them is strictly positive and the relevant inequality / inequalities is / are satisfied. Without going into technical details (which we present in \S\ref{subsubsec:contribution_PCA}), we highlight the importance of these regimes to the reader at this point in an intuitive manner. From our earlier, informal description of the two-parameter percolation game, one may intuit that proving the probability of draw to be equal to $0$ ought to become harder as the values of $r$ and $s$ become smaller -- this is because, small values of $r$ and $s$ would ensure that each edge remains open with a higher probability, and the game does not come to an end as long as the token keeps being moved along open edges. Likewise, in case of the three-parameter percolation game, proving the probability of draw to be equal to $0$ ought to be harder for smaller values of $p$, $q$ and $r$, as these ensure that each vertex as well as each edge remains open with a higher probability. Since the ergodicity of the two elementary PCAs studied in this paper ties in with the probability of draw being equal to $0$ in the two-parameter and three-parameter percolation games respectively, it is no surprise that the true challenge lies in considering $(r,s)$ (respectively, $(p,q,r)$) in as small a neighbourhood of $(0,0)$ (respectively, $(0,0,0)$) as one desires (while making sure that $r+s > 0$ in the former situation, and $p+q+r > 0$ in the latter). This challenge cannot be addressed by making use of existing methods for establishing ergodicity of elementary PCAs (such as those discussed in Chapter 7 of \cite{PCA_survey_old}), but \emph{can} be addressed to a \emph{significant} extent using the method of weight functions that we implement in this paper.

\subsection{Why the three-parameter percolation game is an all-encompassing set-up}\label{subsec:three_parameter_suffices} An obvious question to ask at this point is, why, in the set-up for the three-parameter percolation game, \emph{target} is not included in the list of labels assigned to the edges of our graph. For any $(x,y) \in \mathbb{Z}^{2}$, if at least one of its outgoing edges, $\big((x,y),(x+1,y)\big)$ and $\big((x,y),(x,y+1)\big)$, has been labeled as a target, a player knows that she will win the game the moment her opponent moves the token to $(x,y)$, \emph{unless} $(x,y)$ has already been labeled as a target. Consequently, seen from her opponent's perspective, except for the case where $(x,y)$ has been labeled as a target, it is as if $(x,y)$ \emph{acts} like a vertex that has been labeled as a trap. This equivalence has been formalized in the following paragraph.

We mention here that the suggestion for investigating generalized percolation games (where \emph{both} the vertices and the directed edges of an infinite lattice graph may receive the labels of trap, target and open), and the fact that such a game on the $2$-dimensional infinite square lattice may be reduced to one that is governed by only three parameters, not four, came from an anonymous referee, and we are deeply grateful for these suggestions.

Consider the following policy of (random) assignment of labels to our graph: each vertex is labeled, independently, a trap with probability $p'$, a target with probability $q'$, and open with probability $(1-p'-q')$, and each edge is labeled, independently, a trap with probability $r'$, a target with probability $s'$, and open with probability $(1-r'-s')$. Let $(\sigma,\eta)$ indicate a realized assignment of labels obtained by implementing this policy, with $\sigma=(\sigma((x,y)): (x,y) \in \mathbb{Z}^{2})$ denoting the tuple of vertex labels, and $\eta$ denoting the tuple of edge labels, $\eta\big((x,y),(x+1,y)\big)$ and $\eta\big((x,y),(x,y+1)\big)$, for each $(x,y) \in \mathbb{Z}^{2}$. We now come up with a modified labeling, $(\sigma',\eta')$, as follows: if, for any $(x,y) \in \mathbb{Z}^{2}$, the label $\eta\big((x,y),(x+1,y)\big)$ equals a target, we set 
\[
 \eta'\big((x,y),(x+1,y)\big) = 
  \begin{cases} 
   &\text{trap with probability } r'(1-s')^{-1} \\
   &\text{open with probability } (1-r'-s')(1-s')^{-1}
  \end{cases}
\]
\big(an analogous re-labeling is applied if $\eta\big((x,y),(x,y+1)\big)$ is a target, instead\big), and we set  
\[
 \sigma'(x,y) = 
  \begin{cases} 
   \sigma(x,y) & \text{if } \sigma(x,y) \in \{\text{trap}, \text{target}\} \\
   \text{ trap} & \text{if } \sigma(x,y) = \text{open}.
  \end{cases}
\]
As has been explained in \S\ref{subsubsec:bond_to_generalized}, this modification reduces what was initially a four-parameter percolation game into a three-parameter one in which $p$ indicates the probability of a vertex being labeled as a trap, $q$ indicates the probability of a vertex being labeled as a target, and $r$ indicates the probability of an edge being labeled as a trap, with
\begin{equation}
p=p'+(1-p'-q')\left(2s'-{s'}^{2}\right),\quad q=q' \quad \text{and} \quad r=\frac{r'}{1-s'}.\nonumber
\end{equation}  
This modification allows the reader to see why, when considering the most generalized framework for percolation games on the $2$-dimensional infinite square lattice, it suffices to work with only \emph{three} categories of labels instead of four (such as we do in this paper, namely, 
\begin{enumerate*}
\item trap for vertices,
\item target for vertices,
\item and trap for edges).
\end{enumerate*}

\subsection{Organization of the rest of the paper}\label{subsec:organization} We describe here the organization of the rest of this paper. In \S\ref{sec:formal_defns_main_results}, we formally describe the two games, \emph{generalized percolation games} and \emph{bond percolation games}, that we study in this paper, although the reader has already been accorded an informal introduction to these. The two main results of this paper, pertaining to the probability of draw in each of these games, have been stated in Theorems~\ref{thm:three-parameter} and \ref{thm:two-parameter}. The probabilistic cellular automata (PCAs) arising as a result of the game rules have been formally described in \S\ref{subsec:formal_PCA}, and the main result pertaining to their ergodicity properties has been stated in Theorem~\ref{thm:PCAs}. We draw the reader's attention to \S\ref{subsubsec:bond_to_generalized}, which shows how the bond percolation game (and correspondingly, the PCA, $\E_{r',s'}$, arising out of its game rules) can be obtained as a special case of the generalized percolation game (and correspondingly, the PCA, $\G_{p,q,r}$, arising out of its game rules). One of this paper's most significant contributions (see also the less formal discussion preceding \S\ref{subsec:three_parameter_suffices}) has been outlined in detail in \S\ref{subsubsec:contribution_PCA}. We have dedicated \S\ref{subsec:motivations_literature} and \S\ref{subsec:motivation_PCA} to discussions on the motivations propelling the study of these games and these PCAs, respectively. The game rules, and how they give rise to the PCAs, $\G_{p,q,r}$ and $\E_{r',s'}$, have been detailed in \S\ref{subsec:recurrence}. The two most technical results of this paper, namely, Theorems~\ref{thm:generalized_envelope_PCA_D} and \ref{thm:bond_envelope_PCA_D}, have been stated in \S\ref{subsec:relation}, along with the proofs of Theorems~\ref{thm:three-parameter}, \ref{thm:two-parameter} and \ref{thm:PCAs} assuming the claims made in Theorems~\ref{thm:generalized_envelope_PCA_D} and \ref{thm:bond_envelope_PCA_D} to be true. The proof of Theorem~\ref{thm:generalized_envelope_PCA_D} has been carried out in \S\ref{sec:generalized_weight_function}. We mention here that while the proof \emph{specifically} of Theorem~\ref{thm:generalized_envelope_PCA_D} has been outlined in detail, step by step, in \S\ref{sec:generalized_wt_fn_steps} (the final weight function, the corresponding final weight function inequality, and the deduction of the claim made in Theorem~\ref{thm:generalized_envelope_PCA_D}, have been included in \S\ref{subsec:final_wt_fn_generalized}), a very general idea on how to construct a suitable weight function that serves our desired purpose has been outlined in \S\ref{subsec:central_ideas_weight_functions}. Likewise, the final weight functions, the corresponding weight function inequalities, and the deduction of the claims stated in Theorem~\ref{thm:bond_envelope_PCA_D}, have been included in \S\ref{subsec:final_wt_fn_bond}, while the details of the proofs have been covered in \S\ref{sec:bond_1_wt_fn_steps}, \S\ref{sec:bond_2_wt_fn_steps} and \S\ref{sec:bond_3_wt_fn_steps}.

\section{Formal description of our games and the main results}\label{sec:formal_defns_main_results}
We shall use $\mathbb{Z}^{2}$ to indicate both the infinite $2$-dimensional square lattice graph and its set of vertices. Each edge of this graph is either of the form $\big((x,y),(x+1,y)\big)$ (directed from $(x,y)$ towards $(x+1,y)$) or of the form $\big((x,y),(x,y+1)\big)$ (directed from $(x,y)$ towards $(x,y+1)$), for $(x,y) \in \mathbb{Z}^{2}$. To each $(x,y) \in \mathbb{Z}^{2}$, we assign, independently, a label that reads \emph{trap} with probability $p$, \emph{target} with probability $q$, and \emph{open} with probability $(1-p-q)$, while to each directed edge, we assign, independently, a label that reads \emph{trap} with probability $r$ and \emph{open} with probability $(1-r)$. We assume that at least one of $p$, $q$ and $r$ is strictly positive, so that our parameter-space $\Theta$ is given by $\Theta=\{(p,q,r) \in [0,1]^{3}: p+q+r>0, p+q \leqslant 1\}$. The resulting graph, bearing (random) vertex-labels and (random) edge-labels, is referred to as a \emph{random board}, on which our \emph{generalized percolation games} are played. Any realization of vertex-labels and edge-labels, assigned as described above, is referred to as a \emph{configuration} on $\mathbb{Z}^{2}$, and we \emph{fix} a configuration \emph{before} the game begins. A generalized percolation game is a \emph{two-player combinatorial game}, which tells us that it is a game of \emph{perfect information} -- both players are fully aware of the environment (in this case, the environment is the configuration fixed before the game begins) on which the game is being played.

A token is placed at a vertex of $\mathbb{Z}^{2}$, referred to as the \emph{initial vertex}, just before the game starts. Two players take turns to make moves, where a move involves relocating the token from where it is currently located, say $(x,y) \in \mathbb{Z}^{2}$, to one of $(x+1,y)$ and $(x,y+1)$. A player wins if she 
\begin{enumerate*}
\item succeeds in moving the token to a vertex labeled as a target,
\item or forces her opponent to move the token to a vertex labeled as a trap,
\item or forces her opponent to move the token along an edge labeled as a trap.
\end{enumerate*}
The game continues for as long as the token stays \emph{on} vertices labeled open and keeps being moved \emph{along} edges labeled open, and this may happen indefinitely, leading to a draw. Of primary interest to us is the investigation of \emph{regimes} of values of the parameter-triple $(p,q,r)$ for which the probability of draw equals $0$. 

We assume that, if the game is destined to end in a finite number of rounds, then the player who is destined to win tries to achieve her victory as quickly as possible, while her opponent attempts to stall and prolong the game as much as possible. It is also crucial to note that the roles of the players are symmetric in this game -- a fact that heavily impacts the simplicity of the recurrence relations that we shall study in \S\ref{subsec:recurrence}.

%Our main result concerning generalized percolation games, Theorem~\ref{thm:three-parameter}, also makes claims regarding the ergodicity properties of two PCAs, $\G_{p,q,r}$ and $G_{p,q,r}$, associated with these games (see the discussion in the paragraph preceding \S\ref{subsec:organization}). For a complete description of $\G_{p,q,r}$ and $G_{p,q,r}$, we refer the reader to \S\ref{subsec:formal_PCA}.

\begin{theorem}\label{thm:three-parameter}
There exist $\epsilon_{1}$, $\epsilon_{2}$ and $\epsilon_{3}$, in $(0,1)$, such that whenever $(p,q,r) \in \Theta$ with $p \leqslant \epsilon_{1}$, $q \leqslant \epsilon_{2}$ and $r \leqslant \epsilon_{3}$, the inequality in \eqref{three_cond_universal} 
holds, and $(p,q,r)$ satisfies the constraints described in precisely one of \eqref{three_cond_1}, \eqref{three_cond_2}, \eqref{three_cond_3} and \eqref{three_cond_4}, the probability of the event that the generalized percolation game described above, with underlying parameters $p$, $q$ and $r$, results in a draw, equals $0$. Here, \eqref{three_cond_universal}, \eqref{three_cond_1}, \eqref{three_cond_2}, \eqref{three_cond_3} and \eqref{three_cond_4} are as follows:
\begin{equation}\label{three_cond_universal}
2(p+q)+6r^{2}+3r(p+2q) \geqslant 4r+(p+q)^{2},
\end{equation}
\begin{equation}\label{three_cond_1}
    \begin{aligned}
      &p(1-p) \leqslant q\{1-q-r(1-p-q)\}, 
    \end{aligned}
\end{equation} 
\begin{equation}\label{three_cond_2}
  \begin{rcases}
    \begin{aligned}
      &q\{1-q-r(1-p-q)\} < p(1-p) \leqslant \{q+r(1-p-q)\}\{1-q-r(1-p-q)\}, \\
      &p-2q+pq+qr-3pr-2p^{2}+3q^{2} \leqslant 0,
    \end{aligned}
  \end{rcases}
\end{equation}
\begin{equation}\label{three_cond_3}
  \begin{rcases}
    \begin{aligned}
      &q\{1-q-r(1-p-q)\} < p(1-p) \leqslant \{q+r(1-p-q)\}\{1-q-r(1-p-q)\}, \\
      &p-2q+pq+qr-3pr-2p^{2}+3q^{2} > 0,\\
      &p+4q+9pr+5qr+6r^{2} \geqslant 4r+p^{2}+4q^{2}+5pq,
    \end{aligned}
  \end{rcases}
\end{equation}
and
\begin{equation}\label{three_cond_4}
  \begin{rcases}
    \begin{aligned}
      &q+r(1-p-q) < p, \\
      &6q+10pr+4qr+p^{2}+6r^{2} \geqslant 4r+6pq+7q^{2}.
    \end{aligned}
  \end{rcases}
\end{equation}
\end{theorem}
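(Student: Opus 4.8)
The plan is to convert the game into a system of recurrence relations, read these as the local update rule of a probabilistic cellular automaton, and reduce the vanishing of the draw probability to the ergodicity of that PCA, which I would then establish by a weight-function argument. Concretely, for each $(x,y) \in \mathbb{Z}^{2}$ I would record the outcome of the game started at $(x,y)$ under optimal play as a letter in a three-symbol alphabet (first player wins, first player loses, or draw). Since a move is permitted only along an open edge, and since arriving at a target (respectively trap) vertex decides the game immediately, the state at $(x,y)$ is a deterministic function of the states at the two forward neighbours $(x+1,y)$ and $(x,y+1)$, the two outgoing edge-labels, and the label at $(x,y)$ itself; the symmetry of the two players makes this function a single well-defined update, and assembling it over all vertices yields the PCA $\G_{p,q,r}$ with neighbourhood $\{(x+1,y),(x,y+1)\}$. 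Because a draw at the origin requires an infinite ``equilibrium'' path along which neither player ever deviates, one shows --- this is the equivalence flagged in the abstract and made precise in the relation theorems --- that $\Prob(\text{draw})=0$ if and only if $\G_{p,q,r}$ is ergodic.

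The difficulty is that the ``win'' and ``loss'' letters propagate easily, whereas the ``draw'' letter can a priori persist on a set of positive density, and the classical monotonicity and coupling arguments for elementary PCAs break down in precisely the small-parameter regime of interest. The route I would take is the \emph{envelope-PCA} reduction (Theorem~\ref{thm:generalized_envelope_PCA_D}): one couples $\G_{p,q,r}$ with an auxiliary monotone dynamics on a partially ordered state space so that ergodicity of the envelope forces the density of the draw state to decay to zero, hence $\Prob(\text{draw})=0$. This replaces the three-state problem by a monotone recursion whose fixed points can be controlled.

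The heart of the argument, and what I expect to be the main obstacle, is to prove ergodicity of the envelope PCA by exhibiting a weight function (potential function) in the sense of Holroyd's method: one assigns to each finite local configuration a nonnegative weight, and shows that a single step of the dynamics makes the expected total weight strictly contract. Unwinding the contraction yields exponential decay of the probability that a long path survives entirely in the draw state, which is exactly ergodicity. Designing the weight function is delicate because the optimal move --- and hence the weight assignment that makes the contraction work --- depends on which parameter dominates: the four mutually exclusive cases \eqref{three_cond_1}--\eqref{three_cond_4} record the three orderings of $p(1-p)$ against the products $q\{1-q-r(1-p-q)\}$ and $\{q+r(1-p-q)\}\{1-q-r(1-p-q)\}$ (with the middle band split further). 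I would treat the common inequality \eqref{three_cond_universal} as the global requirement that the weight not grow on average under the basic update, and then, within each regime, add a regime-specific correction term whose admissibility is exactly the corresponding auxiliary inequality in \eqref{three_cond_1}--\eqref{three_cond_4}.

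The final step is bookkeeping: once the weight function is fixed for a given regime, the contraction reduces to a single polynomial inequality in $p,q,r$, which I would verify throughout the stated region, using that $p,q,r$ may be taken as small as $\epsilon_{1},\epsilon_{2},\epsilon_{3}$ to discard higher-order terms. The principal technical risk lies not in this verification but in the design: one must guess a weight function simple enough that the induced inequality is checkable yet rich enough to survive all the way down to $(p,q,r)\to(0,0,0)$. Since the simplified sufficient conditions \eqref{three_cond_1_simplified} and \eqref{three_cond_4_simplified} already show the admissible region reaching arbitrarily close to the origin, any crude weight will fail in that limit, and the bulk of the effort goes into the additive correction terms that keep the contraction strictly positive as the parameters vanish.
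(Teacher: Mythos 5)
Your proposal follows essentially the same route as the paper: encode the game rules as the three-state envelope PCA $\G_{p,q,r}$, relate the draw probability to ergodicity, and prove the key statement by a weight function (a linear combination of cylinder-set probabilities) whose non-growth is governed globally by \eqref{three_cond_universal} and whose regime-specific correction terms are admissible exactly under \eqref{three_cond_1}--\eqref{three_cond_4}. The only structural difference worth noting is that the paper does not route the draw statement through ergodicity or through a contraction/decay argument: the game's outcome law on a diagonal is itself a translation- and reflection-invariant stationary measure for $\G_{p,q,r}$, so Theorem~\ref{thm:generalized_envelope_PCA_D} (which applies the weight-function inequality to stationary $\mu$ to force $\mu(DD)=0$ and hence $\mu(D)=0$) yields $\Prob(\text{draw})=0$ directly, with ergodicity obtained afterwards as a corollary via Theorem~\ref{thm:ergodic_equiv_draw_probab_0} and Lemma~\ref{lem:ergodicity_equivalence}.
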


\begin{figure}[htbp]
    \centering
    \begin{subfigure}[b]{0.45\textwidth}
        \centering
        \includegraphics[width=\textwidth]{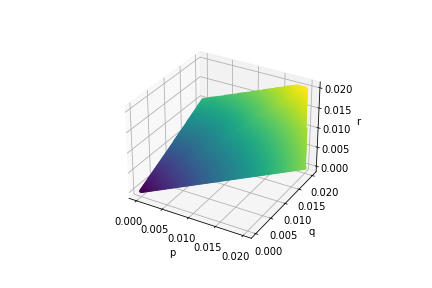}
        \caption{3D plot of \eqref{three_cond_universal} and \eqref{three_cond_1}}
        \label{subfig:3D-region-1}
    \end{subfigure}
    \hfill
    \begin{subfigure}[b]{0.45\textwidth}
        \centering
        \includegraphics[width=\textwidth]{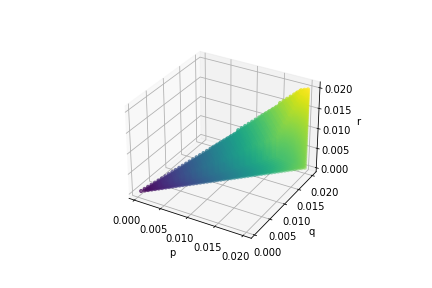}
        \caption{3D plot of \eqref{three_cond_universal} and \eqref{three_cond_2}}
        \label{subfig:3D-region-2}
    \end{subfigure}
    \vskip\baselineskip
    \begin{subfigure}[b]{0.45\textwidth}
        \centering
        \includegraphics[width=\textwidth]{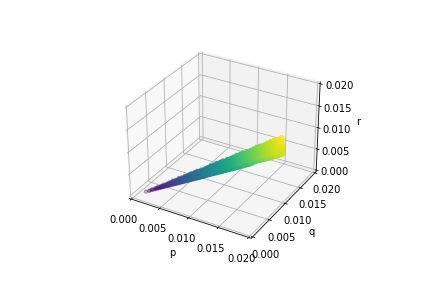}
        \caption{3D plot of \eqref{three_cond_universal} and \eqref{three_cond_3}}
        \label{subfig:3D-region-3}
    \end{subfigure}
    \hfill
    \begin{subfigure}[b]{0.45\textwidth}
        \centering
        \includegraphics[width=\textwidth]{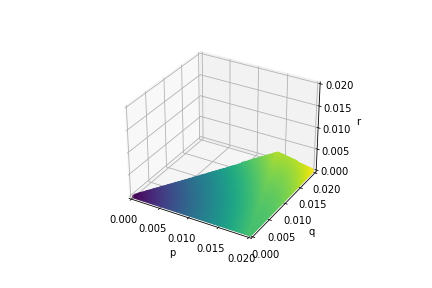}
        \caption{3D plot of \eqref{three_cond_universal} and \eqref{three_cond_4}}
        \label{subfig:3D-region-4}
    \end{subfigure}
    \vskip\baselineskip
    \begin{subfigure}[b]{0.5\textwidth}
        \centering
        \includegraphics[width=\textwidth]{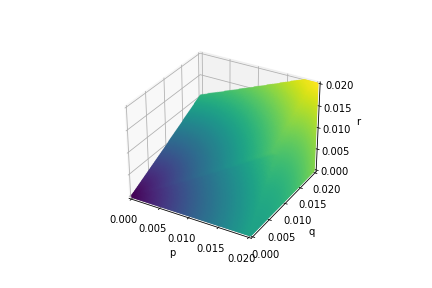}
        \caption{3D plot for the complete region covered by Theorem~\ref{thm:three-parameter}}
        \label{subfig:3D-region-union}
    \end{subfigure}
    \caption{Regions covered by Theorem~\ref{thm:three-parameter}}
    \label{fig:3D-Plots}
\end{figure}

\begin{figure}[htbp]
    \centering
    % Row 1
    \begin{subfigure}[b]{0.33\textwidth}
        \centering
        \includegraphics[width=\textwidth]{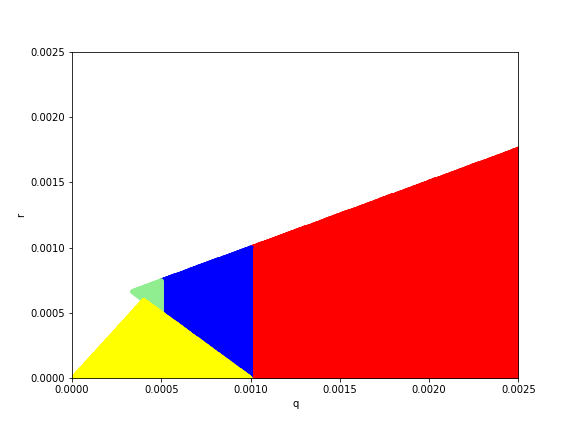}
        \caption{2D plot for $p = 0.001$}
        \label{subfig:region-1-p}
    \end{subfigure}
    \hfill
    \begin{subfigure}[b]{0.33\textwidth}
        \centering
        \includegraphics[width=\textwidth]{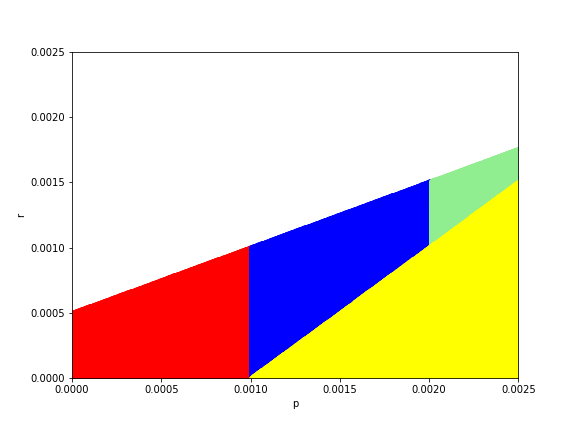}
        \caption{2D plot for $q = 0.001$}
        \label{subgfig:region-2-p}
    \end{subfigure}
    \hfill
    \begin{subfigure}[b]{0.33\textwidth}
        \centering
        \includegraphics[width=\textwidth]{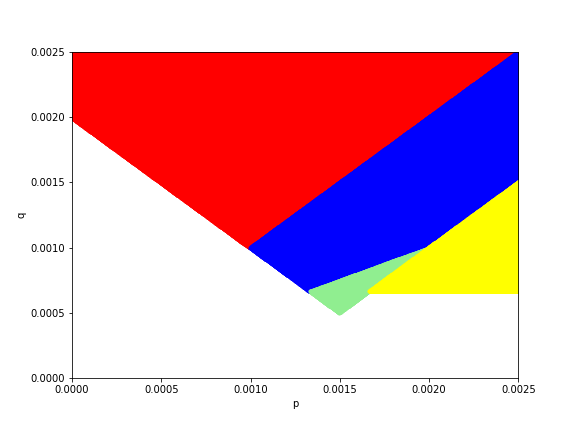}
        \caption{2D plot for $r = 0.001$}
        \label{subfig:region-3-p}
    \end{subfigure}

    \caption{2D plots for regions covered by Theorem~\ref{thm:three-parameter}}
    \label{fig:2D-plots}
\end{figure}

The values of $\epsilon_{1}$, $\epsilon_{2}$ and $\epsilon_{3}$ (i.e.\ how small $p$, $q$ and $r$ need to be for Theorem~\ref{thm:three-parameter} to hold) can be deduced from the various steps, outlined in \S\ref{sec:generalized_wt_fn_steps}, of the proof of Theorem~\ref{thm:generalized_envelope_PCA_D} (which is instrumental in proving Theorem~\ref{thm:three-parameter}). Although better bounds on these are possible, we do not put in much effort to optimize such bounds, and we simply comment here that assuming each of $\epsilon_{1}$, $\epsilon_{2}$ and $\epsilon_{3}$ to be equal to $1/50$ is enough.

In Figure~\ref{fig:3D-Plots}, we plot the regions (of $\Theta$) covered by the inequalities given by \eqref{three_cond_universal}, \eqref{three_cond_1}, \eqref{three_cond_2}, \eqref{three_cond_3} and \eqref{three_cond_4}. As the captions suggest, (A) of Figure~\ref{fig:3D-Plots} shows the coverage of \eqref{three_cond_universal} and \eqref{three_cond_1}, i.e.\ the set of all values of $(p,q,r)$ that satisfy \eqref{three_cond_universal} and \eqref{three_cond_1}, (B) of Figure~\ref{fig:3D-Plots} shows the coverage of \eqref{three_cond_universal} and \eqref{three_cond_2}, and so on. The colours in each of these plots have been applied according to the distance (of the parameter-tuple under consideration) from the origin, with darker colours applied to those near the origin, and lighter colours applied to those farther away. Observe that we consider only the segment from $0$ to $1/50=0.02$ for each of the axes, since Theorem~\ref{thm:three-parameter} assuredly holds when each of $p$, $q$ and $r$ lies in $[0,1/50]$ (along with satisfying \eqref{three_cond_universal} and precisely one of \eqref{three_cond_1}, \eqref{three_cond_2}, \eqref{three_cond_3} and \eqref{three_cond_4}). (E) of Figure~\ref{fig:3D-Plots} shows the union of all four regions demonstrated in (A), (B), (C) and (D) of Figure~\ref{fig:3D-Plots}. In each of these five images, we see that the coloured region has a non-empty intersection with \emph{every} neighbourhood around $(0,0,0)$. Since the 3D plots show only one `surface' of the feasible regions, we demonstrate 
\begin{enumerate}
    \item in (A) of Figure~\ref{fig:2D-plots}, the region covered by our result in Theorem~\ref{thm:three-parameter} when the value of $p$ is kept fixed at $p=0.001$,
    \item in (B) of Figure~\ref{fig:2D-plots}, the region covered by our result in Theorem~\ref{thm:three-parameter} when the value of $q$ is kept fixed at $q=0.001$,
    \item and finally, in (C) of Figure~\ref{fig:2D-plots}, the region covered by our result in Theorem~\ref{thm:three-parameter} when the value of $r$ is kept fixed at $r=0.001$.
\end{enumerate}
We emphasize to the reader that in (A) of Figure~\ref{fig:2D-plots}, the four different colours used are meant to indicate the four distinct regions that are covered by Theorem~\ref{thm:three-parameter}, i.e.\ for $p=0.001$, the region coloured red indicates the set of all values of $(q,r)$ such that $(0.001,q,r)$ satisfies \eqref{three_cond_universal} and \eqref{three_cond_1}, the region coloured blue indicates the set of all values of $(q,r)$ such that $(0.001,q,r)$ satisfies \eqref{three_cond_universal} and \eqref{three_cond_2}, the region coloured green indicates the set of all values of $(q,r)$ such that $(0.001,q,r)$ satisfies \eqref{three_cond_universal} and \eqref{three_cond_3}, and the region coloured yellow indicates the set of all values of $(q,r)$ such that $(0.001,q,r)$ satisfies \eqref{three_cond_universal} and \eqref{three_cond_4}. Similar interpretations apply to (B) and (C) of Figure~\ref{fig:2D-plots}. Notice that the four colour-coded regions in each of (A), (B) and (C) of Figure~\ref{fig:2D-plots} are pairwise non-overlapping -- this is as it should be, because no $(p,q,r) \in \Theta$ can satisfy all the constraints of more than one of \eqref{three_cond_1}, \eqref{three_cond_2}, \eqref{three_cond_3} and \eqref{three_cond_4}.

\begin{remark}\label{rem:three-parameter}
We draw the reader's attention to an important point here. The proof of Theorem~\ref{thm:three-parameter} happens via 
Theorem~\ref{thm:generalized_envelope_PCA_D}, which, in turn, is proved via the technique of weight functions. The construction of a suitable weight function for proving Theorem~\ref{thm:generalized_envelope_PCA_D}, and thereby, Theorem~\ref{thm:three-parameter}, has been outlined in \S\ref{sec:generalized_wt_fn_steps}. As has been emphasized in \S\ref{sec:generalized_wt_fn_steps}, this weight function is not unique, and furthermore, small tweaks here and there in the long and involved process of constructing this weight function, step by step, could lead to very marginal improvements -- we say ``marginal" because such improvements are likely to happen via terms of the form $p^{i}q^{j}r^{k}$, for some $i, j, k \in \mathbb{N}_{0}$, such that $(i+j+k)$ equals $3$ or more, and since we have already assumed that each of $p$, $q$ and $r$ is small, such terms make little difference to the regime obtained in Theorem~\ref{thm:three-parameter}.
\end{remark}

As explained in \S\ref{sec:intro}, the above-described three-parameter set-up is the most generalized of all the games studied in this paper. Setting $r=0$ reduces this set-up to that of the site percolation game studied in \cite{holroyd2019percolation}. In fact, the main result (pertaining to the probability of draw in the site percolation game) of \cite{holroyd2019percolation} follows as a corollary of Theorem~\ref{thm:three-parameter}:
\begin{corollary}
Assuming that Theorem~\ref{thm:three-parameter} holds, the generalized percolation game, with underlying parameters $p$, $q$ and $r$, results in a draw with probability $0$ whenever $p+q > 0$ and $r=0$.
\end{corollary}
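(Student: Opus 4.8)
The plan is to first verify, directly, that the hypotheses of Theorem~\ref{thm:three-parameter} are met at $r=0$ for all sufficiently small $(p,q)$ with $p+q>0$, and then to bootstrap from this ``small-parameter'' conclusion to the full range $p+q>0$ by a monotonicity argument.

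First I would substitute $r=0$ into the governing inequalities and observe that they collapse dramatically. The universal inequality \eqref{three_cond_universal} becomes $2(p+q)\geqslant (p+q)^{2}$, which holds for every $(p,q)\in\Theta$ since $p+q\leqslant 1<2$; moreover the first lines of \eqref{three_cond_2} and \eqref{three_cond_3} degenerate to the impossible chain $q(1-q)<p(1-p)\leqslant q(1-q)$, so at $r=0$ only \eqref{three_cond_1} and \eqref{three_cond_4} can be invoked. Accordingly I would split into the two cases $p\leqslant q$ and $p>q$. When $p\leqslant q$ and both lie in $[0,1/50]\subset[0,1/2]$, the map $t\mapsto t(1-t)$ is increasing, so $p(1-p)\leqslant q(1-q)$, which is exactly \eqref{three_cond_1} at $r=0$. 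When $p>q$ the first constraint of \eqref{three_cond_4}, namely $q<p$, is immediate, and its second constraint reduces to $6q+p^{2}\geqslant 6pq+7q^{2}$; rewriting the left-minus-right side as $6q(1-p)-7q^{2}+p^{2}$ and using $p,q\leqslant 1/50$ shows it is non-negative (and it is trivially $p^{2}\geqslant 0$ when $q=0$). Hence, for every $(p,q)$ with $p+q>0$, $p\leqslant\epsilon_{1}$ and $q\leqslant\epsilon_{2}$ (for which one may take $\epsilon_{1}=\epsilon_{2}=1/50$), Theorem~\ref{thm:three-parameter} applies and the probability of draw is $0$.

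The remaining task is to remove the smallness restriction, i.e.\ to pass from small $(p,q)$ to arbitrary $(p,q)$ with $p+q>0$ (this is precisely what is needed to recover Theorem~1 of \cite{holroyd2019percolation}). For this I would establish that, with $r$ held at $0$, the probability of draw is non-increasing in each of $p$ and $q$ separately. Given a target pair $(p,q)$ with $p+q>0$, I would choose a comparison pair $(p_{0},q_{0})$ with $p_{0}\leqslant p$, $q_{0}\leqslant q$, $p_{0}+q_{0}>0$ and $p_{0},q_{0}\leqslant 1/50$ --- concretely $(p_{0},q_{0})=(0,\min\{q,1/50\})$ if $q>0$ and $(p_{0},q_{0})=(\min\{p,1/50\},0)$ if $q=0$ --- so that, by the previous paragraph, the draw probability at $(p_{0},q_{0})$ equals $0$. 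Monotonicity then forces the draw probability at $(p,q)$ to be $0$ as well.

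The main obstacle is the monotonicity itself. I would prove it by a coupling in which increasing $p$ (with $q,r$ fixed) is realized by turning some open vertices into traps, and increasing $q$ (with $p,r$ fixed) by turning some open vertices into targets, while leaving all other labels unchanged; such couplings are built by first sampling the targets (resp.\ traps) and then using a single uniform random variable per remaining vertex to nest the trap (resp.\ target) sets. It then suffices to show, pathwise under the coupling, that reverting a trap or a target to ``open'' cannot destroy a draw --- equivalently, that enlarging the set of undecided (open) vertices can only enlarge the set of vertices whose game value is a draw. This is the delicate point, because the $\mathrm{W}/\mathrm{L}/\mathrm{D}$ labels are determined by the game recurrence of \S\ref{subsec:recurrence} and can change globally when a single vertex is relabelled; I would argue it by exhibiting the draw-region as the greatest fixed point of the (monotone) game-value recursion and checking that this fixed point is monotone with respect to the partial order ``more open vertices'', so that fewer traps and targets yield a draw-region that only grows. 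Once this monotonicity is in place, the corollary follows immediately from the two-case verification above.
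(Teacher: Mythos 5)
Your first paragraph reproduces the paper's own proof of this corollary almost verbatim: the same reduction of \eqref{three_cond_universal} to $2(p+q)\geqslant(p+q)^{2}$ at $r=0$, the same split into $p\leqslant q$ (handled by \eqref{three_cond_1} via monotonicity of $x\mapsto x(1-x)$) and $p>q$ (handled by \eqref{three_cond_4}). The paper stops there --- it invokes Theorem~\ref{thm:three-parameter} directly, with the case analysis carried out under the standing assumption that $p$ and $q$ are small (it literally says ``assuming both $p$ and $q$ to be bounded above by $1/2$''), and makes no attempt at the bootstrap you describe. So the genuinely different part of your proposal is the second half, and that is where the problem lies.

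The pathwise claim on which your coupling rests --- that relabelling a trap as open can never destroy a draw, equivalently that the $D$-region is monotone in the set of open vertices --- is false. Concrete counterexample at $r=0$: let $v$, $v+(1,0)$ and $v+(0,1)$ be traps, every other vertex open, and set $u=v-(1,0)$, $w=u+(0,1)$. Then $v\in W$ (it is a trap); the forward cone of $w+(0,1)$ contains no trap, so $w+(0,1)\in D$, and since $w$'s out-neighbours are $\{v+(0,1),\,w+(0,1)\}=\{W,D\}$ with no $L$-neighbour, $w\in D$; likewise $u$'s out-neighbours are $\{v,w\}=\{W,D\}$, so $u\in D$. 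Now relabel $v$ as open: both out-neighbours of $v$ are traps, hence in $W$, so $v\in L$; consequently $u$ acquires an $L$-neighbour and jumps to $W$. Making the configuration ``more open'' has destroyed the draw at $u$. The mechanism is intrinsic to the game: a trap adjacent to $u$ is a move that $u$'s first player must avoid, whereas an open vertex whose own out-moves are all traps is a winning move for her. So the game-value map is not monotone with respect to the partial order ``more open vertices'', the greatest-fixed-point argument cannot be made monotone in that order, and the asserted monotonicity of the draw probability in $p$ (and, by a symmetric concern, in $q$) is not established. Your reduction from arbitrary $(p,q)$ with $p+q>0$ to small parameters therefore collapses; note also that one cannot hope to prove this monotonicity by soft means, since for $p+q>0$ the draw probability is identically $0$, so the monotonicity statement is essentially equivalent to the corollary itself. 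If one insists on covering large $p,q$ rigorously, the available route is the one the paper itself gestures at (end of \S\ref{subsec:final_wt_fn_generalized}): at $r=0$ the PCA $\G_{p,q,0}$ is exactly the one of \cite{holroyd2019percolation}, whose ergodicity for all $p+q>0$ is proved there directly --- but that is an appeal to the earlier paper, not a deduction from Theorem~\ref{thm:three-parameter}.
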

\begin{proof}
When $r=0$, the first case to consider is where $p \leqslant q$, which, assuming both $p$ and $q$ to be bounded above by $1/2$, is equivalent to $p(1-p) \leqslant q(1-q)$ (since the function $f(x)=x(1-x)$ is strictly increasing for $x \in [0,1/2)$ and strictly decreasing for $x \in (1/2,1]$). Moreover, since $r=0$, the inequality in \eqref{three_cond_universal} reduces to $2(p+q) \geqslant (p+q)^{2}$, which is indeed true since $p+q \leqslant 1$. Thus, in this case, $(p,q,0)$ satisfies the constraints of \eqref{three_cond_universal} and \eqref{three_cond_1}, leading to the conclusion, from Theorem~\ref{thm:three-parameter}, that the probability of draw in the corresponding generalized percolation game equals $0$.

The second case to consider is where $p > q$. As above, it can be seen that \eqref{three_cond_universal} holds since $r=0$. Furthermore, the second inequality of \eqref{three_cond_4} reduces to 
\begin{align}
    {}&6q+p^{2} \geqslant 6pq+7q^{2} \Longleftrightarrow 6q(1-p-q)+(p-q)(p+q) \geqslant 0,\nonumber
\end{align}
which is indeed true since $p+q \leqslant 1$ and $p > q$. Therefore, the constraints of \eqref{three_cond_universal} and \eqref{three_cond_4} are satisfied by $(p,q,0)$ in this case, and by Theorem~\ref{thm:three-parameter}, we conclude that the probability of draw in the corresponding generalized percolation game equals $0$.

This completes the proof of the claim that the main result of \cite{holroyd2019percolation} follows as a special case of our Theorem~\ref{thm:three-parameter}.
\end{proof}

Next, we come to the following two-parameter set-up: each directed edge of $\mathbb{Z}^{2}$ is assigned, independent of all else, a label that reads \emph{trap} with probability $r'$, \emph{target} with probability $s'$, and \emph{open} with the remaining probability $(1-r'-s')$, with $(r',s')$ belonging to the parameter-space $\Theta'=\{(r',s') \in [0,1]^{2}: 0 < r'+s' \leqslant 1\}$. We refer to the game played on this random board as the \emph{bond percolation game}. The moves permitted in this game are the same as those described above for the generalized percolation game, and a player wins if she is 
\begin{enumerate*}
\item either able to move the token along an edge labeled as a target,
\item or force her opponent to move the token along an edge labeled as a trap.
\end{enumerate*}
The game continues for as long as the token keeps getting moved along edges that have been labeled open, which may happen indefinitely, leading to a draw. Once again, our objective is to find regimes, in terms of values of the parameter-pair $(r',s')$, for which the probability of draw in this game equals $0$, and in Theorem~\ref{thm:two-parameter}, we list three such regimes.
\begin{theorem}\label{thm:two-parameter}
Let $(r',s') \in \Theta'$, where $\Theta'$ is as described above.
\begin{enumerate}[label={(B\arabic*)},leftmargin=*]
\item \label{bond_regime_1} There exist $\epsilon_{1}, \epsilon_{2} \in (0,1)$ such that the probability of draw in the bond percolation game, with underlying parameters $r'$ and $s'$, equals $0$ whenever $r' \leqslant \epsilon_{1}$, $s' \leqslant \epsilon_{2}$ and 
\begin{equation}
\frac{3(2s'-{s'}^{2})^{2}}{2}+\frac{8(2s'-{s'}^{2})r'}{1-s'}+\frac{11{r'}^{2}}{2(1-s')^{2}}\geqslant \frac{2r'}{1-s'}+\frac{9(2s'-{s'}^{2})^{3}}{2}+\frac{16(2s'-{s'}^{2})^{2}r'}{1-s'}+\frac{43(2s'-{s'}^{2}){r'}^{2}}{2(1-s')^{2}}+\frac{5{r'}^{3}}{(1-s')^{3}}.\label{two_regime_1_eq}
\end{equation}
\item \label{bond_regime_2} As long as $s'=0$ and $r'>0.157175$, the probability of draw in the bond percolation game, with underlying parameters $r'$ and $s'$, equals $0$.
\item \label{bond_regime_3} As long as $r'=s' \geqslant 0.10883$, the probability of draw in the bond percolation game, with underlying parameters $r'$ and $s'$, equals $0$. 
\end{enumerate}
\end{theorem}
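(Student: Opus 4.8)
The plan is to run the chain of reductions set up earlier in the paper and then do the genuine work with potential functions. First I would invoke the equivalence (established for the PCAs of this paper) that the probability of draw in the bond percolation game with parameters $(r',s')$ equals $0$ if and only if the associated elementary PCA $\E_{r',s'}$ is ergodic; this turns each of \ref{bond_regime_1}, \ref{bond_regime_2} and \ref{bond_regime_3} into an assertion that $\E_{r',s'}$ is ergodic for the stated $(r',s')$. Since the bond game is the $p'=q'=0$ instance of the four-parameter game, the reduction of \S\ref{subsec:three_parameter_suffices} identifies $\E_{r',s'}$ with the generalized PCA $\G_{p,q,r}$ at the effective parameters $(p,q,r)=\big(2s'-{s'}^{2},\,0,\,r'/(1-s')\big)$. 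I would keep this dictionary in hand throughout, since it both explains the recurrent combinations $2s'-{s'}^{2}$ and $r'/(1-s')$ appearing in \eqref{two_regime_1_eq} and lets me reuse the game-rule/recurrence bookkeeping already assembled for $\G_{p,q,r}$.

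The core of the argument is to prove ergodicity by the envelope-PCA method of \cite{holroyd2019percolation}. I would pass to the envelope process obtained by coupling two runs of $\E_{r',s'}$ from arbitrary initial conditions and recording, at each site, whether the two runs have been forced to agree (a \emph{decided} cell) or may still differ (an \emph{undetermined} cell); ergodicity is equivalent to the probability that a fixed cell is undetermined after $n$ steps tending to $0$. I would then seek a weight function $\phi$ on the finite alphabet of this envelope PCA, vanishing on decided states and positive on undetermined ones, obeying the one-step weight inequality $\mathbb{E}[\phi(\text{updated cell})] \leqslant \tfrac{1}{2}\big(\phi(a)+\phi(b)\big)$ for every pair of neighbour states $(a,b)$ (the neighbours being the images of $(x+1,y)$ and $(x,y+1)$). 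Because each cell feeds into exactly two cells one step later, summing $\phi$ across anti-diagonals makes the total weight a supermartingale; arranging strict contraction in the undetermined cases then forces the undetermined mass at the origin to decay to $0$, which delivers ergodicity and hence a vanishing probability of draw. This is exactly the content I would isolate as a standalone technical proposition (the analogue of Theorem~\ref{thm:bond_envelope_PCA_D}) and feed back into the equivalence above.

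The three regimes require three separate weight functions, because their target values of $(r',s')$ lie in incomparable parts of $\Theta'$. For each, I would posit a parametrized table of nonnegative weights on the envelope alphabet, expand $\mathbb{E}[\phi(\text{updated cell})]$ exactly as a polynomial in $(r',s')$ using the game rule of $\E_{r',s'}$, and reduce the weight inequality to a finite list of polynomial inequalities; optimizing the free weights so that all of them hold simultaneously carves out the largest possible region of $(r',s')$. In regime \ref{bond_regime_1} the binding inequality, after substituting the dictionary and collecting terms, is precisely \eqref{two_regime_1_eq}; I would note that although the reduction already makes the effective parameters small, so that Theorem~\ref{thm:three-parameter} does yield \emph{some} zero-draw region (via \eqref{three_cond_4} with $q=0$), a weight function built directly for $\E_{r',s'}$ enlarges the admissible ratio $r'/{s'}^{2}$ and so genuinely improves on the specialization of Theorem~\ref{thm:three-parameter}. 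In \ref{bond_regime_2} (where $s'=0$, so the effective parameters are $(0,0,r')$) and \ref{bond_regime_3} (where $r'=s'$) the effective parameters are \emph{not} small, Theorem~\ref{thm:three-parameter} does not apply at all, and the weight inequalities collapse to single-variable polynomial inequalities in $r'$ and in the common value $r'=s'$, respectively, whose relevant roots produce the quoted thresholds $0.157175$ and $0.10883$.

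I expect the main obstacle to be the construction and verification of the weight functions, not the soft part of the argument. As Remark~\ref{rem:three-parameter} stresses for the companion theorem, there is no canonical $\phi$: one must guess a workable functional form and then balance the competing constraints coming from the many neighbour-pair cases of the envelope update, all while keeping the feasible region of $(r',s')$ as large as possible. A secondary, purely organizational difficulty is that the envelope alphabet and the case analysis of the update rule are large, so the exact polynomial expressions for $\mathbb{E}[\phi(\text{updated cell})]$ must be assembled and checked with care; this is the bookkeeping I would defer to the step-by-step constructions in \S\ref{sec:bond_1_wt_fn_steps}, \S\ref{sec:bond_2_wt_fn_steps} and \S\ref{sec:bond_3_wt_fn_steps}.
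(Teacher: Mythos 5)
Your soft architecture matches the paper's: reduce the game statement to a statement about the envelope PCA, use the dictionary $(p,q,r)=\big(2s'-{s'}^{2},\,0,\,r'/(1-s')\big)$ to recycle the generalized-game computations for \ref{bond_regime_1}, and build separate weight functions for \ref{bond_regime_2} and \ref{bond_regime_3}, where the effective parameters are not small and Theorem~\ref{thm:three-parameter} is unavailable. The gap is in the technical core. Your "weight function" is a potential $\phi$ on the three-letter envelope alphabet, vanishing on $W$ and $L$, positive on $D$, subject to the pointwise inequality $\mathbb{E}[\phi(\text{update})\mid a,b]\leqslant \tfrac12\big(\phi(a)+\phi(b)\big)$ for every neighbour pair; this is not the weight-function method of the paper, and it provably cannot establish any of the three regimes. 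Test it on the pair $(a,b)=(W,D)$: by \eqref{envelope_PCA_rule_4} the updated cell equals $D$ with probability $(1-s')(1-r'-s')$, so (since $\phi(W)=0$) your inequality forces $(1-s')(1-r'-s')\leqslant\tfrac12$. This fails throughout the neighbourhood of $(0,0)$ that \ref{bond_regime_1} is designed to reach; at $s'=0$ it fails for every $r'<\tfrac12$, so it could at best reproduce \ref{bond_regime_2} with threshold $0.5$ instead of $0.157175$; and for $r'=s'$ it fails whenever $r'<(3-\sqrt{5})/4\approx 0.191$, so it cannot reach $0.10883$ in \ref{bond_regime_3}. Moreover, since $\phi$ is forced to vanish on $W,L$, it has a single positive value $\phi(D)$, which cancels from both sides: there are no free weights to optimize, and replacing the $\tfrac12$-$\tfrac12$ split by $(\lambda,1-\lambda)$ does not help because the update rule is symmetric in the two neighbours.

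The paper's weight functions are objects of a different type: linear functionals $w(\mu)=\sum_i c_i\,\mu(\mathcal{C}_i)$ on translation- and reflection-invariant measures, where the $\mathcal{C}_i$ are \emph{multi-site} cylinder sets (e.g.\ $(W,D)$, $(L,W,D)$, $(L,D,L)$, $(W,W,D,W)$, $(L,L,W,D)$) and the $c_i$ are polynomials in the parameters; the inequality to be verified is $w\big(\E_{r',s'}\mu\big)\leqslant w(\mu)-\sum_i c'_i\,\mu(\mathcal{C}'_i)$ with $c'_i\geqslant 0$, and it is exploited only at stationary $\mu$, not as a site-by-site supermartingale. The correlations carried by the longer cylinder sets are exactly what absorbs the problematic $(W,D)$-type transitions when the parameters are small; the constructions then proceed by the iterative adjustment scheme of \S\ref{subsubsec:adjust_gen}, and they conclude with a propagation step (absent from your plan) in which the vanishing of $\mu(DD)$, $\mu(LD)$ or $\mu(LWD)$ in the respective regimes is pushed, via lower bounds on pushforward probabilities of the form in Lemma~\ref{lem:pushforward_2}, to $\mu(D)=0$. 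So while your reductions, your comparison with the $q=0$ specialization of Theorem~\ref{thm:three-parameter}, and your identification of where the thresholds come from are all sound, the proof engine you propose stalls at the first verification, and repairing it leads necessarily to the multi-site, measure-level formulation that the paper actually uses.
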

We draw the reader's attention to the fact that, between them, the regimes described in \ref{bond_regime_1}, \ref{bond_regime_2} and \ref{bond_regime_3} cover a significant subset of the parameter-space $\Theta'$. In particular, \ref{bond_regime_1} covers a significant portion of $\Theta'$ that intersects the neighbourhood around $(r',s')=(0,0)$ -- a region where older techniques in the literature fail to establish that the probability of draw equals $0$. Furthermore, as is the case with the three-parameter generalized percolation game, \ref{bond_regime_1} seems to indicate (and proves it under the restriction that $(r',s')$ satisfies the inequality given by \eqref{two_regime_1_eq}) that a phase transition happens exactly at $(0,0)$ (in the sense that, at $(r',s')=(0,0)$, the probability of draw equals $1$, whereas for $r'$, $s'$ sufficiently small, at least one of them strictly positive and $(r',s')$ satisfying \eqref{two_regime_1_eq}, the probability of draw equals $0$). 

\section{Formal description of the probabilistic cellular automata we are concerned with}\label{subsec:formal_PCA}
As explained towards the end of \S\ref{sec:intro}, the game rules arising from each game considered in this paper can be represented in the form of a probabilistic cellular automaton (PCA), and the question as to whether such a PCA is \emph{ergodic} or not ties in inexorably with the question of whether the corresponding percolation game has a positive chance of resulting in a draw or not. It is important that we recall some of the basic definitions pertaining to PCAs in general, and for this reason, \S\ref{subsubsec:general_PCAs} is a necessary digression.

\subsection{A brief discussion on PCAs in general}\label{subsubsec:general_PCAs} A $d$-dimensional \emph{deterministic cellular automaton} (CA) is a discrete dynamical system with the following components:
\begin{enumerate*}
\item the \emph{universe} $\mathbb{Z}^{d}$ consisting of \emph{cells},
\item a finite set of states $\mathcal{A}$ referred to as the \emph{alphabet},
\item a finite set of indices $\mathcal{N} = \left\{\mathbf{y}_{1}, \mathbf{y}_{2}, \ldots, \mathbf{y}_{m}\right\} \subset \mathbb{Z}^{d}$, referred to as the \emph{neighbourhood-marking set},
\item and a \emph{local update rule} $f: \mathcal{A}^{\mathcal{N}} \rightarrow \mathcal{A}$.
\end{enumerate*}

A \emph{probabilistic cellular automaton} (PCA) $F$ is, as the name suggests, a CA with a \emph{random} or \emph{stochastic} update rule, represented by a stochastic matrix $\varphi: \mathcal{A}^{m} \times \mathcal{A} \rightarrow [0,1]$. A PCA can be viewed as a discrete-time Markov chain on the \emph{state space} $\Omega = \mathcal{A}^{\mathbb{Z}^{d}}$. Given any element $\eta = \left(\eta(\mathbf{x}): \mathbf{x} \in \mathbb{Z}^{d}\right)$ of the state space $\Omega$, referred to as a \emph{configuration}, the state $\eta(\mathbf{x})$ of the cell $\mathbf{x}$, for each $\mathbf{x} \in \mathbb{Z}^{d}$, is updated, independent of the updates happening at all other cells, to the \emph{random} state $F\eta(\mathbf{x})$ whose distribution is given by  
\begin{align}\label{general_update_rule_eq}
\Prob[F\eta(\mathbf{x}) = b\big|\eta(\mathbf{x}+\mathbf{y}_{i}) = a_{i} \text{ for all } 1 \leqslant i \leqslant m] = \varphi(a_{1}, a_{2}, \ldots, a_{m}, b) \text{ for all } b \in \mathcal{A},
\end{align}  
\sloppy for any $a_{1}, a_{2}, \ldots, a_{m} \in \mathcal{A}$. We may imagine that the updates happen at discrete time-steps, starting with an initial configuration $\eta_0$ at time-step $t=0$, and recursively defining, for each $t \in \mathbb{N}$, $\eta_{t} = \left(\eta_{t}(\mathbf{x}): \mathbf{x} \in \mathbb{Z}^{d}\right) = F \eta_{t-1} = \left(F\eta_{t-1}(\mathbf{x}): \mathbf{x} \in \mathbb{Z}^{d}\right)$. This yields $\eta_{t} = F^{t}\eta_{0}$, where $F^{t}$ refers to the update (i.e.\ the application of $F$) having happened $t$ times. 

Given a random configuration $\pmb{\eta}$ which follows a probability distribution $\mu$ (measurable with respect to the $\sigma$-field generated by all cylinder sets of $\Omega$), we let $F^t\mu$ denote the probability distribution of $F^{t}\pmb{\eta}$ (obtained by starting from $\pmb{\eta}$ and performing $t$ updates sequentially via the stochastic update rules of $F$, as shown in \eqref{general_update_rule_eq}). We call a probability distribution $\mu$ \textit{stationary} for $F$ if $F\mu = \mu$. A PCA $F$ is said to be \textit{ergodic} if 
\begin{enumerate}
    \item It has a unique stationary measure, say $\mu$.
    \item $F^t\nu \rightarrow \mu$ weakly, as $t\rightarrow \infty$, for each probability distribution $\nu$ on $\Omega$.
\end{enumerate}

%A $1$-dimensional PCA is said to be \textit{elementary} if $|\mathcal N| = |\mathcal A| = 2$, and as we shall see, the PCA $F_{p,q}$, which is one of the two PCAs we focus on in this paper, is elementary.

\subsection{The \emph{specific} PCAs we work with in this paper}\label{subsubsec:specific_PCAs}
We begin by describing the PCA that represents the game rules arising from the generalized percolation games described in \S\ref{sec:formal_defns_main_results}. This PCA, denoted $\G_{p,q,r}$ (in order to emphasize its dependence on the parameter-triple $(p,q,r)$), is a $1$-dimensional one (i.e.\ its universe is $\mathbb{Z}$), with
\begin{enumerate}
\item the alphabet $\hat{\mathcal{A}}=\{W,L,D\}$ (already, this choice of notations hints at the intimate connection between the game and this PCA: if $(x,y) \in \mathbb{Z}^{2}$ serves as the initial vertex for a generalized percolation game, we say that $(x,y) \in W$ if this game is won by the player who plays the first round, $(x,y) \in L$ if this game is lost by the player who plays the first round, and $(x,y) \in D$ if this game results in a draw),
\item the neighbourhood-marking set $\mathcal{N}=\{0,1\}$,
\item and stochastic update rules that can be described via the stochastic matrix $\widehat{\varphi}_{p,q,r}: \hat{\mathcal{A}}^{2} \times \hat{\mathcal{A}} \rightarrow [0,1]$, defined as follows (see Figure~\ref{fig:envelope_generalized} for a pictorial illustration):
\begin{equation}\label{GPCA_rule_1}
 \widehat{\varphi}_{p,q,r}(W,W,b) =
  \begin{cases} 
   p & \text{if } b = W, \\
   (1-p) & \text{if } b = L,
  \end{cases}
\end{equation}
\begin{equation}\label{GPCA_rule_2}
 \widehat{\varphi}_{p,q,r}(a_{0},a_{1},b) =
  \begin{cases} 
   p+(1-p-q)(1-r) & \text{if } b = W, \\
   q+(1-p-q)r & \text{if } b = L,
  \end{cases} \quad \text{where } (a_{0}, a_{1}) \in \{(W,L), (L,W)\},
\end{equation}
\begin{equation}\label{GPCA_rule_3}
 \varphi_{p,q,r}(L,L,b) =
  \begin{cases} 
   p+(1-p-q)(1-r^{2}) & \text{if } b = W, \\
   q+(1-p-q)r^{2} & \text{if } b = L,
  \end{cases}
\end{equation}
\begin{equation}\label{GPCA_rule_4}
 \widehat{\varphi}_{p,q,r}(a_{0}, a_{1}, b) =
  \begin{cases} 
   p & \text{if } b = W, \\
   (1-p-q)(1-r) & \text{if } b = D, \\
   (1-p-q)r+q & \text{if } b = L,
  \end{cases} \quad \text{where } (a_{0}, a_{1}) \in \{(W,D), (D,W)\},
\end{equation}
\begin{equation}\label{GPCA_rule_5}
 \widehat{\varphi}_{p,q,r}(a_{0}, a_{1}, b) =
  \begin{cases} 
   p+(1-p-q)(1-r) & \text{if } b = W, \\
   (1-p-q)r(1-r) & \text{if } b = D,\\
   q+(1-p-q)r^{2} & \text{if } b = L,
  \end{cases} \quad \text{where } (a_{0}, a_{1}) \in \{(L,D), (D,L)\},
\end{equation}
\begin{equation}\label{GPCA_rule_6}
 \widehat{\varphi}_{p,q,r}(D,D,b) =
  \begin{cases} 
   p & \text{if } b = W,\\
   (1-p-q)(1-r^{2}) & \text{if } b = D, \\
   q+(1-p-q)r^{2} & \text{if } b = L.
  \end{cases}
\end{equation}
\end{enumerate}
\begin{figure}[h!]
  \centering
    \includegraphics[width=0.85\textwidth]{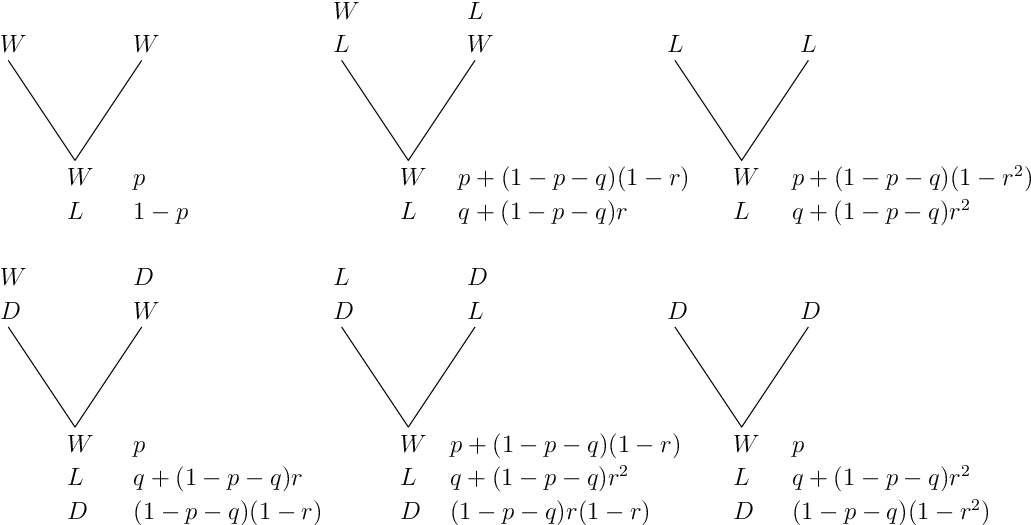}
\caption{Illustrating the stochastic update rules for the PCA $\G_{p,q,r}$}
  \label{fig:envelope_generalized}
\end{figure}
A detailed explanation as to why these stochastic update rules are equivalent to the game rules governing the generalized percolation game, has been provided in \S\ref{subsec:recurrence}. When we restrict $\hat{\mathcal{A}}$ to the smaller alphabet, $\mathcal{A}=\{W,L\}$, we obtain yet another PCA, denoted $G_{p,q,r}$, with stochastic update rules described via the stochastic matrix $\varphi_{p,q,r}: \mathcal{A}^{2} \times \mathcal{A} \rightarrow [0,1]$, where $\varphi_{p,q,r}(a_{0},a_{1},b)=\widehat{\varphi}_{p,q,r}(a_{0},a_{1},b)$ for all $a_{0}, a_{1}, b \in \mathcal{A}$ (in other words, $\varphi_{p,q,r}$ can be defined using \eqref{GPCA_rule_1}, \eqref{GPCA_rule_2} and \eqref{GPCA_rule_3}). We refer to $\G_{p,q,r}$ as the \emph{envelope} to $G_{p,q,r}$ (see \S\ref{subsec:relation}, where we include a discussion on the importance of invoking the notion of envelope PCAs).

We now describe the PCA, denoted $\E_{r',s'}$, that represents the game rules arising from our bond percolation game, with the underlying parameters, $r'$ and $s'$, described in \S\ref{sec:formal_defns_main_results}. Endowed with $\mathbb{Z}$ as its universe, $\E_{r',s'}$ is, just like $\G_{p,q,r}$, a $1$-dimensional PCA with alphabet $\hat{\mathcal{A}}$ and neighbourhood-marking set $\mathcal{N}$, and its stochastic update rules are captured by the stochastic matrix $\widehat{\varphi}_{r',s'}: \hat{\mathcal{A}}^{2} \times \hat{\mathcal{A}} \rightarrow [0,1]$ defined as follows (see Figure~\ref{fig:bond_generalized} for a pictorial illustration):
\begin{equation}\label{envelope_PCA_rule_1}
 \widehat{\varphi}_{r',s'}(W,W,b) =
  \begin{cases} 
   2s'-{s'}^2 & \text{if } b = W, \\
   (1-s')^2 & \text{if } b = L,
  \end{cases}
\end{equation}
\begin{equation}\label{envelope_PCA_rule_2}
 \widehat{\varphi}_{r',s'}(a_{0},a_{1},b) =
  \begin{cases} 
   1-r'+r's' & \text{if } b = W, \\
   r'-r's' & \text{if } b = L,
  \end{cases} \quad \text{where } (a_{0}, a_{1}) \in \{(W,L), (L,W)\},
\end{equation}
\begin{equation}\label{envelope_PCA_rule_3}
 \varphi_{r',s'}(L,L,b) =
  \begin{cases} 
   1-{r'}^{2} & \text{if } b = W, \\
   {r'}^{2} & \text{if } b = L,
  \end{cases}
\end{equation}
\begin{equation}\label{envelope_PCA_rule_4}
 \widehat{\varphi}_{r',s'}(a_{0}, a_{1}, b) =
  \begin{cases} 
   (1-s')(1-r'-s') & \text{if } b = D, \\
   r'(1-s') & \text{if } b = L, \\
   2s'-{s'}^2 & \text{if } b = W,
  \end{cases} \quad \text{where } (a_{0}, a_{1}) \in \{(W,D), (D,W)\},
\end{equation}
\begin{equation}\label{envelope_PCA_rule_5}
 \widehat{\varphi}_{r',s'}(a_{0}, a_{1}, b) =
  \begin{cases} 
   1-r'+r's' & \text{if } b = W, \\
   r'(1-r'-s') & \text{if } b = D,\\
   {r'}^{2} & \text{if } b = L,
  \end{cases} \quad \text{where } (a_{0}, a_{1}) \in \{(L,D), (D,L)\},
\end{equation}
\begin{equation}\label{envelope_PCA_rule_6}
 \widehat{\varphi}_{r',s'}(D,D,b) =
  \begin{cases} 
   (1-r'-s')(1+r'-s') & \text{if } b = D,\\
   {r'}^{2} & \text{if } b = L, \\
   2s'-{s'}^2 & \text{if } b = W.
  \end{cases}
\end{equation}
\begin{figure}[h!]
  \centering
    \includegraphics[width=0.85\textwidth]{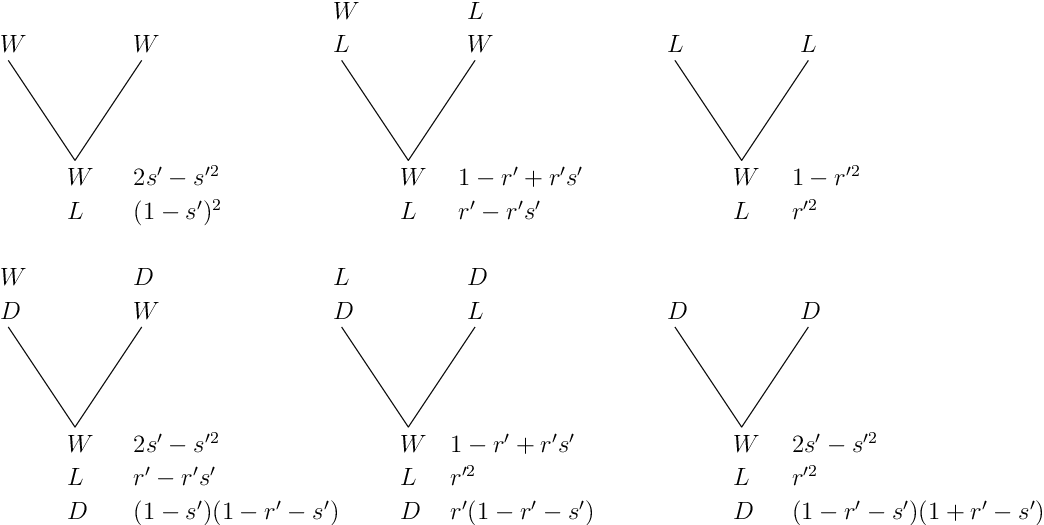}
\caption{Illustrating the stochastic update rules for the PCA $\E_{r',s'}$}
  \label{fig:bond_generalized}
\end{figure}
Once again, we refer the reader to \S\ref{subsec:recurrence} for a detailed explanation as to how the game rules associated with the bond percolation game give rise to the above-mentioned stochastic update rules. If we restrict $\hat{\mathcal{A}}$ to $\mathcal{A}$, we obtain the PCA $E_{r',s'}$ with the same neighbourhood-marking set $\mathcal{N}$, and with stochastic update rules captured by the stochastic matrix $\varphi_{r',s'}: \mathcal{A}^{2} \times \mathcal{A} \rightarrow [0,1]$ such that $\varphi_{r',s'}(a_{0},a_{1},b)=\widehat{\varphi}_{r',s'}(a_{0},a_{1},b)$ for all $a_{0}, a_{1}, b \in \mathcal{A}$ (in other words, the equations defining $\varphi_{r',s'}$ are the same as \eqref{envelope_PCA_rule_1}, \eqref{envelope_PCA_rule_2} and \eqref{envelope_PCA_rule_3}). As is the case with $G_{p,q,r}$ and $\G_{p,q,r}$, we refer to $\E_{r',s'}$ as the envelope for the PCA $E_{r',s'}$. 

We end \S\ref{subsec:formal_PCA} with the main results of this paper pertaining to the PCAs described in \S\ref{subsubsec:specific_PCAs}:
\begin{theorem}\label{thm:PCAs}
The PCA $G_{p,q,r}$, as well as its envelope $\G_{p,q,r}$, is ergodic whenever the underlying parameter-triple $(p,q,r)$ belongs to the parameter-space $\Theta$, where $\Theta$ is as defined in \S\ref{sec:formal_defns_main_results}, and satisfies the constraints stated in Theorem~\ref{thm:three-parameter} (in other words, when $p$, $q$ and $r$ are sufficiently small, the inequality \eqref{three_cond_universal} holds, and $(p,q,r)$ satisfies one of \eqref{three_cond_1}, \eqref{three_cond_2}, \eqref{three_cond_3} and \eqref{three_cond_4}). 

Likewise, the PCA $E_{r',s'}$, as well as its envelope $\E_{r',s'}$, is ergodic whenever the underlying parameter-pair $(r',s')$ belongs to the parameter-space $\Theta'$, where $\Theta'$ is as defined in \S\ref{sec:formal_defns_main_results}, and $(r',s')$ satisfies any one of the constraints stated in Theorem~\ref{thm:two-parameter}, i.e.\ either \ref{bond_regime_1} or \ref{bond_regime_2} or \ref{bond_regime_3}.
\end{theorem}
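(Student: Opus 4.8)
The plan is to prove Theorem~\ref{thm:PCAs} as the probabilistic-cellular-automaton shadow of the game-theoretic statements in Theorems~\ref{thm:three-parameter} and \ref{thm:two-parameter}, with the bridge between the two being the \emph{envelope} correspondence recorded in Theorems~\ref{thm:generalized_envelope_PCA_D} and \ref{thm:bond_envelope_PCA_D}. The guiding principle is the chain of equivalences
\[
\{\,\text{draw prob.}=0\,\}\ \Longleftrightarrow\ \{\,D\ \text{dies out in the envelope from all-}D\,\}\ \Longleftrightarrow\ \{\,\text{ergodicity}\,\}.
\]
Concretely, I would first record the deterministic coupling between the envelope $\G_{p,q,r}$ (respectively $\E_{r',s'}$) on $\hat{\mathcal{A}}=\{W,L,D\}$ and the two-symbol PCA $G_{p,q,r}$ (respectively $E_{r',s'}$) on $\mathcal{A}=\{W,L\}$: a cell of the envelope, started from the configuration in which every cell carries the symbol $D$, is still in state $D$ at time $t$ precisely when the two-symbol dynamics launched from two initial configurations differing at that cell can still disagree there after $t$ steps. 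Thus the asymptotic density of $D$ under the all-$D$ start is simultaneously the probability of a draw in the corresponding game (whence Theorems~\ref{thm:three-parameter}/\ref{thm:two-parameter} control it) and the sole obstruction to coupling (whence it controls ergodicity).

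Next I would carry out the reduction of ergodicity to the statement ``$D$ dies out''. Endow $\hat{\mathcal{A}}$ with the partial order in which $D$ is the unique maximal symbol and $W$, $L$ are the two incomparable minimal symbols, and verify that the envelope's stochastic update rules \eqref{GPCA_rule_1}--\eqref{GPCA_rule_6} (respectively \eqref{envelope_PCA_rule_1}--\eqref{envelope_PCA_rule_6}) are monotone for the induced coordinatewise order; the rules are built precisely so that, for neighbourhood data with one coordinate raised from $W$ or $L$ to $D$, the mass on the formerly-determined symbol is redistributed only onto that symbol and onto $D$, which yields the monotone coupling. Monotonicity makes the all-$D$ configuration the global top, so a single \emph{grand} monotone coupling places every trajectory below the all-$D$ trajectory; at any cell where the top trajectory has resolved to the minimal symbol $W$ or $L$, every dominated trajectory is pinned to the same symbol. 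Hence, if the single-site $D$-probability under the all-$D$ start tends to $0$ — which is exactly the content of Theorems~\ref{thm:generalized_envelope_PCA_D} and \ref{thm:bond_envelope_PCA_D} in the relevant regimes — a union bound over any finite window shows that $\G^{t}\nu$ (respectively $\E^{t}\nu$) has the same cylinder limit for every $\nu$, giving a unique stationary measure (necessarily supported on $\{W,L\}$-configurations, its existence being guaranteed by compactness of $\hat{\mathcal{A}}^{\mathbb{Z}}$ and the Feller property) together with weak convergence from all starts, i.e.\ ergodicity of the envelope. Ergodicity of the restricted PCA follows at once, since on $\{W,L\}$-configurations the envelope never produces $D$ and coincides with $G_{p,q,r}$ (respectively $E_{r',s'}$), so the latter's stationary measures are exactly those of the envelope charging only $\{W,L\}$, of which there is now a unique one, and convergence is inherited through the same coupling.

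Finally I would feed in the parameter regimes: for the generalized game, Theorem~\ref{thm:generalized_envelope_PCA_D} gives ``$D$ dies out'' whenever $(p,q,r)\in\Theta$ is sufficiently small and satisfies \eqref{three_cond_universal} together with one of \eqref{three_cond_1}--\eqref{three_cond_4}, yielding ergodicity of both $G_{p,q,r}$ and $\G_{p,q,r}$; and Theorem~\ref{thm:bond_envelope_PCA_D} supplies the same conclusion in each of \ref{bond_regime_1}, \ref{bond_regime_2}, \ref{bond_regime_3}, giving ergodicity of $E_{r',s'}$ and $\E_{r',s'}$. Since the weight-function estimates behind Theorems~\ref{thm:generalized_envelope_PCA_D} and \ref{thm:bond_envelope_PCA_D} are assumed here, the main obstacle within the present proof is the reduction itself: verifying that the explicit envelope rules are genuinely monotone for the $D$-on-top order, and upgrading the one-sided input ``the single-site $D$-density vanishes under the top start'' into \emph{full} ergodicity — a unique stationary measure plus weak convergence from \emph{every} initial distribution — for the three-symbol envelope and its two-symbol restriction at once. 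The delicate point is that the assumed statement bounds only the marginal $D$-probability at one cell, so one must argue, through the grand coupling, that this forces joint coupling on all finite cylinders (hence weak convergence) and transfers cleanly to the restricted alphabet, rather than merely controlling one-dimensional marginals.
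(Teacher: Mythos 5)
Your overall chain --- weight-function input $\Rightarrow$ draw probability $0$ $\Rightarrow$ ergodicity --- is the same chain the paper uses, but you arrive at it by a more self-contained route. The paper's proof of Theorem~\ref{thm:PCAs} is a short citation chain: Theorem~\ref{thm:generalized_envelope_PCA_D} yields Theorem~\ref{thm:three-parameter}; Theorem~\ref{thm:ergodic_equiv_draw_probab_0} (whose proof is outsourced to Proposition~2.2 of \cite{holroyd2019percolation}) converts ``draw probability $0$'' into ergodicity of $G_{p,q,r}$; and Lemma~\ref{lem:ergodicity_equivalence} (outsourced to Proposition~2.1 of \cite{holroyd2019percolation}) transfers ergodicity to the envelope $\G_{p,q,r}$, with the bond case handled identically. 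You instead reconstruct the two outsourced bridges from scratch: verification that the rules \eqref{GPCA_rule_1}--\eqref{GPCA_rule_6} are monotone for the $D$-on-top partial order (your description of how mass is redistributed onto $D$ when a neighbour is raised to $D$ is accurate and does yield the required stochastic domination), the grand monotone coupling placing every trajectory below the all-$D$ trajectory, the merging argument giving a unique stationary measure and weak convergence from every start, and restriction to $\{W,L\}$-configurations. You also deduce ergodicity of the envelope first and then of $G_{p,q,r}$, whereas the paper goes in the opposite order; both directions are sound. What your route buys is independence from the cited propositions; what it costs is that you must actually write out the coupling machinery the paper never displays.

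There is one concrete inaccuracy you must repair. Theorems~\ref{thm:generalized_envelope_PCA_D} and \ref{thm:bond_envelope_PCA_D} do \emph{not} state that the single-site $D$-probability under the all-$D$ start tends to $0$; they state that $\mu(D)=0$ for every translation-invariant and reflection-invariant \emph{stationary} distribution $\mu$ of the envelope. To plug them in where your proof needs them, you must add a bridge, in one of two ways. Either (a) use the monotonicity you have already established to note that $\G_{p,q,r}^{\,t}\,\delta_{\text{all-}D}$ is stochastically decreasing, hence converges weakly to a limit that is stationary (by the Feller property), translation-invariant, and --- using the symmetry $\widehat{\varphi}_{p,q,r}(a_{0},a_{1},\cdot)=\widehat{\varphi}_{p,q,r}(a_{1},a_{0},\cdot)$ of the update rules together with translation invariance --- reflection-invariant, so that Theorem~\ref{thm:generalized_envelope_PCA_D} applies to this limit and the cylinder probabilities of $(D)_{0}$ decrease to $0$; or (b) route through Theorems~\ref{thm:three-parameter} and \ref{thm:two-parameter} and actually prove the identification you assert in your first paragraph, namely that the $D$-density at time $t$ under the all-$D$ start equals the probability that the game is undecided within $t$ rounds, which decreases to the draw probability. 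Either patch is short and standard, but as written the clause ``which is exactly the content of Theorems~\ref{thm:generalized_envelope_PCA_D} and \ref{thm:bond_envelope_PCA_D}'' is false, and without one of these patches your argument does not connect to the stated inputs.
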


\subsection{How the PCA $\E_{r',s'}$ is obtained as a special case of the PCA $\G_{p,q,r}$}\label{subsubsec:bond_to_generalized} Let us consider a bond percolation game with the underlying parameter-pair $(r',s')$, i.e.\ where $r'$ indicates the probability of an edge being labeled as a trap and $s'$ indicates that of it being labeled as a target. Letting $\eta$ denote this random assignment of labels (i.e.\ $\eta$ is the infinite tuple made up of $\eta\big((x,y),(x+1,y)\big)$ and $\eta\big((x,y),(x,y+1)\big)$ for all $(x,y)\in\mathbb{Z}^{2}$), we tweak $\eta$ as follows:
\begin{enumerate}
\item if $\eta\big((x,y),(x+1,y)\big)$ (respectively, $\eta\big((x,y),(x,y+1)\big)$) equals a target, we re-label the edge $\big((x,y),(x+1,y)\big)$ (respectively, the edge $\big((x,y),(x,y+1)\big)$), independent of all else, to $\eta'\big((x,y),(x+1,y)\big)$ (respectively, to $\eta'\big((x,y),(x,y+1)\big)$), where
\[
 \eta'\big((x,y),(x+1,y)\big) = 
  \begin{cases} 
   &\text{trap with probability } r'(1-s')^{-1} \\
   &\text{open with probability } (1-r'-s')(1-s')^{-1}
  \end{cases}
\]
(the same distribution is adopted for $\eta'\big((x,y),(x,y+1)\big)$ if $\eta\big((x,y),(x,y+1)\big)$ equals a target), and we label the vertex $(x,y)$ a trap, i.e.\ we set $\sigma'(x,y)$ to be equal to a trap;
\item if $\eta\big((x,y),(x+1,y)\big)$ (respectively, $\eta\big((x,y),(x,y+1)\big)$) is either open or a trap, we keep its label unchanged, i.e.\ we set $\eta'\big((x,y),(x+1,y)\big)=\eta\big((x,y),(x+1,y)\big)$ (respectively, $\eta'\big((x,y),(x,y+1)\big)=\eta\big((x,y),(x,y+1)\big)$); 
\item for any $(x',y') \in \mathbb{Z}^{2}$ for which neither $\eta\big((x,y),(x+1,y)\big)$ nor $\eta\big((x,y),(x,y+1)\big)$ equals a target, we label $(x,y)$ open, i.e.\ we set $\sigma'(x,y)$ to be open.
\end{enumerate}
The modified labeling is now $(\sigma',\eta')$, where $\sigma'$ is the tuple comprising $\sigma'(x,y)$ for all $(x,y) \in \mathbb{Z}^{2}$, and $\eta'$ is the tuple comprising $\eta'\big((x,y),(x+1,y)\big)$ and $\eta'\big((x,y),(x,y+1)\big)$ for all $(x,y)\in\mathbb{Z}^{2}$. From the definition of $(\sigma',\eta')$, it is evident that the labels $\sigma'(x,y)$ are i.i.d.\ over all $(x,y) \in \mathbb{Z}^{2}$, with
\begin{align}
p={}&\Prob[\sigma'(x,y)=\text{trap}]\nonumber\\
={}&\Prob\left[\text{at least one of } \eta\big((x,y),(x+1,y)\big) \text{ and } \eta\big((x,y),(x,y+1)\big) \text{ equals a target}\right]\nonumber\\
={}& 1-(1-s')^{2}=2s'-{s'}^{2},\label{transform_1}
\end{align}
and likewise, the labels $\eta'(e)$ are i.i.d.\ over all directed edges $e$ of our graph, with
\begin{align}
r={}&\Prob[\eta'(e)=\text{trap}]\nonumber\\
={}&\Prob[\eta(e)=\text{trap}]+\Prob[\eta'(e)=\text{trap}\big|\eta(e)=\text{target}]\Prob[\eta(e)=\text{target}]\nonumber\\
={}&r'+\frac{r'}{1-s'} \cdot s' = \frac{r'}{1-s'}.\label{transform_2}
\end{align}
Moreover, it is easily verified that the tuples $\sigma'$ and $\eta'$ are independent of each other as well: for instance, we have
\begin{align}
{}&\Prob\left[\sigma'(x,y)=\text{trap}, \eta'\big((x,y),(x+1,y)\big)=\text{trap}\right]\nonumber\\
={}&\Prob\left[\eta\big((x,y),(x+1,y)\big)=\text{target},\eta'\big((x,y),(x+1,y)\big)=\text{trap}\right]\nonumber\\&+\Prob\left[\eta\big((x,y),(x+1,y)\big)=\text{trap},\eta\big((x,y),(x,y+1)\big)=\text{target}\right]\nonumber\\
={}&s' \cdot \frac{r'}{1-s'} + r' \cdot s' = \frac{r' s' (2-s')}{1-s'} = \left(2s'-{s'}^{2}\right) \frac{r'}{1-s'} = pr,\nonumber
\end{align}
as desired. It is now straightforward to see that by choosing $q=0$, $p$ as in \eqref{transform_1} and $r$ as in \eqref{transform_2}, we can reduce \eqref{GPCA_rule_1} to \eqref{envelope_PCA_rule_1}, \eqref{GPCA_rule_2} to \eqref{envelope_PCA_rule_2} and so on.

This transformation -- from a generalized percolation game with the underlying parameter-triple $(p,0,r)$ to a bond percolation game with the underlying parameter-pair $(r',s')$, where $p$ is as given by \eqref{transform_1} and $r$ is as given by \eqref{transform_2} -- as established above, lies at the heart of the proof that in the regime covered by \ref{bond_regime_1} of Theorem~\ref{thm:two-parameter}, the probability of draw equals $0$.  

\subsection{A major contribution of this paper in establishing ergodicity of elementary PCAs}\label{subsubsec:contribution_PCA} A one-dimensional PCA is said to be \emph{elementary} if the cardinality of its alphabet as well as its neighbourhood-marking set equals $2$. The definitions of $G_{p,q,r}$ and $E_{r',s'}$ in \S\ref{subsubsec:specific_PCAs} immediately reveal that each of them is an elementary PCA. In Chapter 7 of \cite{PCA_survey_old}, two fundamental results pertaining to ergodicity of elementary PCAs have been proposed. We first state these two results in terms of the notation used in this paper. We let $\mathcal{A}=\{W,L\}$ (as has been defined in \S\ref{subsubsec:specific_PCAs}) denote the alphabet of \emph{any} elementary PCA $F$, and we let
\begin{equation}
\theta_{i,j}=\Prob\left[F\eta(x)=L\big|\eta(x+y_{1})=i,\eta(x+y_{2})=j\right], \text{ for } i,j \in \mathcal{A},\nonumber
\end{equation}
where $\{y_{1},y_{2}\}$ forms the neighbourhood-marking set of $F$. Thus, the PCA $F$ is completely specified by the parameters $\theta_{W,W}$, $\theta_{W,L}$, $\theta_{L,W}$ and $\theta_{L,L}$. We call $F$ \emph{symmetric} when $\theta_{W,L}=\theta_{L,W}$, in which case $F$ is specified by only three parameters, namely, $\theta_{W,W}$, $\theta_{W,L}$ and $\theta_{L,L}$. The following are two well-known results when it comes to ergodicity properties of symmetric elementary PCAs, stated in Chapter 7 of \cite{PCA_survey_old}:
\begin{enumerate}[label=(\alph*)]
\item \label{ergodic_criterion_1} the PCA $F$ is ergodic when $\theta_{W,W}$, $\theta_{W,L}$ and $\theta_{L,L}$ satisfy the inequalities:
\begin{align}
{}& 0 < \theta_{W,W}, \theta_{W,L}, \theta_{L,L} < 1,\label{gen_erg_1}\\
{}& \theta_{L,L} > \theta_{W,W} - 2\theta_{W,L},\label{gen_erg_2}\\
{}& \theta_{L,L} > \theta_{W,W} - 2(1-\theta_{W,L});\label{gen_erg_3}
\end{align}
\item \label{ergodic_criterion_2} the PCA $F$ is ergodic when $\theta_{W,W}$, $\theta_{W,L}$ and $\theta_{L,L}$ satisfy the inequality:
\begin{align}
\max\{|\theta_{i,j}-\theta_{k,\ell}|:i,j,k,\ell \in \{W,L\}\}+2\max\{|\theta_{L,L}-\theta_{W,L}|,|\theta_{W,W}-\theta_{W,L}|\} < 2.\label{gen_erg_4}
\end{align}
\end{enumerate}
It has been mentioned in Chapter 7 of \cite{PCA_survey_old} that these two regimes together cover more than $90\%$ of the unit cube $[0,1]^{3}$ defined by the parameter-triple $(\theta_{W,W},\theta_{W,L},\theta_{L,L})$, and that the only region where no method of proving or disproving ergodicity for symmetric elementary PCAs, in general, is known is the union of the neighbourhoods of the points $(\theta_{W,W},\theta_{W,L},\theta_{L,L})=(1,0,0)$ and $(\theta_{W,W},\theta_{W,L},\theta_{L,L})=(1,1,0)$.

We now come to the two symmetric elementary PCAs that are of interest to us in this paper, namely $G_{p,q,r}$ and $E_{r',s'}$. We see, from \eqref{GPCA_rule_1}, \eqref{GPCA_rule_2} and \eqref{GPCA_rule_3}, that $\theta_{W,W}=(1-p)$, $\theta_{W,L}=q+(1-p-q)r$ and $\theta_{L,L}=q+(1-p-q)r^{2}$ for $G_{p,q,r}$, so that we have $(\theta_{W,W},\theta_{W,L},\theta_{L,L})=(1,0,0)$ if and only if $p=q=r=0$. Thus, the neighbourhood of $(1,0,0)$ is obtained when we consider as small a value of each of $p$, $q$ and $r$ as we desire. We then show, in Theorem~\ref{thm:three-parameter}, that if, in addition to $(\theta_{W,W},\theta_{W,L},\theta_{L,L})$ being in the neighbourhood of $(1,0,0)$ (equivalently, $(p,q,r)$ being arbitrarily close to $(0,0,0)$), at least one of $p$, $q$ and $r$ is strictly positive and the constraints specified in Theorem~\ref{thm:three-parameter} are satisfied, the PCA $G_{p,q,r}$ is ergodic -- thereby rigorously proving the ergodicity of $G_{p,q,r}$ in a \emph{considerable} chunk of the neighbourhood of $(1,0,0)$. Likewise, from \eqref{envelope_PCA_rule_1}, \eqref{envelope_PCA_rule_2} and \eqref{envelope_PCA_rule_3}, we see that $\theta_{W,W}=(1-s')^{2}$, $\theta_{W,L}=r'(1-s')$ and $\theta_{L,L}={r'}^{2}$ for the PCA $E_{r',s'}$, so that we have $(\theta_{W,W},\theta_{W,L},\theta_{L,L})=(1,0,0)$ if and only if $r'=s'=0$. Thus, the neighbourhood of $(1,0,0)$ is obtained when we consider as small a value of each of $r'$ and $s'$ as we desire. Part \ref{bond_regime_1} of Theorem~\ref{thm:two-parameter} then shows that, in addition to $(\theta_{W,W},\theta_{W,L},\theta_{L,L})$ being in the neighbourhood of $(1,0,0)$ (equivalently, $(r',s')$ being arbitrarily close to $(0,0)$), if at least one of $r'$ and $s'$ is strictly positive and \eqref{two_regime_1_eq} holds, the PCA $E_{r',s'}$ is ergodic -- thereby rigorously proving the ergodicity of $E_{r',s'}$ in a considerable chunk of the neighbourhood of $(1,0,0)$.

In fact, even the regimes described by \ref{bond_regime_2} and \ref{bond_regime_3} of Theorem~\ref{thm:two-parameter} go significantly beyond those that are obtained via \eqref{ergodic_criterion_1} and \eqref{ergodic_criterion_2} for the PCA $E_{r',s'}$. For instance, when $s'=0$, the criterion in \eqref{gen_erg_1} fails to hold for $E_{r',s'}$ since $\theta_{W,W}=1$, so that \ref{bond_regime_2} falls outside of the regime described in \eqref{ergodic_criterion_1}. On the other hand, $\max\{|\theta_{i,j}-\theta_{k,\ell}|:i,j,k,\ell \in \mathcal{A}\} = 1-{r'}^{2}$ and $\max\{|\theta_{L,L}-\theta_{W,L}|,|\theta_{W,W}-\theta_{W,L}|\} = 1-r'$ when $s'=0$, so that for \eqref{gen_erg_4} to hold, we must have $1-{r'}^{2}+2(1-r') = 3-2r'-{r'}^{2} < 2 \Longleftrightarrow 2r'+{r'}^{2} > 1 \Longleftrightarrow r' > 0.414214$. This tells us that the regime described in \ref{bond_regime_2}, i.e.\ $s'=0$ and $r' > 0.157175$, goes \emph{far beyond} the regime obtained by implementing \eqref{ergodic_criterion_2}.

When $r'=s'$ (obviously, the common value for $r'$ and $s'$ in this case must be bounded above by $0.5$, since $r'+s' \leqslant 1$), we have $\theta_{W, W} = (1-r')^{2}$, $\theta_{W,L} = r'-{r'}^{2}$ and $\theta_{L,L} = {r'}^{2}$. It is immediate that these values of $\theta_{W,W}$, $\theta_{L,W}$ and $\theta_{L,L}$ satisfy \eqref{gen_erg_1} when $r' \in (0,1)$. When examining if the inequality in \eqref{gen_erg_2} is satisfied or not, we observe that
\begin{equation*}
    \theta_{L,L} > \theta_{W,W} - 2\theta_{W,L}
    \Longleftrightarrow {r'}^2 > (1-r')^2 - 2(r'-{r'}^2) 
    \Longleftrightarrow 2{r'}^2 - 4r' +  1 < 0 
    \Longleftrightarrow r' > 1 - \frac{1}{\sqrt{2}} \approx 0.293.
\end{equation*}
When checking whether the inequality in \eqref{gen_erg_3} is satisfied or not, we obtain
\begin{equation*}
    \theta_{L,L} > \theta_{W,W} - 2(1 - \theta_{W,L})
    \Longleftrightarrow {r'}^2 > (1-r')^2 - 2(1-r'+{r'}^2)
    \Longleftrightarrow 2{r'}^2 + 1 >0, 
\end{equation*}
which is indeed true for all $(r',s')$ belonging to the regime in \ref{bond_regime_3}. We thus conclude that when $r'=s'$, we require $r' > 0.293$ for the inequalities in \eqref{ergodic_criterion_1} to hold simultaneously, whereas our result in this paper is able to establish ergodicity for $E_{r',s'}$ whenever $r' = s' > 0.10883$. We now focus on the regime covered by \eqref{ergodic_criterion_2} when $r'=s'$. We observe that
\begin{equation}
        \max\{|\theta_{W, W} - \theta_{W, L}|, |\theta_{W, W} - \theta_{L, L}, |\theta_{W, L} - \theta_{L, L}|\} = \max\{|r'(2r'-1)|, |2r'-1|, |(1-r')(2r'-1)|\} = 1-2r' \nonumber
    \end{equation}
	and 
    \begin{equation}
        \max\{|\theta_{W, W} - \theta_{W, L}|, |\theta_{W, L} - \theta_{L, L}|\} = \max\{|r'(2r'-1)|, |(1-r')(2r'-1)|\} =  (1-r')(1-2r'), \nonumber
    \end{equation} 
so that \ref{gen_erg_4} reduces to
\begin{equation}
    1-2r' + 2(1-r')(1-2r') < 2 \Longleftrightarrow 4{r'}^2-8r'+1<0 \Longleftrightarrow r'>\frac{2-\sqrt{3}}{2} \approx 0.13397.\nonumber
\end{equation}
Once again, our result, in \ref{bond_regime_3}, establishes ergodicity for $E_{r',s'}$ whenever $r' = s' > 0.10883$, a significant improvement on the lower bound that is provided by \eqref{ergodic_criterion_2}.

\section{Motivations for studying generalized / bond percolation games, and a brief review of pertinent literature}\label{subsec:motivations_literature}

\subsection{In oligopolistic competitions} The bond percolation game can be interpreted as a stylized model of \emph{oligopolistic competition} between two firms. At each stage, an action labeled as a \emph{trap} represents a significant loss that forces a firm to exit the market, while an action labeled as a \emph{target} represents a sufficient gain that secures monopoly. The environment (nature) randomly and independently selects the set of available actions at each stage, and these are revealed to the firms at the outset, modeled via a labeled directed graph. Each firm aims to strategically navigate the graph to reach a winning situation.
A natural extension of this model would be one in which the environment selects available actions based on the history of previous moves, introducing a dependency structure and dynamic information revelation.

Analogous dynamics arise in courtroom settings, where the prosecution and defense take sequential actions. A \emph{target} corresponds to decisive evidence in favor of a party, while a \emph{trap} denotes conclusive evidence leading to defeat. The sequential nature of legal argumentation and information disclosure closely mirrors the game-theoretic framework described above.

\subsection{As an adversarial avatar of percolation} Another usefulness of our generalized / bond percolation game lies in incorporating an \emph{adversarial} element into the notion of ordinary site and / or bond percolation on directed infinite graphs. Percolation, by itself, is a vast and ever-expanding area of research that spans a multitude of disciplines, including statistical physics, chemistry, computer science, biology and sociology (for instance, in the understanding of phenomena such as \emph{polymeric gelation}, the spread of a forest fire or the propagation of an epidemic, the transportation of a fluid through a porous material etc.). Percolation games (more specifically, \emph{site percolation games}) were first introduced in \cite{holroyd2019percolation}, where each \emph{vertex} of the infinite $2$-dimensional square lattice $\mathbb{Z}^{2}$ was assigned, independently, a label that read \emph{trap} with probability $p$, \emph{target} with probability $q$, and \emph{open} with probability $(1-p-q)$. The two players were allowed the same moves as those permitted in our game in \S\ref{sec:formal_defns_main_results}, and a player won if she could move the token to a vertex labeled as a target or force her opponent to move the token to a vertex labeled as a trap. It was shown that the probability of draw in this game is $0$ whenever $(p+q) > 0$. The set-up described for our generalized percolation game in \S\ref{sec:formal_defns_main_results} is a natural and important generalization of this model, as it additionally considers a random assignment of labels to the \emph{edges} of $\mathbb{Z}^{2}$. The result presented in \cite{holroyd2019percolation} was extended to the set-up considered in \cite{bhasin2022class}, where the token was permitted to be moved from where it was currently located, say the vertex $(x,y)$, to any one of $(x,y+1)$, $(x+1,y+1)$ and $(x+2,y+1)$. As in \cite{holroyd2019percolation}, it was shown in \cite{bhasin2022class} that the probability of draw is $0$ whenever at least one of $p$ and $q$ is strictly positive. A different direction of generalization of the above-mentioned site percolation game on $\mathbb{Z}^{2}$ was pursued in \cite{bhasin2022ergodicity}, by allowing the token to be moved from $(x,y)$ to one of $(x,y+1)$ and $(x+1,y+1)$ if $x$ is even, and from $(x,y)$ to one of $(x+1,y+1)$ and $(x+2,y+1)$ if $x$ is odd. 

It is worthwhile to note here that when $s'=0$ and $r' > 0$ in the set-up described for our bond percolation game in \S\ref{sec:formal_defns_main_results}, the game is essentially a \emph{normal} game that is being played on the infinite oriented $2$-dimensional square lattice, $\mathbb{Z}^{2}$, once the process of bond percolation, with edge-deletion probability $r'$, has been performed on it. This can be justified as follows: each edge of $\mathbb{Z}^{2}$ is, independently, retained with probability $(1-r')$ and deleted with probability $r'$. A move in the normal game played on this premise involves relocating the token from where it is currently located, say the site $(x,y)$, to either $(x+1,y)$ (if the directed edge $\big((x,y),(x+1,y)\big)$ has not been deleted) or to $(x,y+1)$ (if the directed edge $\big((x,y),(x,y+1)\big)$ has not been deleted). A player loses if she is unable to make a move (i.e.\ there are no outgoing edges from the site where the vertex is currently located). The game continues indefinitely if neither player ever encounters a site from which both outgoing edges have been deleted, and this is possible only if there exists an infinite path starting from the initial vertex, i.e.\ if percolation happens. Thus, in such a scenario, the probability of draw is bounded above by the probability of occurrence of percolation, i.e.\ the probability of the existence of an infinite path that begins from the initial vertex. In this context, we refer the reader to \cite{holroyd2021galton}, where the normal game, as well as the \emph{mis\`{e}re} and the \emph{escape} games have been studied on rooted Galton-Watson trees.  

\subsection{A brief discussion on Maker-Breaker percolation games} A different class of \emph{two-player combinatorial games} pertaining to percolation, broadly referred to as the \emph{Maker-Breaker percolation games}, was introduced in \cite{day2021maker}, and further explored in \cite{day2021makerescaping}, \cite{dvovrak2021maker}, \cite{wallwork2022maker} and \cite{dvovrak2024maker}. Each such game is played on a graph referred to as a \emph{board}, and two players, titled \emph{Maker} and \emph{Breaker}, take turns, with Maker claiming $m$ (as yet unclaimed) edges of the board during each of her turns, and Breaker deleting $b$ (as yet unclaimed) edges of the board during each of hers. In \cite{day2021maker}, the board considered is an $m \times n$ rectangular grid, and Maker wins the \emph{$(m,b)$-crossing game} if she manages to claim all edges constituting a \emph{crossing path} joining the left boundary of the grid to its right boundary. In \cite{day2021makerescaping}, the board considered is an infinite connected graph $\Lambda$ (such as the $2$-dimensional infinite square lattice $\mathbb{Z}^{2}$ and the $d$-regular tree in which each vertex has degree $d$), with a vertex $v_{0} \in \Lambda$ specified, and Breaker wins if at any point of time during the game, the connected component of $\Lambda$ containing $v_{0}$ becomes finite. In \cite{dvovrak2021maker}, a critical lower threshold for $b$ in terms of $m$ was found such that if $b$ were to exceed this value, Breaker would win the Maker-Breaker percolation game on $\mathbb{Z}^{2}$; it was also shown that when the board is $\mathbb{Z}^{2}$ \emph{after} the usual bond percolation process with parameter $p$ has been performed on it, with $p$ not too large compared to $1/2$, Breaker almost surely wins the \emph{unbiased} Maker-Breaker percolation game (i.e.\ where $m=b=1$). In \cite{dvovrak2024maker}, the same random board as in \cite{dvovrak2021maker} was considered, and it was shown that 
\begin{enumerate*}
\item when $p < 1$, Breaker almost surely wins the unbiased Maker-Breaker percolation game, 
\item and when $m=2$ and $b=1$, Maker almost surely wins whenever $p > 0.9402$, while Breaker almost surely wins whenever $p < 0.5278$.
\end{enumerate*}
In \cite{wallwork2022maker}, the $(m,b)$-crossing game was studied on a triangular grid that was $m$ vertices across and $n$ vertices high. 

\section{Motivation for studying the PCAs we consider in this paper}\label{subsec:motivation_PCA}
Recall from \S\ref{subsubsec:general_PCAs} that PCAs are obtained as \emph{random perturbations} of CAs. PCAs find applications in various domains like fault-tolerant computation, classifying CAs based on robustness, connections to Gibbs potentials in statistical mechanics, and modeling complex systems in physics, chemistry, and biology. For a comprehensive survey on PCAs and their development, we refer the reader to \cite{PCA_survey} and \cite{PCA_survey_old}. The applications of PCAs are extensive, spanning probability, statistical mechanics, computer science, natural sciences, dynamical systems, and computational cell biology, as showcased in \cite{louis_nardi}, \cite{stat_mech_PCA}, \cite{DNA_PCA}, \cite{crystal_plasticity_PCA}, and many others.

But why are the \emph{specific} PCAs we study, i.e.\ $G_{p,q,r}$, $\G_{p,q,r}$, $E_{r',s'}$ and $\E_{r',s'}$, of interest (independent of the significance they have in the context of our bond percolation games)? Focusing on $E_{r',s'}$, we see that it can be interpreted as a \emph{learning model} for \emph{social learning} in a system of \emph{interacting particles} (these particles are often thought of as \emph{players} or \emph{agents}). The notion of social learning was introduced in \cite{ellison1993rules}, which studied how the speed of learning and market equilibrium were impacted by social networks and other institutions governing communication among market participants. In \cite{bala1998learning}, a general framework in which agents, unaware of the payoffs from different actions, use their own past experience and the experience of their neighbours to guide their decision making. In \cite{bala2000noncooperative}, an approach to network formation is studied in which it is assumed that a link formed by one agent with another allows access, in part and in due course, to the benefits available to the latter agent via their own links. In \cite{chatterjee2004technology}, a model of social learning in a population of myopic, memoryless individuals is studied in which the agents are placed on $\mathbb{Z}$ and at each time-step, each agent performs an experiment using the technology they currently possess, then takes into account the outcome of their own experiment as well as the outcomes of the experiments performed by their neighbours. 

Coming back to our PCA $E_{r',s'}$, we refer to Figure~\ref{fig_1}. Let each vertex or site on the integer line $\mathbb{Z}$ be inhabited by an agent, and at the beginning of each epoch (here, the epochs are indexed by the set $\mathbb{N}_{0}$ of non-negative integers), each agent can avail one of two technologies: $W$ and $L$. During each epoch, each agent performs, using the technology it has chosen to adopt at the beginning of that epoch, an experiment that has two possible outcomes: success and failure. An agent using technology $W$ has probability $s'$ of achieving success, while an agent equipped with technology $L$ has probability $(1-r')$ of achieving success. It is assumed that these outcomes occur independently for all agents on $\mathbb{Z}$, over all epochs in $\mathbb{N}_{0}$. At the beginning of each epoch, an agent looks at itself and its nearest neighbour to the right, and 
\begin{enumerate*}
\item updates its technology to $L$ if and only if \emph{both} of them suffered failures at the previous time-step, 
\item or else, it updates its technology to $W$.
\end{enumerate*}
Figure~\ref{fig_1} reveals that this is exactly how updates happen when we apply the stochastic update rules corresponding to $E_{r',s'}$ (recall these rules from \S\ref{subsubsec:specific_PCAs}). An understanding of the ergodicity, and subsequently, limiting distribution(s) of $E_{r',s'}$ will, therefore, reveal how the diffusion of the technologies, $W$ and $L$, happens throughout this system of interacting agents as time approaches $\infty$.
\begin{figure}[h!]
  \centering
    \includegraphics[width=0.7\textwidth]{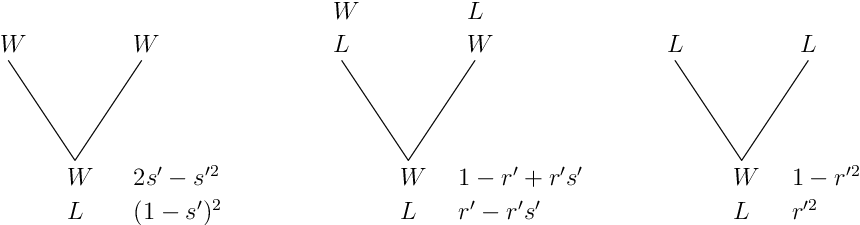}
\caption{Illustrating the stochastic update rules of the PCA $E_{r',s'}$}
  \label{fig_1}
\end{figure}

Referring to Figure 1 of \cite{PCA_survey} and identifying $W$ with the symbol $0$ and $L$ with the symbol $1$, we see that $E_{r',s'}$ can be viewed as a generalization of the \emph{noisy additive PCA}, since $E_{r',s'}$ allows us to introduce randomness into the update $\eta_{t+1}(x)$ that happens conditioned on $\left(\eta_{t}(x),\eta_{t}(x+1)\right) = (0,0)$ or on $\left(\eta_{t}(x), \eta_{t}(x+1)\right) = (1,1)$ (where recall, from \S\ref{subsubsec:general_PCAs} that $\eta_{t}(x)$ is the state of the site $x \in \mathbb{Z}$ at time-step $t$). We may even provide an interpretation to $E_{r',s'}$ that is similar to the interpretation of how we obtain noisy additive PCAs: given $\eta_{t}$, for each $x \in \mathbb{Z}$, we first set the value at $x$ to be $\frac{1}{2}\eta_{t}(x) + \frac{1}{2}\eta_{t}(x+1)$, and then obtain $\eta_{t+1}(x)$ by
\begin{enumerate}
\item flipping a $0$ to a $1$ with probability $(1-s')^{2}$,
\item flipping a $1$ to a $0$ with probability $1-{r'}^{2}$,
\item flipping a $\frac{1}{2}$ to a $1$ with probability $r'(1-s')$ and to a $0$ with probability $1-r'(1-s')$.
\end{enumerate}
Referring to Figure 7 of \cite{PCA_survey}, we see that $E_{r',s'}$ can also be seen as a generalization of the \emph{directed animals PCA}, allowing us to incorporate randomness into $\eta_{t+1}(x)$ when we condition on $\left(\eta_{t}(x), \eta_{t}(x+1)\right)$ being an element from the set $\{(0,1), (1,0), (1,1)\}$. In \cite{directed_animals_1}, the \emph{random gas model} on an \emph{agreeable graph} has been described, in which a site (i.e.\ a vertex of the graph) cannot be occupied if any of its ``children" or ``neighbours" is occupied by a gas particle. Our PCA $E_{r',s'}$ admits a generalization of this model when we let $W$ indicate the state of being unoccupied and $L$ indicate the state of being occupied: assuming that both $r'$ and $s'$ are very small, and considering the underlying graph to be $\mathbb{Z}^{2}$ in which the ``children" or ``neighbours" of $(x,y)$ are $(x+1,y)$ and $(x,y+1)$,
\begin{enumerate}
\item if both $(x+1,y)$ and $(x,y+1)$ are unoccupied, then $(x,y)$ is occupied with probability $(1-s')^{2}$, and unoccupied otherwise,
\item if precisely one of $(x+1,y)$ and $(x,y+1)$ is occupied, then $(x,y)$ is occupied with probability $r'(1-s')$ (which is much lower than the probability of occupation in the first case), and unoccupied otherwise,
\item if both of $(x+1,y)$ and $(x,y+1)$ are occupied, then $(x,y)$ is occupied with probability ${r'}^{2}$ (which, in turn, is much lower than the probability of occupation in the second case), and unoccupied otherwise.
\end{enumerate} 
In some sense, we can see this as a less idealized, more practical version of the random gas model, in which gas particles do repel each other but not so strongly that the occupation of a ``child" or ``neighbour" to a given vertex completely prevents the occupation of that vertex (i.e.\ our PCA $E_{r',s'}$ allows for a random gas model \emph{without hard constraints}), and where the higher the number of occupied ``children" or ``neighbours" of a vertex, the lower the chance of that vertex being occupied.

Very similar motivations justify the importance of studying the PCA $G_{p,q,r}$ as well.

We mention here that \cite{bresler_guo_polyanskiy} shows the existence of a $\delta_{0} > 0$ such that an elementary PCA (whose alphabet comprises the symbols $W$ and $L$) with the NAND function and either a vertex-binary-symmetric-channel (BSC) noise with parameter $\delta$ or an edge-BSC noise with parameter $\delta$ is ergodic for all $\delta \in (0,\delta_{0})$ (to elucidate, the BSC$_{\delta}$ operator applied to a symbol from $\{W,L\}$ keeps it intact with probability $(1-2\delta)$ and switches it to the other symbol with probability $2\delta$).

Finally, given that, of the two main objects we investigate in this paper, one is a class of PCAs and the other a game that is closely tied to the process of percolation, it is only relevant that we draw attention of the reader to a few of the works in the literature dedicated to exploring the connection between PCAs and percolation. In \cite{hartarsky2021generalised}, it is shown that \emph{additive} PCAs can be interpreted as \emph{oriented site percolation} models. In \cite{hartarsky2022bootstrap}, a technique for proving the sharpness of phase transitions in percolation is implemented in the setting of \emph{attractive} PCAs. Moreover, a correspondence between PCAs and \emph{bootstrap percolation} is established in \cite{hartarsky2022bootstrap}, thereby deducing exponential decay of the probability of remaining healthy above criticality for a class of bootstrap percolation models, with implications for related \emph{kinetically constrained models}. It is further demonstrated that this correspondence provides an equivalence between the non-triviality of the phase transition in certain bootstrap percolation models and the stability, with respect to noise, of certain CAs.

\section{Game rules: an analysis of each game via recurrence relations}\label{subsec:recurrence}
Recall, from \S\ref{sec:intro}, our brief allusion to game rules as the bridge between our percolation games on one hand, and our PCA on the other. Deducing these game rules, or, in other words, recurrence relations that govern the games we study, is key to our analysis, and \S\ref{subsec:recurrence} is dedicated to this task.

We partition $\mathbb{Z}^{2}$ into the following three (random) subsets of vertices (the randomness arises from the random assignment of labels as described in \S\ref{sec:formal_defns_main_results}):
\begin{enumerate}
\item we let $W$ consist of all $(x,y) \in \mathbb{Z}^{2}$ such that the game with $(x,y)$ as its initial vertex is won by the player who plays its first round;
\item we let $L$ consist of all $(x,y) \in \mathbb{Z}^{2}$ such that the game with $(x,y)$ as its initial vertex is lost by the player who plays its first round;
\item and we let $D$ comprise all $(x,y) \in \mathbb{Z}^{2}$ such that the game with $(x,y)$ as the initial vertex results in a draw.
\end{enumerate}  
In particular, in case of generalized percolation games, a vertex labeled as a trap is placed in the set $W$, while a vertex labeled as a target is placed in the set $L$. The intuition behind such a convention can be explained as follows. One may imagine an \emph{unseen} round that takes place before the \emph{actual} game begins, during which the player who is supposed to play the second round of the actual game moves the token to the initial vertex $(x,y)$. If $(x,y)$ is a trap, she loses immediately, allowing her opponent, i.e.\ the player who is supposed to play the first round of the actual game, to win even before the game begins, justifying the reason why we include each trap vertex in the set $W$. Likewise, the reason for including target vertices in the set $L$ is justified.

For each $k \in \mathbb{Z}$, we set $S_{k} = \{(x,y) \in \mathbb{Z}^{2}:x+y=k\}$ (i.e.\ $S_{k}$ is the diagonal line, running from top-left to bottom-right, that constitutes all those vertices of $\mathbb{Z}^{2}$ whose coordinates sum to $k$). Right away, we state the following fact that becomes evident from the description of both of our games in \S\ref{sec:formal_defns_main_results}: 
\begin{enumerate}
\item once each vertex on $S_{k+1}$ has been categorized into one of the subsets $W$, $L$, and $D$, 
\item each edge between $S_{k}$ and $S_{k+1}$ has received its label (which is `trap' or `open' when we consider the generalized percolation game, and `trap' or `target' or `open' when we consider the bond percolation game), 
\item and each vertex on $S_{k}$ has received its label (this is relevant only for the generalized percolation game, and the label is any one of `trap', `target' and `open'),
\end{enumerate} 
we are in a position to uniquely determine which of $W$, $L$ and $D$ each vertex on $S_{k}$ belongs to. How this determination happens is revealed via the recurrence relations, or game rules, described in \S\ref{subsec:generalized_game_rules} and \S\ref{subsec:bond_game_rules}.

\subsection{Game rules for the generalized percolation game}\label{subsec:generalized_game_rules}
Fix any $k \in \mathbb{Z}$, any $(x,y) \in S_{k}$, and set $\Out(x,y)=\{(x+1,y),(x,y+1)\}$. We assume that $(x,y)$ serves as the initial vertex for a generalized percolation game, and we let $P_{1}$ denote the player who plays the first round, and $P_{2}$ the player who plays the second round of this game. Recalling the rules of the generalized percolation game described in \S\ref{sec:formal_defns_main_results}, we are able to make the following observations:
\begin{enumerate}
\item When both vertices of $\Out(x,y)$ are in $W$, $P_{2}$ wins no matter how $P_{1}$ moves in the first round, unless $(x,y)$ has been labeled as a trap. Therefore, in this case, $(x,y) \in L$ with probability $(1-p)$ and $(x,y) \in W$ with probability $p$.
\item When $(x+1,y) \in L$ and $(x,y+1) \in W$ (an analogous situation arises when $(x+1,y) \in W$ and $(x,y+1) \in L$), $P_{1}$ wins (by moving the token from $(x,y)$ to $(x+1,y)$ in the first round) if either $(x,y)$ has been labeled as a trap or $(x,y)$ has been labeled open and the directed edge $\big((x,y),(x+1,y)\big)$ is also open, while $P_{2}$ is the winner otherwise. Therefore, in this case, $(x,y) \in W$ with probability $\{p+(1-p-q)(1-r)\}$, and $(x,y) \in L$ with probability $\{q+(1-p-q)r\}$.
\item When both vertices of $\Out(x,y)$ are in $L$, $P_{1}$ wins if $(x,y)$ has been labeled as a trap or $(x,y)$ is open and at least one of the edges $\big((x,y),(x+1,y)\big)$ and $\big((x,y),(x,y+1)\big)$ is also open, while $P_{2}$ wins otherwise. Therefore, in this case, $(x,y) \in W$ with probability $\{p+(1-p-q)(1-r^{2})\}$, and $(x,y) \in L$ with probability $\{q+(1-p-q)r^{2}\}$.
\item When $(x+1,y) \in W$ and $(x,y+1) \in D$ (an analogous situation arises when $(x+1,y) \in D$ and $(x,y+1) \in W$), $P_{1}$ wins if $(x,y)$ has been labeled as a trap, the game results in a draw if $(x,y)$ is open and the edge $\big((x,y),(x,y+1)\big)$ has not been labeled as a trap, and $P_{2}$ wins in all other cases. Therefore, we have $(x,y) \in W$ with probability $p$, $(x,y) \in D$ with probability $(1-p-q)(1-r)$, and $(x,y) \in L$ with probability $\{q+r(1-p-q)\}$. 
\item When $(x+1,y) \in L$ and $(x,y+1) \in D$ (an analogous situation arises when $(x+1,y) \in D$ and $(x,y+1) \in L$), $P_{1}$ wins if $(x,y)$ has been labeled as a trap, or if both the vertex $(x,y)$ and the edge $\big((x,y),(x+1,y)\big)$ are open. The game results in a draw if $(x,y)$ is open, the edge $\big((x,y),(x+1,y)\big)$ has been labeled as a trap and the edge $\big((x,y),(x,y+1)\big)$ is open, while in all other cases, $P_{2}$ wins. Therefore, we have $(x,y) \in W$ with probability $\{p+(1-p-q)(1-r)\}$, $(x,y) \in D$ with probability $(1-p-q)r(1-r)$, and $(x,y) \in L$ with probability $\{q+r^{2}(1-p-q)\}$.
\item Finally, when both vertices of $\Out(x,y)$ are in $D$, $P_{1}$ wins if $(x,y)$ is a trap, the game results in a draw if $(x,y)$ is open and at least one of the edges $\big((x,y),(x+1,y)\big)$ and $\big((x,y),(x,y+1)\big)$ is open, and in all other cases, $P_{2}$ wins. Therefore, $(x,y) \in W$ with probability $p$, $(x,y) \in D$ with probability $(1-p-q)(1-r^{2})$ and $(x,y) \in L$ with probability $\{q+r^{2}(1-p-q)\}$.
\end{enumerate}
For each $k \in \mathbb{Z}$, we identify $S_{k}$ with a copy of the integer line $\mathbb{Z}$, by identifying the vertex $(x,k-x)$ on $S_{k}$ with $x$ on $\mathbb{Z}$. For an arbitrary but fixed $k \in \mathbb{Z}$, let $\omega(x) \in \hat{\mathcal{A}}=\{W,L,D\}$, referred to as the \emph{state} of the vertex $(x,k+1-x)$, denote the subset (out of $W$, $L$ and $D$) to which $(x,k+1-x)$ belongs, for each $x \in \mathbb{Z}$. Conditioned on the infinite tuple $\omega = (\omega(x): x \in \mathbb{Z})$ that specifies the state of \emph{each} vertex on $S_{k+1}$, the \emph{random} state of the vertex $(x,k-x)$, lying on $S_{k}$, equals $\G_{p,q,r}\omega(x)$ for each $x \in \mathbb{Z}$, as is evident from the recurrence relations described above, and from \eqref{general_update_rule_eq}, \eqref{GPCA_rule_1}, \eqref{GPCA_rule_2}, \eqref{GPCA_rule_3}, \eqref{GPCA_rule_4}, \eqref{GPCA_rule_5} and \eqref{GPCA_rule_6}). This is how the game rules governing the generalized percolation game give rise to the PCA $\G_{p,q,r}$.

\subsection{Game rules for the bond percolation game}\label{subsec:bond_game_rules}
The recurrence relations arising from the bond percolation game described in \S\ref{sec:formal_defns_main_results} can be deduced in much the same manner as those derived in \S\ref{subsec:generalized_game_rules} for the generalized percolation game. We let an arbitrarily chosen $(x,y) \in S_{k}$, for some $k \in \mathbb{Z}$, be the initial vertex, and assume, as before, that $P_{1}$ plays the first round of the game while $P_{2}$ plays the second. We then deduce the state of the vertex $(x,y)$, i.e.\ which of the subsets $W$, $L$ and $D$ it belongs to, as follows:
\begin{enumerate}
\item When both vertices of $\Out(x,y)$ are in $W$, $P_{1}$ loses unless at least one of the edges $\big((x,y),(x+1,y)\big)$ and $\big((x,y),(x,y+1)\big)$ has been labeled as a target, so that $(x,y) \in L$ with probability $(1-s')^{2}$ and $(x,y) \in W$ with probability $(2s'-{s'}^{2})$.
\item When $(x+1,y) \in L$ and $(x,y+1) \in W$ (analogously, when $(x+1,y) \in W$ and $(x,y+1) \in L$), $P_{1}$ wins unless the edge $\big((x,y),(x+1,y)\big)$ has been labeled as a trap and the edge $\big((x,y),(x,y+1)\big)$ is not a target. Hence, $(x,y) \in L$ with probability $r'(1-s')$, and $(x,y) \in W$ with probability $\{1-r'(1-s')\}$.
\item When both vertices of $\Out(x,y)$ are in $L$, $P_{1}$ wins unless both the edges $\big((x,y),(x+1,y)\big)$ and $\big((x,y),(x,y+1)\big)$ are traps. Therefore, $(x,y) \in L$ with probability ${r'}^{2}$ and $(x,y) \in W$ with probability $(1-{r'}^{2})$.
\item When $(x+1,y) \in W$ and $(x,y+1) \in D$ (analogously, when $(x+1,y) \in D$ and $(x,y+1) \in W$), $P_{1}$ wins if at least one of the edges $\big((x,y),(x+1,y)\big)$ and $\big((x,y),(x,y+1)\big)$ has been labeled as a target, the game results in a draw if $\big((x,y),(x+1,y)\big)$ is not a target and $\big((x,y),(x,y+1)\big)$ is open, and in every other case, $P_{2}$ wins. Thus, $(x,y) \in W$ with probability $\{1-(1-s')^{2}\}$, $(x,y) \in D$ with probability $(1-s')(1-r'-s')$, and $(x,y) \in L$ with probability $r'(1-s')$.
\item When $(x+1,y) \in L$ and $(x,y+1) \in D$ (analogously, when $(x+1,y) \in D$ and $(x,y+1) \in L$), $P_{1}$ wins as long as the edge $\big((x,y),(x+1,y)\big)$ is not a trap or at least one of the edges $\big((x,y),(x+1,y)\big)$ and $\big((x,y),(x,y+1)\big)$ is a target, the game results in a draw if $\big((x,y),(x+1,y)\big)$ is a trap and $\big((x,y),(x,y+1)\big)$ is open, and in all other cases, $P_{2}$ wins. Thus, $(x,y) \in W$ with probability $\{1-r'(1-s')\}$, $(x,y) \in D$ with probability $r'(1-r'-s')$, and $(x,y) \in L$ with probability ${r'}^{2}$.
\item When both vertices of $\Out(x,y)$ are in $D$, $P_{1}$ wins if at least one of $\big((x,y),(x+1,y)\big)$ and $\big((x,y),(x,y+1)\big)$ is a target, the game results in a draw if neither $\big((x,y),(x+1,y)\big)$ nor $\big((x,y),(x,y+1)\big)$ is a target but not both are traps, and $P_{2}$ wins otherwise. Thus, $(x,y) \in W$ with probability $\{1-(1-s')^{2}\}$, $(x,y) \in D$ with probability $(1-r'-s')(1+r'-s')$, and $(x,y) \in L$ with probability ${r'}^{2}$.
\end{enumerate}
As argued in \S\ref{subsec:generalized_game_rules}, the above-mentioned recurrence relations or game rules governing the bond percolation game are captured precisely by the stochastic update rules of the PCA $\E_{r',s'}$ in \eqref{envelope_PCA_rule_1}, \eqref{envelope_PCA_rule_2}, \eqref{envelope_PCA_rule_3}, \eqref{envelope_PCA_rule_4}, \eqref{envelope_PCA_rule_5} and \eqref{envelope_PCA_rule_6}.

\section{Proofs of Theorems~\ref{thm:three-parameter}, \ref{thm:two-parameter} and \ref{thm:PCAs}, assuming Theorems~\ref{thm:generalized_envelope_PCA_D} and \ref{thm:bond_envelope_PCA_D} to be true}\label{subsec:relation}
Let us consider a generalized percolation game on $\mathbb{Z}^{2}$. For each fixed realization of the random assignment of labels (as described in \S\ref{sec:formal_defns_main_results}) to the vertices and edges of our graph $\mathbb{Z}^{2}$, each vertex $(x,y) \in \mathbb{Z}^{2}$ can be classified uniquely into one of the subsets $W$, $D$ and $L$ (dictated by the outcome of the game whose initial vertex is $(x,y)$), and we refer to this subset as the \emph{state} of $(x,y)$. Let $\pmb{\eta}(x,y)$ denote the \emph{random} state of $(x,y)$ induced by the random assignment of labels to the vertices and edges of $\mathbb{Z}^{2}$. Since \emph{all} vertices of $\mathbb{Z}^{2}$ bear labels that are i.i.d.,\ and \emph{all} edges of $\mathbb{Z}^{2}$ bear labels that are i.i.d.\ as well, the joint law $\mu_{k}$ of the random infinite tuple $\left(\pmb{\eta}(x,y): (x,y) \in S_{k}\right) = \left(\pmb{\eta}(x,k-x): x \in \mathbb{Z}\right)$ is the same for \emph{every} $k \in \mathbb{Z}$, so that we may denote this common law henceforth by $\mu$ (with no dependence on $k$). Referring to the game rules explained in \S\ref{subsec:generalized_game_rules} and how they can be represented by $\G_{p,q,r}$ defined in \S\ref{subsec:formal_PCA}, we can write $\G_{p,q,r}\mu_{k+1}=\mu_{k}$, for each $k \in \mathbb{Z}$. Combining the two observations made above, we conclude that $\G_{p,q,r}\mu=\mu$, thus proving that $\mu$ is a stationary distribution for $\G_{p,q,r}$. Moreover, the i.i.d.\ nature of the assignment of labels to the vertices and edges of $\mathbb{Z}^{2}$ ensures that $\mu$ is both \emph{translation-invariant} and \emph{reflection-invariant}. By these two attributes, we mean the following:
\begin{enumerate}
\item Recall, from the last paragraph of \S\ref{subsec:generalized_game_rules}, that we identify $S_{k}$ with $\mathbb{Z}$ by mapping $(x,k-x)$ onto $x$ for each $x \in \mathbb{Z}$. Consequently, $\mu$ can be thought of as a probability measure on $\Omega = \hat{\mathcal{A}}^{\mathbb{Z}} = \{W,L,D\}^{\mathbb{Z}}$. For any $y \in \mathbb{Z}$, we let $\mathfrak{T}^{y}: \Omega \rightarrow \Omega$ map any configuration $\eta = (\eta(x): x \in \mathbb{Z})$ to the configuration $\mathfrak{T}^{y}\eta = \left(T^{y}\eta(x): x \in \mathbb{Z}\right)$ where $\mathfrak{T}^{y}\eta(x) = \eta(x+y)$ for each $x \in \mathbb{Z}$. A probability measure $\nu$, defined with respect to the $\sigma$-field $\mathcal{F}$ generated by all the cylinder sets of $\Omega$, is said to be \emph{translation-invariant} if $\nu(B) = \nu\left(\mathfrak{T}^{y}B\right)$ for every $B \in \mathbb{D}$, where $\mathfrak{T}^{y}B = \left\{\mathfrak{T}^{y}\eta: \eta \in B\right\}$.

\item Let $\mathfrak{R}: \Omega \rightarrow \Omega$ map any configuration $\eta = (\eta(x): x \in \mathbb{Z})$ to the configuration $\mathfrak{R}\eta = \left(\mathfrak{R}\eta(x): x \in \mathbb{Z}\right)$ where $\mathfrak{R}\eta(x) = \eta(-x)$ for each $x \in \mathbb{Z}$. A probability measure $\nu$, defined with respect to $\mathcal{F}$, is said to be \emph{reflection-invariant} if $\nu(B) = \nu\left(\mathfrak{R}B\right)$ for every $B \in \mathbb{D}$, where $\mathfrak{R}B = \left\{\mathfrak{R}\eta: \eta \in B\right\}$.
\end{enumerate}

We now state what can be considered the most technically challenging result of our paper (the measure $\mu$ in the statement of Theorem~\ref{thm:generalized_envelope_PCA_D} need not be the same as the one alluded to in the previous paragraph):
\begin{theorem}\label{thm:generalized_envelope_PCA_D}
Let $(p,q,r) \in \Theta$ satisfy the constraints stated in Theorem~\ref{thm:three-parameter}, i.e.\ each of $p$, $q$ and $r$ is sufficiently small, the inequality in \eqref{three_cond_universal} holds, and precisely one of \eqref{three_cond_1}, \eqref{three_cond_2}, \eqref{three_cond_3} and \eqref{three_cond_4} is true. If $\mu$ is any translation-invariant and reflection-invariant stationary distribution for $\G_{p,q,r}$, then for any fixed but arbitrary $x \in \mathbb{Z}$, the probability, under $\mu$, that $x$ is assigned the state $D$, abbreviated as $\mu(D)$, equals $0$.  
\end{theorem}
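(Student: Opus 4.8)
The plan is to prove the statement directly by the method of \emph{weight functions}, leveraging stationarity together with the translation- and reflection-invariance of $\mu$. The first step is an observation read off from the update rules \eqref{GPCA_rule_1}--\eqref{GPCA_rule_6}: a \emph{heredity} property of the draw symbol. An input pair $(a_0,a_1)$ containing no $D$ (rules \eqref{GPCA_rule_1}, \eqref{GPCA_rule_2}, \eqref{GPCA_rule_3}) can never produce the output $D$, whereas $D$ appears in the output only when at least one of $a_0,a_1$ already equals $D$ (rules \eqref{GPCA_rule_4}, \eqref{GPCA_rule_5}, \eqref{GPCA_rule_6}). Thus draws are never created \emph{ex nihilo}: they can only be inherited or propagated from pre-existing draws. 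This is the structural reason one expects $\mu(D)=0$ away from the degenerate point $(p,q,r)=(0,0,0)$, and it is exactly the feature a weight function must quantify. Note also that the symbols $\{W,L\}$ evolve autonomously (they form a closed sub-PCA, since $D$-free inputs give $D$-free outputs), so $D$ sits on top of the $W/L$ dynamics, and the weight function must track how $D$ interacts with the $W/L$ boundaries.

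Next I would set up the machinery. Fix a window length and a local weight $\phi$ assigning a real number to each pattern over $\hat{\mathcal{A}}$ of that length, and consider the per-site weight $\bar\phi = \mathbb{E}_\mu[\phi(\text{window at } 0)]$, which is well defined and site-independent by translation-invariance. Because the updates of $\G_{p,q,r}$ act independently across cells given the input row, the conditional expected weight of an \emph{output} window is a deterministic function $\Psi$ of a slightly longer \emph{input} window; writing $\zeta=\G_{p,q,r}\eta$ and using $\G_{p,q,r}\mu=\mu$, stationarity forces $\mathbb{E}_\mu[\phi(\text{window of }\zeta)]=\bar\phi=\mathbb{E}_\mu[\Psi(\text{window of }\eta)]$. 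The heart of the proof is then to construct $\phi$ so that the pointwise \emph{weight function inequality}
\begin{equation}
\Psi(a_0,\dots,a_{m}) \le \phi(a_0,\dots,a_{m-1}) + \big(\text{telescoping increments}\big) - c\,\mathbf{1}[a_{j}=D] \nonumber
\end{equation}
holds for every input pattern, where $c>0$, the index $j$ marks the cell of interest, and the telescoping increments are differences of a local function evaluated on consecutive windows, which vanish in $\mu$-expectation by translation-invariance. Taking $\mathbb{E}_\mu[\cdot]$ of this inequality, the conserved weight terms cancel and the increments average to zero, leaving $0\le -c\,\mu(D)$, whence $\mu(D)=0$.

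Reflection-invariance of $\mu$ --- available because $\G_{p,q,r}$ is symmetric under swapping its two neighbours, so any stationary law may be symmetrised --- lets me take $\phi$ itself reflection-symmetric, which halves the case analysis and keeps the telescoping terms consistent on the two sides. The smallness hypotheses $p\le\epsilon_1$, $q\le\epsilon_2$, $r\le\epsilon_3$ and the algebraic constraints \eqref{three_cond_universal}--\eqref{three_cond_4} enter precisely when verifying the weight function inequality: each of \eqref{three_cond_1}, \eqref{three_cond_2}, \eqref{three_cond_3}, \eqref{three_cond_4} cuts out a sub-regime of $\Theta$ in which a (slightly different) choice of weights makes the inequality valid, with \eqref{three_cond_universal} the common constraint shared across all four cases. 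I would therefore organise the verification into these four cases and, within each, check the finitely many input-pattern inequalities, discarding higher-order terms in $p,q,r$ as permitted by Remark~\ref{rem:three-parameter}.

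The main obstacle is unquestionably the construction of $\phi$ and the verification of the weight function inequality: there is no canonical recipe, the window must be taken long enough to resolve the interaction of $D$ with the autonomous $\{W,L\}$-dynamics, and the resulting system of inequalities over all input patterns is large and algebraically heavy. The explicit, step-by-step construction --- the window length, the trial weights, and the final symmetrised weight function --- is carried out in \S\ref{sec:generalized_wt_fn_steps}, with the final weight function and the final inequality recorded in \S\ref{subsec:final_wt_fn_generalized}; the present proposal only isolates why such a function, once found, forces $\mu(D)=0$.
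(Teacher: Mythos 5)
There is a genuine gap: your proposal contains the scaffolding of the weight-function method but not a proof of this theorem. The entire mathematical content of Theorem~\ref{thm:generalized_envelope_PCA_D} is the \emph{construction} of a weight function $w$ satisfying an inequality of the form \eqref{gen_ineq_form} together with the sign condition \eqref{desired_criterion}, separately on each of the regimes \eqref{three_cond_1}--\eqref{three_cond_4}; the implication ``such a $w$ exists $\Rightarrow \mu(D)=0$'' is the short, standard part (it is essentially the content of \S\ref{subsec:central_ideas_weight_functions}). By explicitly deferring ``the window length, the trial weights, and the final symmetrised weight function'' to \S\ref{sec:generalized_wt_fn_steps} and \S\ref{subsec:final_wt_fn_generalized} --- that is, to the paper's own proof --- you have not produced a blind argument: nothing in your write-up identifies the weight functions \eqref{w_regime_1} and \eqref{w_{1}}, verifies the final inequality \eqref{generalized_final_wt_fn_ineq}, or exhibits where \eqref{three_cond_universal} and the case distinction among \eqref{three_cond_1}--\eqref{three_cond_4} actually enter; that verification is the long chain of estimates (Lemmas~\ref{lem:pushforward_D}, \ref{lem:pushforward_WD}, \ref{lem:pushforward_LWD}, \ref{lem:pushforward_inequalities} and the successive adjustments) which constitutes the proof.

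Beyond this, the framework you do set up diverges from the paper's in two ways that matter. First, you demand a \emph{pointwise} inequality over all input patterns (with telescoping increments), whereas the paper's inequality \eqref{gen_ineq_form} is an inequality between linear functionals of $\mu$; its derivation repeatedly uses measure-level facts with no pointwise analogue --- Lemma~\ref{lem:pushforward_1}, containments such as $\mu(WWDD)\leqslant\mu(WWD)$, and reflection invariance to identify $\mu(LDW)$ with $\mu(WDL)$ --- so it is not clear that your stronger pointwise formulation is even attainable, and you give no argument that it is. Second, your endgame ``$0\leqslant -c\,\mu(D)$, whence $\mu(D)=0$'' presumes the inequality penalizes $\mu(D)$ directly. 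In the paper the strictly negative coefficient sits on $\mu(DD)$ (strict negativity uses $p+q+r>0$ together with \eqref{three_cond_universal}); concluding $\mu(D)=0$ then requires a further step: stationarity gives $\G_{p,q,r}\mu(DD)=0$, Lemma~\ref{lem:pushforward_2} applied to the cylinder $(D,D)_{0,1}$ with inputs $(W,D,W)_{0,1,2}$, $(W,D,L)_{0,1,2}$, $(L,D,L)_{0,1,2}$ forces $\mu(WDW)=\mu(WDL)=\mu(LDL)=0$ when $r>0$, and the case $r=0$ must be handed off to the result of \cite{holroyd2019percolation}. Your proposal skips this two-stage deduction and the $r=0$ case split entirely.
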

An analogous result is true in case of bond percolation games, and can be stated as follows:
\begin{theorem}\label{thm:bond_envelope_PCA_D}
Let $(r',s') \in \Theta'$ belong to any one of the three regimes described in Theorem~\ref{thm:two-parameter}, i.e.\ one of \ref{bond_regime_1}, \ref{bond_regime_2} and \ref{bond_regime_3}. If $\mu$ is any translation-invariant and reflection-invariant stationary distribution for $\E_{r',s'}$, then for any fixed but arbitrary $x \in \mathbb{Z}$, the probability, under $\mu$, that $x$ is assigned the state $D$, abbreviated simply as $\mu(D)$, equals $0$.  
\end{theorem}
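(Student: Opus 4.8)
The plan is to prove Theorem~\ref{thm:bond_envelope_PCA_D} by the method of weight functions, using a single argument-engine that reduces the claim to a finite family of polynomial inequalities in $(r',s')$, and then handling the three regimes \ref{bond_regime_1}, \ref{bond_regime_2} and \ref{bond_regime_3} by choosing (or importing) appropriate weight functions. Fix a translation-invariant and reflection-invariant stationary measure $\mu$ for $\E_{r',s'}$. For a real-valued \emph{weight function} $w$ defined on words of some fixed length $\ell$ over the alphabet $\hat{\mathcal{A}}=\{W,L,D\}$, the stationarity relation $\widehat{\varphi}_{r',s'}\mu=\mu$, together with the fact that the cell-updates are conditionally independent given the relevant coordinates of $\pmb{\eta}$, yields the identity
\[
\mathbb{E}_{\mu}\big[(\mathcal{L}w)(\pmb{\eta}(0),\ldots,\pmb{\eta}(\ell))\big]=\mathbb{E}_{\mu}\big[w(\pmb{\eta}(0),\ldots,\pmb{\eta}(\ell-1))\big],
\]
where the transfer operation $\mathcal{L}$ is given by
\[
(\mathcal{L}w)(a_{0}\cdots a_{\ell})=\sum_{b_{0}\cdots b_{\ell-1}}\ \prod_{i=0}^{\ell-1}\widehat{\varphi}_{r',s'}(a_{i},a_{i+1},b_{i})\,w(b_{0}\cdots b_{\ell-1}).
\]
The target is to exhibit a nonnegative $w$ and a constant $c>0$ for which the \emph{weight function inequality}
\[
(\mathcal{L}w)(a_{0}\cdots a_{\ell})\leqslant \tfrac{1}{2}\big(w(a_{0}\cdots a_{\ell-1})+w(a_{1}\cdots a_{\ell})\big)-c\,\mathbf{1}\{a_{j}=D\}
\]
holds for every input word $a_{0}\cdots a_{\ell}$, where $j$ is a fixed designated coordinate.

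Granting such a $w$, the conclusion is immediate and is the first step I would record. Taking $\mathbb{E}_{\mu}$ on both sides of the weight function inequality and invoking translation invariance of $\mu$ (so that $\mathbb{E}_{\mu}[w(\pmb{\eta}(0),\ldots,\pmb{\eta}(\ell-1))]=\mathbb{E}_{\mu}[w(\pmb{\eta}(1),\ldots,\pmb{\eta}(\ell))]$), the two window-weight terms on the right average back to the left-hand side of the displayed identity; everything cancels and one is left with $0\leqslant -c\,\mu(D)$, forcing $\mu(D)=0$. Reflection invariance of $\mu$ is used to symmetrize $w$, which compensates for the asymmetry of the neighbourhood-marking set $\mathcal{N}=\{0,1\}$ and guarantees that the symmetrized right-hand side telescopes cleanly while also halving the case analysis.

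The second step is the regime-by-regime supply of $w$. For \ref{bond_regime_1} I would avoid constructing a new weight function from scratch and instead exploit the reduction of \S\ref{subsubsec:bond_to_generalized}: since $\widehat{\varphi}_{r',s'}$ is literally $\widehat{\varphi}_{p,q,r}$ at $q=0$, $p=2s'-{s'}^{2}$ and $r=r'/(1-s')$, the measure $\mu$ is also a translation- and reflection-invariant stationary measure for $\G_{p,q,r}$ with these parameters. Hence it suffices to verify that the weight function built for $\G_{p,q,r}$ in the proof of Theorem~\ref{thm:generalized_envelope_PCA_D} continues to satisfy its weight function inequality after this substitution; a direct computation should show that the specialized inequality is exactly \eqref{two_regime_1_eq}, so that Theorem~\ref{thm:generalized_envelope_PCA_D} (equivalently, its underlying weight function) delivers $\mu(D)=0$ whenever $r',s'$ are small and \eqref{two_regime_1_eq} holds. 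For \ref{bond_regime_2} and \ref{bond_regime_3}, where $r'$ is bounded away from $0$, I would construct bespoke reflection-symmetric weight functions directly for $\E_{r',s'}$, the binding member of the resulting polynomial family being the inequality that closes for all $r'>0.157175$ with $s'=0$ (respectively all $r'=s'\geqslant 0.10883$); the contents of \S\ref{subsec:final_wt_fn_bond} would record these functions and inequalities.

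The hard part will be the construction and verification of $w$ in the regimes where the thresholds are genuinely low, since this is precisely where the classical criteria \eqref{ergodic_criterion_1} and \eqref{ergodic_criterion_2} either fail or give much weaker bounds. Once $\ell$ and $w$ are fixed, the weight function inequality is only a finite check, but producing a $w$ whose worst-case inequality still holds down to $0.10883$ (for $r'=s'$) or $0.157175$ (for $s'=0$) — and, for \ref{bond_regime_1}, arbitrarily close to the degenerate corner $(r',s')=(0,0)$ — is delicate and non-routine. I expect to build $w$ incrementally, assigning weights to short words so as to neutralize the dominant draw-producing transitions recorded in \eqref{envelope_PCA_rule_4}, \eqref{envelope_PCA_rule_5} and \eqref{envelope_PCA_rule_6}, and then enlarging the window length $\ell$ until every residual polynomial inequality closes in the stated parameter range.
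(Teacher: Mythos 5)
Your reduction engine is sound, and it is the same mechanism the paper uses: once a weight function inequality with a strictly negative coefficient on a $D$-detecting term is available, stationarity plus translation invariance forces $\mu(D)=0$. One caveat: you demand the inequality \emph{pointwise} on words, whereas the paper only ever establishes (and only ever needs) inequalities between linear functionals of measures, $w(\E_{r',s'}\mu)\leqslant w(\mu)-\sum_i c_i'\mu(\mathcal{C}_i')$, valid for translation- and reflection-invariant $\mu$. This is genuinely weaker: the paper's derivations repeatedly identify $\mu(WD)$ with $\mu(DW)$, $\mu(LDW)$ with $\mu(WDL)$, etc.\ in the \emph{middle} of the computation, identifications that fail pointwise and are not recovered merely by symmetrizing $w$; so even if the paper's weight functions were handed to you, it is not clear they satisfy your pointwise requirement, and insisting on it may cost you the stated thresholds.

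The genuine gap is your treatment of \ref{bond_regime_1}. You claim that substituting $q=0$, $p=2s'-{s'}^{2}$, $r=r'/(1-s')$ into the weight function of Theorem~\ref{thm:generalized_envelope_PCA_D} works, and that ``the specialized inequality is exactly \eqref{two_regime_1_eq}.'' It is not. Specializing Theorem~\ref{thm:three-parameter} to $q=0$ leaves only the regime \eqref{three_cond_4} (at $q=0$ and small parameters, \eqref{three_cond_1} and \eqref{three_cond_2} force $p=0$ and \eqref{three_cond_3} is empty), whose second inequality reads $p^{2}+10pr+6r^{2}\geqslant 4r$, i.e.\ roughly $r\lesssim p^{2}/4$; by contrast \eqref{two_regime_1_eq}, rewritten as \eqref{two_regime_1_eq_transformed}, is roughly $r\lesssim 3p^{2}/4$. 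Concretely, take $q=0$, $r=p^{2}/2$ with $p$ small: then \eqref{two_regime_1_eq_transformed} holds (the point lies in \ref{bond_regime_1}), \eqref{three_cond_universal} holds, but $p^{2}+10pr+6r^{2}<2p^{2}=4r$, so \emph{none} of \eqref{three_cond_1}--\eqref{three_cond_4} holds and the generalized weight function inequality is simply unavailable there. This is precisely why the paper, in \S\ref{sec:bond_1_wt_fn_steps}, reuses the generalized computation only up to \eqref{w_{0}_ineq_7} and then redoes all remaining adjustments exploiting $q=0$ (a different adjustment \eqref{w_{1}_q=0}, the partial pushforward bounds \eqref{i_q=0}--\eqref{iv_q=0}), arriving at the different weight function \eqref{final_wt_fn_q=0} whose closing term $-\,(1-r'-s')\{(1-s')(2s'-{s'}^{2})-r'\}^{2}(1-s')^{-1}\mu(DD)$ is negative without any appeal to \eqref{three_cond_universal}, while \eqref{two_regime_1_eq_transformed} is what kills the residual coefficients of $\mu(LD)$, $\mu(WWDD)$ and $\mu(WWWD)$. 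Secondarily, for \ref{bond_regime_2} and \ref{bond_regime_3} you supply no weight functions at all; since the entire content of the theorem at the thresholds $0.157175$ and $0.10883$ is the existence of such functions (the paper needs the multi-stage constructions of \S\ref{sec:bond_2_wt_fn_steps} and \S\ref{sec:bond_3_wt_fn_steps}, with root locations of explicit high-degree polynomials), deferring them leaves the theorem unproved in those regimes as well.
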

The proofs of Theorems~\ref{thm:generalized_envelope_PCA_D} and \ref{thm:bond_envelope_PCA_D} are accomplished in \S\ref{sec:generalized_weight_function} and \S\ref{sec:bond_weight_function} respectively, employing the technique of \emph{weight functions} or \emph{potential functions} that was first introduced in \cite{holroyd2019percolation} and later explored in \cite{bresler_guo_polyanskiy}, \cite{bhasin2022class} and \cite{bhasin2022ergodicity}.

We have yet to establish a connection between the ergodicity of either of the PCAs $G_{p,q,r}$ and $\G_{p,q,r}$ (respectively, $E_{r',s'}$ and $\E_{r',s'}$) and the event of draw in our generalized percolation game (respectively, bond percolation game). This is where $G_{p,q,r}$ (respectively, $E_{r',s'}$) plays an important role.
\begin{theorem}\label{thm:ergodic_equiv_draw_probab_0}
For each $(p,q,r) \in \Theta$, the generalized percolation game, with underlying parameters $p$, $q$ and $r$, has probability $0$ of resulting in a draw if and only if the PCA $G_{p,q,r}$ is ergodic. Likewise, for each $(r',s') \in \Theta'$, the bond percolation game, with underlying parameters $r'$ and $s'$, has probability $0$ of resulting in a draw if and only if the PCA $E_{r',s'}$ is ergodic.
\end{theorem}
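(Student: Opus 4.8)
The plan is to prove the two equivalences by routing everything through the envelope PCAs $\G_{p,q,r}$ and $\E_{r',s'}$; I will describe the generalized case in full, the bond case being obtained verbatim on replacing $\G_{p,q,r},G_{p,q,r}$ and the vertex/edge labels by $\E_{r',s'},E_{r',s'}$. As recorded in \S\ref{subsec:relation}, the common law $\mu$ of the states $(\pmb{\eta}(x,k-x):x\in\mathbb{Z})$ along a diagonal is a translation- and reflection-invariant stationary measure for $\G_{p,q,r}$, and by the very definition of the state $D$ the game started from a fixed vertex ends in a draw exactly when that vertex lies in $D$. Hence $\Prob[\text{draw}]=\mu(D)$, the probability that a fixed coordinate carries the symbol $D$, and the theorem reduces to the assertion $\mu(D)=0\iff G_{p,q,r}$ is ergodic.

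The bridge is to identify $\mu$ with an all-$D$ limit. Reading $\G_{p,q,r}$ in its set-valued (``possibility'') form, with $D$ standing for $\{W,L\}$ and $W,L$ for the singletons, \eqref{GPCA_rule_1}--\eqref{GPCA_rule_3} say that on $\{W,L\}$-inputs the envelope never emits $D$ and agrees with $G_{p,q,r}$, while \eqref{GPCA_rule_4}--\eqref{GPCA_rule_6} show that $D$ can appear in the output only when $D$ appears in the neighbourhood; consequently $\G_{p,q,r}$ is monotone for the partial order $W,L\preceq D$. Letting $\delta_D$ denote the point mass on the all-$D$ configuration, monotonicity gives $\G_{p,q,r}^{n+1}\delta_D\preceq\G_{p,q,r}^{n}\delta_D$, so the events $\{(\G_{p,q,r}^{n}\delta_D)(0)=D\}$ are nested. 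For a fixed realization of the labels, $\G_{p,q,r}^{n}\delta_D$ computes precisely the depth-$n$ determination of the game, and since a position lies in $W\cup L$ if and only if its value is forced at some finite depth, these events decrease to $\{\text{draw}\}$. Therefore
\begin{equation*}
\mu(D)=\Prob[\text{draw}]=\lim_{n\to\infty}\Prob\!\left[(\G_{p,q,r}^{n}\delta_D)(0)=D\right]=:\alpha .
\end{equation*}
Moreover one verifies directly from \eqref{GPCA_rule_1}--\eqref{GPCA_rule_6} the envelope-consistency identity, valid for each fixed realization,
\begin{equation*}
(\G_{p,q,r}^{n}\delta_D)(x)=\left\{\,G_{p,q,r}^{n}\eta_0(x):\eta_0\in\{W,L\}^{\mathbb{Z}}\,\right\},
\end{equation*}
so that $\{(\G_{p,q,r}^{n}\delta_D)(0)=D\}$ is exactly the event that two initial $\{W,L\}$-configurations, driven by the \emph{same} labels, disagree at the origin after $n$ steps; thus $\alpha$ is the limiting probability of a worst-case discrepancy at the origin.

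For the direction $\mu(D)=0\Rightarrow G_{p,q,r}$ ergodic, suppose $\alpha=0$. For any two laws $\nu,\nu'$ on $\{W,L\}^{\mathbb{Z}}$, coupled through the common PCA randomness, the second displayed identity bounds $\Prob[G_{p,q,r}^{n}\eta_0(0)\neq G_{p,q,r}^{n}\eta_0'(0)]$ by $\Prob[(\G_{p,q,r}^{n}\delta_D)(0)=D]\to0$, uniformly in the initial conditions; by translation invariance the same holds at every site and, by a union bound, on every finite window. Hence all laws $G_{p,q,r}^{n}\nu$ share a single weak limit point, and compactness yields a unique stationary measure $\mu_\ast$ with $G_{p,q,r}^{n}\nu\Rightarrow\mu_\ast$ for every $\nu$, i.e.\ ergodicity. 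Conversely, assume $G_{p,q,r}$ ergodic but $\alpha>0$; then $\mu=\lim_n\G_{p,q,r}^{n}\delta_D$ is a translation- and reflection-invariant stationary measure of $\G_{p,q,r}$ with $\mu(D)=\alpha>0$, and I would use the envelope-consistency above to resolve its $D$-sites into two \emph{distinct} translation-invariant stationary measures of $G_{p,q,r}$ (equivalently, to show that the coupling-from-the-past value at the origin fails, on a set of probability $\alpha$, to be a measurable function of the driving noise), contradicting uniqueness of the stationary measure. Combining the two directions with $\Prob[\text{draw}]=\mu(D)$ gives the stated equivalence, and the bond case follows identically.

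The main obstacle is this last converse, $G_{p,q,r}\text{ ergodic}\Rightarrow\alpha=0$. Ergodicity is a statement about fixed initial \emph{distributions}, whereas $\{(\G_{p,q,r}^{n}\delta_D)(0)=D\}$ quantifies over \emph{all} initial configurations simultaneously and is inherently worst-case: the pair of configurations witnessing a discrepancy may depend on the realized labels, so the vanishing of the discrepancy for, say, the two extremal starts does not by itself force $\alpha=0$. Closing this gap—either by exploiting a monotonicity/attractiveness property of the envelope to reduce the worst case to the extremal starts, or by genuinely extracting two different stationary measures of $G_{p,q,r}$ from the limit measure $\mu$ when $\alpha>0$—is the delicate point, and is exactly where the envelope technique of \cite{holroyd2019percolation}, as developed in \cite{bhasin2022class,bhasin2022ergodicity}, is needed.
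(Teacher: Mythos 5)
Your first direction --- probability of draw $0$ implies ergodicity of $G_{p,q,r}$ --- is essentially correct and is the standard envelope/coupling argument: the quenched discrepancy between any two $\{W,L\}$-initializations driven by the same labels is dominated by the event that $\G_{p,q,r}^{n}\delta_{D}$ reads $D$ at the relevant site, and that probability decreases to $\mu(D)$. (For this direction you only need the easy containment in your ``envelope-consistency identity''; the exact set-equality would require the antitone structure of the game update, but nothing you use depends on it.) The genuine gap is the converse, which you candidly flag yourself: nothing in the proposal shows that ergodicity forces $\alpha=0$, and the sketch you offer --- ``resolve the $D$-sites into two distinct translation-invariant stationary measures of $G_{p,q,r}$'' --- cannot work as stated, because the two natural resolutions are \emph{not} individually stationary for $G_{p,q,r}$. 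For the record, the paper does not prove this theorem either: it invokes Proposition 2.2 of \cite{holroyd2019percolation}, whose argument closes precisely this gap, and in a slightly different way than you anticipate.

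Here is how it closes. Let $\eta^{+}$ be obtained from the game configuration by replacing $D$ with $W$ (a draw counts as a win for the player to move) and $\eta^{-}$ by replacing $D$ with $L$, and let $\nu^{\pm}$ denote their translation-invariant laws. A draw that favours the mover at one level favours the \emph{non}-mover at the next level, and correspondingly the update rules intertwine the two resolutions with a swap: resolving the input $D$'s in \eqref{GPCA_rule_4}, \eqref{GPCA_rule_5}, \eqref{GPCA_rule_6} to $L$ and the output $D$'s to $W$ reproduces exactly the two-letter rules \eqref{GPCA_rule_2}, \eqref{GPCA_rule_3}, \eqref{GPCA_rule_3}, and likewise with the roles of the two resolutions exchanged one reproduces \eqref{GPCA_rule_1}, \eqref{GPCA_rule_2}, \eqref{GPCA_rule_1}. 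Consequently $G_{p,q,r}\nu^{-}=\nu^{+}$ and $G_{p,q,r}\nu^{+}=\nu^{-}$: the pair forms a period-two orbit, and neither measure is stationary unless they coincide. If $G_{p,q,r}$ is ergodic, then $G_{p,q,r}^{n}\nu^{+}$ must converge weakly, which forces $\nu^{+}=\nu^{-}$; since $\{\eta^{-}(0)=W\}\subseteq\{\eta^{+}(0)=W\}$ with difference exactly the event that the origin is a draw position, equality of the laws yields $\mu(D)=\Prob\left[\eta^{+}(0)=W\right]-\Prob\left[\eta^{-}(0)=W\right]=0$. This is the missing half; combined with your coupling argument for the other half it completes the theorem, and the bond percolation case is verbatim the same with $\E_{r',s'}$, $E_{r',s'}$ in place of $\G_{p,q,r}$, $G_{p,q,r}$.
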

\begin{proof}
The proof of Theorem~\ref{thm:ergodic_equiv_draw_probab_0} happens via the exact same argument as the proof of Proposition 2.2 of \cite{holroyd2019percolation}, and is therefore omitted from this paper. 
\end{proof}
Our next result is Lemma~\ref{lem:ergodicity_equivalence}, and we note that it can be proved in exactly the same way as Proposition 2.1 of \cite{holroyd2019percolation} (this lemma, in fact, gets used in proving one side of the two-way implication asserted upon in Theorem~\ref{thm:ergodic_equiv_draw_probab_0}):
\begin{lemma}\label{lem:ergodicity_equivalence}
The PCA $\G_{p,q,r}$ is ergodic if and only if the PCA $G_{p,q,r}$ is ergodic. Likewise, $\E_{r',s'}$ is ergodic if and only if $E_{r',s'}$ is ergodic.
\end{lemma}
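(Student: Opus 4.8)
The plan is to follow the template of Proposition~2.1 of \cite{holroyd2019percolation}, exploiting the fact that the envelope $\G_{p,q,r}$ was constructed precisely so that its third state $D$ records \emph{disagreement} between coupled copies of $G_{p,q,r}$. Since the two halves of Lemma~\ref{lem:ergodicity_equivalence} are structurally identical (with \eqref{envelope_PCA_rule_1}--\eqref{envelope_PCA_rule_6} playing the role of \eqref{GPCA_rule_1}--\eqref{GPCA_rule_6}), I would prove the equivalence for $\G_{p,q,r}$ and $G_{p,q,r}$ and remark that the argument for $\E_{r',s'}$ and $E_{r',s'}$ is verbatim the same. The first observation, read directly off \eqref{GPCA_rule_1}, \eqref{GPCA_rule_2} and \eqref{GPCA_rule_3}, is that no pair of inputs from $\mathcal{A}=\{W,L\}$ ever produces the output $D$; hence $\{W,L\}^{\mathbb{Z}}$ is invariant under $\G_{p,q,r}$ and the restriction of $\G_{p,q,r}$ to $\{W,L\}^{\mathbb{Z}}$ is exactly $G_{p,q,r}$.

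Using this, the implication ``$\G_{p,q,r}$ ergodic $\Rightarrow$ $G_{p,q,r}$ ergodic'' is the easy half. By compactness of $\{W,L\}^{\mathbb{Z}}$ and the Feller property, $G_{p,q,r}$ admits at least one stationary measure $\mu_{G}$; regarded as a measure on $\hat{\mathcal{A}}^{\mathbb{Z}}$, it is also $\G_{p,q,r}$-stationary by the invariance just noted. If $\G_{p,q,r}$ is ergodic, its unique stationary measure must coincide with $\mu_{G}$, which forces $\mu_{G}$ to be the \emph{unique} $G_{p,q,r}$-stationary measure and to be supported off $D$; and for any $\nu$ on $\{W,L\}^{\mathbb{Z}}$ we have $G_{p,q,r}^{t}\nu=\G_{p,q,r}^{t}\nu\to\mu_{G}$ weakly, so $G_{p,q,r}$ is ergodic.

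For the reverse implication I would equip $\hat{\mathcal{A}}$ with the information order in which $W\preceq D$ and $L\preceq D$ (thinking of $D$ as the undetermined value $\{W,L\}$), lifted coordinatewise to $\hat{\mathcal{A}}^{\mathbb{Z}}$, so that the all-$D$ configuration $\zeta_{D}$ is the maximum. The crux is to check, directly from \eqref{GPCA_rule_1}--\eqref{GPCA_rule_6}, that $\G_{p,q,r}$ is monotone for this order and that its update probabilities are precisely those of the \emph{maximal coupling} of the two-state rule $G_{p,q,r}$; for instance, for inputs $(W,D)$ the disagreement probability of the two copies is $|(1-p)-(q+(1-p-q)r)|=(1-p-q)(1-r)$, matching $\widehat{\varphi}_{p,q,r}(W,D,D)$ in \eqref{GPCA_rule_4}, and the six input types are verified one by one. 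Granting this, a grand coupling of all $G_{p,q,r}$-trajectories arises in which, running $\G_{p,q,r}$ from $\zeta_{D}$, the event $\{\eta_{t}(x)=D\}$ is exactly the event that two coupled copies of $G_{p,q,r}$ disagree at position $x$ at time $t$. Because $\{\eta(x)=D\}$ is an increasing event and $\zeta_{D}$ is the top, the sequence $d_{t}:=\mathbf{P}_{\zeta_{D}}[\eta_{t}(0)=D]$ is non-increasing and converges to a limit $d_{\infty}\geqslant 0$.

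It then remains to run the chain of equivalences $G_{p,q,r}\text{ ergodic}\Leftrightarrow d_{\infty}=0\Leftrightarrow\G_{p,q,r}\text{ ergodic}$, exactly as in Proposition~2.1 of \cite{holroyd2019percolation}: if $G_{p,q,r}$ is ergodic then the decreasing limit $\lim_{t}\G_{p,q,r}^{t}\delta_{\zeta_{D}}$ is a translation- and reflection-invariant $\G_{p,q,r}$-stationary measure whose $D$-density $d_{\infty}$ must vanish; if $d_{\infty}=0$ then the coupled copies coalesce at the origin uniformly in their initial conditions, which is loss of memory and yields a unique $G_{p,q,r}$-stationary measure together with convergence from every start; and $d_{\infty}=0$ upgrades to ergodicity of $\G_{p,q,r}$ because $\zeta_{D}$ dominates every configuration, so $\mathbf{P}_{\nu}[\eta_{t}(0)=D]\leqslant d_{t}\to 0$ for all $\nu$ while the $D$-free conditional dynamics is the ergodic $G_{p,q,r}$. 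I expect the main obstacle to be the first part of the crux above: verifying from the explicit numerical rules that $\G_{p,q,r}$ (respectively $\E_{r',s'}$) genuinely is the monotone maximal-coupling lift of $G_{p,q,r}$ (respectively $E_{r',s'}$), so that $D$ tracks disagreement \emph{exactly} rather than merely bounding it; once this envelope identity is established, the monotonicity, compactness, and coalescence arguments are routine and identical to those in \cite{holroyd2019percolation}.
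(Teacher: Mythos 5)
Your overall architecture --- invariance of $\{W,L\}^{\mathbb{Z}}$ for the easy direction, the information order with $D$ on top, the grand coupling, and the equivalence ``$d_{\infty}=0 \Longleftrightarrow \G_{p,q,r}$ ergodic'' --- is the same route the paper intends (it simply defers to Proposition 2.1 of \cite{holroyd2019percolation}), and those parts of your sketch are sound. The genuine gap is the step you state as: ``if $G_{p,q,r}$ is ergodic then the decreasing limit $\lim_{t}\G_{p,q,r}^{t}\delta_{\zeta_{D}}$ is a translation- and reflection-invariant $\G_{p,q,r}$-stationary measure whose $D$-density $d_{\infty}$ must vanish.'' No justification is given, and none can come from the general envelope machinery you have set up: as the paper itself notes in \S\ref{subsec:relation}, an ergodic PCA may have a non-ergodic envelope, so for a generic PCA ergodicity of the base chain is perfectly compatible with $d_{\infty}>0$. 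Nor can you invoke Theorem~\ref{thm:generalized_envelope_PCA_D} (zero $D$-density for translation/reflection-invariant stationary measures), since that holds only under the parameter constraints of Theorem~\ref{thm:three-parameter}, whereas Lemma~\ref{lem:ergodicity_equivalence} is asserted for \emph{all} $(p,q,r)\in\Theta$ and $(r',s')\in\Theta'$. So ``$G_{p,q,r}$ ergodic $\Rightarrow d_{\infty}=0$'' is precisely the crux, and it is missing; relatedly, you flag the verification that $\G_{p,q,r}$ is the maximal-coupling lift of $G_{p,q,r}$ as the main obstacle, but that is a routine six-case check, not the hard part.

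What the argument of \cite{holroyd2019percolation} actually uses at this point is structure special to these game PCAs. First, $\G_{p,q,r}^{t}\delta_{\zeta_{D}}$ is exactly the law of the depth-$t$ truncated game values on a diagonal (the PCA update \emph{is} the game recursion, with $D$'s placed on the boundary row), and these decrease in the information order to the true $W/L/D$ classification, so $d_{\infty}$ equals the probability of a draw. Next, one resolves the draws in the two extreme ways, $\eta^{+}$ (every $D\mapsto W$) and $\eta^{-}$ (every $D\mapsto L$), and checks from the game recursions --- equivalently, from the anti-monotonicity of the rules \eqref{GPCA_rule_1}--\eqref{GPCA_rule_3}, where the probability of outputting $L$ decreases as the inputs increase --- that applying the recursion to $\eta^{+}$ on one row produces $\eta^{-}$ on the next row and vice versa; hence $G_{p,q,r}$ maps $\mathrm{law}(\eta^{+})$ to $\mathrm{law}(\eta^{-})$ and back, a two-periodic orbit. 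Ergodicity of $G_{p,q,r}$ implies ergodicity of $G_{p,q,r}^{2}$, forcing $\mathrm{law}(\eta^{+})=\mathrm{law}(\eta^{-})=\pi$; since $\eta^{-}$ and $\eta^{+}$ agree off the draw set and dominate one another pointwise while being equal in law, they coincide almost surely, i.e.\ the draw probability, hence $d_{\infty}$, is $0$. Your own coupling arguments then upgrade $d_{\infty}=0$ to ergodicity of $\G_{p,q,r}$. Without this (or an equivalent) game-specific step, the hard direction of the lemma does not follow.
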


Some discussion is now in order as to the philosophy behind invoking the notion of envelope PCAs. We let $F$ denote a $d$-dimensional PCA endowed with the alphabet $\mathcal{A}_{F} = \{W,L\}$, some neighbourhood-marking set $\mathcal{N}_{F}$, and stochastic update rules given by some stochastic matrix $\varphi_{F}$. Introduced in \cite{buvsic}, the envelope PCA $\F$ corresponding to $F$ is a different PCA whose alphabet $\mathcal{A}_{\F} = \{W,L,D\}$ is obtained by appending $\mathcal{A}_{F}$ with the symbol $D$. The neighbourhood-marking set for $\F$ is, once again, $\mathcal{N}_{F}$, and the stochastic update rules for $\F$ are captured by the stochastic matrix $\varphi_{\F}$ such that, when we restrict $\mathcal{A}_{\F}$ to $\mathcal{A}_{F}$, the corresponding restriction of $\varphi_{\F}$ yields $\varphi_{F}$. The state spaces for $F$ and $\F$ are respectively $\Omega_{F} = \mathcal{A}_{F}^{\mathbb{Z}^{d}}$ and $\Omega_{\F} = \mathcal{A}_{\F}^{\mathbb{Z}^{d}}$. A configuration $\eta = \left(\eta(\mathbf{x}): \mathbf{x} \in \mathbb{Z}^{d}\right)$ in $\Omega_{\F}$ can be thought of as a configuration that actually belongs to $\Omega_{F}$, but with an unknown symbol (which could equal either $W$ or $L$) occupying each coordinate $\mathbf{x}$ of $\mathbb{Z}^{d}$ for which $\eta(\mathbf{x}) = D$. Thus, the symbol $D$ acts as a placeholder whenever $\mathbf{x} \in \mathbb{Z}^{d}$ is occupied by a symbol from the original alphabet $\mathcal{A}_{F}$ that we are not aware of or are uncertain about. Given an ergodic PCA $F$ with the unique stationary distribution $\mu$, the motivation for introducing envelope PCAs lies in coming up with an efficient procedure, known as a \emph{perfect sampling} procedure (see \cite{propp1996exact} and \cite{buvsic}), for generating a configuration from $\mu$. One such algorithm implements the \emph{coupled from the past} (\emph{CFTP}) method proposed in \cite{propp1996exact}, but it is inefficient when the state space $\Omega_{F}$ is large (for instance, in this paper, $\Omega_{G_{p,q,r}}=\Omega_{E_{r',s'}}=\{W,L\}^{\mathbb{Z}}$ is uncountably infinite). Even when the PCA $F$ is ergodic, the corresponding envelope PCA $\F$ may or may not be ergodic, but if it is, then \cite{buvsic} shows that an efficient perfect sampling procedure can be achieved by running $\F$ on a single initial configuration. The envelope PCA $\F$ is used to couple two (random) configurations obtained via (possibly repeated) applications of $F$, and as mentioned above, the symbol $D$ is used to populate the cells of $\mathbb{Z}^{d}$ at which these two coupled configurations may differ from one another.

We are now in a position to prove Theorem~\ref{thm:three-parameter}, assuming the conclusion of Theorem~\ref{thm:generalized_envelope_PCA_D}, as follows:
\begin{proof}[Proof of Theorem~\ref{thm:three-parameter}]
Let $(p,q,r) \in \Theta$ satisfy the constraints stated in Theorem~\ref{thm:three-parameter}. From Theorem~\ref{thm:generalized_envelope_PCA_D}, we may conclude that $\mu\left[\left\{\pmb{\eta}(x,k-x) = D\right\}\right] = 0$ for \emph{any} $x, k \in \mathbb{Z}$, where $\mu$ is the joint law of $(\pmb{\eta}(x,k-x):x \in \mathbb{Z})$ (see the first paragraph of \S\ref{subsec:relation}). Such a conclusion is possible because we showed, just before stating Theorem~\ref{thm:generalized_envelope_PCA_D}, that $\mu$ is a translation-invariant and reflection-invariant stationary distribution for $\G_{p,q,r}$. In other words, for \emph{any} vertex $(x,k-x)$ lying on \emph{any} diagonal $S_{k}$, the probability that it belongs to $D$ equals $0$ under $\mu$. Therefore, almost surely, there is \emph{no} vertex in $\mathbb{Z}^{2}$ which, when serving as the initial vertex for the generalized percolation game, leads to a draw. This, along with Theorem~\ref{thm:ergodic_equiv_draw_probab_0} and Lemma~\ref{lem:ergodicity_equivalence}, yields the conclusion of Theorem~\ref{thm:three-parameter}.
\end{proof} 
The argument for proving Theorem~\ref{thm:two-parameter}, provided Theorem~\ref{thm:bond_envelope_PCA_D} holds, is identical to the above.

\begin{proof}[Proof of Theorem~\ref{thm:PCAs}]
As explained above, once Theorem~\ref{thm:generalized_envelope_PCA_D} has been established, the conclusion of Theorem~\ref{thm:three-parameter} follows. By Theorem~\ref{thm:ergodic_equiv_draw_probab_0}, we then know that the PCA $G_{p,q,r}$ is ergodic whenever the underlying parameter-triple $(p,q,r)$ belongs to $\Theta$ and satisfies the constraints stated in Theorem~\ref{thm:three-parameter}. By Lemma~\ref{lem:ergodicity_equivalence}, we further conclude that the envelope PCA $\G_{p,q,r}$ is ergodic for all such values of $(p,q,r)$ as well. This concludes the proof of the first assertion stated in Theorem~\ref{thm:PCAs}. The proof of the second assertion of Theorem~\ref{thm:PCAs} follows via an identical argument (in which we replace Theorem~\ref{thm:three-parameter} by Theorem~\ref{thm:two-parameter}).
\end{proof}

All we have left to prove now are Theorems~\ref{thm:generalized_envelope_PCA_D} and \ref{thm:bond_envelope_PCA_D}.

\section{The proof of Theorem~\ref{thm:generalized_envelope_PCA_D} by the technique of weight functions}\label{sec:generalized_weight_function}
It was stated in \S\ref{subsec:relation} that the crux of this paper lies in the rather technical proof of Theorem~\ref{thm:generalized_envelope_PCA_D} (and its counterpart in case of bond percolation games, Theorem~\ref{thm:bond_envelope_PCA_D}). Before proceeding with the proof, we summarize here, for the reader's convenience, how the rest of \S\ref{sec:generalized_weight_function} has been organized:
\begin{enumerate}
\item In \S\ref{subsec:central_ideas_weight_functions}, we outline the central ideas relevant to the step-by-step construction of our weight function -- these ideas apply equally well to the construction of the weight functions that allow us to prove Theorem~\ref{thm:bond_envelope_PCA_D}. 
\item In \S\ref{subsec:final_wt_fn_generalized}, we state, without proof, the weight function we come up with for the generalized percolation game, the \emph{weight function inequality} it satisfies, and finally, draw the desired conclusion from this inequality (thus proving Theorem~\ref{thm:generalized_envelope_PCA_D}) .  
\item In \S\ref{sec:generalized_wt_fn_steps}, we show the details of each step leading to the construction of our weight function. \end{enumerate}

\subsection{The principal ideas behind the construction of our weight functions}\label{subsec:central_ideas_weight_functions}
We present to the reader a very broad, but hopefully comprehensive, outline of how we aim to accomplish the proof of Theorem~\ref{thm:generalized_envelope_PCA_D}. We denote by $\mathcal{R}$ the subset of the parameter-space $\Theta$ (defined in \S\ref{sec:formal_defns_main_results}) that consists of all those triples $(p,q,r)$ that satisfy the constraints stated in Theorem~\ref{thm:three-parameter} -- in other words, $p$, $q$ and $r$ are sufficiently small, the inequality in \eqref{three_cond_universal} holds, and precisely one of \eqref{three_cond_1}, \eqref{three_cond_2}, \eqref{three_cond_3} and \eqref{three_cond_4} is true. Although, in the rest of \S\ref{subsec:central_ideas_weight_functions}, we focus on $(p,q,r) \in \mathcal{R}$ and work with the envelope PCA $\G_{p,q,r}$ corresponding to generalized percolation games, this overview is equally pertinent to the construction of the weight functions corresponding to the three different regimes addressed in Theorem~\ref{thm:bond_envelope_PCA_D} for bond percolation games. 

Recall, from \S\ref{subsubsec:specific_PCAs}, that the state space corresponding to our envelope PCA, $\G_{p,q,r}$, is $\Omega = \hat{\mathcal{A}}^{\mathbb{Z}}$, where $\hat{\mathcal{A}}=\{W,L,D\}$ is its alphabet. Let $\mathcal{M}$ denote the space of all reflection-invariant and translation-invariant (recall these definitions from \S\ref{subsec:relation}) probability measures on $\Omega$, defined with respect to the $\sigma$-field $\mathcal{F}$ generated by all cylinder sets of $\Omega$. The goal is to come up with a ``suitable" real-valued function $w$, referred to as a \emph{weight function} or \emph{potential function}, defined on $\mathcal{M}$. As such, a weight function (that serves the purpose outlined below) need not be unique, but as far as this paper is concerned, we construct $w$ as a linear combination of the form
\begin{equation}
w(\mu) = \sum_{i=1}^{t}c_{i}\mu(\mathcal{C}_{i}),\label{gen_form}
\end{equation} 
where $t \in \mathbb{N}$, each of $\mathcal{C}_{1}$, $\ldots$, $\mathcal{C}_{t}$ is a cylinder set (thus belonging to $\mathcal{F}$), and each of $c_{1}$, $\ldots$, $c_{t}$ is a real-valued function (in fact, a polynomial) in the parameters $p$, $q$ and $r$. Our goal is to construct a weight function, of the form \eqref{gen_form}, that satisfies an inequality, henceforth referred to as a \emph{weight function inequality}, of the form
\begin{equation}
w\left(\G_{p,q,r}\mu\right) \leqslant w(\mu) - \sum_{i=1}^{t'}c'_{i}\mu\left(\mathcal{C}'_{i}\right),\label{gen_ineq_form}
\end{equation} 
where $t' \in \mathbb{N}$, and $\mathcal{C}'_{i}$ is a cylinder set and $c'_{i}$ is a real-valued function (once again, a polynomial) in $p$, $q$ and $r$, for each $i \in \{1,2,\ldots,t'\}$. It must be ensured that 
\begin{equation}\label{desired_criterion}
c'_{i} \geqslant 0 \text{ for each } i \in \{1,2,\ldots,t'\} \text{ for each } (p,q,r) \in \mathcal{R}.
\end{equation}
Let $\mathcal{I}$ denote the set of all $i$, with $i \in \{1,2,\ldots,t'\}$, such that $c'_{i} > 0$ whenever $(p,q,r) \in \mathcal{R}$. If $\mu$ is stationary for $\G_{p,q,r}$ (so that $\G_{p,q,r}\mu = \mu$) and $\mathcal{I}$ is non-empty, by \eqref{desired_criterion} we conclude that $\mu\left(\mathcal{C}'_{i}\right) = 0$ for each $i \in \mathcal{I}$ whenever $(p,q,r) \in \mathcal{R}$. If the cylinder sets $\mathcal{C}'_{1}, \ldots, \mathcal{C}'_{t'}$ have been chosen judiciously enough, suitable set-theoretic operations performed on $\mathcal{C}'_{i}$, for $i \in \mathcal{I}$, allow us to conclude that $\mu\left((D)_{0}\right)=0$ whenever $(p,q,r) \in \mathcal{R}$ (here, $(D)_{0}$ indicates the cylinder set that consists of \emph{all} configurations $\eta = (\eta(x): x \in \mathbb{Z})$ with $\eta(0) = D$, i.e.\ in which the $0$-th coordinate is occupied by the symbol $D$). This concludes the primary idea propelling the proof of Theorem~\ref{thm:generalized_envelope_PCA_D}.

From here onward, we make use of some simplified notations, as follows. In its most general form, a cylinder set is indicated by $\left(a_{1},a_{2},\ldots,a_{\ell}\right)_{x_{1},x_{2},\ldots,x_{\ell}}$, where $a_{1}, a_{2}, \ldots, a_{\ell} \in \hat{\mathcal{A}}$ and $x_{1}, x_{2}, \ldots, x_{\ell} \in \mathbb{Z}$, signifying that this cylinder set comprises all tuples $\eta = (\eta(x): x \in \mathbb{Z})$ in which $\eta(x_{i}) = a_{i}$ for each $i \in \{1,2,\ldots,\ell\}$. When $x_{1}, x_{2}, \ldots, x_{\ell}$ are consecutive integers, i.e.\ $x_{i+1}=x_{i}+1$ for each $i \in \{1,2,\ldots,\ell-1\}$, we have 
\begin{equation}
\mu\left(\left(a_{1},a_{2},\ldots,a_{\ell}\right)_{0,1,\ldots,\ell-1}\right) = \mu\left(\left(a_{1},a_{2},\ldots,a_{\ell}\right)_{x_{1},x_{1}+1,\ldots,x_{1}+\ell-1}\right)\nonumber
\end{equation}
as $\mu$ is translation-invariant. This allows us to abbreviate $\mu\left(\left(a_{1},a_{2},\ldots,a_{\ell}\right)_{x_{1},x_{1}+1,\ldots,x_{1}+\ell-1}\right)$ as simply $\mu\left(\left(a_{1},a_{2},\ldots,a_{\ell}\right)\right)$ for \emph{any} $x_{1} \in \mathbb{Z}$. We further remove the commas and the outer parentheses and shorten $\mu\left(\left(a_{1},a_{2},\ldots,a_{\ell}\right)\right)$ to $\mu\left(a_{1}a_{2}\ldots a_{\ell}\right)$. We call a cylinder set, of the form $\left(a_{1},a_{2},\ldots,a_{\ell}\right)_{x_{1},x_{2},\ldots,x_{\ell}}$, \emph{$D$-inclusive} if $a_{i}=D$ for at least one $i \in \{1,2,\ldots,\ell\}$. 

We emphasize here that even proceeding via the approach outlined above, one may be able to come up with \emph{different} weight functions all of which serve the same purpose (i.e.\ satisfy an inequality of the form \eqref{gen_ineq_form}, along with the criterion stated in \eqref{desired_criterion}). Since the ultimate goal is to draw a conclusion about $\mu(D)$, it makes sense
\begin{enumerate*}
\item to consider each cylinder set $\mathcal{C}_{i}$ in \eqref{gen_form} to be $D$-inclusive, and
\item to begin our step-by-step construction of the weight function by setting $\mathcal{C}_{1} = (D)_{0}$ in \eqref{gen_form}.
\end{enumerate*}
The subsequent choices for $\mathcal{C}_{2}$, $\mathcal{C}_{3}$ etc.\ will be motivated in the subsubsections under \S\ref{sec:generalized_wt_fn_steps}. As seen in \S\ref{sec:generalized_wt_fn_steps}, in every step, up to and including the penultimate step, of constructing our weight function, we obtain a weight function inequality that, although of the same form as \eqref{gen_ineq_form}, fails to satisfy \eqref{desired_criterion}. This is what leads us to carry out an ``adjustment" of the weight function obtained until that step. In \S\ref{subsubsec:adjust_gen}, we present to the reader the heuristics of how such an adjustment is implemented, and how it contributes to satisfying a weight function inequality of the form \eqref{gen_ineq_form}. However, the implementation of this idea becomes much clearer when the reader goes through the specific details laid out in the various subsubsections of \S\ref{sec:generalized_wt_fn_steps}.

\subsubsection{How computations of the pushforward measures of various cylinder sets have been carried out in the sequel}\label{subsubsec:pushforward_computations} Letting $\mathcal{C}=(a_{1},a_{2},\ldots,a_{\ell})_{x_{1},x_{1}+1,\ldots,x_{1}+\ell-1}$ be a cylinder set, we denote by $\mu(a_{0}\mathcal{C})$ the probability of the cylinder set $\mathcal{C}'=(a_{0},a_{1},a_{2},\ldots,a_{\ell})_{x_{1}-1,x_{1},x_{1}+1,\ldots,x_{1}+\ell-1}$, and by $\mu(\mathcal{C}a_{\ell+1})$ the probability of the cylinder set $\mathcal{C}''=(a_{1},a_{2},\ldots,a_{\ell},a_{\ell+1})_{x_{1},x_{1}+1,\ldots,x_{1}+\ell-1,x_{1}+\ell}$. We now state the following lemma (whose proof is immediate and has been, therefore, omitted):
\begin{lemma}\label{lem:pushforward_1}
For any cylinder set $\mathcal{C}=(a_{1},a_{2},\ldots,a_{\ell})_{x_{1},x_{1}+1,\ldots,x_{1}+\ell-1}$, and any probability measure $\mu\in\mathcal{M}$ (i.e.\ $\mu$ is translation-invariant, reflection-invariant and measurable with respect to the $\sigma$-field $\mathcal{F}$ generated by all cylinder sets of $\Omega$), we can write
\begin{equation}
\mu(\mathcal{C})=\mu(W\mathcal{C})+\mu(D\mathcal{C})+\mu(L\mathcal{C})=\mu(\mathcal{C}W)+\mu(\mathcal{C}D)+\mu(\mathcal{C}L).\nonumber
\end{equation}
\end{lemma}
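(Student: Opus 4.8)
The plan is to read the notation of Lemma~\ref{lem:pushforward_1} back into its set-theoretic meaning and then apply finite additivity. Recall that, per the conventions set just before the lemma, $\mathcal{C}=(a_{1},a_{2},\ldots,a_{\ell})_{x_{1},x_{1}+1,\ldots,x_{1}+\ell-1}$ is the set of all $\eta\in\Omega$ with $\eta(x_{1}+i-1)=a_{i}$ for $1\leqslant i\leqslant\ell$, while $W\mathcal{C}$, $D\mathcal{C}$, $L\mathcal{C}$ denote the cylinder sets obtained by additionally prescribing the value at the coordinate $x_{1}-1$ to be $W$, $D$, $L$ respectively, and $\mathcal{C}W$, $\mathcal{C}D$, $\mathcal{C}L$ are obtained by prescribing the value at the coordinate $x_{1}+\ell$. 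Since the alphabet is $\hat{\mathcal{A}}=\{W,L,D\}$, every $\eta$ has $\eta(x_{1}-1)$ equal to \emph{exactly one} of the three symbols $W$, $D$, $L$; there is no fourth possibility to account for. This is the only (entirely trivial) point on which the argument rests.

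First I would observe that, as a consequence, the three sets $W\mathcal{C}$, $D\mathcal{C}$, $L\mathcal{C}$ are pairwise disjoint (they impose mutually exclusive conditions at $x_{1}-1$) and their union is all of $\mathcal{C}$ (each $\eta\in\mathcal{C}$ lands in exactly one of them according to the value $\eta(x_{1}-1)$). Hence
\begin{equation}
\mathcal{C}=(W\mathcal{C})\sqcup(D\mathcal{C})\sqcup(L\mathcal{C}),\nonumber
\end{equation}
a disjoint union of three members of $\mathcal{F}$. Applying finite additivity of the probability measure $\mu$ to this partition yields $\mu(\mathcal{C})=\mu(W\mathcal{C})+\mu(D\mathcal{C})+\mu(L\mathcal{C})$, which is the first asserted identity. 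The second identity is proved identically, now partitioning $\mathcal{C}$ according to the value at the coordinate $x_{1}+\ell$ rather than $x_{1}-1$: one gets $\mathcal{C}=(\mathcal{C}W)\sqcup(\mathcal{C}D)\sqcup(\mathcal{C}L)$ and again invokes finite additivity.

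There is no genuine obstacle here; the statement is immediate, which is precisely why the paper omits its proof. I would only remark that neither translation-invariance nor reflection-invariance of $\mu$ is actually used in this particular argument — the result holds for any finitely additive probability measure on $\mathcal{F}$ — and that the hypotheses defining $\mathcal{M}$ are invoked merely to keep the ambient class of measures uniform across the paper. The essential content is simply that refining a cylinder set by the state of one additional adjacent coordinate splits it into $|\hat{\mathcal{A}}|=3$ disjoint pieces.
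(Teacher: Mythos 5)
Your proof is correct and is exactly the immediate argument the paper intends when it omits the proof of Lemma~\ref{lem:pushforward_1}: partition $\mathcal{C}$ by the symbol at the single adjacent coordinate ($x_{1}-1$ or $x_{1}+\ell$) into three disjoint cylinder sets and apply finite additivity. Your side remark is also accurate — the identity holds for any probability measure on $\mathcal{F}$, with translation- and reflection-invariance of $\mu\in\mathcal{M}$ playing no role in this particular statement (they matter only for the paper's abbreviated notation $\mu(a_{1}a_{2}\ldots a_{\ell})$, which suppresses the anchoring coordinate).
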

Lemma~\ref{lem:pushforward_1} has been used repeatedly in the computations of \S\ref{sec:generalized_wt_fn_steps}. In addition, another general result has been used, which is captured by Lemma~\ref{lem:pushforward_2}. Before stating Lemma~\ref{lem:pushforward_2}, we introduce the notation $\mathcal{C}\in\{W,L,D\}^{i}$, for any $i\in\mathbb{N}$, to mean that $\mathcal{C}$ is a cylinder set of the form $(a_{0},a_{1},\ldots,a_{i-1})_{0,1,\ldots,i-1}$. Given cylinder sets $\mathcal{C}=(a_{0},a_{1},\ldots,a_{i-1})_{0,1,\ldots,i-1}$ and $\mathcal{D}=(b_{0},b_{1},\ldots,b_{i})_{0,1,\ldots,i}$, we denote by $\Prob[\mathcal{C}\big|\mathcal{D}]$ the probability of the event that $\G_{p,q,r}\eta(j)=a_{j}$ for each $j\in\{0,1,\ldots,i-1\}$, conditioned on the event that $\eta(j)=b_{j}$ for each $j\in\{0,1,\ldots,i\}$. 
\begin{lemma}\label{lem:pushforward_2}
For each $i\in\mathbb{N}$ and each $\mathcal{C}\in\{W,L,D\}^{i}$,
\begin{equation}
\G_{p,q,r}\mu(\mathcal{C})=\sum_{\mathcal{D}\in\{W,L,D\}^{i+1}}\Prob[\mathcal{C}\big|\mathcal{D}]\mu(\mathcal{D}).\nonumber
\end{equation}
\end{lemma}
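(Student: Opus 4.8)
The statement I need to prove is a marginalization identity: it expresses the probability, under the pushforward measure $\G_{p,q,r}\mu$, of a cylinder set $\mathcal{C}$ supported on the coordinates $\{0,1,\ldots,i-1\}$ as a sum over the cylinder sets $\mathcal{D}$ supported on the one-longer block $\{0,1,\ldots,i\}$, weighted by the transition probabilities $\Prob[\mathcal{C}\mid\mathcal{D}]$. The plan is to unpack the definition of the pushforward measure and exploit the locality of the PCA update rule, namely that the new state $\G_{p,q,r}\eta(j)$ depends only on $\big(\eta(j),\eta(j+1)\big)$ because the neighbourhood-marking set is $\mathcal{N}=\{0,1\}$.

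**Key steps.**
First I would write, by the definition of the pushforward of a measure under the map $\G_{p,q,r}$, that $\G_{p,q,r}\mu(\mathcal{C})$ equals the $\mu$-probability of the set of all configurations $\eta$ that get mapped into $\mathcal{C}$, i.e.\ configurations for which $\G_{p,q,r}\eta(j)=a_{j}$ for every $j\in\{0,1,\ldots,i-1\}$. Second, I would observe that because the update at each cell $j$ uses only $\eta(j)$ and $\eta(j+1)$, the event $\{\G_{p,q,r}\eta(j)=a_{j}\text{ for all }j\in\{0,\ldots,i-1\}\}$ depends on $\eta$ only through the restriction of $\eta$ to coordinates $\{0,1,\ldots,i\}$. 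Third, I would condition on this restriction: partitioning the underlying probability space according to which cylinder set $\mathcal{D}=(b_{0},b_{1},\ldots,b_{i})_{0,1,\ldots,i}$ in $\{W,L,D\}^{i+1}$ the configuration $\eta$ falls into, and using the law of total probability, gives
\begin{equation}
\G_{p,q,r}\mu(\mathcal{C})=\sum_{\mathcal{D}\in\{W,L,D\}^{i+1}}\Prob\big[\G_{p,q,r}\eta(j)=a_{j}\text{ for all }j\,\big|\,\eta\in\mathcal{D}\big]\,\mu(\mathcal{D}).\nonumber
\end{equation}
Fourth, I would identify the conditional probability appearing here with $\Prob[\mathcal{C}\mid\mathcal{D}]$, which is precisely the quantity defined just before the lemma statement, completing the derivation.

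**Where the real content lies.**
The only substantive point to justify carefully is the reduction in the second step: that the finite block $\{0,1,\ldots,i\}$ of coordinates is genuinely sufficient to determine the event defining $\mathcal{C}$, so that the conditioning on $\mathcal{D}\in\{W,L,D\}^{i+1}$ exhausts all the relevant randomness. This is immediate from the form of the stochastic update rule \eqref{general_update_rule_eq} with $\mathcal{N}=\{0,1\}$: the cell at position $i-1$ needs the states at positions $i-1$ and $i$, and no cell in $\{0,\ldots,i-1\}$ reaches beyond coordinate $i$. I would also note that the conditional independence of the updates across distinct cells (inherent in the PCA definition) is what makes $\Prob[\mathcal{C}\mid\mathcal{D}]$ factor as the relevant product of single-cell transition probabilities, although the lemma as stated does not require writing out that product. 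Because each step is a direct application of the definition of pushforward together with the locality and independence built into the PCA, there is no genuine obstacle here; this is why the authors remark that the proof is immediate and omit it. The main thing to get right in a written-out proof would simply be bookkeeping of the index ranges, ensuring that the $(i+1)$-fold cylinder sets $\mathcal{D}$ are exactly the right conditioning events for the $i$-fold cylinder set $\mathcal{C}$.
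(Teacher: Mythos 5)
Your proof is correct and is exactly the argument the paper has in mind: the paper states Lemma~\ref{lem:pushforward_2} without proof, treating it as immediate, and your three ingredients (definition of the pushforward measure, locality of the update rule forced by $\mathcal{N}=\{0,1\}$ so that the block $\{0,1,\ldots,i\}$ determines the event, and the law of total probability over the partition into cylinder sets $\mathcal{D}\in\{W,L,D\}^{i+1}$) are precisely the standard route, with the conditional probability matching the paper's definition of $\Prob[\mathcal{C}\big|\mathcal{D}]$. No gaps; nothing further is needed.
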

When applying Lemma~\ref{lem:pushforward_2}, such as in the proofs of Lemmas~\ref{lem:pushforward_D} and \ref{lem:pushforward_WD}, we often simply write $\Prob[a_{0}a_{1}\ldots a_{i-1}\big|b_{0}b_{1}\ldots b_{i}]$ in place of $\Prob[\mathcal{C}\big|\mathcal{D}]$.

\subsubsection{How each adjustment to the weight function has been carried out in the sequel}\label{subsubsec:adjust_gen}
Let us denote by $w_{i-1}$ the weight function we have constructed up until the start of the $i$-th adjustment (thus, the very first weight function we begin with is denoted by $w_{0}$, the weight function obtained after the first adjustment is denoted by $w_{1}$ and so on). The weight function inequality at the \emph{beginning} of the $i$-th adjustment will then be of the form 
\begin{equation}
w_{i-1}\left(\G_{p,q,r}\mu\right) \leqslant w_{i-1}(\mu) - \sum_{t=1}^{k_{i-1}}\alpha_{i-1,t}\mu\left(\mathcal{C}_{i-1,t}\right),\label{gen_wt_fn_ineq}
\end{equation}
for some $k_{i-1} \in \mathbb{N}$, coefficients $\alpha_{i-1,1}, \ldots, \alpha_{i-1,k_{i-1}}$ that are polynomials in $p$, $q$ and $r$, and cylinder sets $\mathcal{C}_{i-1,1}, \ldots, \mathcal{C}_{i-1,k_{i-1}}$. However, if there exists at least one coefficient among $\alpha_{i-1,1}, \ldots, \alpha_{i-1,k_{i-1}}$ that is negative when $(p,q,r) \in \mathcal{R}$, the inequality \eqref{gen_wt_fn_ineq} is unable to fulfill \eqref{desired_criterion} when $(p,q,r) \in \mathcal{R}$, thereby necessitating an adjustment to the weight function $w_{i-1}$ derived so far. We now define, for suitably chosen cylinder sets $\mathcal{D}_{i,1}$, $\ldots$, $\mathcal{D}_{i,\ell_{i}}$, and suitably chosen coefficients $\beta_{i,1}$, $\ldots$, $\beta_{i,\ell_{i}}$ that are polynomials in $p$, $q$ and $r$, the updated / adjusted weight function
\begin{equation}
w_{i}(\mu) = w_{i-1}(\mu) - \sum_{t=1}^{\ell_{i}}\beta_{i,t}\mu\left(\mathcal{D}_{i,t}\right).\label{adjust_gen_form}
\end{equation}
When incorporated into \eqref{gen_wt_fn_ineq}, $w_{i}$ yields the updated weight function inequality (obtained at the \emph{end} of the $i$-th adjustment):
\begin{align}
{}&w_{i}\left(\G_{p,q,r}\mu\right) + \sum_{t=1}^{\ell_{i}}\beta_{i,t}\G_{p,q,r}\mu\left(\mathcal{D}_{i,t}\right) \leqslant w_{i}(\mu) + \sum_{t=1}^{\ell_{i}}\beta_{i,t}\mu\left(\mathcal{D}_{i,t}\right) - \sum_{t=1}^{k_{i-1}}\alpha_{i-1,t}\mu\left(\mathcal{C}_{i-1,t}\right)\nonumber\\
\Longleftrightarrow{}& w_{i}\left(\G_{p,q,r}\mu\right) \leqslant w_{i}(\mu) + \sum_{t=1}^{\ell_{i}}\beta_{i,t}\mu\left(\mathcal{D}_{i,t}\right) - \sum_{t=1}^{k_{i-1}}\alpha_{i-1,t}\mu\left(\mathcal{C}_{i-1,t}\right) - \sum_{t=1}^{\ell_{i}}\beta_{i,t}\G_{p,q,r}\mu\left(\mathcal{D}_{i,t}\right).\label{ith_wt_fn_ineq}
\end{align}
Attempt is usually made to select $\mathcal{D}_{i,1}, \ldots, \mathcal{D}_{i,\ell_{i}}$ as subsets of elements from the set 
\begin{equation}\label{non-negative_coefficient_subset}
\left\{\mathcal{C}_{i-1,t}: 1 \leqslant t \leqslant k_{i-1} \text{ and } \alpha_{i-1,t} > 0\right\},
\end{equation}
with the added restriction that if $\mathcal{D}_{i,t} \subseteq \mathcal{C}_{i-1,t'}$ for some $\mathcal{C}_{i-1,t'}$ belonging to the set in \eqref{non-negative_coefficient_subset}, then $\beta_{i,t} \leqslant \alpha_{i-1,t'}$, so that the coefficient of $\mu\left(\mathcal{C}_{i-1,t'}\right)$ in the right side of \eqref{ith_wt_fn_ineq} remains non-positive. Attempt is also made to select $\mathcal{D}_{i,1}, \ldots, \mathcal{D}_{i,\ell_{i}}$ in such a manner that terms from the sum $-\sum_{t=1}^{\ell_{i}}\beta_{i,t}\G_{p,q,r}\mu\left(\mathcal{D}_{i,t}\right)$ are able to aid in negating terms of the form $-\alpha_{i-1,t}\mu\left(\mathcal{C}_{i-1,t}\right)$ in which $\alpha_{i-1,t} < 0$ when $(p,q,r) \in \mathcal{R}$. 

The above gives a broad overview of 
\begin{enumerate*}
\item why adjustments to the weight function, in several rounds, are required, and
\item how they are usually accomplished.
\end{enumerate*}
The above idea is reiterated until the final weight function inequality is of the form given in \eqref{gen_ineq_form} and satisfies the criterion in \eqref{desired_criterion}.

\subsection{The final weight function obtained for the generalized percolation game when $(p,q,r) \in \mathcal{R}$}\label{subsec:final_wt_fn_generalized}
We state here, for the reader's convenience, the final weight functions obtained, and the corresponding weight function inequalities, of the form \eqref{gen_ineq_form} (and obeying the criterion stated in \eqref{desired_criterion}), that these weight functions satisfy when $(p,q,r) \in \mathcal{R}$. The final weight function that aids us in proving Theorem~\ref{thm:generalized_envelope_PCA_D}, and, in turn, Theorem~\ref{thm:three-parameter}, is 
\begin{equation}
w(\mu)=\mu(D)+\mu(WD)+\mu(LWD)\label{w_regime_1}
\end{equation}
when \eqref{three_cond_1} is true, and
\begin{align}\label{w_{1}}
w(\mu)={}&\mu(D)+\mu(WD)+\{4r-2(p+q)+(p+q)^{2}-6r^{2}-3r(p+2q)\}\mu(LD)+(1-p-q)(1-r)\nonumber\\&(-2r+r^{2}+3pr+3qr+r^{3})\mu(LDW)+(1-p-q)(1-r)(-1+p+q+r^{2}+2qr)\mu(LDL)\nonumber\\&-[2-(3+p+q+r-2q^{2}-2r^{2}-5qr+2pq+pr+3pqr+4q^{2}r+4qr^{2}+5pr^{2}-2p^{2}r\nonumber\\&-5pqr^{2}-3p^{2}r^{2}-2q^{2}r^{2})(1-p-q)(1-r)]\mu(LWD)
\end{align}
when any one of \eqref{three_cond_2}, \eqref{three_cond_3} and \eqref{three_cond_4} is true. The weight function inequality that each of the two functions in \eqref{w_regime_1} and \eqref{w_{1}} satisfies (and which is of the form \eqref{gen_ineq_form}) is given by
\begin{align}\label{generalized_final_wt_fn_ineq}
w(\G_{p,q,r}\mu)\leqslant{}&w(\mu)-[2-(1-p-q)(1-r)\{2+2p+q+r-q^{2}-r^{2}-3qr+pq+2q^{2}r+2qr^{2}+pqr+\nonumber\\&2pr^{2}-p^{2}r-2pqr^{2}-p^{2}r^{2}-q^{2}r^{2}\}]\{\mu(WD)-\mu(WWWD)-\mu(DWDD)-\mu(LWD)\}\nonumber\\&-\{1-(1+p+q+pq+pr-q^{2}-qr-p^{2}r+q^{2}r)(1-p-q)(1-r)(1+r)\}\mu(DD)\nonumber\\&-r^{2}(1-r)^{2}(1-p-q)^{2}\mu(WLD)-r^{2}(1-r)^{2}(1-p-q)^{2}\mu(DLD).
\end{align}

Let us now deduce, from \eqref{generalized_final_wt_fn_ineq}, our desired conclusion, i.e.\ that $\mu(D)=0$ whenever $\mu$ is a translation-invariant and reflection-invariant stationary distribution for $\G_{p,q,r}$. To this end, we shall focus on the term involving $\mu(DD)$ in \eqref{generalized_final_wt_fn_ineq}. Expanding its coefficient, we obtain:
\begin{align}
{}&-1+(1+p+q+pq+pr-q^{2}-qr-p^{2}r+q^{2}r)(1-p-q)(1-r)(1+r)\nonumber\\
={}&-p^{3}r^{3}-p^{2}q\{1-r-r^{2}+r^{3}\}-p^{2}r\{2-r-2r^{2}-p\}-pq^{2}r(1-r^{2})-pr^{3}-2q^{2}r^{3}-q^{3}r\{1+r-r^{2}\}\nonumber\\&-qr\{1-pr-2q-r^{2}-2qr\}-q^{2}(1-q)-p(p+q-r)-q^{2}-r^{2}.\label{DD_coeff_expanded}
\end{align}
Since \eqref{three_cond_universal} is assumed to hold no matter which of \eqref{three_cond_1}, \eqref{three_cond_2}, \eqref{three_cond_3} and \eqref{three_cond_4} $(p,q,r)$ satisfies, we have
\begin{align}
{}&2(p+q)-(p+q)^{2}+3r(p+2q) \geqslant 4r-6r^{2}
\Longleftrightarrow p+q-r \geqslant r\left(1-3r-\frac{3p}{2}-3q\right)+\frac{1}{2}(p+q)^{2},\nonumber
\end{align}
and this lower bound on the right side of the inequality above is strictly positive as long as $p$, $q$ and $r$ are sufficiently small but at least one of them is strictly positive. Note, furthermore, that if $p=0$, the term $-p(p+q-r)$ disappears from \eqref{DD_coeff_expanded}, but $-q^{2}-r^{2}$ remains, and this is strictly negative whenever at least one of $q$ and $r$ is non-zero. All these observations show us that when $p$, $q$ and $r$ are sufficiently small and \eqref{three_cond_universal} holds, the expression in \eqref{DD_coeff_expanded} is strictly negative as long as at least one of $p$, $q$ and $r$ is strictly positive (recall, from \S\ref{sec:formal_defns_main_results}, that the condition $p+q+r > 0$ is included in the definition of the parameter-space $\Theta$). Consequently, when $\mu$ is stationary for $\G_{p,q,r}$, we deduce, from \eqref{generalized_final_wt_fn_ineq} and the identity $\G_{p,q,r}\mu=\mu$, that $\mu(DD)=0$.

Since $\G_{p,q,r}\mu=\mu$ for any stationary $\mu$, hence $\mu(DD)=0 \implies \G_{p,q,r}\mu(DD)=0$. Setting $\mathcal{C}=(D,D)_{0,1}$, and considering $\mathcal{D}\in\{(W,D,W)_{0,1,2},(W,D,L)_{0,1,2},(L,D,L)_{0,1,2}\}$, an application of Lemma~\ref{lem:pushforward_2} yields the inequality 
\begin{equation}
(1-p-q)^{2}(1-r)^{2}\{\mu(WDW)+2r\mu(WDL)+r^{2}\mu(LDL)\} \leqslant \G_{p,q,r}\mu(DD),\nonumber
\end{equation}
so that when $r > 0$, we have 
\begin{equation}
\G_{p,q,r}\mu(DD)=0 \implies \mu(WDW)=\mu(WDL)=\mu(LDL)=0.\nonumber
\end{equation}
Moreover, we have $\mu(WDD)=\mu(LDD)=\mu(DDD)=0$ since each of them is bounded above by $\mu(DD)$ (follows from Lemma~\ref{lem:pushforward_1}), and $\mu(DD)=0$. Since $\mu(D)=\mu(WDW)+\mu(LDL)+\mu(DDD)+2\mu(WDL)+2\mu(WDD)+2\mu(LDD)$ (by Lemma~\ref{lem:pushforward_1} and the reflection invariance of $\mu$), we have $\mu(D)=0$, as desired.

Note that, when $r=0$, we are back to the set-up considered in \cite{holroyd2019percolation}, and it has already been established there that the associated PCA, $\G_{p,q,0}$, is ergodic whenever $p+q > 0$.

\subsection{Detailed construction of the weight function for generalized percolation games when $(p,q,r) \in \mathcal{R}$}\label{sec:generalized_wt_fn_steps}
Recall that $\mathcal{R} \subset \Theta$ comprises all those values of $(p,q,r)$ that satisfy the constraints stated in Theorem~\ref{thm:three-parameter}, i.e.\ $p$, $q$ and $r$ are sufficiently small, the inequality in \eqref{three_cond_universal} holds, and precisely one of \eqref{three_cond_1}, \eqref{three_cond_2}, \eqref{three_cond_3} and \eqref{three_cond_4} is true. Throughout \S\ref{sec:generalized_wt_fn_steps}, it is assumed that the probability measure $\mu$ that we deal with is an element of $\mathcal{M}$, i.e.\ it is defined on the state space $\Omega=\hat{\mathcal{A}}^{\mathbb{Z}}=\{W,L,D\}^{\mathbb{Z}}$ of $\G_{p,q,r}$, with respect to the $\sigma$-field $\mathcal{F}$ generated by all cylinder sets of $\Omega$, and that $\mu$ is both reflection-invariant and translation-invariant. Recall, from the discussion in the paragraph preceding \S\ref{subsubsec:adjust_gen}, that beginning the construction of our weight function with the cylinder set $\mathcal{C}_{1}=(D)_{0}$ seems to be a sensible and natural choice. Since, at each step, we must obtain an inequality of the form given by \eqref{gen_ineq_form}, it is crucial that we write down the pushforward measure $\G_{p,q,r}\mu(\mathcal{C}_{i})$ for each cylinder set $\mathcal{C}_{i}$ that we incorporate into our gradually-unfolding weight function. Let us, therefore, begin with the following lemma, which captures $\G_{p,q,r}\mu(\mathcal{C}_{1})$ for $\mathcal{C}_{1}=(D)_{0}$:
\begin{lemma}\label{lem:pushforward_D}
For each $\mu\in\mathcal{M}$, we have
\begin{equation}
\G_{p,q,r}\mu(D)=2(1-p-q)(1-r)\mu(WD)+2(1-p-q)r(1-r)\mu(LD)+(1-p-q)(1-r^{2})\mu(DD).\label{D}
\end{equation}
\end{lemma}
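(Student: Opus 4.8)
The plan is to apply Lemma~\ref{lem:pushforward_2} directly to the single-cell cylinder set $\mathcal{C}=(D)_{0}$, which lies in $\{W,L,D\}^{1}$ (so $i=1$ in the notation of that lemma). This rewrites $\G_{p,q,r}\mu(D)$ as a sum over the nine two-cell cylinder sets $\mathcal{D}=(a_{0},a_{1})_{0,1}$, each weighted by the conditional probability $\Prob[D\,|\,a_{0}a_{1}]$ read off from the stochastic update rules \eqref{GPCA_rule_1}--\eqref{GPCA_rule_6}. The essential simplification is that the output state $D$ can only arise from a neighbourhood that already contains a $D$: the rules \eqref{GPCA_rule_1}, \eqref{GPCA_rule_2} and \eqref{GPCA_rule_3}, which govern the neighbourhoods $(W,W)$, $(W,L)$, $(L,W)$ and $(L,L)$, assign positive probability only to outputs in $\{W,L\}$, so $\Prob[D\,|\,a_{0}a_{1}]=0$ for each of these four. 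Only the five $D$-inclusive neighbourhoods therefore contribute.

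Next I would record the surviving coefficients from \eqref{GPCA_rule_4}, \eqref{GPCA_rule_5} and \eqref{GPCA_rule_6}, namely $\Prob[D\,|\,WD]=\Prob[D\,|\,DW]=(1-p-q)(1-r)$, $\Prob[D\,|\,LD]=\Prob[D\,|\,DL]=(1-p-q)r(1-r)$, and $\Prob[D\,|\,DD]=(1-p-q)(1-r^{2})$. Substituting these into the sum furnished by Lemma~\ref{lem:pushforward_2} gives
\begin{align}
\G_{p,q,r}\mu(D)={}&(1-p-q)(1-r)\{\mu(WD)+\mu(DW)\}\nonumber\\
&+(1-p-q)r(1-r)\{\mu(LD)+\mu(DL)\}\nonumber\\
&+(1-p-q)(1-r^{2})\mu(DD).\nonumber
\end{align}

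The factors of $2$ appearing in \eqref{D} then come from the reflection invariance of $\mu$. The reflection $\mathfrak{R}$ sends the cylinder set $(W,D)_{0,1}$ to the set specifying state $W$ at coordinate $0$ and state $D$ at coordinate $-1$; combined with translation invariance, this yields $\mu(WD)=\mu(DW)$, and the identical argument gives $\mu(LD)=\mu(DL)$. Collapsing the two paired terms produces exactly the identity \eqref{D}. I do not expect any substantive obstacle here, as the whole argument is essentially bookkeeping; the only points requiring care are the correct identification of the three rules \eqref{GPCA_rule_4}, \eqref{GPCA_rule_5} and \eqref{GPCA_rule_6} as the sole sources of a $D$ output, and the symmetric pairing of the neighbourhoods $(W,D)$ with $(D,W)$ and $(L,D)$ with $(D,L)$ via reflection invariance.
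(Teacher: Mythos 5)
Your proposal is correct and follows essentially the same route as the paper's own proof: both apply Lemma~\ref{lem:pushforward_2} to the single-cell cylinder set for $D$, observe that only the $D$-inclusive neighbourhoods governed by \eqref{GPCA_rule_4}, \eqref{GPCA_rule_5} and \eqref{GPCA_rule_6} contribute, and then invoke reflection invariance (with translation invariance) to identify $\mu(WD)=\mu(DW)$ and $\mu(LD)=\mu(DL)$, producing the factors of $2$. No gaps.
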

\begin{proof}
We apply Lemma~\ref{lem:pushforward_2} to the cylinder set $(D)_{1}$, obtain the conditional probabilities $\Prob[D\big|WD], \ldots, \Prob[D\big|DD]$ from \eqref{GPCA_rule_4}, \eqref{GPCA_rule_5} and \eqref{GPCA_rule_6}, and use of the assumption that $\mu$ is reflection-invariant, to obtain:
\begin{align}
\G_{p,q,r}\mu(D)={}&(1-p-q)(1-r)\left\{\mu(WD)+\mu(DW)\right\}+(1-p-q)r(1-r)\left\{\mu(LD)+\mu(DL)\right\}\nonumber\\&+(1-p-q)(1-r^{2})\mu(DD)\nonumber\\
={}&2(1-p-q)(1-r)\mu(WD)+2(1-p-q)r(1-r)\mu(LD)+(1-p-q)(1-r^{2})\mu(DD),\nonumber
\end{align}
as claimed.
\end{proof} 

We now explain the intuition behind our choice of the second cylinder set, $\mathcal{C}_{2}$, as we attempt to shape up our weight function in the form given by \eqref{gen_form}. The coefficient $2(1-p-q)r(1-r)$ of $\mu(LD)$ in \eqref{D} is always bounded above by $1$ (in fact, by $1/2$), since $r(1-r) \leqslant 1/4$ and $(1-p-q) \leqslant 1$. The coefficient $(1-p-q)(1-r^{2})$ of $\mu(DD)$ in \eqref{D} is obviously bounded above by $1$. On the other hand, when $p$, $q$ and $r$ are all sufficiently small, the coefficient $2(1-p-q)(1-r)$ of $\mu(WD)$ in \eqref{D} is very close to $2$. We can thus write, using \eqref{D} and the fact that $\mu(D)=\mu(WD)+\mu(LD)+\mu(DD)$, the following inequality: $\G_{p,q,r}\mu(D) \leqslant \mu(D)+\mu(WD)$. Comparing this with \eqref{gen_ineq_form}, it becomes evident that if $c_{1}=1$ and $\mathcal{C}_{1}=(D)_{0}$ in \eqref{gen_form}, a sensible choice for our next cylinder set and the corresponding coefficient would be $\mathcal{C}_{2}=(W,D)_{0,1}$ and $c_{2}=1$. This now requires us to find the pushforward measure $\G_{p,q,r}\mu(WD)$:
\begin{lemma}\label{lem:pushforward_WD}
For any $\mu\in\mathcal{M}$, we have
\begin{align}
\G_{p,q,r}\mu(WD)={}&2p(1-p-q)(1-r)\mu(WD)+p(1-p-q)(1-r)(1+r)\mu(DD)+(1-p-q+r+pr-qr\nonumber\\&-r^{2}+pr^{2}+qr^{2}-r^{3}+pr^{3}+qr^{3})(1-p-q)(1-r)\mu(LD)-r(1-p-q)^{2}(1-r)^{2}\mu(LDW)\nonumber\\&-(1-r)^{2}(1-p-q)^{2}\mu(LDL)+(1-p-q)^{2}(1-r)^{2}\mu(LWD)-r^{2}(1-r)^{2}(1-p-q)^{2}\nonumber\\&\mu(WLD)-r^{2}(1-r)^{2}(1-p-q)^{2}\mu(DLD).\label{WD}
\end{align}
\end{lemma}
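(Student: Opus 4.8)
The plan is to apply Lemma~\ref{lem:pushforward_2} directly to the cylinder set $\mathcal{C}=(W,D)_{0,1}$, expanding $\G_{p,q,r}\mu(WD)$ as a sum over all length-three cylinder sets $\mathcal{D}=(b_0,b_1,b_2)_{0,1,2}\in\{W,L,D\}^3$. Since, in the PCA $\G_{p,q,r}$, the updates at cells $0$ and $1$ are conditionally independent given the configuration, each conditional probability factors as $\Prob[WD\mid b_0b_1b_2]=\widehat{\varphi}_{p,q,r}(b_0,b_1,W)\,\widehat{\varphi}_{p,q,r}(b_1,b_2,D)$. The first observation that tames the bookkeeping is that $\widehat{\varphi}_{p,q,r}(b_1,b_2,D)$ is nonzero only when $(b_1,b_2)$ is one of the five $D$-producing neighbourhoods $(W,D)$, $(D,W)$, $(L,D)$, $(D,L)$, $(D,D)$, as read off from \eqref{GPCA_rule_4}, \eqref{GPCA_rule_5} and \eqref{GPCA_rule_6}; this immediately restricts the sum to at most fifteen triples, whose remaining factors $\widehat{\varphi}_{p,q,r}(b_0,b_1,W)$ are supplied by \eqref{GPCA_rule_1}--\eqref{GPCA_rule_6}.

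The key algebraic device I would use to organize the remaining sum is to write $\widehat{\varphi}_{p,q,r}(b_0,b_1,W)=p+\psi(b_0,b_1)$, where $\psi$ collects the non-$p$ part of the $W$-update rule (so $\psi$ vanishes on $(W,W)$, $(W,D)$, $(D,W)$, $(D,D)$, equals $(1-p-q)(1-r)$ on $(W,L)$, $(L,W)$, $(L,D)$, $(D,L)$, and equals $(1-p-q)(1-r^2)$ on $(L,L)$). The sum then splits into a ``$p$-part'' and a ``$\psi$-part''. In the $p$-part, summing over $b_0$ via Lemma~\ref{lem:pushforward_1} collapses $\sum_{b_0}\mu(b_0b_1b_2)$ to $\mu(b_1b_2)$, so that the $p$-part equals $p\sum_{(b_1,b_2)}\widehat{\varphi}_{p,q,r}(b_1,b_2,D)\mu(b_1b_2)=p\,\G_{p,q,r}\mu(D)$; substituting the value of $\G_{p,q,r}\mu(D)$ from \eqref{D} produces exactly the $\mu(WD)$ and $\mu(DD)$ terms of \eqref{WD}, together with a $2p(1-p-q)r(1-r)\mu(LD)$ contribution.

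The $\psi$-part, after one reads off the surviving triples, yields seven length-three terms: multiples of $\mu(LWD)$, $\mu(WLD)$, $\mu(LLD)$, $\mu(DLD)$, $\mu(LDW)$, $\mu(LDL)$ and $\mu(LDD)$. To reconcile this with the five three-letter terms appearing in \eqref{WD}, I would apply Lemma~\ref{lem:pushforward_1} in the two reductions $\mu(LLD)=\mu(LD)-\mu(WLD)-\mu(DLD)$ and $\mu(LDD)=\mu(LD)-\mu(LDW)-\mu(LDL)$, which convert $\mu(LLD)$ and $\mu(LDD)$ into combinations of the retained cylinder sets plus a $\mu(LD)$ contribution. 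The telescoping between the $(1-r)$ and $(1-r^2)=(1-r)(1+r)$ factors is precisely what turns the naively positive coefficients into the $-r^2(1-p-q)^2(1-r)^2$, $-r(1-p-q)^2(1-r)^2$ and $-(1-p-q)^2(1-r)^2$ coefficients of \eqref{WD}; meanwhile all the $\mu(LD)$ contributions collected from the $p$-part, from $\mu(LLD)$ and from $\mu(LDD)$ assemble into the single polynomial coefficient displayed in \eqref{WD}, as a short factorization check (comparing the two expressions divided by the common factor $(1-p-q)(1-r)$) confirms. Throughout, reflection invariance of $\mu$ is used to identify mirror-image two-letter probabilities such as $\mu(DW)=\mu(WD)$ and $\mu(DL)=\mu(LD)$. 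I expect the main obstacle to be purely organizational: recognizing that exactly those two Lemma~\ref{lem:pushforward_1} reductions (and no others) are needed to eliminate the spurious $\mu(LLD)$ and $\mu(LDD)$ terms and to generate the correct signs, since a careless expansion leaves a tangle of seven three-letter terms with the wrong coefficients.
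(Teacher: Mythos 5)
Your proof is correct, and the algebra checks out: the decomposition $\widehat{\varphi}_{p,q,r}(b_{0},b_{1},W)=p+\psi(b_{0},b_{1})$ is valid on all nine pairs, the ``$p$-part'' does collapse to $p\,\G_{p,q,r}\mu(D)$ (summing over $b_{0}$ via Lemma~\ref{lem:pushforward_1} and then recognizing the remaining sum as the $i=1$ case of Lemma~\ref{lem:pushforward_2}), the $\psi$-part produces exactly the seven terms you list, and after the two reductions $\mu(LLD)=\mu(LD)-\mu(WLD)-\mu(DLD)$ and $\mu(LDD)=\mu(LD)-\mu(LDW)-\mu(LDL)$ the coefficients $-r^{2}(1-p-q)^{2}(1-r)^{2}$, $-r(1-p-q)^{2}(1-r)^{2}$ and $-(1-p-q)^{2}(1-r)^{2}$ emerge exactly as you describe, while $2pr(1-p-q)(1-r)+(1-p-q)^{2}(1-r)^{2}(1+r)^{2}$ indeed equals the displayed $\mu(LD)$ coefficient in \eqref{WD}. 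Where you differ from the paper: its proof of Lemma~\ref{lem:pushforward_WD} performs the full expansion of Lemma~\ref{lem:pushforward_2} over all fifteen admissible triples $(b_{0},b_{1},b_{2})$, merges mirror-image terms by reflection invariance, and then groups the summands by hand (its underbraced groups) so that Lemma~\ref{lem:pushforward_1} collapses each group into $\mu(DW)$, $\mu(DD)$, $\mu(LD)$, $\mu(WD)$ plus the leftover negative three-letter terms; it never invokes Lemma~\ref{lem:pushforward_D}. Your $p+\psi$ split is a genuine organizational shortcut: it absorbs the entire $p$-part in one stroke by recycling the already-proved \eqref{D}, leaves only seven terms to track instead of fifteen, and makes structurally transparent why the $\mu(WD)$ and $\mu(DD)$ coefficients of \eqref{WD} are exactly $p$ times the corresponding coefficients of \eqref{D}. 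The trade-off is that your argument depends on Lemma~\ref{lem:pushforward_D} (and hence on the reflection invariance already baked into \eqref{D}), whereas the paper's computation is self-contained; as bookkeeping, however, yours is cleaner and less error-prone.
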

\begin{proof}
We apply Lemma~\ref{lem:pushforward_2}, keeping in mind that the only way the event $\{\G_{p,q,r}\eta(1)=D\}$ happens with positive probability is if at least one of $\eta(1)$ and $\eta(2)$ equals $D$. For $\mathcal{D}=(b_{0},b_{1},b_{2})_{0,1,2}\in\{W,L,D\}^{3}$, computing the conditional probabilities 
\begin{equation}
\Prob[WD\big|\mathcal{D}]=\Prob\left[\G_{p,q,r}\eta(0)=W\big|\eta(0)=b_{0},\eta(1)=b_{1}\right]\Prob\left[\G_{p,q,r}\eta(1)=D\big|\eta(1)=b_{1},\eta(2)=b_{2}\right]\nonumber
\end{equation} 
by making use of \eqref{GPCA_rule_1} through \eqref{GPCA_rule_6}, we obtain:
\begin{align}
\G_{p,q,r}\mu(WD)={}&p(1-p-q)(1-r)\mu(WDW)+p(1-p-q)(1-r)\mu(DDW)+\{p+(1-p-q)(1-r)\}\nonumber\\&(1-p-q)(1-r)\mu(LDW)+p(1-p-q)(1-r^{2})\mu(WDD)+p(1-p-q)(1-r^{2})\mu(DDD)\nonumber\\&+\{p+(1-p-q)(1-r)\}(1-p-q)(1-r^{2})\mu(LDD)+p(1-p-q)r(1-r)\mu(WDL)\nonumber\\&+p(1-p-q)r(1-r)\mu(DDL)+\{p+(1-p-q)(1-r)\}(1-p-q)r(1-r)\mu(LDL)\nonumber\\&+p(1-p-q)(1-r)\mu(WWD)+p(1-p-q)(1-r)\mu(DWD)+\{p+(1-p-q)(1-r)\}\nonumber\\&(1-p-q)(1-r)\mu(LWD)+\{p+(1-p-q)(1-r)\}(1-p-q)r(1-r)\mu(WLD)+\nonumber\\&\{p+(1-p-q)(1-r)\}(1-p-q)r(1-r)\mu(DLD)+\{p+(1-p-q)(1-r^{2})\}(1-p-q)\nonumber\\&r(1-r)\mu(LLD)\nonumber\\
={}&p(1-p-q)(1-r)\mu(WDW)+p(1-p-q)(1-r)(2+r)\mu(DDW)+\{1-q-r+qr+2pr\}\nonumber\\&(1-p-q)(1-r)\mu(LDW)+p(1-p-q)(1-r^{2})\mu(DDD)+\{1-q-r^{2}+2pr+qr^{2}+pr^{2}\}\nonumber\\&(1-p-q)(1-r)\mu(DDL)+(1-q-r+pr+qr)(1-p-q)r(1-r)\mu(LDL)\nonumber\\&+p(1-p-q)(1-r)\mu(WWD)+p(1-p-q)(1-r)\mu(DWD)+(1-q-r+pr+qr)\nonumber\\&(1-p-q)(1-r)\mu(LWD)+(1-q-r+pr+qr)(1-p-q)r(1-r)\mu(WLD)+\nonumber\\&(1-q-r+pr+qr)(1-p-q)r(1-r)\mu(DLD)+(1-q-r^{2}+pr^{2}+qr^{2})(1-p-q)\nonumber\\&r(1-r)\mu(LLD) \quad (\text{using reflection-invariance})\nonumber\\
={}&\underbrace{p(1-p-q)(1-r)\mu(WDW)+p(1-p-q)(1-r)\mu(DDW)}_{(1)}\nonumber\\&\underbrace{+p(1-p-q)(1-r)(1+r)\mu(WDD)}_{(2)}\underbrace{+p(1-p-q)(1-r)\mu(LDW)}_{(1)}\nonumber\\&\underbrace{+(1-p-q-r+qr+2pr)(1-p-q)(1-r)\mu(LDW)}_{(3)}\underbrace{+p(1-p-q)(1-r^{2})\mu(DDD)}_{(2)}\nonumber\\&\underbrace{+p(1-p-q)(1-r)(1+r)\mu(LDD)}_{(2)}\underbrace{+(1-p-q-r^{2}+pr+pr^{2}+qr^{2})(1-p-q)(1-r)\mu(LDD)}_{(3)}\nonumber\\&\underbrace{+(1-q-r+pr+qr)(1-p-q)r(1-r)\mu(LDL)}_{(3)}\underbrace{+p(1-p-q)(1-r)\mu(WWD)}_{(4)}\nonumber\\&\underbrace{+p(1-p-q)(1-r)\mu(DWD)+p(1-p-q)(1-r)\mu(LWD)}_{(4)}+(1-p-q)^{2}(1-r)^{2}\mu(LWD)\nonumber\\&\underbrace{+(1-q-r^{2}+pr^{2}+qr^{2})(1-p-q)r(1-r)\mu(WLD)}_{(5)}-r^{2}(1-r)^{2}(1-p-q)^{2}\mu(WLD)\nonumber\\&\underbrace{+(1-q-r^{2}+pr^{2}+qr^{2})(1-p-q)r(1-r)\mu(DLD)}_{(5)}-r^{2}(1-r)^{2}(1-p-q)^{2}\mu(DLD)\nonumber\\&\underbrace{+(1-q-r^{2}+pr^{2}+qr^{2})(1-p-q)r(1-r)\mu(LLD)}_{(5)}\nonumber\\
={}&\underbrace{p(1-p-q)(1-r)\mu(DW)}_{\text{combining terms underbraced by }(1)}\underbrace{+p(1-p-q)(1-r)(1+r)\mu(DD)}_{\text{combining terms underbraced by }(2)}\nonumber\\&\underbrace{+(1-p-q-r^{2}+pr+pr^{2}+qr^{2})(1-p-q)(1-r)\mu(LD)}_{\text{combining terms underbraced by }(3)}-r(1-p-q)^{2}(1-r)^{2}\mu(LDW)\nonumber\\&-(1-r)^{2}(1-p-q)^{2}\mu(LDL)\underbrace{+p(1-p-q)(1-r)\mu(WD)}_{\text{combining terms underbraced by }(4)}+(1-p-q)^{2}(1-r)^{2}\mu(LWD)\nonumber\\&\underbrace{+(1-q-r^{2}+pr^{2}+qr^{2})(1-p-q)r(1-r)\mu(LD)}_{\text{combining terms underbraced by }(5)}-r^{2}(1-r)^{2}(1-p-q)^{2}\mu(WLD)\nonumber\\&-r^{2}(1-r)^{2}(1-p-q)^{2}\mu(DLD)\nonumber\\
={}&2p(1-p-q)(1-r)\mu(WD)+p(1-p-q)(1-r)(1+r)\mu(DD)+(1-p-q+r+pr-qr\nonumber\\&-r^{2}+pr^{2}+qr^{2}-r^{3}+pr^{3}+qr^{3})(1-p-q)(1-r)\mu(LD)-r(1-p-q)^{2}(1-r)^{2}\mu(LDW)\nonumber\\&-(1-r)^{2}(1-p-q)^{2}\mu(LDL)+(1-p-q)^{2}(1-r)^{2}\mu(LWD)-r^{2}(1-r)^{2}(1-p-q)^{2}\nonumber\\&\mu(WLD)-r^{2}(1-r)^{2}(1-p-q)^{2}\mu(DLD),\nonumber
\end{align}
where, in the last step, we combine terms using the reflection-invariance property of $\mu$.
\end{proof}

Recall that, so far, we have set $c_{1}=c_{2}=1$, $\mathcal{C}_{1}=(D)_{0}$ and $\mathcal{C}_{2}=(W,D)_{0,1}$. The weight function, so far, equals $\mu(D)+\mu(WD)$, and the right side of the weight function inequality (of the form \eqref{gen_ineq_form}) is given by 
\begin{align}
{}&\G_{p,q,r}\mu(D)+\G_{p,q,r}\mu(WD)\nonumber\\
={}&2(1-p-q)(1-r)\mu(WD)+2(1-p-q)r(1-r)\mu(LD)+(1-p-q)(1-r^{2})\mu(DD)\nonumber\\&+2p(1-p-q)(1-r)\mu(WD)+p(1-p-q)(1-r)(1+r)\mu(DD)+(1-p-q+r+pr-qr\nonumber\\&-r^{2}+pr^{2}+qr^{2}-r^{3}+pr^{3}+qr^{3})(1-p-q)(1-r)\mu(LD)-r(1-p-q)^{2}(1-r)^{2}\mu(LDW)\nonumber\\&-(1-r)^{2}(1-p-q)^{2}\mu(LDL)+(1-p-q)^{2}(1-r)^{2}\mu(LWD)-r^{2}(1-r)^{2}(1-p-q)^{2}\nonumber\\&\mu(WLD)-r^{2}(1-r)^{2}(1-p-q)^{2}\mu(DLD)\nonumber\\
={}&2(1+p)(1-p-q)(1-r)\mu(WD)+(1+p)(1-p-q)(1-r)(1+r)\mu(DD)+(1-p-q+3r\nonumber\\&+pr-qr-r^{2}+pr^{2}+qr^{2}-r^{3}+pr^{3}+qr^{3})(1-p-q)(1-r)\mu(LD)-r(1-p-q)^{2}(1-r)^{2}\mu(LDW)\nonumber\\&-(1-r)^{2}(1-p-q)^{2}\mu(LDL)+(1-p-q)^{2}(1-r)^{2}\mu(LWD)-r^{2}(1-r)^{2}(1-p-q)^{2}\nonumber\\&\mu(WLD)-r^{2}(1-r)^{2}(1-p-q)^{2}\mu(DLD).\label{intermediate_1_generalized}
\end{align}
We now pause and focus on the coefficient of $\mu(LD)$ in \eqref{intermediate_1_generalized} (we need not focus on the coefficients of $\mu(WD)$ and $\mu(DD)$ since we can readily see that the former is bounded above by $2$ and the latter by $1$). Writing
\begin{align}
pr-qr-r^{2}+pr^{2}+qr^{2}-r^{3}+pr^{3}+qr^{3}=pr-qr-r^{2}(1-p-q)(1+r),\nonumber
\end{align}
we see that the coefficient of $\mu(LD)$ in \eqref{intermediate_1_generalized} is given by
\begin{align}
{}&1+2r-2(p+q)+(p+q)^{2}-3r^{2}-(p+q)r+3(p+q)r^{2}-(p+q)^{2}r\nonumber\\&+pr(1-p-q)(1-r)-qr(1-p-q)(1-r)-r^{2}(1-p-q)^{2}(1-r)^{2}\nonumber\\
={}&1+2r-2(p+q)+(p+q)^{2}-3r^{2}-2qr+pr\{-1+(1-p-q)(1-r)\}\nonumber\\&+r^{2}\{-(1-p-q)^{2}(1-r)^{2}+3p+4q-pq-q^{2}\}-p^{2}r-pqr\nonumber\\
\leqslant{}&1+2r-2(p+q)+(p+q)^{2}-3r^{2}-2qr \quad \text{when } p, q, r \text{ are sufficiently small,}\label{intermediate_2_generalized}
\end{align}
and when \eqref{three_cond_universal} holds, we have 
\begin{align}
2(p+q)-(p+q)^{2}-2r+3r^{2}+2qr\geqslant 4r-6r^{2}-3pr-6qr-2r+3r^{2}+2qr=r(2-3p-4q-3r) \geqslant 0\nonumber
\end{align}
for all $p$ and $r$ sufficiently small, thus proving that the expression in \eqref{intermediate_2_generalized}, and consequently, the coefficient of $\mu(LD)$ in \eqref{intermediate_1_generalized}, is bounded above by $1$. 

We now continue with \eqref{intermediate_1_generalized}, and write (decomposing $\mu(D)$ as in Lemma~\ref{lem:pushforward_1}):
\begin{align}
{}&\G_{p,q,r}\mu(D)+\G_{p,q,r}\mu(WD)\nonumber\\
={}&\mu(D)+\mu(WD)-2\{1-(1+p)(1-p-q)(1-r)\}\mu(WD)-\{1-(1+p)(1-p-q)(1-r)(1+r)\}\mu(DD)\nonumber\\&-\{1-(1-p-q+3r+pr-qr-r^{2}+pr^{2}+qr^{2}-r^{3}+pr^{3}+qr^{3})(1-p-q)(1-r)\}\mu(LD)\nonumber\\&-r(1-p-q)^{2}(1-r)^{2}\mu(LDW)-(1-r)^{2}(1-p-q)^{2}\mu(LDL)+(1-p-q)^{2}(1-r)^{2}\mu(LWD)\nonumber\\&-r^{2}(1-r)^{2}(1-p-q)^{2}\mu(WLD)-r^{2}(1-r)^{2}(1-p-q)^{2}\mu(DLD),\label{intermediate_4_generalized}
\end{align}
and given the observations made above regarding the coefficient of $\mu(LD)$ in \eqref{intermediate_1_generalized}, we conclude that, when $p$, $q$ and $r$ are all sufficiently small and \eqref{three_cond_universal} holds, the only term on the right side of \eqref{intermediate_4_generalized}, other than $\mu(D)$ and $\mu(WD)$, that has a non-negative coefficient is $(1-p-q)^{2}(1-r)^{2}\mu(LWD)$. Moreover, when $p$, $q$ and $r$ are all sufficiently small, we have $(1-p-q)^{2}(1-r)^{2}=\Theta(1)$ (i.e.\ its order of magnitude is the same as the constant $1$ when $p$, $q$ and $r$ approach $0$). Hence, it seems sensible to set $\mathcal{C}_{3}=(L,W,D)_{0,1,2}$ and $c_{3}=1$, and the weight function obtained thus far becomes
\begin{equation}
w_{0}(\mu)=\mu(D)+\mu(WD)+\mu(LWD).\label{w_{0}}
\end{equation}
This necessitates the computation of $\G_{p,q,r}\mu(LWD)$, which has been accomplished via Lemma~\ref{lem:pushforward_LWD}.
\begin{lemma}\label{lem:pushforward_LWD}
For any $\mu\in\mathcal{M}$, we can write $\G_{p,q,r}\mu(LWD)=\sum_{i=1}^{9}A_{i}$, where
\begin{align}
A_{1}\leqslant{}&(1-p)\{p+(1-p-q)(1-r)\}(1-p-q)r(1-r)\mu(LD),\label{bound_A_{1}}\\
A_{2}\leqslant{}&\{q+(1-p-q)r\}p(1-p-q)(1-r)(1+r)\mu(DD),\label{bound_A_{2}}\\
A_{3}\leqslant{}&\{q+(1-p-q)r\}(1-p-q)(1-r)\{1+p-q-r+2pr+qr\}\mu(LWD),\label{bound_A_{3}},\\
A_{4}\leqslant{}&\{q+(1-p-q)r\}\{p+(1-p-q)(1-r)\}(1-p-q)r(1-r)\mu(LDL),\label{bound_A_{4}},\\
A_{5}={}&(1-p)p(1-p-q)r(1-r)\mu(WDL),\label{bound_A_{5}},\\
A_{6}\leqslant{}&\{q+(1-p-q)r\}\{p+(1-p-q)(1-r)\}(1-p-q)(1-r)\mu(DW)\nonumber\\&+\{p(1-p)-(q+r-pr-qr)(1-q-r+pr+qr)\}(1-p-q)(1-r)\mu(WWDW),\label{bound_A_{6}}\\
A_{7}\leqslant{}&p(1-p-q)^{2}(1-r)^{2}\mu(WWWD)+\{q+(1-p-q)r\}p(1-p-q)(1-r)\mu(WD),\label{bound_A_{7}}\\
A_{8}\leqslant{}&(q+r-pr-qr)(1-q-r+pr+qr)(1-p-q)(1-r)(1+r)\mu(LDD),\label{bound_A_{8}}\\
A_{9}\leqslant{}&q(1-q-r+pr+qr)(1-p-q)(1-r)(1+r)\mu(WDD)\nonumber\\&+\{p(1-p)-q(1-q-r+pr+qr)\}(1-p-q)(1-r)(1+r)\mu(WWDD)\nonumber\\&+(pr+qr-q)(1-p-q)^{2}(1-r)(1+r)\mu(DWDD).\label{bound_A_{9}}
\end{align}
\end{lemma}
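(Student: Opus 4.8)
The plan is to prove the decomposition by a direct, if lengthy, application of Lemma~\ref{lem:pushforward_2} to the cylinder set $\mathcal{C}=(L,W,D)_{0,1,2}$, followed by a careful aggregation of terms. First I would write
\[
\G_{p,q,r}\mu(LWD)=\sum_{\mathcal{D}\in\{W,L,D\}^{4}}\Prob[LWD\big|\mathcal{D}]\,\mu(\mathcal{D}),
\]
and exploit the independence of the updates across cells to factorize, for $\mathcal{D}=(b_{0},b_{1},b_{2},b_{3})$,
\[
\Prob[LWD\big|b_{0}b_{1}b_{2}b_{3}]=\Prob[\G_{p,q,r}\eta(0)=L\mid b_{0},b_{1}]\cdot\Prob[\G_{p,q,r}\eta(1)=W\mid b_{1},b_{2}]\cdot\Prob[\G_{p,q,r}\eta(2)=D\mid b_{2},b_{3}],
\]
with each factor read off directly from \eqref{GPCA_rule_1}--\eqref{GPCA_rule_6}.

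The decisive simplification is that the third factor, the probability that cell $2$ receives the state $D$, is strictly positive only when the pair $(b_{2},b_{3})$ contains at least one $D$, that is, only when $(b_{2},b_{3})\in\{(W,D),(D,W),(L,D),(D,L),(D,D)\}$. This collapses the sum over $\{W,L,D\}^{4}$ to five families of surviving terms, which I would organize according to the value of $b_{2}$ (since $b_{2}$ governs how the middle factor depends on $b_{1}$). Within each family I would sum over $b_{0}$ first: because the probability that cell $0$ becomes $L$ takes only two values, depending solely on whether $b_{0}=W$ or $b_{0}\in\{L,D\}$, each such sum collapses, via Lemma~\ref{lem:pushforward_1}, into a coarser three-coordinate cylinder probability (carrying a coefficient $q+(1-p-q)r$ or $q+(1-p-q)r^{2}$) together with a correction supported on the $W$-prefixed four-coordinate cylinder (carrying a coefficient $(1-p-q)(1-r)$ or $(1-p-q)r(1-r)$).

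Next I would invoke the reflection invariance of $\mu$ to identify measures of mirror-image cylinders (for instance $\mu(LDW)=\mu(WDL)$), and then sort the surviving terms into the nine aggregates $A_{1},\dots,A_{9}$ according to the terminal cylinder each one contributes to. Most of the stated bounds are then obtained by passing from a finer cylinder probability to a coarser one, using $\mu(\mathcal{B})\leqslant\mu(\mathcal{A})$ for $\mathcal{B}\subseteq\mathcal{A}$ wherever the accompanying coefficient is non-negative; this is what produces, for example, the $\mu(WD)$ term in \eqref{bound_A_{7}} out of an underlying $\mu(WWD)$, and the $\mu(LWD)$ term in \eqref{bound_A_{3}} after bounding $\sum_{b_{3}}\Prob[\G_{p,q,r}\eta(2)=D\mid D,b_{3}]$ above by $(1-p-q)(1-r^{2})$. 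The term $A_{5}$ remains an exact equality precisely because no such coarsening is required for it.

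I expect the principal obstacle to be the scale and precision of the bookkeeping rather than any conceptual difficulty. After the restriction on $(b_{2},b_{3})$ there remain on the order of forty-five product terms, each a product of three low-degree polynomials in $p,q,r$ times a four-coordinate cylinder probability; collapsing the $b_{0}$-sums correctly, matching reflection-related cylinders, and partitioning everything into exactly nine groups so that each bound assumes the claimed compact form demands careful sign-tracking. The delicate choices are, group by group, which finer cylinders to coarsen and which coefficient polynomials to bound from above, so as to obtain clean \emph{upper} bounds --- exactly the form needed when these expressions are later substituted into the weight-function inequality \eqref{generalized_final_wt_fn_ineq} --- while keeping $A_{5}$ exact.
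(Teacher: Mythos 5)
Your proposal is correct and is essentially the paper's own argument: it applies Lemma~\ref{lem:pushforward_2} to $(L,W,D)_{0,1,2}$, restricts to the $5\times 9=45$ quadruples whose last two coordinates contain a $D$, coarsens cylinder probabilities by comparing coefficients and invoking additivity (Lemma~\ref{lem:pushforward_1}), and matches mirror-image cylinders by reflection invariance; the paper merely organizes the same bookkeeping by fixing the nine families of quadruples $A_{1},\dots,A_{9}$ up front and bounding each family by its maximal coefficient, rather than collapsing the $b_{0}$-sums first as you do. The one small inaccuracy is your remark that $A_{5}$ requires no coarsening: in the paper's grouping $A_{5}$ collects $\mu(WWDL)$ and $\mu(DWDL)$, whose first factors $(1-p)$ and $q+(1-p-q)r$ differ, so \eqref{bound_A_{5}} is really only an upper bound (the displayed $=$ notwithstanding), which is harmless since only the upper bound is used in the sequel.
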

\begin{proof}
Extending the notation introduced just before stating Lemma~\ref{lem:pushforward_2}, we now let $R_{0}\times R_{1} \times \cdots \times R_{i-1}$, for $i\in\mathbb{N}$ and $R_{j}\subset\{W,L,D\}$ for each $j\in\{0,1,\ldots,i-1\}$, denote the set of all cylinder sets $(a_{0},a_{1},\ldots,a_{i-1})_{0,1,\ldots,i-1}$ such that $a_{j}\in R_{j}$ for each $j\in\{0,1,\ldots,i-1\}$. Applying Lemma~\ref{lem:pushforward_2} to the cylinder set $\mathcal{C}=(L,W,D)_{0,1,2}$, and keeping in mind that in order for the event $\{\G_{p,q,r}\eta(2)=D\}$ to take place with positive probability, at least one of the events $\{\eta(2)=D\}$ and $\{\eta(3)=D\}$ must occur, we can write:
\begin{align}
\G_{p,q,r}\mu(LWD)={}&\sum_{\mathcal{D}\in\{W,L,D\}^{4}}\Prob[LWD\big|\mathcal{D}]\mu(\mathcal{D})=\sum_{i=1}^{9}A_{i},\nonumber
\end{align}
where we set
\begin{align}
A_{1}={}&\sum_{\mathcal{D}\in\{W,L,D\}^{2}\times\{L\}\times\{D\}}\Prob[LWD\big|\mathcal{D}]\mu(\mathcal{D}),\nonumber\\
A_{2}={}&\sum_{\mathcal{D}\in\{W,L,D\}\times\{D\}^{2}\times\{W,L,D\}}\Prob[LWD\big|\mathcal{D}]\mu(\mathcal{D}),\nonumber\\
A_{3}={}&\sum_{\mathcal{D}\in\{W,L,D\}\times\{L\}\times\{W\}\times\{D\}}\Prob[LWD\big|\mathcal{D}]\mu(\mathcal{D})+\sum_{\mathcal{D}\in\{L\}\times\{W\}\times\{D\}\times\{W,L,D\}}\Prob[LWD\big|\mathcal{D}]\mu(\mathcal{D}),\nonumber\\
A_{4}={}&\sum_{\mathcal{D}\in\{W,L,D\}\times\{L\}\times\{D\}\times\{L\}}\Prob[LWD\big|\mathcal{D}]\mu(\mathcal{D}),\nonumber\\
A_{5}={}&\sum_{\mathcal{D}\in\{W,D\}\times\{W\}\times\{D\}\times\{L\}}\Prob[LWD\big|\mathcal{D}]\mu(\mathcal{D}),\nonumber\\
A_{6}={}&\sum_{\mathcal{D}\in\{W,L,D\}\times\{L\}\times\{D\}\times\{W\}}\Prob[LWD\big|\mathcal{D}]\mu(\mathcal{D})+\sum_{\mathcal{D}\in\{W,D\}\times\{W\}\times\{D\}\times\{W\}}\Prob[LWD\big|\mathcal{D}]\mu(\mathcal{D}),\nonumber\\
A_{7}={}&\sum_{\mathcal{D}\in\{W,L,D\}\times\{W,D\}\times\{W\}\times\{D\}}\Prob[LWD\big|\mathcal{D}]\mu(\mathcal{D}),\nonumber\\
A_{8}={}&\sum_{\mathcal{D}\in\{W,L,D\}\times\{L\}\times\{D\}^{2}}\Prob[LWD\big|\mathcal{D}]\mu(\mathcal{D}),\nonumber\\
A_{9}={}&\sum_{\mathcal{D}\in\{W,D\}\times\{W\}\times\{D\}^{2}}\Prob[LWD\big|\mathcal{D}]\mu(\mathcal{D}).\nonumber
\end{align}
In the computation of $A_{i}$ for each $i\in\{1,2,\ldots,9\}$, we make use of the relation 
\begin{align}
\Prob[LWD\big|\mathcal{D}]={}&\Prob\left[\G_{p,q,r}\eta(0)=L\big|\eta(0)=b_{0},\eta(1)=b_{1}\right]\Prob\left[\G_{p,q,r}\eta(1)=W\big|\eta(1)=b_{1},\eta(2)=b_{2}\right]\nonumber\\&\Prob\left[\G_{p,q,r}\eta(2)=D\big|\eta(2)=b_{2},\eta(3)=b_{3}\right],\nonumber
\end{align}
where $\mathcal{D}=(b_{0},b_{1},b_{2},b_{3})_{0,1,2,3}$, and we apply the stochastic update rules of \eqref{GPCA_rule_1} through \eqref{GPCA_rule_6}.

We begin with
\begin{align}
A_{1}={}&(1-p)\{p+(1-p-q)(1-r)\}(1-p-q)r(1-r)\mu(WWLD)+\{q+(1-p-q)r\}\{p+(1-p-q)\nonumber\\&(1-r^{2})\}(1-p-q)r(1-r)\mu(WLLD)+\{q+(1-p-q)r\}\{p+(1-p-q)(1-r)\}(1-p-q)r\nonumber\\&(1-r)\mu(LWLD)+\{q+(1-p-q)r^{2}\}\{p+(1-p-q)(1-r^{2})\}(1-p-q)r(1-r)\mu(LLLD)+\nonumber\\&\{q+(1-p-q)r\}\{p+(1-p-q)(1-r)\}(1-p-q)r(1-r)\mu(WDLD)+\{q+(1-p-q)r\}\nonumber\\&\{p+(1-p-q)(1-r)\}(1-p-q)r(1-r)\mu(DWLD)+\{q+(1-p-q)r^{2}\}\{p+(1-p-q)(1-r)\}\nonumber\\&(1-p-q)r(1-r)\mu(LDLD)+\{q+(1-p-q)r^{2}\}\{p+(1-p-q)(1-r^{2})\}(1-p-q)r(1-r)\nonumber\\&\mu(DLLD)+\{q+(1-p-q)r^{2}\}\{p+(1-p-q)(1-r)\}(1-p-q)r(1-r)\mu(DDLD).\label{contribution_LD}
\end{align}
A quick comparison of the coefficients in the various terms of \eqref{contribution_LD} reveals that the coefficient of each of $\mu(LWLD)$, $\mu(LLLD)$, $\ldots$, $\mu(DDLD)$ is bounded above by the coefficient of $\mu(WLLD)$, and 
\begin{align}
{}&(1-p)\{p+(1-p-q)(1-r)\}-\{q+(1-p-q)r\}\{p+(1-p-q)(1-r^{2})\}\nonumber\\
={}&(1-p)\{p+(1-p-q)(1-r)\}-\{q+(1-p-q)r\}\{p+(1-p-q)(1-r)+r(1-p-q)(1-r)\}\nonumber\\
={}&(1-p-q)(1-r)\{p+(1-p-q)(1-r)\}-\{q+(1-p-q)r\}r(1-p-q)(1-r)\nonumber\\
={}&(1-p-q)(1-r)\{p+(1-p-q)(1-r-r^{2})-qr\}\nonumber
\end{align}
is non-negative whenever $p$, $q$ and $r$ are all sufficiently small, so that the coefficient of $\mu(WWLD)$ exceeds that of $\mu(WLLD)$ in \eqref{contribution_LD}. For each cylinder set $\mathcal{D}\in\{W,L,D\}^{2}\times\{L\}\times\{D\}$, let us denote the corresponding coefficient by $\alpha_{\mathcal{D}}$ \big(for instance, the coefficient of $\mu(DLLD)$ is $\alpha_{DLLD}=\{q+(1-p-q)r^{2}\}\{p+(1-p-q)(1-r^{2})\}(1-p-q)r(1-r)$\big), and let us define
\begin{equation}
\alpha'_{\mathcal{D}}=\alpha_{\mathcal{D}}-\alpha_{WWLD}=\alpha_{\mathcal{D}}-(1-p)\{p+(1-p-q)(1-r)\}(1-p-q)r(1-r)\nonumber
\end{equation}
for each cylinder set $\mathcal{D}\in\{W,L,D\}^{2}\times\{L\}\times\{D\}\setminus\left\{(W,W,L,D)_{0,1,2,3}\right\}$. From our discussion above, it is evident that when $p$, $q$ and $r$ are all small enough, each $\alpha'_{\mathcal{D}} \leqslant 0$, so that we may rewrite \eqref{contribution_LD} as
\begin{align}
A_{1}={}&(1-p)\{p+(1-p-q)(1-r)\}(1-p-q)r(1-r)\mu(LD)+\alpha'_{WLLD}\mu(WLLD)+\alpha'_{LWLD}\mu(LWLD)\nonumber\\&+\alpha'_{LLLD}\mu(LLLD)+\alpha'_{WDLD}\mu(WDLD)+\alpha'_{DWLD}\mu(DWLD)+\alpha'_{LDLD}\mu(LDLD)+\alpha'_{DLLD}\mu(DLLD)\nonumber\\&+\alpha'_{DDLD}\mu(DDLD)\leqslant(1-p)\{p+(1-p-q)(1-r)\}(1-p-q)r(1-r)\mu(LD),\nonumber%\label{contribution_LD_rewritten}
\end{align}
which proves the inequality in \eqref{bound_A_{1}}. Next, we have
\begin{align}
A_{2}={}&\{q+(1-p-q)r\}p(1-p-q)(1-r)\mu(WDDW)+\{q+(1-p-q)r^{2}\}p(1-p-q)(1-r)\mu(DDDW)\nonumber\\&+\{q+(1-p-q)r^{2}\}p(1-p-q)(1-r)\mu(LDDW)+\{q+(1-p-q)r\}p(1-p-q)(1-r^{2})\mu(WDDD)\nonumber\\&+\{q+(1-p-q)r^{2}\}p(1-p-q)(1-r^{2})\mu(DDDD)+\{q+(1-p-q)r^{2}\}p(1-p-q)(1-r^{2})\mu(LDDD)\nonumber\\&+\{q+(1-p-q)r\}p(1-p-q)r(1-r)\mu(WDDL)+\{q+(1-p-q)r^{2}\}p(1-p-q)r(1-r)\mu(DDDL)\nonumber\\&+\{q+(1-p-q)r^{2}\}p(1-p-q)r(1-r)\mu(LDDL).\label{contribution_DD}
\end{align}
A quick comparison of the coefficients of the various terms in \eqref{contribution_DD} reveals that the coefficient of $\mu(WDDD)$, namely, $\{q+(1-p-q)r\}p(1-p-q)(1-r)(1+r)$, is the largest. As in the analysis of $A_{1}$, letting $\alpha_{\mathcal{D}}$ denote the coefficient of $\mu(\mathcal{D})$ in \eqref{contribution_DD} for every $\mathcal{D}\in\{W,L,D\}\times\{D\}^{2}\times\{W,L,D\}$, and setting
\begin{equation}
\alpha'_{\mathcal{D}}=\alpha_{\mathcal{D}}-\alpha_{WDDD}=\alpha_{\mathcal{D}}-\{q+(1-p-q)r\}p(1-p-q)(1-r)(1+r)\nonumber
\end{equation}
for each $\mathcal{D}\in\{W,L,D\}\times\{D\}^{2}\times\{W,L,D\}\setminus\left\{(W,D,D,D)_{0,1,2,3}\right\}$, we know, from our observation above, that each $\alpha'_{\mathcal{D}} \leqslant 0$, and we can rewrite \eqref{contribution_DD} as:
\begin{align}
A_{2}={}&\{q+(1-p-q)r\}p(1-p-q)(1-r)(1+r)\mu(DD)+\alpha'_{WDDW}\mu(WDDW)+\alpha'_{DDDW}\mu(DDDW)\nonumber\\&+\alpha'_{LDDW}\mu(LDDW)+\alpha'_{DDDD}\mu(DDDD)+\alpha'_{LDDD}\mu(LDDD)+\alpha'_{WDDL}\mu(WDDL)+\alpha'_{DDDL}\mu(DDDL)\nonumber\\&+\alpha'_{LDDL}\mu(LDDL)\leqslant\{q+(1-p-q)r\}p(1-p-q)(1-r)(1+r)\mu(DD),\nonumber%\label{contribution_DD_rewritten}
\end{align}
which proves the bound in \eqref{bound_A_{2}}. From the definition of $A_{3}$, we have
\begin{align}
A_{3}={}&\{q+(1-p-q)r\}\{p+(1-p-q)(1-r)\}(1-p-q)(1-r)\mu(WLWD)+\{q+(1-p-q)r^{2}\}\nonumber\\&\{p+(1-p-q)(1-r)\}(1-p-q)(1-r)\mu(DLWD)+\{q+(1-p-q)r^{2}\}\{p+(1-p-q)(1-r)\}\nonumber\\&(1-p-q)(1-r)\mu(LLWD)+\{q+(1-p-q)r\}p(1-p-q)(1-r)\mu(LWDW)+\{q+(1-p-q)r\}\nonumber\\&p(1-p-q)(1-r^{2})\mu(LWDD)+\{q+(1-p-q)r\}p(1-p-q)r(1-r)\mu(LWDL)\nonumber\\
={}&\{q+(1-p-q)r\}\{p+(1-p-q)(1-r)\}(1-p-q)(1-r)\mu(LWD)+\alpha'_{DLWD}\mu(DLWD)+\alpha'_{LLWD}\nonumber\\&\mu(LLWD)+\{q+(1-p-q)r\}p(1-p-q)(1-r^{2})\mu(LWD)+\alpha'_{LWDW}\mu(LWDW)+\alpha'_{LWDL}\mu(LWDL)\nonumber\\
={}&\{q+(1-p-q)r\}(1-p-q)(1-r)\{1+p-q-r+2pr+qr\}\mu(LWD)+\alpha'_{DLWD}\mu(DLWD)\nonumber\\&+\alpha'_{LLWD}\mu(LLWD)+\alpha'_{LWDW}\mu(LWDW)+\alpha'_{LWDL}\mu(LWDL),\label{contribution_LWD}
\end{align}
where each of the coefficients
\begin{align}
{}&\alpha'_{DLWD}=\alpha'_{LLWD}=-r(1-p-q)^{2}(1-r)^{2}\{p+(1-p-q)(1-r)\},\nonumber\\
{}&\alpha'_{LWDW}=-r\{q+(1-p-q)r\}p(1-p-q)(1-r),\nonumber\\
{}&\alpha'_{LWDL}=-\{q+(1-p-q)r\}p(1-p-q)(1-r)\nonumber
\end{align}
is non-positive, letting \eqref{contribution_LWD} imply the inequality in \eqref{bound_A_{3}}. Next, we have
\begin{align}
A_{4}={}&\{q+(1-p-q)r\}\{p+(1-p-q)(1-r)\}(1-p-q)r(1-r)\mu(WLDL)\nonumber\\&+\{q+(1-p-q)r^{2}\}\{p+(1-p-q)(1-r)\}(1-p-q)r(1-r)\mu(DLDL)\nonumber\\&+\{q+(1-p-q)r^{2}\}\{p+(1-p-q)(1-r)\}(1-p-q)r(1-r)\mu(LLDL)\nonumber\\
={}&\{q+(1-p-q)r\}\{p+(1-p-q)(1-r)\}(1-p-q)r(1-r)\mu(LDL)\nonumber\\&+\alpha'_{DLDL}\mu(DLDL)+\alpha'_{LLDL}\mu(LLDL),\label{contribution_LDL}
\end{align}
where $\alpha'_{DLDL}=\alpha'_{LLDL}=-\{p+(1-p-q)(1-r)\}(1-p-q)^{2}r^{2}(1-r)^{2}$ are both non-positive coefficients, thus letting \eqref{contribution_LDL} imply that the inequality in \eqref{bound_A_{4}} is true. Keeping in mind that we have already taken into account $\mathcal{D}=(L,W,D,L)_{0,1,2,3}$ in $A_{3}$, we obtain, from the definition of $A_{5}$,
\begin{align}
A_{5}={}&(1-p)p(1-p-q)r(1-r)\mu(WWDL)+\{q+(1-p-q)r\}p(1-p-q)r(1-r)\mu(DWDL)\nonumber\\
={}&(1-p)p(1-p-q)r(1-r)\mu(WDL)-pr(1-p-q)^{2}(1-r)^{2}\mu(DWDL)\nonumber\\&-(1-p)p(1-p-q)r(1-r)\mu(LWDL)\leqslant (1-p)p(1-p-q)r(1-r)\mu(WDL),\nonumber%\label{contribution_WDL}
\end{align}
proving \eqref{bound_A_{5}}. Keeping in mind that $\mathcal{D}=(L,W,D,W)_{0,1,2,3}$ has already been taken into account in $A_{3}$, we obtain, from the definition of $A_{6}$:
\begin{align}
A_{6}={}&\{q+(1-p-q)r\}\{p+(1-p-q)(1-r)\}(1-p-q)(1-r)\mu(WLDW)\nonumber\\&+\{q+(1-p-q)r^{2}\}\{p+(1-p-q)(1-r)\}(1-p-q)(1-r)\mu(DLDW)\nonumber\\&+\{q+(1-p-q)r^{2}\}\{p+(1-p-q)(1-r)\}(1-p-q)(1-r)\mu(LLDW)\nonumber\\&+(1-p)p(1-p-q)(1-r)\mu(WWDW)+\{q+(1-p-q)r\}p(1-p-q)(1-r)\mu(DWDW)\nonumber\\
\leqslant{}&\{q+(1-p-q)r\}\{p+(1-p-q)(1-r)\}(1-p-q)(1-r)\mu(DW)+\alpha'_{DLDW}\mu(DLDW)\nonumber\\&+\alpha'_{LLDW}\mu(LLDW)+\{p(1-p)-(q+r-pr-qr)(1-q-r+pr+qr)\}(1-p-q)(1-r)\nonumber\\&\mu(WWDW)+\alpha'_{DWDW}\mu(DWDW),\label{contribution_DW}
\end{align}
(the sum of $\mu(WLDW)$, $\mu(DLDW)$, $\mu(LLDW)$, $\mu(WWDW)$ and $\mu(DWDW)$ is bounded above by $\mu(DW)$, which follows from Lemma~\ref{lem:pushforward_1}) where each of 
\begin{align}
{}&\alpha'_{DLDW}=\alpha'_{LLDW}=-r\{p+(1-p-q)(1-r)\}(1-p-q)^{2}(1-r)^{2},\nonumber\\
{}&\alpha'_{DWDW}=-\{q+(1-p-q)r\}(1-p-q)^{2}(1-r)^{2}\nonumber
\end{align}
is a non-positive, allowing us to conclude, from \eqref{contribution_DW}, that \eqref{bound_A_{6}} is true. Next, we observe that
\begin{align}
A_{7}={}&(1-p)p(1-p-q)(1-r)\mu(WWWD)+\{q+(1-p-q)r\}p(1-p-q)(1-r)\mu(DWWD)\nonumber\\&+\{q+(1-p-q)r\}p(1-p-q)(1-r)\mu(LWWD)+\{q+(1-p-q)r\}p(1-p-q)(1-r)\nonumber\\&\mu(WDWD)+\{q+(1-p-q)r^{2}\}p(1-p-q)(1-r)\mu(DDWD)+\{q+(1-p-q)r^{2}\}\nonumber\\&p(1-p-q)(1-r)\mu(LDWD)\nonumber\\
\leqslant{}&p(1-p-q)^{2}(1-r)^{2}\mu(WWWD)+\{q+(1-p-q)r\}p(1-p-q)(1-r)\mu(WD)\nonumber\\&+\alpha'_{DDWD}\mu(DDWD)+\alpha'_{LDWD}\mu(LDWD),\label{contribution_WD}
\end{align}
where $\alpha'_{DDWD}=\alpha'_{LDWD}=-pr(1-p-q)^{2}(1-r)^{2}$ (the last inequality follows from the fact that the sum of $\mu(WWWD)$, $\mu(DWWD)$, $\mu(LWWD)$, $\mu(WDWD)$, $\mu(DDWD)$ and $\mu(LDWD)$ is bounded above by $\mu(WD)$, which, in turn, follows from Lemma~\ref{lem:pushforward_1}), thus allowing \eqref{contribution_WD} to imply that \eqref{bound_A_{7}} holds. The second last term in the expansion of $\G_{p,q,r}\mu(LWD)$ equals
\begin{align}
A_{8}={}&\{q+(1-p-q)r\}\{p+(1-p-q)(1-r)\}(1-p-q)(1-r)(1+r)\mu(WLDD)\nonumber\\&+\{q+(1-p-q)r^{2}\}\{p+(1-p-q)(1-r)\}(1-p-q)(1-r)(1+r)\mu(DLDD)\nonumber\\&+\{q+(1-p-q)r^{2}\}\{p+(1-p-q)(1-r)\}(1-p-q)(1-r)(1+r)\mu(LLDD)\nonumber\\
={}&(q+r-pr-qr)(1-q-r+pr+qr)(1-p-q)(1-r)(1+r)\mu(LDD)\nonumber\\&-r\{p+(1-p-q)(1-r)\}(1-p-q)^{2}(1-r)^{2}(1+r)\{\mu(DLDD)+\mu(LLDD)\},\nonumber%\label{contribution_LDD}
\end{align}
corroborating the inequality in \eqref{bound_A_{8}}, whereas the final term in the expansion of $\G_{p,q,r}\mu(LWD)$ equals
\begin{align}
A_{9}={}&(1-p)p(1-p-q)(1-r)(1+r)\mu(WWDD)\nonumber\\&+\{q+(1-p-q)r\}p(1-p-q)(1-r)(1+r)\mu(DWDD)\label{contribution_{W,D}_W_D_D}\\
\leqslant{}&q(1-q-r+pr+qr)(1-p-q)(1-r)(1+r)\mu(WDD)\nonumber\\&+\{p(1-p)-q(1-q-r+pr+qr)\}(1-p-q)(1-r)(1+r)\mu(WWDD)\nonumber\\&+(pr+qr-q)(1-p-q)^{2}(1-r)(1+r)\mu(DWDD),\nonumber
\end{align} 
(the second step is obtained by first applying Lemma~\ref{lem:pushforward_1}, then ignoring the term involving $\mu(LWDD)$ since its coefficient is non-positive), corrborating \eqref{bound_A_{9}}. This concludes the proof of Lemma~\ref{lem:pushforward_LWD}.
\end{proof}

The weight function inequality of the form given by \eqref{gen_ineq_form}, corresponding to the weight function in \eqref{w_{0}}, can now be derived as follows, making use of \eqref{intermediate_4_generalized} and Lemma~\ref{lem:pushforward_LWD}:
\begin{align}\label{w_{0}_ineq_1}
w_{0}(\G_{p,q,r}\mu)={}&\G_{p,q,r}\mu(D)+\G_{p,q,r}\mu(WD)+\G_{p,q,r}\mu(LWD)\nonumber\\
={}&\mu(D)+\mu(WD)-2\{1-(1+p)(1-p-q)(1-r)\}\mu(WD)-\{1-(1+p)(1-p-q)(1-r)\nonumber\\&(1+r)\}\mu(DD)-\{1-(1-p-q+3r+pr-qr-r^{2}+pr^{2}+qr^{2}-r^{3}+pr^{3}+qr^{3})(1-p-q)\nonumber\\&(1-r)\}\mu(LD)-r(1-p-q)^{2}(1-r)^{2}\mu(LDW)-(1-r)^{2}(1-p-q)^{2}\mu(LDL)+(1-p-q)^{2}\nonumber\\&(1-r)^{2}\mu(LWD)-r^{2}(1-r)^{2}(1-p-q)^{2}\mu(WLD)-r^{2}(1-r)^{2}(1-p-q)^{2}\mu(DLD)\nonumber\\&+\G_{p,q,r}\mu(LWD)\nonumber\\
={}&\mu(D)+\mu(WD)+\mu(LWD)-2\{1-(1+p)(1-p-q)(1-r)\}\mu(WD)\nonumber\\&-\{1-(1+p)(1-p-q)(1-r)(1+r)\}\mu(DD)-\{1-(1-p-q+3r+pr-qr-r^{2}+pr^{2}\nonumber\\&+qr^{2}-r^{3}+pr^{3}+qr^{3})(1-p-q)(1-r)\}\mu(LD)-r(1-p-q)^{2}(1-r)^{2}\mu(LDW)\nonumber\\&-(1-r)^{2}(1-p-q)^{2}\mu(LDL)-\{1-(1-p-q)^{2}(1-r)^{2}\}\mu(LWD)-r^{2}(1-r)^{2}(1-p-q)^{2}\nonumber\\&\mu(WLD)-r^{2}(1-r)^{2}(1-p-q)^{2}\mu(DLD)+\sum_{i=1}^{9}A_{i}\nonumber\\
\leqslant{}&w_{0}(\mu)-2\{1-(1+p)(1-p-q)(1-r)\}\mu(WD)-\{1-(1+p)(1-p-q)(1-r)(1+r)\}\mu(DD)\nonumber\\&\underbrace{-\{1-(1-p-q+3r+pr-qr-r^{2}+pr^{2}+qr^{2}-r^{3}+pr^{3}+qr^{3})(1-p-q)(1-r)\}\mu(LD)}\nonumber\\&-r(1-p-q)^{2}(1-r)^{2}\mu(LDW)-(1-r)^{2}(1-p-q)^{2}\mu(LDL)-\{1-(1-p-q)^{2}(1-r)^{2}\}\nonumber\\&\mu(LWD)-r^{2}(1-r)^{2}(1-p-q)^{2}\mu(WLD)-r^{2}(1-r)^{2}(1-p-q)^{2}\mu(DLD)\nonumber\\&\underbrace{+(1-p)\{p+(1-p-q)(1-r)\}(1-p-q)r(1-r)\mu(LD)}+\sum_{i=2}^{9}A_{i},
\end{align}
where the last step is obtained by implementing the inequality of \eqref{bound_A_{1}}. Combining the terms involving $\mu(LD)$ (indicated by underbraces) in \eqref{w_{0}_ineq_1}, we obtain:
\begin{align}
{}&-\{1-(1-p-q+3r+pr-qr-r^{2}+pr^{2}+qr^{2}-r^{3}+pr^{3}+qr^{3})(1-p-q)(1-r)\}\mu(LD)\nonumber\\&+(1-p)\{p+(1-p-q)(1-r)\}(1-p-q)r(1-r)\mu(LD)\nonumber\\
={}&[-1+(1-p-q+4r-2qr-2r^{2}+pqr+3pr^{2}+2qr^{2}-r^{3}+pr^{3}+qr^{3}-p^{2}r^{2}-pqr^{2})\nonumber\\&(1-p-q)(1-r)]\mu(LD)\nonumber\\
={}&[-1+\{1-p-q+4r-qr(2-p-2r)-r^{2}(2-3p)-r^{3}(1-p-q)-p^{2}r^{2}-pqr^{2}\}\nonumber\\&(1-p-q)(1-r)]\mu(LD)\label{intermediate_5'}\\
\leqslant{}&[-1+(1-p-q+4r)(1-p-q)(1-r)]\mu(LD) \quad \text{when } p, r \text{ are sufficiently small;}\nonumber\\
={}&[-1+\{1+3r-4r^{2}-2(p+q)+(p+q)^{2}-(p+q)r-(p+q)r(1+p+q-4r)\}]\mu(LD)\nonumber\\
\leqslant{}&\{3r-4r^{2}-2(p+q)+(p+q)^{2}-(p+q)r\}]\mu(LD) \quad \text{when } r \text{ sufficiently small}.\label{intermediate_5}
\end{align}
When $p$, $q$ and $r$ are sufficiently small and the inequality in \eqref{three_cond_universal} holds, we have
\begin{align}
{}&2(p+q)-(p+q)^{2}-3r+4r^{2}+(p+q)r\nonumber\\
\geqslant{}& 4r-6r^{2}-3pr-6qr-3r+4r^{2}+(p+q)r=r(1-2p-5q-2r) \geqslant 0,\nonumber
\end{align}
so that the coefficient of $\mu(LD)$ in \eqref{intermediate_5} is non-positive, and consequently, the coefficient of $\mu(LD)$ in \eqref{intermediate_5'}, is non-positive. Incorporating \eqref{intermediate_5'} into \eqref{w_{0}_ineq_1}, we obtain:
\begin{align}
w_{0}(\G_{p,q,r}\mu)\leqslant{}&w_{0}(\mu)-2\{1-(1+p)(1-p-q)(1-r)\}\mu(WD)-\{1-(1+p)(1-p-q)\nonumber\\&(1-r)(1+r)\}\mu(DD)-[1-\{1-p-q+4r-qr(2-p-2r)-r^{2}(2-3p)\nonumber\\&-r^{3}(1-p-q)-p^{2}r^{2}-pqr^{2}\}(1-p-q)(1-r)]\mu(LD)-r(1-p-q)^{2}(1-r)^{2}\nonumber\\&\mu(LDW)-(1-r)^{2}(1-p-q)^{2}\mu(LDL)-\{1-(1-p-q)^{2}(1-r)^{2}\}\mu(LWD)\nonumber\\&-r^{2}(1-r)^{2}(1-p-q)^{2}\mu(WLD)-r^{2}(1-r)^{2}(1-p-q)^{2}\mu(DLD)+\sum_{i=2}^{9}A_{i}\nonumber\\
\leqslant{}&w_{0}(\mu)-2\{1-(1+p)(1-p-q)(1-r)\}\mu(WD)\nonumber\\&\underbrace{-\{1-(1+p)(1-p-q)(1-r)(1+r)\}\mu(DD)}-[1-\{1-p-q+4r-qr(2-p-2r)\nonumber\\&-r^{2}(2-3p)-r^{3}(1-p-q)-p^{2}r^{2}-pqr^{2}\}(1-p-q)(1-r)]\mu(LD)\nonumber\\&-r(1-p-q)^{2}(1-r)^{2}\mu(LDW)-(1-r)^{2}(1-p-q)^{2}\mu(LDL)-\nonumber\\&\{1-(1-p-q)^{2}(1-r)^{2}\}\mu(LWD)-r^{2}(1-r)^{2}(1-p-q)^{2}\mu(WLD)-\nonumber\\&r^{2}(1-r)^{2}(1-p-q)^{2}\mu(DLD)\underbrace{+\{q+(1-p-q)r\}p(1-p-q)(1-r)(1+r)\mu(DD)}+\sum_{i=3}^{9}A_{i},\label{w_{0}_ineq_2}
\end{align}
where the last step follows from the inequality in \eqref{bound_A_{2}}. Before proceeding further, we observe that
\begin{align}
{}&\{(1-p-q)(1-r)(1+r)\}^{-1}=\{1-p-q-r^{2}+pr^{2}+qr^{2}\}^{-1}=\sum_{j=0}^{\infty}(p+q+r^{2}-pr^{2}-qr^{2})^{j}\nonumber\\
\geqslant{}&1+p+q+r^{2}-pr^{2}-qr^{2}+(p+q+r^{2}-pr^{2}-qr^{2})^{2}\nonumber\\
={}&1+p+q+r^{2}+p^{2}+q^{2}+2pq+p^{2}r^{4}-2p^{2}r^{2}+2pqr^{4}-4pqr^{2}-2pr^{4}+pr^{2}+q^{2}r^{4}\nonumber\\&-2q^{2}r^{2}-2qr^{4}+qr^{2}+r^{4}\label{inverse_1}\\
={}&(1+p+pq+pr-p^{2}r-pqr)+\{(p-r)^{2}+pr\}+p^{2}r(1-2r)+pr^{2}(1-2r^{2})+pqr(1-4r)\nonumber\\&+q^{2}(1-2r^{2})+qr^{2}(1-2r^{2})+q+pq+p^{2}r^{4}+2pqr^{4}+q^{2}r^{4}+r^{4}\nonumber\\
\geqslant{}&1+p+pq+pr-p^{2}r-pqr \text{ when } p,q,r \text{ are all sufficiently small}.\label{intermediate_6_generalized}
\end{align}
Combining the terms involving $\mu(DD)$ in \eqref{w_{0}_ineq_2} (indicated by underbraces), we obtain:
\begin{align}
{}&-\{1-(1+p)(1-p-q)(1-r)(1+r)\}\mu(DD)+\{q+(1-p-q)r\}p(1-p-q)(1-r)(1+r)\mu(DD)\nonumber\\
{}&=[-1+(1+p+pq+pr-p^{2}r-pqr)(1-p-q)(1-r)(1+r)]\mu(DD),\label{intermediate_7_generalized}
\end{align}
and the coefficient of $\mu(DD)$ in \eqref{intermediate_7_generalized} is non-positive by the inequality we have derived in \eqref{intermediate_6_generalized}. Incorporating \eqref{intermediate_7_generalized} into \eqref{w_{0}_ineq_2}, we obtain:
\begin{align}
w_{0}(\G_{p,q,r}\mu)\leqslant{}&w_{0}(\mu)-2\{1-(1+p)(1-p-q)(1-r)\}\mu(WD)-\{1-(1+p+pq+pr-p^{2}r-pqr)\nonumber\\&(1-p-q)(1-r)(1+r)\}\mu(DD)-[1-\{1-p-q+4r-qr(2-p-2r)-r^{2}(2-3p)\nonumber\\&-r^{3}(1-p-q)-p^{2}r^{2}-pqr^{2}\}(1-p-q)(1-r)]\mu(LD)-r(1-p-q)^{2}(1-r)^{2}\nonumber\\&\mu(LDW)-(1-r)^{2}(1-p-q)^{2}\mu(LDL)-\{1-(1-p-q)^{2}(1-r)^{2}\}\mu(LWD)\nonumber\\&-r^{2}(1-r)^{2}(1-p-q)^{2}\mu(WLD)-r^{2}(1-r)^{2}(1-p-q)^{2}\mu(DLD)+\sum_{i=3}^{9}A_{i}\nonumber\\
\leqslant{}&w_{0}(\mu)-2\{1-(1+p)(1-p-q)(1-r)\}\mu(WD)-\{1-(1+p+pq+pr-p^{2}r-pqr)\nonumber\\&(1-p-q)(1-r)(1+r)\}\mu(DD)-[1-\{1-p-q+4r-qr(2-p-2r)-r^{2}(2-3p)\nonumber\\&-r^{3}(1-p-q)-p^{2}r^{2}-pqr^{2}\}(1-p-q)(1-r)]\mu(LD)-r(1-p-q)^{2}(1-r)^{2}\nonumber\\&\mu(LDW)-(1-r)^{2}(1-p-q)^{2}\mu(LDL)\underbrace{-\{1-(1-p-q)^{2}(1-r)^{2}\}\mu(LWD)}\nonumber\\&-r^{2}(1-r)^{2}(1-p-q)^{2}\mu(WLD)-r^{2}(1-r)^{2}(1-p-q)^{2}\mu(DLD)\nonumber\\&\underbrace{+\{q+(1-p-q)r\}(1-p-q)(1-r)\{1+p-q-r+2pr+qr\}\mu(LWD)}+\sum_{i=4}^{9}A_{i},\label{w_{0}_ineq_3}
\end{align}
where the last step follows from \eqref{bound_A_{3}}. Much like the derivation carried out for \eqref{intermediate_6_generalized}, we have
\begin{align}
{}&\{(1-p-q)(1-r)\}^{-1}=(1-p-q-r+pr+qr)^{-1}\nonumber\\
\geqslant{}&1+(p+q+r-pr-qr)+(p+q+r-pr-qr)^{2}\nonumber\\
={}&1+p+q+r+p^{2}+q^{2}+r^{2}+2pq+pr+qr-4pqr-2p^{2}r-2pr^{2}-2q^{2}r-2qr^{2}\nonumber\\&+p^{2}r^{2}+q^{2}r^{2}+2pqr^{2}\label{inverse_2}\\
={}&(1-p-q^{2}-r^{2}-2qr+pq+pr+2pqr+3pr^{2}+2qr^{2}+2q^{2}r-p^{2}r-3pqr^{2}\nonumber\\&-q^{2}r^{2}-2p^{2}r^{2})+2p+q+r+2q^{2}+pq+qr(3-6p-4q-4r)+r^{2}(2-5p)\nonumber\\&+p^{2}(1-r)+3p^{2}r^{2}+2q^{2}r^{2}+5pqr^{2}\nonumber\\
\geqslant{}&(1-p-q^{2}-r^{2}-2qr+pq+pr+2pqr+3pr^{2}+2qr^{2}+2q^{2}r-p^{2}r-3pqr^{2}\nonumber\\&-q^{2}r^{2}-2p^{2}r^{2}) \text{ for all } p,q,r \text{ sufficiently small}.\label{intermediate_8_generalized}
\end{align}
Combining the terms involving $\mu(LWD)$ in \eqref{w_{0}_ineq_3} (indicated by underbraces), we obtain:
\begin{align}
{}&\{q+(1-p-q)r\}(1-p-q)(1-r)\{1+p-q-r+2pr+qr\}\mu(LWD)\nonumber\\&-\{1-(1-p-q)^{2}(1-r)^{2}\}\mu(LWD)\nonumber\\
={}&[-1+\{(q+r-pr-qr)(1+p-q-r+2pr+qr)+(1-p-q)(1-r)\}\nonumber\\&(1-p-q)(1-r)]\mu(LWD)\nonumber\\
={}&[-1+(1-p-q^{2}-r^{2}-2qr+pq+pr+2pqr+3pr^{2}+2qr^{2}+2q^{2}r-p^{2}r-3pqr^{2}\nonumber\\&-q^{2}r^{2}-2p^{2}r^{2})(1-p-q)(1-r)]\mu(LWD),\label{intermediate_9_generalized}
\end{align}
and by \eqref{intermediate_8_generalized}, the coefficient of $\mu(LWD)$ in \eqref{intermediate_9_generalized} is non-positive. Incorporating \eqref{intermediate_9_generalized} into \eqref{w_{0}_ineq_3}, we get:
\begin{align}
w_{0}(\G_{p,q,r}\mu)\leqslant{}&w_{0}(\mu)-2\{1-(1+p)(1-p-q)(1-r)\}\mu(WD)-\{1-(1+p+pq+pr-p^{2}r-pqr)\nonumber\\&(1-p-q)(1-r)(1+r)\}\mu(DD)-[1-\{1-p-q+4r-qr(2-p-2r)-r^{2}(2-3p)\nonumber\\&-r^{3}(1-p-q)-p^{2}r^{2}-pqr^{2}\}(1-p-q)(1-r)]\mu(LD)-r(1-p-q)^{2}(1-r)^{2}\nonumber\\&\mu(LDW)-(1-r)^{2}(1-p-q)^{2}\mu(LDL)-[1-(1-p-q^{2}-r^{2}-2qr+pq+pr+2pqr\nonumber\\&+3pr^{2}+2qr^{2}+2q^{2}r-p^{2}r-3pqr^{2}-q^{2}r^{2}-2p^{2}r^{2})(1-p-q)(1-r)]\mu(LWD)\nonumber\\&-r^{2}(1-r)^{2}(1-p-q)^{2}\mu(WLD)-r^{2}(1-r)^{2}(1-p-q)^{2}\mu(DLD)+\sum_{i=4}^{9}A_{i}\nonumber\\
\leqslant{}&w_{0}(\mu)-2\{1-(1+p)(1-p-q)(1-r)\}\mu(WD)-\{1-(1+p+pq+pr-p^{2}r-pqr)\nonumber\\&(1-p-q)(1-r)(1+r)\}\mu(DD)-[1-\{1-p-q+4r-qr(2-p-2r)-r^{2}(2-3p)\nonumber\\&-r^{3}(1-p-q)-p^{2}r^{2}-pqr^{2}\}(1-p-q)(1-r)]\mu(LD)-r(1-p-q)^{2}(1-r)^{2}\nonumber\\&\mu(LDW)\underbrace{-(1-r)^{2}(1-p-q)^{2}\mu(LDL)}-[1-(1-p-q^{2}-r^{2}-2qr+pq+pr+2pqr\nonumber\\&+3pr^{2}+2qr^{2}+2q^{2}r-p^{2}r-3pqr^{2}-q^{2}r^{2}-2p^{2}r^{2})(1-p-q)(1-r)]\mu(LWD)\nonumber\\&-r^{2}(1-r)^{2}(1-p-q)^{2}\mu(WLD)-r^{2}(1-r)^{2}(1-p-q)^{2}\mu(DLD)\nonumber\\&\underbrace{+\{q+(1-p-q)r\}\{p+(1-p-q)(1-r)\}(1-p-q)r(1-r)\mu(LDL)}+\sum_{i=5}^{9}A_{i},\label{w_{0}_ineq_4}
\end{align}
where the last step follows from the inequality in \eqref{bound_A_{4}}. Combining the terms involving $\mu(LDL)$ in \eqref{w_{0}_ineq_4} (indicated by underbraces), we obtain:
\begin{align}
{}&-(1-r)^{2}(1-p-q)^{2}\mu(LDL)+\{q+(1-p-q)r\}\{p+(1-p-q)(1-r)\}(1-p-q)r(1-r)\mu(LDL)\nonumber\\
{}&=-(1-p-q)(1-r)[(1-r)(1-p-q)-\{q+(1-p-q)r\}\{p+(1-p-q)(1-r)\}r]\mu(LDL)\nonumber\\
{}&=-(1-p-q)(1-r)[(1-r)(1-p-q)\{1-qr-r^{2}(1-p-q)\}-pqr-pr^{2}(1-p-q)]\mu(LDL),\label{intermediate_10}
\end{align}
which is non-positive when $p$, $q$ and $r$ are sufficiently small. Incorporating \eqref{intermediate_10} into \eqref{w_{0}_ineq_4}, we obtain:
\begin{align}
w_{0}(\G_{p,q,r}\mu)\leqslant{}&w_{0}(\mu)-2\{1-(1+p)(1-p-q)(1-r)\}\mu(WD)-\{1-(1+p+pq+pr-p^{2}r-pqr)\nonumber\\&(1-p-q)(1-r)(1+r)\}\mu(DD)-[1-\{1-p-q+4r-qr(2-p-2r)-r^{2}(2-3p)\nonumber\\&-r^{3}(1-p-q)-p^{2}r^{2}-pqr^{2}\}(1-p-q)(1-r)]\mu(LD)-r(1-p-q)^{2}(1-r)^{2}\nonumber\\&\mu(LDW)-(1-p-q)(1-r)[(1-r)(1-p-q)\{1-qr-r^{2}(1-p-q)\}-pqr\nonumber\\&-pr^{2}(1-p-q)]\mu(LDL)-[1-(1-p-q^{2}-r^{2}-2qr+pq+pr+2pqr\nonumber\\&+3pr^{2}+2qr^{2}+2q^{2}r-p^{2}r-3pqr^{2}-q^{2}r^{2}-2p^{2}r^{2})(1-p-q)(1-r)]\mu(LWD)\nonumber\\&-r^{2}(1-r)^{2}(1-p-q)^{2}\mu(WLD)-r^{2}(1-r)^{2}(1-p-q)^{2}\mu(DLD)+\sum_{i=5}^{9}A_{i}\nonumber\\
\leqslant{}&w_{0}(\mu)-2\{1-(1+p)(1-p-q)(1-r)\}\mu(WD)-\{1-(1+p+pq+pr-p^{2}r-pqr)\nonumber\\&(1-p-q)(1-r)(1+r)\}\mu(DD)-[1-\{1-p-q+4r-qr(2-p-2r)-r^{2}(2-3p)\nonumber\\&-r^{3}(1-p-q)-p^{2}r^{2}-pqr^{2}\}(1-p-q)(1-r)]\mu(LD)\underbrace{-r(1-p-q)^{2}(1-r)^{2}\mu(LDW)}\nonumber\\&-(1-p-q)(1-r)[(1-r)(1-p-q)\{1-qr-r^{2}(1-p-q)\}-pqr\nonumber\\&-pr^{2}(1-p-q)]\mu(LDL)-[1-(1-p-q^{2}-r^{2}-2qr+pq+pr+2pqr\nonumber\\&+3pr^{2}+2qr^{2}+2q^{2}r-p^{2}r-3pqr^{2}-q^{2}r^{2}-2p^{2}r^{2})(1-p-q)(1-r)]\mu(LWD)\nonumber\\&-r^{2}(1-r)^{2}(1-p-q)^{2}\mu(WLD)-r^{2}(1-r)^{2}(1-p-q)^{2}\mu(DLD)\nonumber\\&\underbrace{+(1-p)p(1-p-q)r(1-r)\mu(WDL)}+\sum_{i=6}^{9}A_{i},\label{w_{0}_ineq_5}
\end{align}
where the last step follows from \eqref{bound_A_{5}}. Combining the terms involving $\mu(WDL)$ and $\mu(LDW)$ in \eqref{w_{0}_ineq_5} (indicated by underbraces) and using reflection invariance, we obtain:
\begin{align}
{}&-r(1-p-q)^{2}(1-r)^{2}\mu(LDW)+(1-p)p(1-p-q)r(1-r)\mu(WDL)\nonumber\\
={}&-r(1-p-q)(1-r)\{(1-p-q)(1-r)-p(1-p)\}\mu(WDL),\label{intermediate_11}
\end{align}
and the coefficient of $\mu(WDL)$ in \eqref{intermediate_11} is assuredly non-positive when $p$, $q$ and $r$ are sufficiently small. Incorporating \eqref{intermediate_11} into \eqref{w_{0}_ineq_5}, we obtain:
\begin{align}
w_{0}(\G_{p,q,r}\mu)\leqslant{}&w_{0}(\mu)-2\{1-(1+p)(1-p-q)(1-r)\}\mu(WD)-\{1-(1+p+pq+pr-p^{2}r-pqr)\nonumber\\&(1-p-q)(1-r)(1+r)\}\mu(DD)-[1-\{1-p-q+4r-qr(2-p-2r)-r^{2}(2-3p)\nonumber\\&-r^{3}(1-p-q)-p^{2}r^{2}-pqr^{2}\}(1-p-q)(1-r)]\mu(LD)-r(1-p-q)(1-r)\nonumber\\&\{(1-p-q)(1-r)-p(1-p)\}\mu(LDW)-(1-p-q)(1-r)[(1-r)(1-p-q)\nonumber\\&\{1-qr-r^{2}(1-p-q)\}-pqr-pr^{2}(1-p-q)]\mu(LDL)-[1-(1-p-q^{2}-r^{2}-2qr\nonumber\\&+pq+pr+2pqr+3pr^{2}+2qr^{2}+2q^{2}r-p^{2}r-3pqr^{2}-q^{2}r^{2}-2p^{2}r^{2})(1-p-q)(1-r)]\nonumber\\&\mu(LWD)-r^{2}(1-r)^{2}(1-p-q)^{2}\mu(WLD)-r^{2}(1-r)^{2}(1-p-q)^{2}\mu(DLD)+\sum_{i=6}^{9}A_{i}\nonumber\\
\leqslant{}&w_{0}(\mu)\underbrace{-2\{1-(1+p)(1-p-q)(1-r)\}\mu(WD)}-\{1-(1+p+pq+pr-p^{2}r-pqr)\nonumber\\&(1-p-q)(1-r)(1+r)\}\mu(DD)-[1-\{1-p-q+4r-qr(2-p-2r)-r^{2}(2-3p)\nonumber\\&-r^{3}(1-p-q)-p^{2}r^{2}-pqr^{2}\}(1-p-q)(1-r)]\mu(LD)-r(1-p-q)(1-r)\nonumber\\&\{(1-p-q)(1-r)-p(1-p)\}\mu(LDW)-(1-p-q)(1-r)[(1-r)(1-p-q)\nonumber\\&\{1-qr-r^{2}(1-p-q)\}-pqr-pr^{2}(1-p-q)]\mu(LDL)-[1-(1-p-q^{2}-r^{2}-2qr\nonumber\\&+pq+pr+2pqr+3pr^{2}+2qr^{2}+2q^{2}r-p^{2}r-3pqr^{2}-q^{2}r^{2}-2p^{2}r^{2})(1-p-q)(1-r)]\nonumber\\&\mu(LWD)-r^{2}(1-r)^{2}(1-p-q)^{2}\mu(WLD)-r^{2}(1-r)^{2}(1-p-q)^{2}\mu(DLD)\nonumber\\&\underbrace{+\{q+(1-p-q)r\}\{p+(1-p-q)(1-r)\}(1-p-q)(1-r)\mu(DW)}\nonumber\\&+\{p(1-p)-(q+r-pr-qr)(1-q-r+pr+qr)\}(1-p-q)(1-r)\mu(WWDW)\nonumber\\&+p(1-p-q)^{2}(1-r)^{2}\mu(WWWD)\underbrace{+\{q+(1-p-q)r\}p(1-p-q)(1-r)\mu(WD)}+\sum_{i=8}^{9}A_{i},\label{w_{0}_ineq_6}
\end{align}
where the last step follows from the inequalities in \eqref{bound_A_{6}} and \eqref{bound_A_{7}}. From \eqref{inverse_2}, we have 
\begin{align}
{}&2\{(1-p-q)(1-r)\}^{-1}\geqslant 2+2p+2q+2r+2p^{2}+2q^{2}+2r^{2}+4pq+2pr+2qr-8pqr-4p^{2}r\nonumber\\&-4pr^{2}-4q^{2}r-4qr^{2}+2p^{2}r^{2}+2q^{2}r^{2}+4pqr^{2}\nonumber\\
={}&\{2+2p+q+r-q^{2}-r^{2}-3qr+pq+2q^{2}r+2qr^{2}+pqr+2pr^{2}-p^{2}r-2pqr^{2}-p^{2}r^{2}-q^{2}r^{2}\}\nonumber\\&+q+r+2p^{2}+3q^{2}+3r^{2}+3pq(1-3r)+pr(2-3p-6r)+qr(5-6q-6r)+3p^{2}r^{2}\nonumber\\&+3q^{2}r^{2}+6pqr^{2}\nonumber\\
\geqslant{}&\{2+2p+q+r-q^{2}-r^{2}-3qr+pq+2q^{2}r+2qr^{2}+pqr+2pr^{2}-p^{2}r-2pqr^{2}-p^{2}r^{2}-q^{2}r^{2}\}\label{intermediate_12}
\end{align}
whenever $p$, $q$ and $r$ are sufficiently small. Combining the terms involving $\mu(WD)$ and $\mu(DW)$ in \eqref{w_{0}_ineq_6} (indicated by underbraces), and making use of the reflection-invariance of $\mu$, we obtain:
\begin{align}
{}&-2\{1-(1+p)(1-p-q)(1-r)\}\mu(WD)+\{q+(1-p-q)r\}\{p+(1-p-q)(1-r)\}\nonumber\\&(1-p-q)(1-r)\mu(DW)+\{q+(1-p-q)r\}p(1-p-q)(1-r)\mu(WD)\nonumber\\
={}&[-2+(1-p-q)(1-r)\{2(1+p)+(q+r-pr-qr)(1-q-r+pr+qr)\nonumber\\&+(q+r-pr-qr)p\}]\mu(WD)\nonumber\\
={}&[-2+(1-p-q)(1-r)\{2+2p+q+r-q^{2}-r^{2}-3qr+pq+2q^{2}r+2qr^{2}\nonumber\\&+pqr+2pr^{2}-p^{2}r-2pqr^{2}-p^{2}r^{2}-q^{2}r^{2}\}]\mu(WD),\label{intermediate_13}
\end{align}
and by the inequality derived in \eqref{intermediate_12}, it is immediate that the coefficient of $\mu(WD)$ in \eqref{intermediate_13} is non-positive. Incorporating \eqref{intermediate_13} into \eqref{w_{0}_ineq_6}, we obtain:
\begin{align}
w_{0}(\G_{p,q,r}\mu)\leqslant{}&w_{0}(\mu)-[2-(1-p-q)(1-r)\{2+2p+q+r-q^{2}-r^{2}-3qr+pq+2q^{2}r+2qr^{2}\nonumber\\&+pqr+2pr^{2}-p^{2}r-2pqr^{2}-p^{2}r^{2}-q^{2}r^{2}\}]\mu(WD)-\{1-(1+p+pq+pr-p^{2}r-pqr)\nonumber\\&(1-p-q)(1-r)(1+r)\}\mu(DD)-[1-\{1-p-q+4r-qr(2-p-2r)-r^{2}(2-3p)\nonumber\\&-r^{3}(1-p-q)-p^{2}r^{2}-pqr^{2}\}(1-p-q)(1-r)]\mu(LD)-r(1-p-q)(1-r)\nonumber\\&\{(1-p-q)(1-r)-p(1-p)\}\mu(LDW)-(1-p-q)(1-r)[(1-r)(1-p-q)\nonumber\\&\{1-qr-r^{2}(1-p-q)\}-pqr-pr^{2}(1-p-q)]\mu(LDL)-[1-(1-p-q^{2}-r^{2}-2qr\nonumber\\&+pq+pr+2pqr+3pr^{2}+2qr^{2}+2q^{2}r-p^{2}r-3pqr^{2}-q^{2}r^{2}-2p^{2}r^{2})(1-p-q)(1-r)]\nonumber\\&\mu(LWD)-r^{2}(1-r)^{2}(1-p-q)^{2}\mu(WLD)-r^{2}(1-r)^{2}(1-p-q)^{2}\mu(DLD)\nonumber\\&+\{p(1-p)-(q+r-pr-qr)(1-q-r+pr+qr)\}(1-p-q)(1-r)\mu(WWDW)\nonumber\\&+p(1-p-q)^{2}(1-r)^{2}\mu(WWWD)+\sum_{i=8}^{9}A_{i}\nonumber\\
\leqslant{}&w_{0}(\mu)-[2-(1-p-q)(1-r)\{2+2p+q+r-q^{2}-r^{2}-3qr+pq+2q^{2}r+2qr^{2}\nonumber\\&+pqr+2pr^{2}-p^{2}r-2pqr^{2}-p^{2}r^{2}-q^{2}r^{2}\}]\mu(WD)-\{1-(1+p+pq+pr-p^{2}r-pqr)\nonumber\\&(1-p-q)(1-r)(1+r)\}\mu(DD)-[1-\{1-p-q+4r-qr(2-p-2r)-r^{2}(2-3p)\nonumber\\&-r^{3}(1-p-q)-p^{2}r^{2}-pqr^{2}\}(1-p-q)(1-r)]\mu(LD)-r(1-p-q)(1-r)\nonumber\\&\{(1-p-q)(1-r)-p(1-p)\}\mu(LDW)-(1-p-q)(1-r)[(1-r)(1-p-q)\nonumber\\&\{1-qr-r^{2}(1-p-q)\}-pqr-pr^{2}(1-p-q)]\mu(LDL)-[1-(1-p-q^{2}-r^{2}-2qr\nonumber\\&+pq+pr+2pqr+3pr^{2}+2qr^{2}+2q^{2}r-p^{2}r-3pqr^{2}-q^{2}r^{2}-2p^{2}r^{2})(1-p-q)(1-r)]\nonumber\\&\mu(LWD)-r^{2}(1-r)^{2}(1-p-q)^{2}\mu(WLD)-r^{2}(1-r)^{2}(1-p-q)^{2}\mu(DLD)\nonumber\\&+\{p(1-p)-(q+r-pr-qr)(1-q-r+pr+qr)\}(1-p-q)(1-r)\mu(WWDW)\nonumber\\&+p(1-p-q)^{2}(1-r)^{2}\mu(WWWD)+(q+r-pr-qr)(1-q-r+pr+qr)(1-p-q)\nonumber\\&(1-r)(1+r)\mu(LDD)+q(1-q-r+pr+qr)(1-p-q)(1-r)(1+r)\mu(WDD)\nonumber\\&+\{p(1-p)-q(1-q-r+pr+qr)\}(1-p-q)(1-r)(1+r)\mu(WWDD)\nonumber\\&+(pr+qr-q)(1-p-q)^{2}(1-r)(1+r)\mu(DWDD),\label{w_{0}_ineq_7}
\end{align}
where the last step follows from \eqref{bound_A_{8}} and \eqref{bound_A_{9}}.

The idea, now, is to split the term involving $\mu(LDD)$ in \eqref{w_{0}_ineq_7} into two suitable parts, one of which is to be negated by the term involving $\mu(DD)$ in \eqref{w_{0}_ineq_7}, and the other is to be negated by the term involving $\mu(LD)$ in \eqref{w_{0}_ineq_7}. We decompose the term involving $\mu(LDD)$ in \eqref{w_{0}_ineq_7} into the following two parts:
\begin{align}
{}&(q+r-pr-qr)(1-q-r+pr+qr)(1-p-q)(1-r)(1+r)\mu(LDD)\nonumber\\
={}&q(1-q-r+pr+qr)(1-p-q)(1-r)(1+r)\mu(LDD)\nonumber\\&+r(1-q-r+pr+qr)(1-p-q)^{2}(1-r)(1+r)\mu(LDD).\label{decomp_LDD}
\end{align}
At this point, we observe, from \eqref{inverse_1}, that:
\begin{align}
{}&\{(1-p-q)(1-r)(1+r)\}^{-1}\geqslant 1+p+q+r^{2}+p^{2}+q^{2}+2pq+p^{2}r^{4}-2p^{2}r^{2}+2pqr^{4}\nonumber\\&-4pqr^{2}-2pr^{4}+pr^{2}+q^{2}r^{4}-2q^{2}r^{2}-2qr^{4}+qr^{2}+r^{4}\nonumber\\
={}&(1+p+q+pq+pr-q^{2}-qr-p^{2}r+q^{2}r)+(p-r)^{2}+2q^{2}+pq(1-4r^{2}+2r^{4})\nonumber\\&+qr(1-q+r-2qr-2r^{3}+qr^{3})+pr(1+p+r-2pr-2r^{3}+pr^{3})+r^{4}\nonumber\\
\geqslant{}&(1+p+q+pq+pr-q^{2}-qr-p^{2}r+q^{2}r) \text{ when } p,q,r \text{ are sufficiently small}.\label{intermediate_14}
\end{align}
We now combine the first of the two terms in \eqref{decomp_LDD} with the terms involving $\mu(WDD)$ and $\mu(DD)$ in \eqref{w_{0}_ineq_7}, and use the inequality $\mu(WDD)+\mu(LDD)\leqslant \mu(DD)$ (follows from Lemma~\ref{lem:pushforward_1}), to obtain:
\begin{align}
{}&-\{1-(1+p+pq+pr-p^{2}r-pqr)(1-p-q)(1-r)(1+r)\}\mu(DD)\nonumber\\&+q(1-q-r+pr+qr)(1-p-q)(1-r)(1+r)\mu(LDD)\nonumber\\&+q(1-q-r+pr+qr)(1-p-q)(1-r)(1+r)\mu(WDD)\nonumber\\
{}&\leqslant[-1+(1+p+q+pq+pr-q^{2}-qr-p^{2}r+q^{2}r)(1-p-q)(1-r)(1+r)]\mu(DD).\label{intermediate_15}
\end{align}  
By \eqref{intermediate_14}, the coefficient of $\mu(DD)$ in \eqref{intermediate_15} is non-positive. Next, we combine the second term appearing in \eqref{decomp_LDD} with the term involving $\mu(LD)$ in \eqref{w_{0}_ineq_7}, and use Lemma~\ref{lem:pushforward_1}, to obtain:
\begin{align}
{}&-[1-\{1-p-q+4r-qr(2-p-2r)-r^{2}(2-3p)-r^{3}(1-p-q)-p^{2}r^{2}-pqr^{2}\}(1-p-q)(1-r)]\nonumber\\&\mu(LD)+r(1-q-r+pr+qr)(1-p-q)^{2}(1-r)(1+r)\mu(LDD)\nonumber\\
={}&[-1+\{1-p-q+4r-2qr+pqr+2qr^{2}-2r^{2}+3pr^{2}-r^{3}(1-p-q)-p^{2}r^{2}-pqr^{2}\}(1-p-q)(1-r)]\nonumber\\&\mu(LD)+r(1-q-r+pr+qr)(1-p-q)^{2}(1-r)(1+r)[\mu(LD)-\mu(LDL)-\mu(LDW)]\nonumber\\
\leqslant{}&[-1+\{1-p-q+4r-2qr+pqr+2qr^{2}-2r^{2}+3pr^{2}+r(1-q-r+pr+qr)(1-p-q)(1+r)\}\nonumber\\&(1-p-q)(1-r)]\mu(LD)-r(1-q-r+pr+qr)(1-p-q)^{2}(1-r)(1+r)[\mu(LDL)+\mu(LDW)] \nonumber\\
{}& (\text{when } p, q, r \text{ are sufficiently small})\nonumber\\
={}&[-1+(1-p-q+5r-2r^{2}-pr-4qr-r^{3}+2qr^{2}+4pr^{2}+2pqr+q^{2}r-p^{2}r^{3}-p^{2}r^{2}-2pqr^{3}-pqr^{2}\nonumber\\&+2pr^{3}-q^{2}r^{3}+2qr^{3})(1-p-q)(1-r)]\mu(LD)-r(1-q-r+pr+qr)(1-p-q)^{2}(1-r)(1+r)\nonumber\\&[\mu(LDL)+\mu(LDW)]\nonumber\\
\leqslant{}&[-1+\{1-p-q+5r-2r^{2}-pr-4qr+2qr^{2}+4pr^{2}+2pqr+q^{2}r-r^{3}(1-2p-2q)\}(1-p-q)\nonumber\\&(1-r)]\mu(LD)-r(1-q-r+pr+qr)(1-p-q)^{2}(1-r)(1+r)[\mu(LDL)+\mu(LDW)]\nonumber\\
\leqslant{}&[4r-2p-2q+p^{2}+2pq+q^{2}-7r^{2}-4pr-7qr+5pqr+12pr^{2}+4q^{2}r+13qr^{2}+2r^{3}-2p^{2}qr(1-r)\nonumber\\&-p^{2}r^{2}(5-4r)-3pq^{2}r(1-r)-pqr^{2}(13-6r)-6pr^{3}-q^{3}r(1-r)-q^{2}r^{2}(7-2r)-4qr^{3}]\mu(LD)\nonumber\\&-r(1-q-r+pr+qr)(1-p-q)^{2}(1-r)(1+r)[\mu(LDL)+\mu(LDW)]\nonumber\\
{}& (\text{when } p, q \text{ are sufficiently small -- in particular, when } p+q \leqslant 1/2)\nonumber\\
\leqslant{}&[4r-2(p+q)+(p+q)^{2}-6r^{2}-3pr-6qr-r^{2}(1-2r-6p-4q)-pr(1-5q-6r)-qr(1-4q-9r)]\nonumber\\&\mu(LD)-r(1-q-r+pr+qr)(1-p-q)^{2}(1-r)(1+r)[\mu(LDL)+\mu(LDW)]\nonumber\\
{}& (\text{when } p, q, r \text{ are sufficiently small})\nonumber\\
\leqslant{}&[4r-2(p+q)+(p+q)^{2}-6r^{2}-3pr-6qr]\mu(LD)-r(1-q-r+pr+qr)(1-p-q)^{2}(1-r)(1+r)\nonumber\\
{}&[\mu(LDL)+\mu(LDW)] \quad (\text{when } p, q, r \text{ are sufficiently small}),\label{intermediate_16}
\end{align}
and the coefficient of $\mu(LD)$ in \eqref{intermediate_16} is non-positive whenever \eqref{three_cond_universal} holds. Combining the terms involving $\mu(LDL)$ and $\mu(LDW)$ of \eqref{intermediate_16} with those of \eqref{w_{0}_ineq_7}, we obtain:
\begin{align}
{}&-r(1-p-q)(1-r)\{(1-p-q)(1-r)-p(1-p)\}\mu(LDW)-(1-p-q)(1-r)[(1-r)(1-p-q)\nonumber\\&\{1-qr-r^{2}(1-p-q)\}-pqr-pr^{2}(1-p-q)]\mu(LDL)-r(1-q-r+pr+qr)(1-p-q)^{2}(1-r)\nonumber\\&(1+r)[\mu(LDL)+\mu(LDW)]\nonumber\\
={}&(1-p-q)(1-r)\{-2r+r^{2}+3pr+3qr+r^{3}-p^{2}r(1-r-r^{2})-pqr(1-r-2r^{2})-2pr^{3}-2pr^{2}\nonumber\\&-q^{2}r(1-r^{2})-2qr^{3}-qr^{2}\}\mu(LDW)+(1-p-q)(1-r)\{-1+p+q+r^{2}+2qr-2q^{2}r(1-r)\nonumber\\&-3qr^{2}(1-p)-pr^{2}(2-p)-pqr\}\mu(LDL)\nonumber\\
\leqslant{}&(1-p-q)(1-r)[(-2r+r^{2}+3pr+3qr+r^{3})\mu(LDW)+(-1+p+q+r^{2}+2qr)\mu(LDL)],\label{intermediate_16'}
\end{align}
for all $p$, $q$ and $r$ sufficiently small. The coefficient of each of $\mu(LDW)$ and $\mu(LDL)$ in \eqref{intermediate_16'} is non-positive when $p$, $q$ and $r$ are sufficiently small. Incorporating \eqref{intermediate_15}, \eqref{intermediate_16} and \eqref{intermediate_16'} into \eqref{w_{0}_ineq_7}, we obtain:
\begin{align}
w_{0}(\G_{p,q,r}\mu)\leqslant{}&w_{0}(\mu)-[2-(1-p-q)(1-r)\{2+2p+q+r-q^{2}-r^{2}-3qr+pq+2q^{2}r+2qr^{2}\nonumber\\&+pqr+2pr^{2}-p^{2}r-2pqr^{2}-p^{2}r^{2}-q^{2}r^{2}\}]\mu(WD)-\{1-(1+p+q+pq+pr\nonumber\\&-q^{2}-qr-p^{2}r+q^{2}r)(1-p-q)(1-r)(1+r)\}\mu(DD)+\{4r-2(p+q)+(p+q)^{2}\nonumber\\&-6r^{2}-3r(p+2q)\}\mu(LD)+(1-p-q)(1-r)(-2r+r^{2}+3pr+3qr+r^{3})\mu(LDW)\nonumber\\&+(1-p-q)(1-r)(-1+p+q+r^{2}+2qr)\mu(LDL)-[1-(1-p-q^{2}-r^{2}-2qr\nonumber\\&+pq+pr+2pqr+3pr^{2}+2qr^{2}+2q^{2}r-p^{2}r-3pqr^{2}-q^{2}r^{2}-2p^{2}r^{2})(1-p-q)(1-r)]\nonumber\\&\mu(LWD)-r^{2}(1-r)^{2}(1-p-q)^{2}\mu(WLD)-r^{2}(1-r)^{2}(1-p-q)^{2}\mu(DLD)\nonumber\\&+\{p(1-p)-(q+r-pr-qr)(1-q-r+pr+qr)\}(1-p-q)(1-r)\mu(WWDW)\nonumber\\&+p(1-p-q)^{2}(1-r)^{2}\mu(WWWD)+\{p(1-p)-q(1-q-r+pr+qr)\}(1-p-q)\nonumber\\&(1-r)(1+r)\mu(WWDD)+(pr+qr-q)(1-p-q)^{2}(1-r)(1+r)\mu(DWDD).\label{w_{0}_ineq_8}
\end{align}

We recall, for the reader's convenience, that the ultimate goal for us is to come up with a weight function inequality that satisfies \eqref{desired_criterion}, and although the weight function inequality in \eqref{w_{0}_ineq_8} is of the same form as \eqref{gen_ineq_form}, it does not satisfy \eqref{desired_criterion} because of the last few terms. In particular, we observe that:
\begin{enumerate}
\item the coefficient $\mu(WWWD)$ in \eqref{w_{0}_ineq_8} is non-negative;
\item the coefficient of $\mu(DWDD)$ in \eqref{w_{0}_ineq_8} may or may not be non-negative;  
\item \label{for_f(x)} the coefficient of $\mu(WWDW)$ in \eqref{w_{0}_ineq_8} is positive whenever $p(1-p) > (q+r-pr-qr)(1-q-r+pr+qr)$, which happens when \eqref{three_cond_4} is true;
\item \label{another_for_f(x)} and the coefficient of $\mu(WWDD)$ in \eqref{w_{0}_ineq_8} is positive whenever $p(1-p) > q(1-q-r+pr+qr)$, which happens when one of \eqref{three_cond_2}, \eqref{three_cond_3} and \eqref{three_cond_4} is true. 
\end{enumerate}
Here, we provide a brief explanation as to why the first inequality of \eqref{three_cond_4} ensures, when $p$, $q$ and $r$ are sufficiently small, that $p(1-p) > (q+r-pr-qr)(1-q-r+pr+qr)$, and this, in turn, implies that $p(1-p) > q(1-q-r+pr+qr)$. The function $f(x)=x(1-x)$, for $x\in[0,1]$, is strictly increasing on $[0,1/2)$ and strictly decreasing on $(1/2,1]$. When $p$ is sufficiently small (in particular, $p<1/2$), and the first inequality of \eqref{three_cond_4} holds, we can, therefore, write 
\begin{equation}
\frac{1}{2}>p>q+r(1-p-q) \implies f(p)>f(q+r(1-p-q)) \Longleftrightarrow p(1-p)>(q+r-pr-qr)(1-q-r+pr+qr),\nonumber
\end{equation}
which explains the claim made in \eqref{for_f(x)}, and since $q+r(1-p-q)\geqslant q$ (since each of $r$ and $(1-p-q)$ is non-negative), the inequality above further implies that $p(1-p) > q(1-q-r+pr+qr)$, which explains the claim stated in \eqref{another_for_f(x)}.

At this point, our first attempt is to try to negate, to as large an extent as possible, the terms involving $\mu(WWWD)$ and $\mu(DWDD)$ in \eqref{w_{0}_ineq_8}, using the term involving $\mu(WD)$ in \eqref{w_{0}_ineq_8}. Here, we make use of Lemma~\ref{lem:pushforward_1} and reflection-invariance to write: 
\begin{align}
\mu(WD)=\mu(WWD)+\mu(DWD)+\mu(LWD)\geqslant\mu(WWWD)+\mu(DWDD)+\mu(LWD).\label{WD_split}
\end{align}
The inequality in \eqref{WD_split} is applied to the term involving $\mu(WD)$ in \eqref{w_{0}_ineq_8}, thereby obtaining three new terms: one involving $\mu(WWWD)$, one involving $\mu(DWDD)$, and the third involving $\mu(LWD)$. The second of these is combined with the term involving $\mu(DWDD)$ that is \emph{already present} in \eqref{w_{0}_ineq_8}, to get:
\begin{align}
{}&-[2-(1-p-q)(1-r)\{2+2p+q+r-q^{2}-r^{2}-3qr+pq+2q^{2}r+2qr^{2}+pqr+2pr^{2}-p^{2}r\nonumber\\&-2pqr^{2}-p^{2}r^{2}-q^{2}r^{2}\}]\mu(DWDD)+(pr+qr-q)(1-p-q)^{2}(1-r)(1+r)\mu(DWDD)\nonumber\\
={}&[-2+\{2+2p+r-r^{2}-3qr+2pq+pr+3pr^{2}-2p^{2}r+2q^{2}r+3qr^{2}-2p^{2}r^{2}-2q^{2}r^{2}-4pqr^{2}\}\nonumber\\&(1-p-q)(1-r)]\mu(DWDD).\label{DWDD_left_1}
\end{align}
In order to draw conclusions about whether the coefficient of $\mu(DWDD)$ in \eqref{DWDD_left_1} is non-positive or not, we observe, from \eqref{inverse_2}, that
\begin{align}
{}&2\{(1-p-q)(1-r)\}^{-1} \geqslant 2+2p+2q+2r+2p^{2}+2q^{2}+2r^{2}+4pq+2pr+2qr-8pqr-4p^{2}r-4pr^{2}\nonumber\\&-4q^{2}r-4qr^{2}+2p^{2}r^{2}+2q^{2}r^{2}+4pqr^{2}\nonumber\\
={}&(2+2p+r-r^{2}-3qr+2pq+pr+3pr^{2}-2p^{2}r+2q^{2}r+3qr^{2}-2p^{2}r^{2}-2q^{2}r^{2}-4pqr^{2})\nonumber\\&+2q+r+2p^{2}+2q^{2}+3r^{2}+5qr+2pq+pr-7pr^{2}-2p^{2}r-6q^{2}r-7qr^{2}-8pqr+4p^{2}r^{2}+4q^{2}r^{2}\nonumber\\&+8pqr^{2}\nonumber\\
={}&(2+2p+r-r^{2}-3qr+2pq+pr+3pr^{2}-2p^{2}r+2q^{2}r+3qr^{2}-2p^{2}r^{2}-2q^{2}r^{2}-4pqr^{2})+\nonumber\\&2q+r+2p^{2}+2q^{2}+3r^{2}+qr(5-6q-7r+3qr)+2pq(1-4r+4r^{2})+pr(1-7r-2p+4pr)\nonumber\\
\geqslant{}&2+2p+r-r^{2}-3qr+2pq+pr+3pr^{2}-2p^{2}r+2q^{2}r+3qr^{2}-2p^{2}r^{2}-2q^{2}r^{2}-4pqr^{2}\label{intermediate_17}
\end{align}
when $p$, $q$ and $r$ are sufficiently small. It now becomes evident that the coefficient of $\mu(DWDD)$ in \eqref{DWDD_left_1} is non-positive because of the inequality established in \eqref{intermediate_17}. The new term involving $\mu(LWD)$, obtained by applying \eqref{WD_split} to the term involving $\mu(WD)$ in \eqref{w_{0}_ineq_8}, is now combined with the term involving $\mu(LWD)$ that is \emph{already present} in \eqref{w_{0}_ineq_8}, to get:
\begin{align}
{}&-[2-(1-p-q)(1-r)\{2+2p+q+r-q^{2}-r^{2}-3qr+pq+2q^{2}r+2qr^{2}+pqr+2pr^{2}-p^{2}r-2pqr^{2}\nonumber\\&-p^{2}r^{2}-q^{2}r^{2}\}]\mu(LWD)-[1-(1-p-q^{2}-r^{2}-2qr+pq+pr+2pqr+3pr^{2}+2qr^{2}+2q^{2}r-p^{2}r\nonumber\\&-3pqr^{2}-q^{2}r^{2}-2p^{2}r^{2})(1-p-q)(1-r)]\mu(LWD)\nonumber\\
={}&-[3-(3+p+q+r-2q^{2}-2r^{2}-5qr+2pq+pr+3pqr+4q^{2}r+4qr^{2}+5pr^{2}-2p^{2}r\nonumber\\&-5pqr^{2}-3p^{2}r^{2}-2q^{2}r^{2})(1-p-q)(1-r)]\mu(LWD).\label{LWD_left_1}
\end{align}
Recall that \eqref{intermediate_8_generalized} allowed us to conclude that the coefficient of $\mu(LWD)$ in \eqref{intermediate_9_generalized} is non-positive, and that the inequality in \eqref{intermediate_12} allowed us to conclude that the coefficient of $\mu(WD)$ in \eqref{intermediate_13} is non-positive. Combining these conclusions, it follows that the coefficient of $\mu(LWD)$ in \eqref{LWD_left_1} is non-positive as well. 

As mentioned above, the inequality in \eqref{WD_split} is applied to the term involving $\mu(WD)$ in \eqref{w_{0}_ineq_8}, thus giving rise to a new term involving $\mu(WWWD)$, which we now combine with the existing term involving $\mu(WWWD)$ that is already present in \eqref{w_{0}_ineq_8}, to get:
\begin{align}
{}&-[2-(1-p-q)(1-r)\{2+2p+q+r-q^{2}-r^{2}-3qr+pq+2q^{2}r+2qr^{2}+pqr+2pr^{2}-p^{2}r\nonumber\\&-2pqr^{2}-p^{2}r^{2}-q^{2}r^{2}\}]\mu(WWWD)+p(1-p-q)^{2}(1-r)^{2}\mu(WWWD)\nonumber\\
={}&[-2(1-p-q)(1-r)\{(1-p-q)(1-r)\}^{-1}+(1-p-q)(1-r)\{2+2p+q+r-q^{2}-r^{2}-3qr\nonumber\\&+pq+2q^{2}r+2qr^{2}+pqr+2pr^{2}-p^{2}r-2pqr^{2}-p^{2}r^{2}-q^{2}r^{2}\}+p(1-p-q)^{2}(1-r)^{2}]\mu(WWWD)\nonumber\\
\leqslant{}&(1-p-q)(1-r)[-2\{1+p+q+r+p^{2}+q^{2}+r^{2}+2pq+pr+qr-4pqr-2p^{2}r-2pr^{2}-2q^{2}r\nonumber\\&-2qr^{2}+p^{2}r^{2}+q^{2}r^{2}+2pqr^{2}\}+\{2+2p+q+r-q^{2}-r^{2}-3qr+pq+2q^{2}r+2qr^{2}+pqr+2pr^{2}\nonumber\\&-p^{2}r-2pqr^{2}-p^{2}r^{2}-q^{2}r^{2}\}+p(1-p-q)(1-r)]\mu(WWWD)\nonumber\\
{}&(\text{making use of the inequality in \eqref{inverse_2}})\nonumber\\
\leqslant{}&(1-p-q)(1-r)[p-q-r-3p^{2}-3q^{2}-3r^{2}-4pq-3pr-5qr+4p^{2}r+6pr^{2}+6q^{2}r+6qr^{2}\nonumber\\&+10pqr]\mu(WWWD).\label{WWWD_left_1}
\end{align}
The coefficient of $\mu(WWWD)$ in \eqref{WWWD_left_1} may or may not be non-positive.

Incorporating \eqref{DWDD_left_1}, \eqref{LWD_left_1} and \eqref{WWWD_left_1} into \eqref{w_{0}_ineq_8}, and eliminating the term involving $\mu(DWDD)$ since its coefficient in \eqref{DWDD_left_1} has already been proven above to be non-positive, via the inequality in \eqref{intermediate_17}, when $p$, $q$ and $r$ are sufficiently small, we obtain:
\begin{align}
w_{0}(\G_{p,q,r}\mu)\leqslant{}&w_{0}(\mu)-[2-(1-p-q)(1-r)\{2+2p+q+r-q^{2}-r^{2}-3qr+pq+2q^{2}r+2qr^{2}+pqr\nonumber\\&+2pr^{2}-p^{2}r-2pqr^{2}-p^{2}r^{2}-q^{2}r^{2}\}]\{\mu(WD)-\mu(WWWD)-\mu(DWDD)-\mu(LWD)\}\nonumber\\&-\{1-(1+p+q+pq+pr-q^{2}-qr-p^{2}r+q^{2}r)(1-p-q)(1-r)(1+r)\}\mu(DD)\nonumber\\&+\{4r-2(p+q)+(p+q)^{2}-6r^{2}-3r(p+2q)\}\mu(LD)+(1-p-q)(1-r)(-2r+r^{2}\nonumber\\&+3pr+3qr+r^{3})\mu(LDW)+(1-p-q)(1-r)(-1+p+q+r^{2}+2qr)\mu(LDL)-\nonumber\\&[3-(3+p+q+r-2q^{2}-2r^{2}-5qr+2pq+pr+3pqr+4q^{2}r+4qr^{2}+5pr^{2}-2p^{2}r\nonumber\\&-5pqr^{2}-3p^{2}r^{2}-2q^{2}r^{2})(1-p-q)(1-r)]\mu(LWD)-r^{2}(1-r)^{2}(1-p-q)^{2}\mu(WLD)\nonumber\\&-r^{2}(1-r)^{2}(1-p-q)^{2}\mu(DLD)+\{p(1-p)-(q+r-pr-qr)(1-q-r+pr+qr)\}\nonumber\\&(1-p-q)(1-r)\mu(WWDW)+(1-p-q)(1-r)\{p-q-r-3p^{2}-3q^{2}-3r^{2}-4pq\nonumber\\&-3pr-5qr+4p^{2}r+6pr^{2}+6q^{2}r+6qr^{2}+10pqr\}\mu(WWWD)+\{p(1-p)-q(1-q\nonumber\\&-r+pr+qr)\}(1-p-q)(1-r)(1+r)\mu(WWDD).\label{w_{0}_ineq_9}
\end{align}

\subsubsection{When \eqref{three_cond_1} of Theorem~\ref{thm:three-parameter} is true}\label{subsubsec:gen_regime_1} Here, we assume that $(p,q,r)$ satisfies \eqref{three_cond_universal} and \eqref{three_cond_1}. The latter allows us to deduce the following upper bound on the coefficient of $\mu(WWWD)$ in \eqref{WWWD_left_1} (which is also the coefficient of $\mu(WWWD)$ in \eqref{w_{0}_ineq_9}):
\begin{align}
{}&(1-p-q)(1-r)\{p-q-r-3p^{2}-3q^{2}-3r^{2}-4pq-3pr-5qr+4p^{2}r+6pr^{2}+6q^{2}r+6qr^{2}+10pqr\}\nonumber\\
={}& (1-p-q)(1-r)\{p(1-p)-q-r-2p^{2}-3q^{2}-3r^{2}-4pq-3pr-5qr+4p^{2}r+6pr^{2}+6q^{2}r+6qr^{2}\nonumber\\&+10pqr\}\nonumber\\
\leqslant{}&(1-p-q)(1-r)\{q-q^{2}-qr(1-p-q)-q-r-2p^{2}-3q^{2}-3r^{2}-4pq-3pr-5qr+4p^{2}r+6pr^{2}\nonumber\\& +6q^{2}r+6qr^{2}+10pqr\}\nonumber\\
={}&(1-p-q)(1-r)\{-r-2p^{2}-4q^{2}-3r^{2}-pq(4-11r)-pr(3-4p-6r)-qr(6-6r-7q)\}\nonumber
\end{align}
which is non-positive for all $p$, $q$ and $r$ sufficiently small. Thus, the coefficient of $\mu(WWWD)$ in \eqref{WWWD_left_1} is non-positive when $p$, $q$ and $r$ are sufficiently small and $(p,q,r)$ belongs to the regime described in \eqref{three_cond_1}. Since \eqref{three_cond_1} holds, the coefficient of $\mu(WWDD)$ in \eqref{w_{0}_ineq_9} is non-positive, and furthermore, we have $p(1-p) \leqslant q\{1-q-r(1-p-q)\} \leqslant \{q+r(1-p-q)\}\{1-q-r(1-p-q)\}$, proving that the coefficient of $\mu(WWDW)$ in \eqref{w_{0}_ineq_9} is non-positive as well. Consequently, \eqref{w_{0}_ineq_9} is of the form given by \eqref{gen_ineq_form} and satisfies \eqref{desired_criterion}, thus fulfilling the objective we set out to achieve when $(p,q,r)$ satisfies \eqref{three_cond_universal} and \eqref{three_cond_1}. Therefore, the construction of our desired weight function comes to an end at this step when \eqref{three_cond_1} holds.

Since the coefficient of each of $\mu(LD)$, $\mu(LDW)$, $\mu(LDL)$, $\mu(LWD)$, $\mu(WWDW)$, $\mu(WWDD)$ and $\mu(WWWD)$ in \eqref{w_{0}_ineq_9} is non-positive in this case, we can remove these terms from the right side of \eqref{w_{0}_ineq_9}, and the resulting inequality would be exactly what appears in \eqref{generalized_final_wt_fn_ineq} (once we replace the notation $w_{0}$ by $w$).

\subsubsection{When either \eqref{three_cond_2} or \eqref{three_cond_3} holds}\label{subsubsec:gen_regime_2,3}
The reason for considering these two possibilities together is that the way we proceed, to a large extent, is the same for both of them (in fact, much of this approach is also common to the scenario where $(p,q,r)$ satisfies \eqref{three_cond_4}). 

The very first observation to make is that the coefficient of $\mu(WWDW)$ in \eqref{w_{0}_ineq_9} is non-positive when either of \eqref{three_cond_2} and \eqref{three_cond_3} is true. Next, recall, from the discussion in \eqref{for_f(x)}, that the inequality $p(1-p) \leqslant \{q+r(1-p-q)\}\{1-q-r(1-p-q)\}$, which is a part of both \eqref{three_cond_2} and \eqref{three_cond_3}, becomes equivalent to $p \leqslant q+r(1-p-q)$ when $p \leqslant 1/2$ and $q+r(1-p-q) \leqslant 1/2$ (in other words, when $p$, $q$ and $r$ are sufficiently small). Applying the latter inequality, the coefficient of $\mu(WWWD)$ in \eqref{w_{0}_ineq_9} can be bounded above as follows:
\begin{align}
{}&(1-p-q)(1-r)\{p-q-r-3p^{2}-3q^{2}-3r^{2}-4pq-3pr-5qr+4p^{2}r+6pr^{2}+6q^{2}r+6qr^{2}+10pqr\}\nonumber\\
\leqslant{}&(1-p-q)(1-r)\{q+r(1-p-q)-q-r-3p^{2}-3q^{2}-3r^{2}-4pq-3pr-5qr+4p^{2}r+6pr^{2}+6q^{2}r\nonumber\\&+6qr^{2}+10pqr\}\nonumber\\
={}&(1-p-q)(1-r)\{-3p^{2}-3q^{2}-3r^{2}-4pq-4pr-6qr+4p^{2}r+6pr^{2}+6q^{2}r+6qr^{2}+10pqr\}\nonumber\\
={}&(1-p-q)(1-r)\{-3p^{2}-3q^{2}-3r^{2}-2pq(2-5r)-2pr(2-2p-3r)-6qr(1-q-r)\},\label{WWWD_nonpositive_regime_2}
\end{align}
which is non-positive for all $p$, $q$ and $r$ sufficiently small. This shows us that the coefficient of $\mu(WWWD)$ in \eqref{w_{0}_ineq_9} is non-positive in this case. Only the coefficient of $\mu(WWDD)$ in \eqref{w_{0}_ineq_9} remains non-negative.

This is where the need for an adjustment to our weight function, along the lines of the idea outlined in \S\ref{subsubsec:adjust_gen}, arises. There are quite a few terms with non-positive coefficients available to us on the right side of \eqref{w_{0}_ineq_9}, but we should take into account the following aspects while deciding which of them to use for negating the existing (possibly) non-negative terms on the right side of \eqref{w_{0}_ineq_9}:
\begin{enumerate}
\item which of the cylinder sets (namely, $(D,D)_{0,1}$, $(L,D)_{0,1}$, $(L,D,W)_{0,1,2}$ etc.)\ appearing on the right side of \eqref{w_{0}_ineq_9}, with non-positive coefficients, can boast a significant contribution from $\mu(WWD)$, $\mu(WWWD)$, $\mu(WWDW)$ and $\mu(WWDD)$ when we consider their probabilities under the pushforward measure (i.e.\ when we consider the expansion of $\G_{p,q,r}\mu(DD)$, $\G_{p,q,r}\mu(LD)$, $\G_{p,q,r}\mu(LDW)$ etc.); 
\item the order of magnitude of the coefficient of each of these terms assuming $p$, $q$ and $r$ to be sufficiently small (for instance, the coefficient of $\mu(LDW)$ in \eqref{w_{0}_ineq_9} is of the order of $r$, that of $\mu(LDL)$ is of the order of $1$, and so on).
\end{enumerate}
To this end, we make use of the following lemma:
\begin{lemma}\label{lem:pushforward_inequalities}
For any $\mu\in\mathcal{M}$, the following inequalities are true:
\begin{align} 
{}&(1-p)(1-p-q)(1-r)\mu(WWD)\leqslant \G_{p,q,r}\mu(LD),\label{i}\\
{}&(1-p)(1-p-q)(1-r)p\{\mu(WWDW)+\mu(WWDD)\}\leqslant \G_{p,q,r}\mu(LDW),\label{ii}\\
{}&(1-p)(1-p-q)(1-r)\{q+(1-p-q)r\}\mu(WWDW)+(1-p)(1-p-q)(1-r)\nonumber\\&\{q+(1-p-q)r^{2}\}\mu(WWDD)\leqslant \G_{p,q,r}\mu(LDL),\label{iii}\\
{}&(1-p)p(1-p-q)(1-r)\mu(WWDW)+(1-p)p(1-p-q)(1-r)(1+r)\mu(WWDD)\nonumber\\&+(1-p)p(1-p-q)(1-r)\mu(WWWD)\leqslant \G_{p,q,r}\mu(LWD).\label{iv}
\end{align}
\end{lemma}
\begin{proof}
The proof of Lemma~\ref{lem:pushforward_inequalities} follows from Lemma~\ref{lem:pushforward_2} and the stochastic update rules given by \eqref{GPCA_rule_1} through \eqref{GPCA_rule_6}. The reason why we obtain inequalities instead of equalities is because, when computing $\G_{p,q,r}\mu(\mathcal{C})$ where $\mathcal{C}$ is any of the cylinder sets $(L,D)_{0,1}$, $(L,D,W)_{0,1,2}$, $(L,D,L)_{0,1,2}$ and $(L,W,D)_{0,1,2}$, we do not consider \emph{all} possible cylinder sets $\mathcal{D}$ for which $\Prob[\mathcal{C}\big|\mathcal{D}]>0$. The details of the proof are as follows:
\begin{enumerate}
\item For the lower bound on $\G_{p,q,r}\mu(LD)$, we focus on the cylinder set $\mathcal{C}=(L,D)_{0,1}$, and we consider $\mathcal{D}=(W,W,D)_{0,1,2}$. Noting that $\Prob[\mathcal{C}\big|\mathcal{D}]$ equals $(1-p)(1-p-q)(1-r)$, we now obtain \eqref{i} as a consequence of Lemma~\ref{lem:pushforward_2}.
\item For the lower bound on $\G_{p,q,r}\mu(LDW)$, we focus on the cylinder set $\mathcal{C}=(L,D,W)_{0,1,2}$, and we consider $\mathcal{D}\in\{W\}^{2}\times\{D\}\times\{W,D\}$, and noting that $\Prob[\mathcal{C}\big|\mathcal{D}]$ equals $(1-p)(1-p-q)(1-r)p$ for each $\mathcal{D}$ here, we obtain \eqref{ii} as a consequence of Lemma~\ref{lem:pushforward_2}.
\item For the lower bound on $\G_{p,q,r}\mu(LDL)$, we focus on the cylinder set $\mathcal{C}=(L,D,L)_{0,1,2}$, and we consider $\mathcal{D}\in\{W\}^{2}\times\{D\}\times\{W,D\}$. Noting that $\Prob[\mathcal{C}\big|\mathcal{D}]$ equals $(1-p)(1-p-q)(1-r)\{q+(1-p-q)r\}$ when $\mathcal{D}=(W,W,D,W)_{0,1,2,3}$, whereas it equals $(1-p)(1-p-q)(1-r)\{q+(1-p-q)r^{2}\}$ when $\mathcal{D}=(W,W,D,D)_{0,1,2,3}$, we obtain \eqref{iii} as a consequence of Lemma~\ref{lem:pushforward_2}.
\item Finally, for the lower bound on $\G_{p,q,r}\mu(LWD)$, we focus on the cylinder set $\mathcal{C}=(L,W,D)_{0,1,2}$, and we consider $\mathcal{D}\in\left(\{W\}^{2}\times\{D\}\times\{W,D\}\right)\cup\left(\{W\}^{3}\times\{D\}\right)$. Noting that $\Prob[\mathcal{C}\big|\mathcal{D}]$ equals $(1-p)p(1-p-q)(1-r)$ when $\mathcal{D}\in\left\{(W,W,D,W)_{0,1,2,3},(W,W,W,D)_{0,1,2,3}\right\}$, and it equals $(1-p)p(1-p-q)(1-r)(1+r)$ when $\mathcal{D}=(W,W,D,D)_{0,1,2,3}$, we obtain \eqref{iv} as a consequence of Lemma~\ref{lem:pushforward_2}.\qedhere
\end{enumerate}
\end{proof}

We now define the adjusted / updated weight function, $w_{1}$, to be exactly as defined in \eqref{w_{1}}. For the reader's convenience, we write here its rather long and involved expression, recalling $w_{0}$ from \eqref{w_{0}}:
\begin{align}
w_{1}(\mu)={}&w_{0}(\mu)+\{4r-2(p+q)+(p+q)^{2}-6r^{2}-3r(p+2q)\}\mu(LD)+(1-p-q)(1-r)(-2r+r^{2}+3pr\nonumber\\&+3qr+r^{3})\mu(LDW)+(1-p-q)(1-r)(-1+p+q+r^{2}+2qr)\mu(LDL)-[3-(3+p+q+r\nonumber\\&-2q^{2}-2r^{2}-5qr+2pq+pr+3pqr+4q^{2}r+4qr^{2}+5pr^{2}-2p^{2}r-5pqr^{2}-3p^{2}r^{2}-2q^{2}r^{2})\nonumber\\&(1-p-q)(1-r)]\mu(LWD).\label{w_{1}_rewritten}
\end{align}
Following the argument outlined in \S\ref{subsubsec:adjust_gen}, the updated weight function in \eqref{w_{1}_rewritten} transforms the current weight function inequality, in \eqref{w_{0}_ineq_9}, as follows (the same way as we deduce \eqref{ith_wt_fn_ineq}):
\begin{align}
w_{1}(\G_{p,q,r}\mu)\leqslant{}&w_{1}(\mu)-[2-(1-p-q)(1-r)\{2+2p+q+r-q^{2}-r^{2}-3qr+pq+2q^{2}r+2qr^{2}+pqr+\nonumber\\&2pr^{2}-p^{2}r-2pqr^{2}-p^{2}r^{2}-q^{2}r^{2}\}]\{\mu(WD)-\mu(WWWD)-\mu(DWDD)-\mu(LWD)\}\nonumber\\&-\{1-(1+p+q+pq+pr-q^{2}-qr-p^{2}r+q^{2}r)(1-p-q)(1-r)(1+r)\}\mu(DD)\nonumber\\&-r^{2}(1-r)^{2}(1-p-q)^{2}\mu(WLD)-r^{2}(1-r)^{2}(1-p-q)^{2}\mu(DLD)+\{p(1-p)-\nonumber\\&(q+r-pr-qr)(1-q-r+pr+qr)\}(1-p-q)(1-r)\mu(WWDW)+(1-p-q)(1-r)\nonumber\\&\{p-q-r-3p^{2}-3q^{2}-3r^{2}-4pq-3pr-5qr+4p^{2}r+6pr^{2}+6q^{2}r+6qr^{2}+10pqr\}\nonumber\\&\mu(WWWD)+\{p(1-p)-q(1-q-r+pr+qr)\}(1-p-q)(1-r)(1+r)\mu(WWDD)\nonumber\\&+\{4r-2(p+q)+(p+q)^{2}-6r^{2}-3r(p+2q)\}\G_{p,q,r}\mu(LD)+(1-p-q)(1-r)(-2r+\nonumber\\&r^{2}+3pr+3qr+r^{3})\G_{p,q,r}\mu(LDW)+(1-p-q)(1-r)(-1+p+q+r^{2}+2qr)\nonumber\\&\G_{p,q,r}\mu(LDL)-[3-(3+p+q+r-2q^{2}-2r^{2}-5qr+2pq+pr+3pqr+4q^{2}r+4qr^{2}\nonumber\\&+5pr^{2}-2p^{2}r-5pqr^{2}-3p^{2}r^{2}-2q^{2}r^{2})(1-p-q)(1-r)]\G_{p,q,r}\mu(LWD)\nonumber\\
\leqslant{}&w_{1}(\mu)-[2-(1-p-q)(1-r)\{2+2p+q+r-q^{2}-r^{2}-3qr+pq+2q^{2}r+2qr^{2}+pqr+\nonumber\\&2pr^{2}-p^{2}r-2pqr^{2}-p^{2}r^{2}-q^{2}r^{2}\}]\{\mu(WD)-\mu(WWWD)-\mu(DWDD)-\mu(LWD)\}\nonumber\\&-\{1-(1+p+q+pq+pr-q^{2}-qr-p^{2}r+q^{2}r)(1-p-q)(1-r)(1+r)\}\mu(DD)\nonumber\\&-r^{2}(1-r)^{2}(1-p-q)^{2}\mu(WLD)-r^{2}(1-r)^{2}(1-p-q)^{2}\mu(DLD)+\{p(1-p)-\nonumber\\&(q+r-pr-qr)(1-q-r+pr+qr)\}(1-p-q)(1-r)\mu(WWDW)+(1-p-q)(1-r)\nonumber\\&\{p-q-r-3p^{2}-3q^{2}-3r^{2}-4pq-3pr-5qr+4p^{2}r+6pr^{2}+6q^{2}r+6qr^{2}+10pqr\}\nonumber\\&\mu(WWWD)+\{p(1-p)-q(1-q-r+pr+qr)\}(1-p-q)(1-r)(1+r)\mu(WWDD)\nonumber\\&+\{4r-2(p+q)+(p+q)^{2}-6r^{2}-3r(p+2q)\}(1-p)(1-p-q)(1-r)\mu(WWD)+\nonumber\\&(1-p-q)(1-r)(-2r+r^{2}+3pr+3qr+r^{3})(1-p)(1-p-q)(1-r)p\{\mu(WWDW)\nonumber\\&+\mu(WWDD)\}+(1-p-q)(1-r)(-1+p+q+r^{2}+2qr)[(1-p)(1-p-q)(1-r)\nonumber\\&\{q+(1-p-q)r\}\mu(WWDW)+(1-p)(1-p-q)(1-r)\{q+(1-p-q)r^{2}\}\mu(WWDD)]\nonumber\\&-[3-(3+p+q+r-2q^{2}-2r^{2}-5qr+2pq+pr+3pqr+4q^{2}r+4qr^{2}+5pr^{2}-2p^{2}r\nonumber\\&-5pqr^{2}-3p^{2}r^{2}-2q^{2}r^{2})(1-p-q)(1-r)]\{(1-p)p(1-p-q)(1-r)\mu(WWDW)+\nonumber\\&(1-p)p(1-p-q)(1-r)(1+r)\mu(WWDD)+(1-p)p(1-p-q)(1-r)\mu(WWWD)\}\label{intermediate_18}
\end{align}
where the inequality in the last step of the derivation of \eqref{intermediate_18} is obtained by making use of \eqref{i}, \eqref{ii}, \eqref{iii} and \eqref{iv}. 

Let us write down all those terms of \eqref{intermediate_18} that we now need to combine suitably:
\begin{enumerate}
\item \label{item_1} $\{p(1-p)-(q+r-pr-qr)(1-q-r+pr+qr)\}(1-p-q)(1-r)\mu(WWDW)$,
\item \label{item_2}$(1-p-q)(1-r)\{p-q-r-3p^{2}-3q^{2}-3r^{2}-4pq-3pr-5qr+4p^{2}r+6pr^{2}+6q^{2}r+6qr^{2}+10pqr\}\mu(WWWD)$,
\item \label{item_3} $\{p(1-p)-q(1-q-r+pr+qr)\}(1-p-q)(1-r)(1+r)\mu(WWDD)$,
\item \label{item_4} $\{4r-2(p+q)+(p+q)^{2}-6r^{2}-3r(p+2q)\}(1-p)(1-p-q)(1-r)\mu(WWD)$,
\item \label{item_5} $(1-p-q)(1-r)(-2r+r^{2}+3pr+3qr+r^{3})(1-p)(1-p-q)(1-r)p\{\mu(WWDW)+\mu(WWDD)\}$,
\item \label{item_6} $(1-p-q)(1-r)(-1+p+q+r^{2}+2qr)[(1-p)(1-p-q)(1-r)\{q+(1-p-q)r\}\mu(WWDW)+(1-p)(1-p-q)(1-r)\{q+(1-p-q)r^{2}\}\mu(WWDD)]$,
\item \label{item_7} $-[3-(3+p+q+r-2q^{2}-2r^{2}-5qr+2pq+pr+3pqr+4q^{2}r+4qr^{2}+5pr^{2}-2p^{2}r-5pqr^{2}-3p^{2}r^{2}-2q^{2}r^{2})(1-p-q)(1-r)]\{(1-p)p(1-p-q)(1-r)\mu(WWDW)+(1-p)p(1-p-q)(1-r)(1+r)\mu(WWDD)+(1-p)p(1-p-q)(1-r)\mu(WWWD)\}$.
\end{enumerate}
Recall that when $(p,q,r)$ satisfies the constraints of \eqref{three_cond_2} or \eqref{three_cond_3}, it is only the coefficient of $\mu(WWDD)$ that is non-positive on the right side of \eqref{w_{0}_ineq_9} (other than $w_{0}(\mu)$ itself). Consequently, for this case, we need only focus on the terms involving $\mu(WWDD)$ in the above-mentioned list (appearing in \eqref{item_3}, \eqref{item_5}, \eqref{item_6} and \eqref{item_7}), and the term involving $\mu(WWD)$ (appearing in \eqref{item_4}). Combining the terms involving $\mu(WWDD)$ from \eqref{item_3}, \eqref{item_5}, \eqref{item_6} and \eqref{item_7}, we get the following coefficient of $\mu(WWDD)$ (or rather, an upper bound on it), as long as $p$, $q$ and $r$ are sufficiently small:
\begin{align}
{}&(1-p-q)(1-r)[\{p(1-p)-q(1-q-r+pr+qr)\}(1+r)+(1-p-q)(1-r)(-2r+r^{2}+3pr+3qr\nonumber\\&+r^{3})(1-p)p+(-1+p+q+r^{2}+2qr)(1-p)(1-p-q)(1-r)\{q+(1-p-q)r^{2}\}-\{3-(3+p+q\nonumber\\&+r-2q^{2}-2r^{2}-5qr+2pq+pr+3pqr+4q^{2}r+4qr^{2}+5pr^{2}-2p^{2}r-5pqr^{2}-3p^{2}r^{2}-2q^{2}r^{2})\nonumber\\&(1-p-q)(1-r)\}(1-p)p(1+r)]\nonumber\\
={}&[-p^{5}r(2+3r-2r^{2}-3r^{3})-p^{4}qr^{2}(10+r-8r^{2})-11p^{4}r^{4}-4p^{4}r^{3}-7p^{3}q^{2}r^{2}(1+r-r^{2})-20p^{3}qr^{4}\nonumber\\&-p^{3}q(1+3r-15r^{2}-9r^{3}-2p-pr)-p^{3}r(11+3r+r^{2}-13r^{3}-r^{4}-8p-6pr)-2p^{2}q^{3}r^{3}(2-r)\nonumber\\&-2p^{2}q^{3}(1-2r)-p^{2}q^{2}r(13-7p-2r-15r^{2}+9r^{3})-p^{2}qr^{3}(17-11r-2r^{2})-p^{2}q(1-5q-5r)\nonumber\\&-3p^{2}r^{5}-4p^{2}r^{4}-p^{2}r^{2}(1-7r)-p^{2}(1-8r-p-p^{2})-3pq^{3}r(1+r-r^{2})-pq^{2}r^{4}(3-r)-8pq^{2}r^{3}\nonumber\\&-pq^{2}(7-6r-3q-11r^{2})-4pqr^{5}-pqr^{2}(8-11r-5r^{2})-3pqr-pr^{3}(5+2r-3r^{2})-2q^{3}r^{4}\nonumber\\&-q^{3}(1+r-3r^{2}-r^{3})-q^{2}r^{5}-q^{2}r^{2}(7-5r^{2})-2qr^{4}(2-r)-r^{2}(1-r-2p-5q-r^{2}+2qr+r^{3})\nonumber\\&+\{-2p^{2}+pq-3pr+p+3q^{2}+qr-2q\}](1-p-q)(1-r)\nonumber\\
\leqslant{}& (p-2q+pq+qr-3pr-2p^{2}+3q^{2})(1-p-q)(1-r).\label{coeff_WWDD_1}
\end{align}

The first scenario to consider is where the final expression in \eqref{coeff_WWDD_1} is non-positive, i.e.\ the second inequality of \eqref{three_cond_2} holds. The current weight function inequality, obtained from \eqref{intermediate_18} by updating the coefficient of $\mu(WWDD)$ to the expression in \eqref{coeff_WWDD_1}, and by combining all the terms involving $\mu(WWDW)$ (as listed in \eqref{item_1}, \eqref{item_5}, \eqref{item_6} and \eqref{item_7}) and all the terms involving $\mu(WWWD)$ (as listed in \eqref{item_2} and \eqref{item_7}) that were present in the final step of \eqref{intermediate_18} (this, in fact, \emph{further} reduces the already non-positive coefficients of $\mu(WWDW)$ and $\mu(WWWD)$), is given by:
\begin{align}
w_{1}(\G_{p,q,r}\mu)\leqslant{}&w_{1}(\mu)-[2-(1-p-q)(1-r)\{2+2p+q+r-q^{2}-r^{2}-3qr+pq+2q^{2}r+2qr^{2}+pqr+\nonumber\\&2pr^{2}-p^{2}r-2pqr^{2}-p^{2}r^{2}-q^{2}r^{2}\}]\{\mu(WD)-\mu(WWWD)-\mu(DWDD)-\mu(LWD)\}\nonumber\\&-\{1-(1+p+q+pq+pr-q^{2}-qr-p^{2}r+q^{2}r)(1-p-q)(1-r)(1+r)\}\mu(DD)\nonumber\\&-r^{2}(1-r)^{2}(1-p-q)^{2}\mu(WLD)-r^{2}(1-r)^{2}(1-p-q)^{2}\mu(DLD)+(1-p-q)(1-r)\nonumber\\&[\{p(1-p)-(q+r-pr-qr)(1-q-r+pr+qr)\}+(1-p-q)(1-r)(-2r+r^{2}+3pr\nonumber\\&+3qr+r^{3})(1-p)p+(-1+p+q+r^{2}+2qr)(1-p)(1-p-q)(1-r)\{q+(1-p-q)r\}\nonumber\\&-\{3-(3+p+q+r-2q^{2}-2r^{2}-5qr+2pq+pr+3pqr+4q^{2}r+4qr^{2}+5pr^{2}-2p^{2}r\nonumber\\&-5pqr^{2}-3p^{2}r^{2}-2q^{2}r^{2})(1-p-q)(1-r)\}(1-p)p]\mu(WWDW)+(1-p-q)(1-r)\nonumber\\&[p-q-r-3p^{2}-3q^{2}-3r^{2}-4pq-3pr-5qr+4p^{2}r+6pr^{2}+6q^{2}r+6qr^{2}+10pqr-\nonumber\\&\{3-(3+p+q+r-2q^{2}-2r^{2}-5qr+2pq+pr+3pqr+4q^{2}r+4qr^{2}+5pr^{2}-2p^{2}r\nonumber\\&-5pqr^{2}-3p^{2}r^{2}-2q^{2}r^{2})(1-p-q)(1-r)\}(1-p)p]\mu(WWWD)+(1-p-q)(1-r)\nonumber\\&(p-2q+pq+qr-3pr-2p^{2}+3q^{2})\mu(WWDD)+\{4r-2(p+q)+(p+q)^{2}-6r^{2}\nonumber\\&-3r(p+2q)\}(1-p)(1-p-q)(1-r)\mu(WWD).\label{w_{0}_ineq_10}
\end{align}
The inequality in \eqref{w_{0}_ineq_10} is evidently of the form given by \eqref{gen_ineq_form}, and satisfies \eqref{desired_criterion}. Therefore, our construction of the desired weight function comes to an end here in this case. The final weight function is as given by \eqref{w_{1}_rewritten} (which is the same as \eqref{w_{1}}), and the final weight function inequality is \eqref{w_{0}_ineq_10}. Since the coefficient of each of $\mu(WWDW)$, $\mu(WWDD)$, $\mu(WWWD)$ and $\mu(WWD)$ in \eqref{w_{0}_ineq_10} is non-positive when \eqref{three_cond_universal} and \eqref{three_cond_2} hold, we may remove these terms from \eqref{w_{0}_ineq_10}, which would yield the exact same weight function inequality as that given by \eqref{generalized_final_wt_fn_ineq} (once we replace the notation $w_{1}$ by simply $w$).

The second scenario to consider is where the final expression in \eqref{coeff_WWDD_1} is positive, i.e.\ if the second inequality of \eqref{three_cond_3} holds instead. In this case, we combine the coefficient of $\mu(WWD)$ in \eqref{item_4} with the expression in \eqref{coeff_WWDD_1} (making use of the inequality $\mu(WWDD) \leqslant \mu(WWD)$ -- we can use this since the coefficient of $\mu(WWD)$ in \eqref{item_4} is non-positive due to \eqref{three_cond_universal}), to get the following upper bound on the coefficient of $\mu(WWDD)$:
\begin{align}
{}&(p-2q+pq+qr-3pr-2p^{2}+3q^{2})(1-p-q)(1-r)+\{4r-2(p+q)+(p+q)^{2}-6r^{2}-3r(p+2q)\}\nonumber\\&(1-p)(1-p-q)(1-r)\nonumber\\
={}&(1-p-q)(1-r)\{-p^{3}-2p^{2}q+3p^{2}r+p^{2}-pq^{2}+6pqr+5pq+6pr^{2}-10pr-p+4q^{2}-5qr\nonumber\\&-4q-6r^{2}+4r\}\nonumber\\
\leqslant{}&(1-p-q)(1-r)\{-pr(1-3p-6q-6r)+p^{2}+5pq-9pr-p+4q^{2}-5qr-4q-6r^{2}+4r\}\nonumber\\
\leqslant{}&(1-p-q)(1-r)(-p-4q+4r+p^{2}+4q^{2}+5pq-9pr-5qr-6r^{2}),\label{coeff_WWDD_regime_2}
\end{align}
as long as $p$, $q$ and $r$ are sufficiently small, and this final expression is non-positive due to the third inequality of \eqref{three_cond_3}. Once incorporated into \eqref{w_{0}_ineq_10}, the updated coefficient of $\mu(WWDD)$ from \eqref{coeff_WWDD_regime_2} transforms the weight function inequality into one that satisfies \eqref{desired_criterion} (since now, each term on the right side of \eqref{w_{0}_ineq_10}, apart from $w_{0}(\mu)$, has a non-positive coefficient). Since the coefficient of each of $\mu(WWDW)$, $\mu(WWDD)$ and $\mu(WWWD)$ in the updated weight function inequality is non-positive, we may remove all of these terms, leaving us with exactly the same weight function inequality as that given by \eqref{generalized_final_wt_fn_ineq} (once we replace the notation $w_{1}$ by simply $w$).

\subsubsection{When $(p,q,r)$ satisfies \eqref{three_cond_4} of Theorem~\ref{thm:three-parameter}}\label{subsubsec:regime_4} The hardest case to consider is where \eqref{three_cond_4} is true. Most of the approach adopted so far also applies to this case, i.e.\ the weight function is updated as in \eqref{w_{1}_rewritten}, the coefficient of $\mu(WWDD)$ is updated to the expression given by \eqref{coeff_WWDD_1}, and the weight function inequality is updated to \eqref{w_{0}_ineq_10}. Observe that in the regime given by \eqref{three_cond_4}, there is no guarantee that the coefficient of any of $\mu(WWDW)$, $\mu(WWDD)$ and $\mu(WWWD)$ in \eqref{w_{0}_ineq_10} is non-positive. 

First, we simplify, by providing suitable upper bounds, the coefficients of $\mu(WWDW)$ and $\mu(WWWD)$ in \eqref{w_{0}_ineq_10}, since otherwise, the expressions are too cluttered due of the presence of very small terms (third or higher order terms, i.e.\ terms of the form $p^{i}q^{j}r^{k}$ with $i+j+k \geqslant 3$, make little difference to us as $p$, $q$ and $r$ are assumed to be sufficiently small). We deduce the following upper bound on the coefficient of $\mu(WWDW)$ in \eqref{w_{0}_ineq_10}, provided $p$, $q$ and $r$ are small enough:
\begin{align}
{}&(1-p-q)(1-r)[\{p(1-p)-(q+r-pr-qr)(1-q-r+pr+qr)\}+(1-p-q)(1-r)(-2r+r^{2}+3pr\nonumber\\&+3qr+r^{3})(1-p)p+(-1+p+q+r^{2}+2qr)(1-p)(1-p-q)(1-r)\{q+(1-p-q)r\}-\{3-(3+p+\nonumber\\&q+r-2q^{2}-2r^{2}-5qr+2pq+pr+3pqr+4q^{2}r+4qr^{2}+5pr^{2}-2p^{2}r-5pqr^{2}-3p^{2}r^{2}-2q^{2}r^{2})\nonumber\\&(1-p-q)(1-r)\}(1-p)p]\nonumber\\
={}&[-p^{5}r(2+r-3r^{2})-p^{4}qr^{2}(9-8r)-p^{4}qr-11p^{4}r^{3}-7p^{3}q^{2}r^{2}(2-r)-20p^{3}qr^{3}-p^{3}qr(4-25r-7q)\nonumber\\&-p^{3}q(1-2p)-p^{3}r^{2}(7-14r-4p)-p^{3}r(8-6p)-2p^{2}q^{3}r^{2}(3-r)-2p^{2}q^{3}(1-3r)-9p^{2}q^{2}r^{3}\nonumber\\&-p^{2}q^{2}r(19-23r)-p^{2}qr^{2}(23-11r-r^{2})-p^{2}q(1-12r)-p^{2}r^{4}-6p^{2}r^{3}-p^{2}(1-p-5r-p^{2}-8r^{2})\nonumber\\&-3pq^{3}r(2-r)-pq^{2}r^{3}(4-r)-5pq^{2}r^{2}-pq^{2}(7-15r-5p-3q)-3pqr^{4}-pqr(13-9r-6r^{2})\nonumber\\&-pr^{2}(6+r-2r^{2})-2q^{3}r^{3}-q^{3}(1-3r^{2})-q^{2}r^{4}-3q^{2}r^{2}(1-2r)-5q^{2}r-qr^{3}(5-2r)-2qr^{2}-r^{4}\nonumber\\&+\{-2p^{2}+pq+pr+p+3q^{2}+7qr-2q+r^{3}+2r^{2}-2r\}](1-p-q)(1-r)\nonumber\\
\leqslant{}&(p-2q-2r+pq+pr+7qr-2p^{2}+3q^{2}+2r^{2}+r^{3})(1-p-q)(1-r).\label{coeff_WWDW_1}
\end{align}
Next, we deduce the following upper bound on the coefficient of $\mu(WWWD)$ in \eqref{w_{0}_ineq_10}:
\begin{align}
{}&(1-p-q)(1-r)\{p-q-r-3p^{2}-3q^{2}-3r^{2}-4pq-3pr-5qr+4p^{2}r+6pr^{2}+6q^{2}r+6qr^{2}+10pqr\}\nonumber\\&-\{3-(3+p+q+r-2q^{2}-2r^{2}-5qr+2pq+pr+3pqr+4q^{2}r+4qr^{2}+5pr^{2}-2p^{2}r-5pqr^{2}-3p^{2}r^{2}\nonumber\\&-2q^{2}r^{2})(1-p-q)(1-r)\}(1-p)p(1-p-q)(1-r)\nonumber\\
={}&(1-p-q)(1-r)[-p^{5}r(2+r-3r^{2})-p^{4}qr^{2}(9-8r)-p^{4}qr-11p^{4}r^{3}-7p^{3}q^{2}r^{2}(2-r)-22p^{3}qr^{3}\nonumber\\&-p^{3}qr(6-30r-7q)-2p^{3}q(1-p)-p^{3}r(4+12r-15r^{2}-6pr-4p)-2p^{2}q^{3}r^{2}(3-r)-2p^{2}q^{3}(1-3r)\nonumber\\&-13p^{2}q^{2}r^{3}-p^{2}q^{2}r(19-29r)-p^{2}qr^{2}(33-20r)-2pq^{3}r^{3}-6pq^{3}r(1-r)-pq^{2}r^{2}(15-6r)\nonumber\\&-pq^{2}(3-12r-2q)-p^{2}(1-4r-2q-p-p^{2}-3q^{2}-11qr-10r^{2})-pr(1-3r-2r^{2}+9pr^{2}-6q-4p)\nonumber\\&-qr(1-6q-6r-12pr)+\{p-q-r-4p^{2}-3q^{2}-3r^{2}-6pq-4pr-4qr\}]\nonumber\\
\leqslant{}&(1-p-q)(1-r)(p-q-r-4p^{2}-3q^{2}-3r^{2}-6pq-4pr-4qr)\label{coeff_WWWD_1}
\end{align}
when $p$, $q$ and $r$ are sufficiently small. Incorporating \eqref{coeff_WWDW_1} and \eqref{coeff_WWWD_1} into \eqref{w_{0}_ineq_10}, we obtain:
\begin{align}
w_{1}(\G_{p,q,r}\mu)\leqslant{}&w_{1}(\mu)-[2-(1-p-q)(1-r)\{2+2p+q+r-q^{2}-r^{2}-3qr+pq+2q^{2}r+2qr^{2}+pqr\nonumber\\&+2pr^{2}-p^{2}r-2pqr^{2}-p^{2}r^{2}-q^{2}r^{2}\}]\{\mu(WD)-\mu(WWWD)-\mu(DWDD)-\mu(LWD)\}\nonumber\\&-\{1-(1+p+q+pq+pr-q^{2}-qr-p^{2}r+q^{2}r)(1-p-q)(1-r)(1+r)\}\mu(DD)\nonumber\\&-r^{2}(1-r)^{2}(1-p-q)^{2}\mu(WLD)-r^{2}(1-r)^{2}(1-p-q)^{2}\mu(DLD)+(p-2q-2r+pq\nonumber\\&+pr+7qr-2p^{2}+3q^{2}+2r^{2}+r^{3})(1-p-q)(1-r)\mu(WWDW)+(1-p-q)(1-r)\nonumber\\&(p-q-r-4p^{2}-3q^{2}-3r^{2}-6pq-4pr-4qr)\mu(WWWD)+(1-p-q)(1-r)(p-2q\nonumber\\&+pq+qr-3pr-2p^{2}+3q^{2})\mu(WWDD)+\{4r-2(p+q)+(p+q)^{2}-6r^{2}-3r(p+2q)\}\nonumber\\&(1-p)(1-p-q)(1-r)\mu(WWD).\label{w_{0}_ineq_11}
\end{align}
At this point, we could further subdivide our analysis into various cases depending on which of these three coefficients, i.e.\ the coefficients of $\mu(WWDW)$, $\mu(WWWD)$ and $\mu(WWDD)$, is / are non-negative, which would allow us coverage of a \emph{marginally} better regime, while \emph{significantly} lengthening and complicating the computations that follow. We, therefore, adopt the following idea irrespective of which among the coefficients of $\mu(WWDW)$, $\mu(WWDD)$ and $\mu(WWWD)$ in \eqref{w_{0}_ineq_11} is / are non-negative: this idea is to make use of the yet-untouched term involving $\mu(WWD)$, appearing in \eqref{item_4}, in reducing further each of the coefficients of $\mu(WWDW)$, $\mu(WWDD)$ and $\mu(WWWD)$ in \eqref{w_{0}_ineq_11}. Specifically, letting $c_{WWDW}$, $c_{WWDD}$, $c_{WWWD}$ and $c_{WWD}$ denote the coefficients of $\mu(WWDW)$, $\mu(WWDD)$, $\mu(WWWD)$ and $\mu(WWD)$, respectively, in \eqref{w_{0}_ineq_11}, and keeping in mind that $c_{WWD}\leqslant 0$ due to \eqref{three_cond_universal}, we write:
\begin{align}
{}&c_{WWD}\mu(WWD)+c_{WWDW}\mu(WWDW)+c_{WWDD}\mu(WWDD)+c_{WWWD}\mu(WWWD)\nonumber\\
{}&=\left\{\frac{1}{2}c_{WWD}\mu(WWD)+c_{WWWD}\mu(WWWD)\right\}+\left\{\frac{1}{2}c_{WWD}\mu(WWD)+c_{WWDW}\mu(WWDW)+c_{WWDD}\mu(WWDD)\right\}\nonumber\\
{}&\leqslant\left\{\frac{1}{2}c_{WWD}+c_{WWWD}\right\}\mu(WWWD)+\left\{\frac{1}{2}c_{WWD}+c_{WWDW}\right\}\mu(WWDW)+\left\{\frac{1}{2}c_{WWD}+c_{WWDD}\right\}\mu(WWDD),\label{combine_WWD_WWDW_WWDD_WWWD}
\end{align}
where we make use of the inequalities $\mu(WWWD) \leqslant \mu(WWD)$ and $\mu(WWDW)+\mu(WWDD)\leqslant \mu(WWD)$ (which follows from Lemma~\ref{lem:pushforward_1}). Combining half of the term involving $\mu(WWD)$ in \eqref{w_{0}_ineq_11} with the term involving $\mu(WWWD)$ in \eqref{w_{0}_ineq_11} the same way as shown in \eqref{combine_WWD_WWDW_WWDD_WWWD}, and keeping $p$, $q$ and $r$ sufficiently small wherever needed (for instance, in the third step of the derivation below), we obtain:
\begin{align}
{}&\big[(1-p-q)(1-r)(p-q-r-4p^{2}-3q^{2}-3r^{2}-6pq-4pr-4qr)+\frac{1}{2}\{4r-2(p+q)+(p+q)^{2}\nonumber\\&-6r^{2}-3r(p+2q)\}(1-p)(1-p-q)(1-r)\big]\mu(WWWD) \nonumber\\
={}&\Big\{-\frac{p^{3}}{2}-p^{2}q-\frac{3pr}{2}(1-p-2q-2r)-\frac{pq^{2}}{2}-6pr-\frac{5p^{2}}{2}-4pq-\frac{5q^{2}}{2}-7qr-2q-6r^{2}+r\Big\}\nonumber\\&(1-p-q)(1-r)\mu(WWWD)\nonumber\\
\leqslant{}&\Big(-6pr-\frac{5p^{2}}{2}-4pq-\frac{5q^{2}}{2}-7qr-2q-6r^{2}+r\Big)(1-p-q)(1-r)\mu(WWWD)\nonumber\\
={}&\frac{1}{2}\Big\{\Big(-5pr-\frac{p^{2}}{2}+3pq+\frac{7q^{2}}{2}-2qr-3q-3r^{2}+2r\Big)-7pr-\frac{9p^{2}}{2}-11pq-\frac{17q^{2}}{2}-12qr\nonumber\\&-q-9r^{2}\Big\}(1-p-q)(1-r)\mu(WWWD)\nonumber\\
\leqslant{}&\frac{1}{2}\Big(-5pr-\frac{p^{2}}{2}+3pq+\frac{7q^{2}}{2}-2qr-3q-3r^{2}+2r\Big)(1-p-q)(1-r)\mu(WWWD).\label{coeff_WWWD_2}
\end{align}
Likewise, combining the remaining half of the term involving $\mu(WWD)$ in \eqref{w_{0}_ineq_11} with the terms involving $\mu(WWDW)$ and $\mu(WWDD)$ in \eqref{w_{0}_ineq_11} the same way as shown in \eqref{combine_WWD_WWDW_WWDD_WWWD}, and keeping $p$, $q$ and $r$ sufficiently small wherever needed (such as in obtaining the inequality in the last step of deriving \eqref{coeff_WWDD_WWDW_2}), we obtain:
\begin{align}
{}&\big[(p-2q-2r+pq+pr+7qr-2p^{2}+3q^{2}+2r^{2}+r^{3})(1-p-q)(1-r)+\frac{1}{2}\{4r-2(p+q)+(p+q)^{2}\nonumber\\&-6r^{2}-3r(p+2q)\}(1-p)(1-p-q)(1-r)\big]\mu(WWDW)+\big[(1-p-q)(1-r)(p-2q+pq+qr-3pr\nonumber\\&-2p^{2}+3q^{2})+\frac{1}{2}\{4r-2(p+q)+(p+q)^{2}-6r^{2}-3r(p+2q)\}(1-p)(1-p-q)(1-r)\big]\mu(WWDD)\nonumber\\
={}&\Big\{-\frac{p^{3}}{2}-p^{2}q-\frac{pq^{2}}{2}-\frac{3pr}{2}(1-p-2q-2r)-5pr-\frac{p^{2}}{2}+3pq+\frac{7q^{2}}{2}-2qr-3q-3r^{2}+2r\Big\}\nonumber\\&(1-p-q)(1-r)\mu(WWDD)+\Big\{-\frac{p^{3}}{2}-p^{2}q-\frac{pq^{2}}{2}-\frac{3pr}{2}(1-p-2q-2r)-pr-\frac{p^{2}}{2}+\frac{7q^{2}}{2}\nonumber\\&+4qr-3q-r^{2}+3pq+r^{3}\Big\}(1-p-q)(1-r)\mu(WWDW)\nonumber\\
\leqslant{}&\Big(-5pr-\frac{p^{2}}{2}+3pq+\frac{7q^{2}}{2}-2qr-3q-3r^{2}+2r\Big)(1-p-q)(1-r)\mu(WWDD)\nonumber\\&+\Big(-pr-\frac{p^{2}}{2}+\frac{7q^{2}}{2}+4qr-3q-r^{2}+3pq+r^{3}\Big)(1-p-q)(1-r)\mu(WWDW)\nonumber\\
={}&\Big(-5pr-\frac{p^{2}}{2}+3pq+\frac{7q^{2}}{2}-2qr-3q-3r^{2}+2r\Big)(1-p-q)(1-r)\mu(WWDD)+(1-p-q)(1-r)\nonumber\\&\Big\{\Big(-5pr-\frac{p^{2}}{2}+3pq+\frac{7q^{2}}{2}-2qr-3q-3r^{2}+2r\Big)-r(2-4p-6q-2r-r^{2})\Big\}\mu(WWDW)\nonumber\\
\leqslant{}&\Big(-5pr-\frac{p^{2}}{2}+3pq+\frac{7q^{2}}{2}-2qr-3q-3r^{2}+2r\Big)(1-p-q)(1-r)\{\mu(WWDD)+\mu(WWDW)\}.\label{coeff_WWDD_WWDW_2}
\end{align}
The final coefficient of each of $\mu(WWWD)$, $\mu(WWDD)$ and $\mu(WWDW)$ in \eqref{coeff_WWWD_2} and \eqref{coeff_WWDD_WWDW_2} is non-positive due to the last inequality of \eqref{three_cond_4}. 

Incorporating the updated coefficients of $\mu(WWDW)$, $\mu(WWDD)$ and $\mu(WWWD)$ from \eqref{coeff_WWWD_2} and \eqref{coeff_WWDD_WWDW_2} into \eqref{w_{0}_ineq_10}, we obtain:
\begin{align}
w_{1}(\G_{p,q,r}\mu)\leqslant{}&w_{1}(\mu)-[2-(1-p-q)(1-r)\{2+2p+q+r-q^{2}-r^{2}-3qr+pq+2q^{2}r+2qr^{2}+pqr+\nonumber\\&2pr^{2}-p^{2}r-2pqr^{2}-p^{2}r^{2}-q^{2}r^{2}\}]\{\mu(WD)-\mu(WWWD)-\mu(DWDD)-\mu(LWD)\}\nonumber\\&-\{1-(1+p+q+pq+pr-q^{2}-qr-p^{2}r+q^{2}r)(1-p-q)(1-r)(1+r)\}\mu(DD)\nonumber\\&-r^{2}(1-r)^{2}(1-p-q)^{2}\mu(WLD)-r^{2}(1-r)^{2}(1-p-q)^{2}\mu(DLD)+\nonumber\\&\left(-5pr-\frac{p^{2}}{2}+3pq+\frac{7q^{2}}{2}-2qr-3q-3r^{2}+2r\right)(1-p-q)(1-r)\nonumber\\&\left\{\frac{1}{2}\mu(WWWD)+\mu(WWDD)+\mu(WWDW)\right\}\nonumber\\
\leqslant{}&w_{1}(\mu)-[2-(1-p-q)(1-r)\{2+2p+q+r-q^{2}-r^{2}-3qr+pq+2q^{2}r+2qr^{2}+pqr+\nonumber\\&2pr^{2}-p^{2}r-2pqr^{2}-p^{2}r^{2}-q^{2}r^{2}\}]\{\mu(WD)-\mu(WWWD)-\mu(DWDD)-\mu(LWD)\}\nonumber\\&-\{1-(1+p+q+pq+pr-q^{2}-qr-p^{2}r+q^{2}r)(1-p-q)(1-r)(1+r)\}\mu(DD)\nonumber\\&-r^{2}(1-r)^{2}(1-p-q)^{2}\mu(WLD)-r^{2}(1-r)^{2}(1-p-q)^{2}\mu(DLD),\nonumber
\end{align}
where the last step is obtained by eliminating the terms involving $\mu(WWDW)$, $\mu(WWDD)$ and $\mu(WWWD)$ from the previous step, which is allowed because the coefficient in each of these terms is non-positive when \eqref{three_cond_4} is true. We draw the reader's attention to the fact that the final inequality deduced above is the same as the inequality stated in \eqref{generalized_final_wt_fn_ineq} (once we replace the notation $w_{1}$ by simply $w$), and \eqref{generalized_final_wt_fn_ineq} is of the same form as \eqref{gen_ineq_form} and satisfies \eqref{desired_criterion}. This completes the construction of our weight function (the final form of which is as given by \eqref{w_{1}}), for the generalized percolation games, when the underlying parameter-triple $(p,q,r)$ satisfies the constraints given by \eqref{three_cond_4} of Theorem~\ref{thm:three-parameter}.

\section{The proof of Theorem~\ref{thm:bond_envelope_PCA_D} by the technique of weight functions}\label{sec:bond_weight_function}
The principal ideas involved in constructing our weight function for each of the three regimes stated in Theorem~\ref{thm:two-parameter} are the same as those outlined in \S\ref{subsec:central_ideas_weight_functions} (including, as outlined in \S\ref{subsubsec:adjust_gen}, the ideas implemented for the iterative tweaking of the weight function until it satisfies an inequality of the form given by \eqref{gen_ineq_form}, along with the criterion stated in \eqref{desired_criterion}). However, as becomes evident when one goes through the steps leading to the construction of the weight function for the proof of Theorem~\ref{thm:generalized_envelope_PCA_D}, outlined in \S\ref{sec:generalized_wt_fn_steps}, the details involved are rather \emph{ad hoc} and \emph{specific} to the regime of values of the parameter-pair $(r',s')$ under consideration. Consequently, we dedicate \S\ref{sec:bond_1_wt_fn_steps} to the detailed construction of the weight function for the regime given by \ref{bond_regime_1}, \S\ref{sec:bond_2_wt_fn_steps} to that for the regime given by \ref{bond_regime_2}, and \S\ref{sec:bond_3_wt_fn_steps} to that for the regime given by \ref{bond_regime_3}.

\subsection{The final weight functions obtained for the bond percolation game when $(r',s')$ belongs to one of the regimes given by \ref{bond_regime_1}, \ref{bond_regime_2} and \ref{bond_regime_3}}\label{subsec:final_wt_fn_bond}
As in \S\ref{subsec:final_wt_fn_generalized}, we state here, for the reader's convenience, the final weight function we have come up with, and the corresponding weight function inequality it satisfies, for $(r',s')$ belonging to each of the regimes given by \ref{bond_regime_1}, \ref{bond_regime_2} and \ref{bond_regime_3}. When $(r',s')$ satisfies the constraints of \ref{bond_regime_1}, the weight function is given by
\begin{align}\label{final_wt_fn_q=0}
w(\mu)={}&\mu(D)+\mu(WD)+\Big[1-(1-r'-s')\Big\{2(2s'-{s'}^{2})(1-s')+r'+(2s'-{s'}^{2})^{2}(1-s')+\frac{2{r'}^{2}}{(1-s')}\nonumber\\&-(2s'-{s'}^{2})^{2}r'-\frac{5(2s'-{s'}^{2}){r'}^{2}}{(1-s')}\Big\}\Big]\mu(LWD)-(1-r'-s')\Big\{-2(2s'-{s'}^{2})(1-s')-4r'\nonumber\\&+(2s'-{s'}^{2})^{2}(1-s')+\frac{3{r'}^{2}}{(1-s')}+2(2s'-{s'}^{2})r'-2(2s'-{s'}^{2})^{2}r'-\frac{6(2s'-{s'}^{2}){r'}^{2}}{(1-s')}\Big\}\mu(LD)-\nonumber\\&(1-r'-s')\Big\{3r'+2(2s'-{s'}^{2})^{2}(1-s')+\frac{2{r'}^{2}}{(1-s')}-(2s'-{s'}^{2})r'-2(2s'-{s'}^{2})^{2}r'-\frac{4(2s'-{s'}^{2}){r'}^{2}}{(1-s')}\nonumber\\&-\frac{{r'}^{3}}{(1-s')^{2}}\Big\}\mu(LDW)-(1-s')(1-r'-s')\Big\{(1-s')^{2}-{r'}^{2}+(2s'-{s'}^{2}){r'}^{2}\Big\}\mu(LDL),
\end{align}
%w(\mu)={}&\mu(D)+\mu(WD)+[1-(1-p)(1-r)\{2p+r+p^{2}+2r^{2}-p^{2}r-5pr^{2}\}]\mu(LWD)\nonumber\\&-(1-p)(1-r)\{2p-4r+p^{2}+3r^{2}+2pr-2p^{2}r-6pr^{2}\}\mu(LD)\nonumber\\&-(1-p)(1-r)\{3r+2p^{2}+2r^{2}-pr-2p^{2}r-4pr^{2}-r^{3}\}\mu(LDW)\nonumber\\&-(1-p)^{2}(1-r)(1-r^{2}+pr^{2})\mu(LDL),
and the corresponding weight function inequality is given by
\begin{align}
w(\E_{r',s'}\mu)\leqslant{}&w(\mu)-\frac{(1-r'-s')\{(1-s')(2s'-{s'}^{2})-r'\}^{2}}{1-s'}\mu(DD).\label{final_wt_fn_ineq_q=0}
\end{align}
We now deduce, from \eqref{final_wt_fn_ineq_q=0}, the conclusion we desire, i.e.\ $\mu(D)=0$ for any translation-invariant and reflection-invariant probability distribution $\mu$ that is stationary for $\E_{r',s'}$. As seen in \S\ref{subsubsec:bond_to_generalized}, $\E_{r',s'}$ is equivalent to $\G_{p,q,r}$ if we set $q=0$ and let $p$ and $r$ equal suitable functions of $r'$ and $s'$, as given by \eqref{transform_1} and \eqref{transform_2}. From \eqref{transform_2}, we see that $r'=0 \implies r=0$, so that we are back to the set-up of \cite{holroyd2019percolation}, with $q=0$ and $p=2s'-{s'}^{2}$, when $r'=0$. Our desired conclusion , therefore, follows from \cite{holroyd2019percolation} when $r'=0$. It thus suffices for us to consider $r' > 0$. 

Since $r'$ and $s'$ are sufficiently small, we have $(1-r'-s') > 0$, and because of \eqref{two_regime_1_eq}, we have $(1-s')(2s'-{s'}^{2})-r' > 0$ as well, so that the coefficient of $\mu(DD)$ in \eqref{final_wt_fn_ineq_q=0} is strictly positive. Consequently, when $\mu$ is stationary for $\E_{r',s'}$, the parameters $r'$ and $s'$ are sufficiently small and they satisfy the inequality in \eqref{two_regime_1_eq}, we have $\mu(DD)=0$, which, in turn, implies that $\E_{r',s'}\mu(DD)=0$. By an application of Lemma~\ref{lem:pushforward_2} with $\G_{p,q,r}$ replaced by $\E_{r',s'}$ (where we set $\mathcal{C}=(D,D)_{0,1}$ and consider $\mathcal{D}\in\left\{(W,D,W)_{0,1,2},(W,D,L)_{0,1,2},(L,D,L)_{0,1,2}\right\}$) and using \eqref{envelope_PCA_rule_1} through \eqref{envelope_PCA_rule_6}, we see that 
\begin{equation}
(1-s')^{2}(1-r'-s')^{2}\mu(WDW)+2r'(1-s')(1-r'-s')^{2}\mu(WDL)+{r'}^{2}(1-r'-s')^{2}\mu(LDL) \leqslant \E_{r',s'}\mu(DD),\nonumber
\end{equation}
so that, for $r'$ and $s'$ sufficiently small, $r' > 0$, and $(r',s')$ satisfying \eqref{two_regime_1_eq}, we have $\mu(WDW)=\mu(WDL)=\mu(LDW)=\mu(LDL)=0$ when $\mu$ is any translation-invariant and reflection-invariant stationary distribution for $\E_{r',s'}$. Moreover, we already know that $\mu(WDD)=\mu(DDW)=\mu(LDD)=\mu(DDL)=\mu(DDD)=0$ as each of these is bounded above by $\mu(DD)$. Combining all of these, we obtain $\mu(D)=0$, as desired.

When $(r',s')$ belongs to the regime given by \ref{bond_regime_2}, the weight function takes various forms depending on the subset of $(0.157175,1]$ that the parameter $r'$ belongs to. When $s'=0$ and $r' \in [0.4564,1]$, it suffices to consider the weight function
\begin{equation}
w(\mu) = \mu(D) + \mu(WD) + \mu(LWD) - 2r'\mu(WD),\label{adjust_1}
\end{equation}
and the corresponding weight function inequality is given by
\begin{align}
w(\E_{r',0}\mu) \leqslant{}& w(\mu) - {r'}^{2}\mu(DD) + (2r'-6{r'}^{2}+4{r'}^{3}-{r'}^{4})\mu(LD) - (3r'-3{r'}^{2}+{r'}^{3})\mu(LWD)\nonumber\\& - (1-r')^{2}(1-r'+2{r'}^{2})\mu(LDL) - 2{r'}^{2}(1-r')^{2}\mu(LDD) - r'(1-r')^{2}(1+r')\mu(LDW) \nonumber\\& - r'(1-r')^{4}\mu(LLD) - r'(1-r')^{3}\mu(DLD).\label{wt_ineq_1}
\end{align}
When $s'=0$ and $r' \in (0.201383,0.4564)$, the weight function that yields the desired conclusion is
\begin{equation}\label{adjust_2}
w(\mu) = \mu(D) + \mu(WD) + \mu(LWD) - 2r'\mu(WD) - (3r'-3{r'}^{2}+{r'}^{3})\mu(LWD),
\end{equation}
and the corresponding weight function inequality is given by
\begin{align}
w(\E_{r',0}\mu) \leqslant{}& w(\mu) - {r'}^{2}\mu(DD) + (2r'-14{r'}^{2}+23{r'}^{3}-14{r'}^{4}+3{r'}^{6}-{r'}^{7})\mu(LD)\nonumber\\&- (1-r')^{2}(1-r'-3{r'}^{2}+3{r'}^{3}-{r'}^{4})\mu(LDL) - r'(1-r')^{2}(1-r'-3{r'}^{2}+3{r'}^{3}-{r'}^{4})\mu(LDW) \nonumber\\&- r'(1-r')^{2}({r'}^{5}-2{r'}^{4}-{r'}^{3}+7{r'}^{2}-5r'+1)\mu(LLD) - r'(1-r')^{6}\mu(DLD) \nonumber\\&- (3r'-3{r'}^{2}+{r'}^{3})r'(1-r')^{2}\mu(LWD).\label{wt_ineq_2}
\end{align}
For the final sub-regime, i.e.\ where $s'=0$ and $r' \in (0.157175,0.201383]$, the weight function is given by
\begin{align}
w(\mu) ={}&\mu(D) + \mu(WD) + \mu(LWD) - 2r'\mu(WD) - (3r'-3{r'}^{2}+{r'}^{3})\mu(LWD) - r'(1-r')^{6}\mu(LLWD) \nonumber\\&- r'(1-r')^{6}\mu(LLDW) - {r'}^{2}\mu(WDD) - (3r'-3{r'}^{2}+{r'}^{3})r'(1-r')^{2}\mu(LWD) \nonumber\\&- r'(1-r')^{4}(1-3r'+{r'}^{2}+{r'}^{3}-3{r'}^{4}+3{r'}^{5}-{r'}^{6})\mu(LWLD),\label{adjust_4}
\end{align}
and the corresponding weight function inequality is of the form
\begin{align}
w(\E_{r',0}\mu) \leqslant{}& w(\mu) + r'(2-16r'+20{r'}^{2}+7{r'}^{3}-42{r'}^{4}+51{r'}^{5}-35{r'}^{6}+7{r'}^{7}+13{r'}^{8}-14{r'}^{9}+6{r'}^{10}-{r'}^{11})\nonumber\\&\mu(LD) - {r'}^{2}\mu(DDD).\label{wt_ineq_4}
\end{align}

When $\mu$ is a reflection-invariant and translation-invariant stationary measure for $\E_{r',0}$, and $(r',s')$ belongs to any one of the sub-regimes mentioned above, the relevant inequality out of \eqref{wt_ineq_1}, \eqref{wt_ineq_2} and \eqref{wt_ineq_4} yields $\mu(LD) = 0$. As $\mu$ is stationary for $\E_{r',0}$, this implies $\E_{r',0}\mu(LD)=0$ as well. Setting $\mathcal{C}=(L,D)_{0,1}$ and $\mathcal{D}\in\{W,L,D\}\times\{D\}\times\{W,L,D\}$, and applying Lemma~\ref{lem:pushforward_2} with $\G{p,q,r}$ replaced by $\E_{r',s'}$, we obtain:
\begin{align}
{}&r'(1-r')\mu(WDW)+r'(1-r')(1+r')\mu(WDD)+{r'}^{2}(1-r')\mu(WDL)+{r'}^{2}(1-r')\mu(DDW)\nonumber\\&+{r'}^{2}(1-r')(1+r')\mu(DDD)+{r'}^{3}(1-r')\mu(DDL)+{r'}^{2}(1-r')\mu(LDW)+{r'}^{2}(1-r')(1+r')\mu(LDD)\nonumber\\&+{r'}^{3}(1-p)\mu(LDL) \leqslant \E_{r',0}\mu(LD).\nonumber
\end{align}
Since $\E_{r',0}\mu(LD)=0$, we conclude that each of $\mu(WDW)$, $\mu(WDD)$, $\mu(WDL)$, $\mu(DDW)$, $\mu(DDD)$, $\mu(DDL)$, $\mu(LDW)$, $\mu(LDD)$ and $\mu(LDL)$ equals $0$. Adding these, we obtain $\mu(D)=0$, as desired.

For the regime described in \ref{bond_regime_3}, our weight function is given by
\begin{align}
    w(\mu) &= \mu(D) + \mu(WD) - (2{r'}^2 - 5{r'}^3 + 2{r'}^4) \mu(WDW) - (3{r'} - 10{r'}^2 + 10{r'}^3 - 4{r'}^4) \mu(WDL) \nonumber \\
    &\enspace - (7{r'}^2 - 7{r'}^3 + 2{r'}^4) \mu(DDW) - (2r' - 2{r'}^2 + 5{r'}^3 - 2{r'}^4) \mu(DDL)  \nonumber\\
    &\enspace - (1 - 4r' + 6{r'}^2 - 5{r'}^3 + 2{r'}^4) \mu(LDL) - (4{r'}^2 - {r'}^3 + 2{r'}^4) \mu(WLD)  \nonumber\\
    &\enspace - (3{r'}^2 + 3{r'}^3 -2{r'}^4) \mu(LLD) - (2r' + 8{r'}^2 - 9{r'}^3 + 2{r'}^4) \mu(WWD),\label{weight_fn_p=q}
\end{align}
and the corresponding weight function inequality can be written as follows: 
\begin{align}
    w(\E_{r',r'}\mu) &\leqslant w(\mu) + (1 - 10r' + 7{r'}^2 + 64{r'}^4 - 292{r'}^5 + 583{r'}^6 - 663{r'}^7 + 447{r'}^8 -168{r'}^9 + 28{r'}^{10}) \mu(LWD).\label{weight_fn_ineq_p=q}
\end{align}
The coefficient of $\mu(LWD)$ in \eqref{weight_fn_ineq_p=q} is negative whenever $r'=s' > 0.10883$, so that, when $\mu$ is a translation-invariant and reflection-invariant stationary measure for $\E_{r',r'}$, we conclude, from \eqref{weight_fn_ineq_p=q}, that $\mu(LWD) = 0$. This, in turn, leads to $\E_{r',r'}\mu(LWD) = 0$ since $\mu$ is stationary for $\E_{r',r'}$. Setting $\mathcal{C}=(L,W,D)_{0,1,2}$, considering $\mathcal{D}\in\{W,L,D\}^{3}\times\{D\}$, applying Lemma~\ref{lem:pushforward_2} with $\G_{p,q,r}$ replaced by $\E_{r',s'}$, and using \eqref{envelope_PCA_rule_1} through \eqref{envelope_PCA_rule_6}, we obtain: 
\begin{align}
    \E_{r', r'}\mu(LWD) &\geqslant (1-r')^2(2r'-{r'}^2)(1-r')(1-2r') \mu(WWWD) + r'(1-r')(2r'-{r'}^2)(1-r')(1-2r') \mu(LWWD) \nonumber\\&+ r'(1-r')(2r'-{r'}^2)(1-r')(1-2r') \mu(DWWD) + r'(1-r')(1-r'+{r'}^2)(1-r')(1-2r') \mu(WLWD) \nonumber\\&+ {r'}^2(1-r'+{r'}^2)(1-r')(1-2r')\mu(LLWD) + {r'}^2(1-r'+{r'}^2)(1-r')(1-2r')\mu(DLWD) \nonumber\\&+ r'(1-r')(2r'-{r'}^2)(1-r')(1-2r') \mu(WDWD) + {r'}^2(2r'-{r'}^2)(1-r')(1-2r') \mu(LDWD) \nonumber\\&+ {r'}^2(2r'-{r'}^2)(1-r')(1-2r')\mu(DDWD) + (1-r')^2(1-r'+{r'}^2)r'(1-2r') \mu(WWLD) \nonumber\\&+ r'(1-r')(1-r'+{r'}^2)r'(1-2r') \mu(LWLD)+ r'(1-r')(1-r'+{r'}^2)r'(1-2r') \mu(DWLD) \nonumber\\&+ r'(1-r')(1-{r'}^2)r'(1-2r') \mu(WLLD) + {r'}^2(1-{r'}^2)r'(1-2r') \mu(LLLD) \nonumber\\&+ {r'}^2(1-{r'}^2)r'(1-2r') \mu(DLLD) + r'(1-r')(1-r'+{r'}^2)r'(1-2r') \mu(WDLD) \nonumber\\&+ {r'}^2(1-r'+{r'}^2)r'(1-2r') \mu(LDLD) + {r'}^2(1-r'+{r'}^2)r'(1-2r') \mu(DDLD) \nonumber\\&+ (1-r')^2(2r'-{r'}^2)(1-2r') \mu(WWDD) + r'(1-r')(2r'-{r'}^2)(1-2r') \mu(LWDD) \nonumber\\&+ r'(1-r')(2r'-{r'}^2)(1-2r') \mu(DWDD) + r'(1-r')(1-r'+{r'}^2)(1-2r') \mu(WLDD) \nonumber\\&+ {r'}^2(1-r'+{r'}^2)(1-2r') \mu(LLDD) + {r'}^2(1-r'+{r'}^2)(1-2r') \mu(DLDD) \nonumber\\&+ r'(1-r')(2r'-{r'}^2)(1-2r') \mu(WDDD) + {r'}^2(2r'-{r'}^2)(1-2r') \mu(LDDD) \nonumber\\&+ {r'}^2(2r'-{r'}^2)(1-2r') \mu(DDDD).\label{F_{p,p}mu(LWD)}
    \end{align}
Since we consider $0 < r' < 0.5$ (when $r'=s'$, it makes sense to consider $r' \leqslant 0.5$ since $r'+s' \leqslant 1$, and when $r'+s'=1$, each edge of our graph would be labeled either a trap or a target, bringing the game to an end right after the first round), the coefficient of each term in the right side of \eqref{F_{p,p}mu(LWD)} is strictly positive. Since $\E_{r',r'}\mu(LWD)=0$ when $\mu$ is stationary for $\E_{r',r'}$, we obtain, from \eqref{F_{p,p}mu(LWD)}, that $\mu(\mathcal{D})=0$ for each cylinder set $\mathcal{D}\in\{W,L,D\}^{3}\times\{D\}$. Adding them all, we obtain $\mu(D)=0$, as desired.

\subsection{Detailed construction of the weight function for the bond percolation game when $(r',s')$ belongs to the regime given by \ref{bond_regime_1}}\label{sec:bond_1_wt_fn_steps} As in the case of \S\ref{sec:generalized_wt_fn_steps}, it is assumed, throughout \S\ref{sec:bond_1_wt_fn_steps}, that $\mu$ is a translation-invariant and reflection-invariant measure on the state space $\Omega=\hat{\mathcal{A}}^{\mathbb{Z}}=\{W,L,D\}^{\mathbb{Z}}$ of $\E_{r',s'}$, measurable with respect to the $\sigma$-field $\mathcal{F}$ generated by all cylinder sets of $\Omega$. Recall, from \S\ref{subsubsec:bond_to_generalized}, that the envelope PCA $\E_{r',s'}$, corresponding to the bond percolation game with underlying parameter-pair $(r',s')$, identical to the envelope PCA $\G_{p,q,r}$, corresponding to the generalized percolation game with underlying parameter-triple $(p,q,r)$, provided we set (see \eqref{transform_1} and \eqref{transform_2})
\begin{equation}
p=2s'-{s'}^{2},\quad q=0 \quad \text{and} \quad r=\frac{r'}{1-s'}.\label{transform_combined}
\end{equation} 
With these values of $p$ and $r$, the inequality in \eqref{two_regime_1_eq} boils down to 
\begin{equation}
\frac{3p^{2}}{2}+8pr+\frac{11r^{2}}{2}\geqslant 2r+\frac{9p^{3}}{2}+16p^{2}r+\frac{43pr^{2}}{2}+5r^{3},\label{two_regime_1_eq_transformed}
\end{equation}
and proving the claim that $\nu(D)=0$ for any translation-invariant and reflection-invariant stationary distribution $\nu$ for the PCA $\E_{r',s'}$, whenever $(r',s')$ satisfies the constraints of \ref{bond_regime_1}, is equivalent to showing, instead, that $\mu(D)=0$ for any translation-invariant and reflection-invariant stationary distribution $\mu$ for $\G_{p,0,r}$, where $p$ and $r$ are as given by \eqref{transform_combined} and $(p,r)$ satisfies the inequality in \eqref{two_regime_1_eq_transformed}. 

Recall that our computations in \S\ref{sec:generalized_wt_fn_steps} were performed \emph{without} making the assumption that $q=0$, and hence we may proceed in the same manner as in \S\ref{sec:generalized_wt_fn_steps}, up to and including the derivation of \eqref{w_{0}_ineq_7}. Setting $w_{0}(\mu)=\mu(D)+\mu(WD)+\mu(LWD)$, the weight function inequality obtained can be written as 
\begin{align}
w_{0}(\G_{p,0,r}\mu)\leqslant{}&w_{0}(\mu)-[2-(1-p)(1-r)\{2+2p+r-r^{2}+2pr^{2}-p^{2}r-p^{2}r^{2}\}]\mu(WD)\nonumber\\&-\{1-(1+p+pr-p^{2}r)(1-p)(1-r)(1+r)\}\mu(DD)-[1-\{1-p+4r\nonumber\\&-r^{2}(2-3p)-r^{3}(1-p)-p^{2}r^{2}\}(1-p)(1-r)]\mu(LD)-r(1-p)^{2}(1-r)\nonumber\\&(1-p-r)\mu(LDW)-(1-p)^{2}(1-r)[(1-r)\{1-r^{2}(1-p)\}-pr^{2}]\mu(LDL)\nonumber\\&-[1-(1-p-r^{2}+pr+3pr^{2}-p^{2}r-2p^{2}r^{2})(1-p)(1-r)]\mu(LWD)\nonumber\\&-r^{2}(1-r)^{2}(1-p)^{2}\mu(WLD)-r^{2}(1-r)^{2}(1-p)^{2}\mu(DLD)+\{p-r+r^{2}(1-p)\}\nonumber\\&(1-p)^{2}(1-r)\mu(WWDW)+p(1-p)^{2}(1-r)^{2}\mu(WWWD)+A_{8}+A_{9},\label{w_{0}_ineq_7_q=0}
\end{align}  
where $A_{8}$ and $A_{9}$ are as defined in the proof of Lemma~\ref{lem:pushforward_LWD}. Substituting $q=0$ in \eqref{bound_A_{8}}, we have $A_{8}\leqslant r(1-r+pr)(1-p)^{2}(1-r)(1+r)\mu(LDD)$, and combining this bound with the term involving $\mu(LD)$ in \eqref{w_{0}_ineq_7_q=0}, we get
\begin{align}
{}&-[1-\{1-p+4r-r^{2}(2-3p)-r^{3}(1-p)-p^{2}r^{2}\}(1-p)(1-r)]\mu(LD)\nonumber\\&+r(1-r+pr)(1-p)^{2}(1-r)(1+r)\mu(LDD)\nonumber\\
={}&[-(1-p)(1-r)\{(1-p)(1-r)\}^{-1}+\{1-p+4r-r^{2}(2-3p)-r^{3}(1-p)-p^{2}r^{2}\}(1-p)(1-r)]\nonumber\\&\mu(LD)+r(1-r+pr)(1-p)^{2}(1-r)(1+r)\mu(LDD)\nonumber\\
\leqslant{}&[-(1-p)(1-r)\{1+p+r+p^{2}+r^{2}+pr-2p^{2}r-2pr^{2}+p^{2}r^{2}\}+(1-p)(1-r)\{1-p+4r\nonumber\\&-r^{2}(2-3p)-r^{3}(1-p)-p^{2}r^{2}\}]\mu(LD)+r(1-r+pr)(1-p)^{2}(1-r)(1+r)\{\mu(LD)\nonumber\\&-\mu(LDW)-\mu(LDL)\}\nonumber\\
={}&(1-p)(1-r)\{-2p+4r-p^{2}-3r^{2}-2pr+2p^{2}r+6pr^{2}-r^{3}(2-3p)-3p^{2}r^{2}-p^{2}r^{3}\}\mu(LD)\nonumber\\&-r(1-r+pr)(1-p)^{2}(1-r)(1+r)\{\mu(LDW)+\mu(LDL)\}\nonumber\\
\leqslant{}&(1-p)(1-r)\{-2p+4r-p^{2}-3r^{2}-2pr+2p^{2}r+6pr^{2}\}\mu(LD)\nonumber\\&-r(1-r+pr)(1-p)^{2}(1-r)(1+r)\{\mu(LDW)+\mu(LDL)\}\label{q=0_intermediate_2}
\end{align}
when $p$ is sufficiently small (it suffices for $p$ to be bounded above by $2/3$). Note, here, that when \eqref{two_regime_1_eq_transformed} holds, we have 
\begin{align}
{}&-2p+4r-p^{2}-3r^{2}-2pr+2p^{2}r+6pr^{2}\nonumber\\
\leqslant{}&-2p+3p^{2}+16pr+11r^{2}-9p^{3}-32p^{2}r-43pr^{2}-10r^{3}-p^{2}-3r^{2}-2pr+2p^{2}r+6pr^{2}\nonumber\\
={}&-2p+2p^{2}+14pr+8r^{2}-9p^{3}-30p^{2}r-37pr^{2}-10r^{3}\nonumber\\
\leqslant{}&-2p+2p^{2}+14pr+\frac{1}{2}\left(3p^{2}+16pr+11r^{2}\right)^{2}-9p^{3}-30p^{2}r-37pr^{2}-10r^{3}\nonumber\\
={}&-2p+2p^{2}+14pr+9p^{4}+256p^{2}r^{2}+121r^{4}+96p^{3}r+66p^{2}r^{2}+352pr^{3}-9p^{3}-30p^{2}r-37pr^{2}-10r^{3}\nonumber\\
={}&-p(2-2p-14r+9p^{2}-9p^{3})-p^{2}r(30-96p-161r)-pr^{2}(37-352r-161p)-r^{3}(10-121r),\nonumber
\end{align}
showing us that the coefficient of $\mu(LD)$ in \eqref{q=0_intermediate_2} is non-positive when $p$ and $r$ are sufficiently small and \eqref{two_regime_1_eq_transformed} holds. Setting $q=0$ in the computation of $A_{9}$ in \eqref{contribution_{W,D}_W_D_D}, we get
\begin{align}
A_{9}={}&p(1-p)^{2}(1-r)(1+r)\mu(WWDD)+pr(1-p)^{2}(1-r)(1+r)\mu(DWDD).\label{contribution_WDD_q=0}
\end{align}
Since $\mu(WWDD)+\mu(DWDD)\leqslant \mu(WDD) \leqslant \mu(WD)$ (by Lemma~\ref{lem:pushforward_1}), we combine the term involving $\mu(WD)$ in \eqref{w_{0}_ineq_7_q=0} with the terms in \eqref{contribution_WDD_q=0} to write:
\begin{align}
{}&-[2-(1-p)(1-r)\{2+2p+r-r^{2}+2pr^{2}-p^{2}r-p^{2}r^{2}\}]\mu(WD)+p(1-p)^{2}(1-r)(1+r)\mu(WWDD)\nonumber\\&+pr(1-p)^{2}(1-r)(1+r)\mu(DWDD)\nonumber\\
={}&[-2(1-p)(1-r)\{(1-p)(1-r)\}^{-1}+(1-p)(1-r)\{2+2p+r-r^{2}+2pr^{2}-p^{2}r-p^{2}r^{2}\}]\mu(WD)\nonumber\\&+p(1-p)^{2}(1-r)(1+r)\mu(WWDD)+pr(1-p)^{2}(1-r)(1+r)\mu(DWDD)\nonumber\\
\leqslant{}&[-2(1-p)(1-r)\{1+p+r+p^{2}+r^{2}+pr-2p^{2}r-2pr^{2}+p^{2}r^{2}\}+(1-p)(1-r)\{2+2p+r-r^{2}\nonumber\\&+2pr^{2}-p^{2}r-p^{2}r^{2}\}]\mu(WD)+p(1-p)^{2}(1-r)(1+r)\mu(WWDD)+pr(1-p)^{2}(1-r)(1+r)\nonumber\\&\mu(DWDD)\quad (\text{substituting } q=0 \text{ in \eqref{inverse_2}})\nonumber\\
={}&(1-p)(1-r)\{-r-2p^{2}-3r^{2}-2pr+3p^{2}r+6pr^{2}-3p^{2}r^{2}\}\mu(WD)+p(1-p)^{2}(1-r)(1+r)\nonumber\\&\mu(WWDD)+pr(1-p)^{2}(1-r)(1+r)\mu(DWDD)\nonumber\\
={}&(1-p)(1-r)\{-r-2p^{2}-3r^{2}-2pr+3p^{2}r+6pr^{2}-3p^{2}r^{2}\}\{\mu(WD)-\mu(WWDD)-\mu(DWDD)\}\nonumber\\&+(1-p)(1-r)(p-r-3p^{2}-3r^{2}-pr+2p^{2}r+6pr^{2}-3p^{2}r^{2})\mu(WWDD)\nonumber\\&+(1-p)(1-r)(-r-pr-2p^{2}-3r^{2}+7pr^{2}+2p^{2}r-4p^{2}r^{2})\mu(DWDD)\nonumber\\
\leqslant{}&(1-p)(1-r)\{-r-2p^{2}-3r^{2}-2pr+3p^{2}r+6pr^{2}-3p^{2}r^{2}\}\{\mu(WD)-\mu(WWDD)-\mu(DWDD)\}\nonumber\\&+(1-p)(1-r)(p-r-3p^{2}-3r^{2}-pr+2p^{2}r+6pr^{2})\mu(WWDD)\label{q=0_intermediate_1}
\end{align}
(we can remove the term involving $\mu(DWDD)$ in the last step since the coefficient of $\mu(DWDD)$ is evidently non-positive for $p$ and $r$ sufficiently small). Since $\mu(WWDD)+\mu(DWDD)+\mu(WDL)\leqslant \mu(WD)$ (follows from Lemma~\ref{lem:pushforward_1}), we can combine the terms involving $\mu(LDW)$ and $\mu(LDL)$ in \eqref{w_{0}_ineq_7_q=0}, the terms involving $\mu(LDW)$ and $\mu(LDL)$ in \eqref{q=0_intermediate_2}, and the term involving $\mu(WD)-\mu(WWDD)-\mu(DWDD)$ in \eqref{q=0_intermediate_1} to get:
\begin{align}
{}&-r(1-p)^{2}(1-r)(1-p-r)\mu(LDW)-(1-p)^{2}(1-r)[(1-r)\{1-r^{2}(1-p)\}-pr^{2}]\mu(LDL)-r(1-r\nonumber\\&+pr)(1-p)^{2}(1-r)(1+r)\{\mu(LDW)+\mu(LDL)\}+(1-p)(1-r)\{-r-2p^{2}-3r^{2}-2pr+3p^{2}r+6pr^{2}\nonumber\\&-3p^{2}r^{2}\}\{\mu(WD)-\mu(WWDD)-\mu(DWDD)\}\nonumber\\
={}&(1-p)(1-r)\{-3r-2p^{2}-2r^{2}+pr+2p^{2}r+4pr^{2}+r^{3}-pr^{3}(2-p)-2p^{2}r^{2}\}\mu(LDW)\nonumber\\&-(1-p)^{2}(1-r)(1-r^{2}+pr^{2})\mu(LDL)+(1-p)(1-r)\{-r-2p^{2}-3r^{2}-2pr+3p^{2}r+6pr^{2}-3p^{2}r^{2}\}\nonumber\\&\{\mu(WD)-\mu(WWDD)-\mu(DWDD)-\mu(WDL)\}\nonumber\\
\leqslant{}&(1-p)(1-r)\{-3r-2p^{2}-2r^{2}+pr+2p^{2}r+4pr^{2}+r^{3}\}\mu(LDW)-(1-p)^{2}(1-r)(1-r^{2}+pr^{2})\mu(LDL)\nonumber\\&+(1-p)(1-r)\{-r-2p^{2}-3r^{2}-2pr+3p^{2}r+6pr^{2}-3p^{2}r^{2}\}\{\mu(WD)-\mu(WWDD)-\mu(DWDD)\nonumber\\&-\mu(WDL)\}.\label{q=0_intermediate_3}
\end{align}
Finally, the coefficient of $\mu(LWD)$ in \eqref{w_{0}_ineq_7_q=0} can be bounded above by the expression in \eqref{inverse_2}, with $q=0$:
\begin{align}
{}&-[1-(1-p-r^{2}+pr+3pr^{2}-p^{2}r-2p^{2}r^{2})(1-p)(1-r)]\nonumber\\
={}&-(1-p)(1-r)\{(1-p)(1-r)\}^{-1}+(1-p)(1-r)(1-p-r^{2}+pr+3pr^{2}-p^{2}r-2p^{2}r^{2})\nonumber\\
\leqslant{}&(1-p)(1-r)\{1+p+r+p^{2}+r^{2}+pr-2p^{2}r-2pr^{2}+p^{2}r^{2}\}\nonumber\\&+(1-p)(1-r)(1-p-r^{2}+pr+3pr^{2}-p^{2}r-2p^{2}r^{2})\nonumber\\
={}&(1-p)(1-r)(-2p-r-p^{2}-2r^{2}+p^{2}r+5pr^{2}-3p^{2}r^{2})\nonumber\\
\leqslant{}&-(1-p)(1-r)\{2p+r+p^{2}+2r^{2}-p^{2}r-5pr^{2}\}.\label{q=0_intermediate_4}
\end{align}
Incorporating the term involving $\mu(LD)$ from \eqref{q=0_intermediate_2}, the term involving $\mu(WWDD)$ from \eqref{q=0_intermediate_1}, the terms from \eqref{q=0_intermediate_3}, and the updated coefficient of $\mu(LWD)$ from \eqref{q=0_intermediate_4} into \eqref{w_{0}_ineq_7_q=0}, we obtain:
\begin{align}
w_{0}(\G_{p,0,r}\mu)\leqslant{}&w_{0}(\mu)+(1-p)(1-r)\{-r-2p^{2}-3r^{2}-2pr+3p^{2}r+6pr^{2}-3p^{2}r^{2}\}\{\mu(WD)-\mu(WWDD)\nonumber\\&-\mu(DWDD)-\mu(WDL)\}-\{1-(1+p+pr-p^{2}r)(1-p)(1-r)(1+r)\}\mu(DD)-(1-p)\nonumber\\&(1-r)\{2p-4r+p^{2}+3r^{2}+2pr-2p^{2}r-6pr^{2}\}\mu(LD)-(1-p)(1-r)\{3r+2p^{2}+2r^{2}-pr\nonumber\\&-2p^{2}r-4pr^{2}-r^{3}\}\mu(LDW)-(1-p)^{2}(1-r)(1-r^{2}+pr^{2})\mu(LDL)-(1-p)(1-r)\nonumber\\&\{2p+r+p^{2}+2r^{2}-p^{2}r-5pr^{2}\}\mu(LWD)-r^{2}(1-r)^{2}(1-p)^{2}\mu(WLD)-r^{2}(1-r)^{2}(1-p)^{2}\nonumber\\&\mu(DLD)+\{p-r+r^{2}(1-p)\}(1-p)^{2}(1-r)\mu(WWDW)+p(1-p)^{2}(1-r)^{2}\mu(WWWD)\nonumber\\&+(1-p)(1-r)(p-r-3p^{2}-3r^{2}-pr+2p^{2}r+6pr^{2})\mu(WWDD).\label{w_{0}_ineq_8_q=0}
\end{align}
The non-negative coefficients on the right side of \eqref{w_{0}_ineq_8_q=0}, other than $w_{0}(\mu)$ itself, are those that correspond to $\mu(WWDW)$, $\mu(WWWD)$ and $\mu(WWDD)$.  

We now update the weight function (the same way as we did in \eqref{w_{1}_rewritten}) $w_{0}$ as follows:
\begin{align}
w_{1}(\mu)={}&w_{0}(\mu)-(1-p)(1-r)\{2p-4r+p^{2}+3r^{2}+2pr-2p^{2}r-6pr^{2}\}\mu(LD)-(1-p)(1-r)\nonumber\\&\{3r+2p^{2}+2r^{2}-pr-2p^{2}r-4pr^{2}-r^{3}\}\mu(LDW)-(1-p)^{2}(1-r)(1-r^{2}+pr^{2})\mu(LDL)\nonumber\\&-(1-p)(1-r)\{2p+r+p^{2}+2r^{2}-p^{2}r-5pr^{2}\}\mu(LWD),\label{w_{1}_q=0}
\end{align}
which is exactly the same as what we stated in \eqref{final_wt_fn_q=0} (once we replace $w_{1}$ by $w$ and set $p$ and $r$ to be the functions of $r'$ and $s'$ as given by \eqref{transform_combined}). In order to understand how the update in \eqref{w_{1}_q=0} impacts the weight function inequality in \eqref{w_{0}_ineq_8_q=0}, we need to compute $\G_{p,0,r}\mu(LD)$, $\G_{p,0,r}\mu(LDW)$, $\G_{p,0,r}\mu(LDL)$ and $\G_{p,0,r}\mu(LDW)$, but only partially, just as we did in \eqref{i}, \eqref{ii}, \eqref{iii} and \eqref{iv}:
\begin{align}\label{i_q=0}
{}&\bullet (1-p)^{2}(1-r)\mu(WWD) \leqslant \G_{p,0,r}\mu(LD)\nonumber\\
\Longleftrightarrow{}& -(1-p)(1-r)\{2p-4r+p^{2}+3r^{2}+2pr-2p^{2}r-6pr^{2}\}\G_{p,0,r}\mu(LD) \leqslant -(1-p)(1-r)\nonumber\\&\{2p-4r+p^{2}+3r^{2}+2pr-2p^{2}r-6pr^{2}\}(1-p)^{2}(1-r)\mu(WWD)\nonumber\\
\Longleftrightarrow{}& -(1-p)(1-r)\{2p-4r+p^{2}+3r^{2}+2pr-2p^{2}r-6pr^{2}\}\G_{p,0,r}\mu(LD) \leqslant -(1-p)^{3}(1-r)^{2}\nonumber\\&\{2p-4r+p^{2}+3r^{2}+2pr-2p^{2}r-6pr^{2}\}\mu(WWD)
\end{align}
(the second step of the derivation of \eqref{i_q=0} is true because the coefficient $-(1-p)(1-r)\{2p-4r+p^{2}+3r^{2}+2pr-2p^{2}r-6pr^{2}\}$ is non-positive when \eqref{two_regime_1_eq_transformed} holds and $p$ and $r$ are both sufficiently small);
\begin{align}\label{ii_q=0}
{}&\bullet p(1-p)^{2}(1-r)\{\mu(WWDW)+\mu(WWDD)\}\leqslant \G_{p,0,r}\mu(LDW)\nonumber\\
\Longleftrightarrow{}&-(1-p)(1-r)\{3r+2p^{2}+2r^{2}-pr-2p^{2}r-4pr^{2}-r^{3}\}\G_{p,0,r}\mu(LDW) \leqslant -(1-p)(1-r)\nonumber\\&\{3r+2p^{2}+2r^{2}-pr-2p^{2}r-4pr^{2}-r^{3}\}p(1-p)^{2}(1-r)\{\mu(WWDW)+\mu(WWDD)\}\nonumber\\
\Longleftrightarrow{}&-(1-p)(1-r)\{3r+2p^{2}+2r^{2}-pr-2p^{2}r-4pr^{2}-r^{3}\}\G_{p,0,r}\mu(LDW) \leqslant -p(1-p)^{3}(1-r)^{2}\nonumber\\&\{3r+2p^{2}+2r^{2}-pr-2p^{2}r-4pr^{2}-r^{3}\}\{\mu(WWDW)+\mu(WWDD)\},
\end{align}
\begin{align}\label{iii_q=0}
{}&\bullet r(1-p)^{3}(1-r)\mu(WWDW)+r^{2}(1-p)^{3}(1-r)\mu(WWDD)\leqslant \G_{p,0,r}\mu(LDL)\nonumber\\
\Longleftrightarrow{}&-(1-p)^{2}(1-r)(1-r^{2}+pr^{2})\G_{p,0,r}\mu(LDL) \leqslant -(1-p)^{2}(1-r)(1-r^{2}+pr^{2})\{r(1-p)^{3}(1-r)\nonumber\\&\mu(WWDW)+r^{2}(1-p)^{3}(1-r)\mu(WWDD)\}\nonumber\\
\Longleftrightarrow{}&-(1-p)^{2}(1-r)(1-r^{2}+pr^{2})\G_{p,0,r}\mu(LDL) \leqslant -r(1-p)^{5}(1-r)^{2}(1-r^{2}+pr^{2})\mu(WWDW)\nonumber\\&-r^{2}(1-p)^{5}(1-r)^{2}(1-r^{2}+pr^{2})\mu(WWDD);
\end{align}
and finally,
\begin{align}\label{iv_q=0}
{}&\bullet p(1-p)^{2}(1-r)\{\mu(WWDW)+\mu(WWWD)\}+p(1-p)^{2}(1-r)(1+r)\mu(WWDD)\leqslant \G_{p,0,r}\mu(LWD)\nonumber\\
\Longleftrightarrow{}&-(1-p)(1-r)\{2p+r+p^{2}+2r^{2}-p^{2}r-5pr^{2}\}\G_{p,0,r}\mu(LWD) \leqslant -(1-p)(1-r)\{2p+r+p^{2}+2r^{2}\nonumber\\&-p^{2}r-5pr^{2}\}[p(1-p)^{2}(1-r)\{\mu(WWDW)+\mu(WWWD)\}+p(1-p)^{2}(1-r)(1+r)\mu(WWDD)]\nonumber\\
\Longleftrightarrow{}&-(1-p)(1-r)\{2p+r+p^{2}+2r^{2}-p^{2}r-5pr^{2}\}\G_{p,0,r}\mu(LWD) \leqslant -p(1-p)^{3}(1-r)^{2}\{2p+r+p^{2}\nonumber\\&+2r^{2}-p^{2}r-5pr^{2}\}\{\mu(WWDW)+\mu(WWWD)\}-p(1-p)^{3}(1-r)^{2}(1+r)\{2p+r+p^{2}+2r^{2}\nonumber\\&-p^{2}r-5pr^{2}\}\mu(WWDD).
\end{align}
Implementing the same idea as that used to derive \eqref{ith_wt_fn_ineq}, the update in \eqref{w_{1}_q=0} transforms the weight function inequality in \eqref{w_{0}_ineq_8_q=0} as follows, using the bounds obtained in \eqref{i_q=0}, \eqref{ii_q=0}, \eqref{iii_q=0} and \eqref{iv_q=0}:
\begin{align}
w_{1}(\G_{p,0,r}\mu)\leqslant{}&w_{1}(\mu)+(1-p)(1-r)\{-r-2p^{2}-3r^{2}-2pr+3p^{2}r+6pr^{2}-3p^{2}r^{2}\}\{\mu(WD)-\mu(WWDD)\nonumber\\&-\mu(DWDD)-\mu(WDL)\}-\{1-(1+p+pr-p^{2}r)(1-p)(1-r)(1+r)\}\mu(DD)-r^{2}(1-r)^{2}\nonumber\\&(1-p)^{2}\mu(WLD)-r^{2}(1-r)^{2}(1-p)^{2}\mu(DLD)+\{p-r+r^{2}(1-p)\}(1-p)^{2}(1-r)\mu(WWDW)\nonumber\\&+p(1-p)^{2}(1-r)^{2}\mu(WWWD)+(1-p)(1-r)(p-r-3p^{2}-3r^{2}-pr+2p^{2}r+6pr^{2})\nonumber\\&\mu(WWDD)-(1-p)(1-r)\{2p-4r+p^{2}+3r^{2}+2pr-2p^{2}r-6pr^{2}\}\G_{p,0,r}\mu(LD)-(1-p)\nonumber\\&(1-r)\{3r+2p^{2}+2r^{2}-pr-2p^{2}r-4pr^{2}-r^{3}\}\G_{p,0,r}\mu(LDW)-(1-p)^{2}(1-r)(1-r^{2}+pr^{2})\nonumber\\&\G_{p,0,r}\mu(LDL)-(1-p)(1-r)\{2p+r+p^{2}+2r^{2}-p^{2}r-5pr^{2}\}\G_{p,0,r}\mu(LWD)\nonumber\\
\leqslant{}&w_{1}(\mu)+(1-p)(1-r)\{-r-2p^{2}-3r^{2}-2pr+3p^{2}r+6pr^{2}-3p^{2}r^{2}\}\{\mu(WD)-\mu(WWDD)\nonumber\\&-\mu(DWDD)-\mu(WDL)\}-\{1-(1+p+pr-p^{2}r)(1-p)(1-r)(1+r)\}\mu(DD)-r^{2}(1-r)^{2}\nonumber\\&(1-p)^{2}\mu(WLD)-r^{2}(1-r)^{2}(1-p)^{2}\mu(DLD)+\{p-r+r^{2}(1-p)\}(1-p)^{2}(1-r)\mu(WWDW)\nonumber\\&+p(1-p)^{2}(1-r)^{2}\mu(WWWD)+(1-p)(1-r)(p-r-3p^{2}-3r^{2}-pr+2p^{2}r+6pr^{2})\nonumber\\&\mu(WWDD)-(1-p)^{3}(1-r)^{2}\{2p-4r+p^{2}+3r^{2}+2pr-2p^{2}r-6pr^{2}\}\mu(WWD)-p(1-p)^{3}\nonumber\\&(1-r)^{2}\{3r+2p^{2}+2r^{2}-pr-2p^{2}r-4pr^{2}-r^{3}\}\{\mu(WWDW)+\mu(WWDD)\}-r(1-p)^{5}\nonumber\\&(1-r)^{2}(1-r^{2}+pr^{2})\mu(WWDW)-r^{2}(1-p)^{5}(1-r)^{2}(1-r^{2}+pr^{2})\mu(WWDD)-p(1-p)^{3}\nonumber\\&(1-r)^{2}\{2p+r+p^{2}+2r^{2}-p^{2}r-5pr^{2}\}\{\mu(WWDW)+\mu(WWWD)\}-p(1-p)^{3}(1-r)^{2}\nonumber\\&(1+r)\{2p+r+p^{2}+2r^{2}-p^{2}r-5pr^{2}\}\mu(WWDD).\label{w_{0}_ineq_9_q=0}
\end{align}
We make use of an inequality analogous to \eqref{combine_WWD_WWDW_WWDD_WWWD}, so that
\begin{enumerate}
\item the coefficient of $\mu(WWDD)$ is updated to
\begin{align}
{}&(1-p)(1-r)(p-r-3p^{2}-3r^{2}-pr+2p^{2}r+6pr^{2})-\frac{1}{2}(1-p)^{3}(1-r)^{2}\{2p-4r+p^{2}+3r^{2}+2pr-2p^{2}r\nonumber\\&-6pr^{2}\}-p(1-p)^{3}(1-r)^{2}\{3r+2p^{2}+2r^{2}-pr-2p^{2}r-4pr^{2}-r^{3}\}-r^{2}(1-p)^{5}(1-r)^{2}(1-r^{2}+pr^{2})\nonumber\\&-p(1-p)^{3}(1-r)^{2}(1+r)\{2p+r+p^{2}+2r^{2}-p^{2}r-5pr^{2}\}\nonumber\\
={}&(1-p)(1-r)\Big[-p^{5}r^{4}(1-r)-p^{5}r^{3}-p^{5}r^{2}-p^{5}(3-5r)-p^{4}r^{3}-\frac{5p^{4}r}{2}(3-4r)-p^{3}r(4+12r-4r^{2}-r^{3}\nonumber\\&-10r^{2})-10p^{2}r^{5}-\frac{p^{2}r^{2}}{2}(13-3r-6r^{2})-pr^{3}(6+4r-5r^{2})-r^{5}+\frac{7p^{4}}{2}+p^{3}+\frac{29p^{2}r}{2}-\frac{7p^{2}}{2}+20pr^{2}\nonumber\\&-9pr+r^{4}+\frac{5r^{3}}{2}-\frac{15r^{2}}{2}+r\Big]\nonumber\\
\leqslant{}&\frac{1}{2}(1-p)(1-r)[2r-7p^{2}-15r^{2}-18pr+29p^{2}r+40pr^{2}+2p^{3}+5r^{3}+7p^{4}+2r^{4}]\nonumber\\
={}&\frac{1}{2}(1-p)(1-r)\Big[\left\{\frac{9p^{3}}{2}+16p^{2}r-\frac{3p^{2}}{2}+\frac{43pr^{2}}{2}-8pr+5r^{3}-\frac{11r^{2}}{2}+2r\right\}-\frac{11p^{2}}{2}-\frac{19r^{2}}{2}-10pr\nonumber\\&+13p^{2}r+\frac{37pr^{2}}{2}-\frac{5p^{3}}{2}+7p^{4}+2r^{4}\Big]\nonumber\\
\leqslant{}&\frac{1}{2}(1-p)(1-r)\left\{\frac{9p^{3}}{2}+16p^{2}r-\frac{3p^{2}}{2}+\frac{43pr^{2}}{2}-8pr+5r^{3}-\frac{11r^{2}}{2}+2r\right\}\label{WWDD_coeff_final_q=0}
\end{align}
for $p$ and $r$ sufficiently small, and this final upper bound is non-positive because of \eqref{two_regime_1_eq_transformed};
\item the coefficient of $\mu(WWDW)$ is updated to
\begin{align}
{}&(1-p)^{2}(1-r)\{p-r+r^{2}(1-p)\}-\frac{1}{2}(1-p)^{3}(1-r)^{2}\{2p-4r+p^{2}+3r^{2}+2pr-2p^{2}r-6pr^{2}\}\nonumber\\&-p(1-p)^{3}(1-r)^{2}\{3r+2p^{2}+2r^{2}-pr-2p^{2}r-4pr^{2}-r^{3}\}-r(1-p)^{5}(1-r)^{2}(1-r^{2}+pr^{2})\nonumber\\&-p(1-p)^{3}(1-r)^{2}\{2p+r+p^{2}+2r^{2}-p^{2}r-5pr^{2}\}\nonumber\\
={}&(1-p)^{2}(1-r)\Big[-p^{4}r^{4}-p^{4}r(6-3r-r^{2})-p^{3}r^{2}(11-5r-4r^{2})-\frac{p^{3}}{2}(1-5r-6p)-5p^{2}r^{4}-3p^{2}r^{3}\nonumber\\&-\frac{p^{2}}{2}(3-11r-8r^{2})-\frac{pr^{3}}{2}(11-6r)-\frac{pr}{2}(6-11r)-\frac{r^{2}}{2}(3-5r+2r^{2})\Big]\label{WWDW_coeff_final_q=0}
\end{align}
which is evidently non-positive for $p$ and $r$ sufficiently small;
\item the coefficient of $\mu(WWWD)$ is updated to 
\begin{align}
{}&p(1-p)^{2}(1-r)^{2}-\frac{1}{2}(1-p)^{3}(1-r)^{2}\{2p-4r+p^{2}+3r^{2}+2pr-2p^{2}r-6pr^{2}\}\nonumber\\&-p(1-p)^{3}(1-r)^{2}\{2p+r+p^{2}+2r^{2}-p^{2}r-5pr^{2}\}\nonumber\\
={}&(1-p)^{2}(1-r)^{2}\left\{2r-\frac{3p^{2}}{2}-\frac{3r^{2}}{2}-4pr+3p^{2}r+\frac{5pr^{2}}{2}+\frac{3p^{3}}{2}+4p^{2}r^{2}-5p^{3}r^{2}+p^{4}-p^{4}r\right\}\nonumber\\
={}&-p^{6}r^{3}-3p^{6}r(1-r)-\frac{p^{5}}{2}(1-2p-6r)-5p^{5}r^{4}-\frac{p^{5}r^{2}}{2}(19-24r)\nonumber\\&-p^{4}r^{3}(26-14r)-\frac{p^{4}}{2}(7-18r-13r^{2})-\frac{21p^{3}r^{4}}{2}-p^{3}r(19-14r-11r^{2})+\frac{9p^{3}}{2}-\frac{5p^{2}r^{4}}{2}\nonumber\\&-p^{2}r^{2}(30-18r)+16p^{2}r-\frac{3p^{2}}{2}-\frac{pr^{3}}{2}(38-11r)+\frac{43pr^{2}}{2}-8pr-\frac{3r^{4}}{2}+5r^{3}-\frac{11r^{2}}{2}+2r\nonumber\\
\leqslant{}&\frac{9p^{3}}{2}+16p^{2}r-\frac{3p^{2}}{2}+\frac{43pr^{2}}{2}-8pr+5r^{3}-\frac{11r^{2}}{2}+2r\label{WWWD_coeff_final_q=0}
\end{align}
which is non-positive precisely because of the inequality in \eqref{two_regime_1_eq_transformed}.
\end{enumerate} 

A final step that we perform now is the deduction of a suitable upper bound for the coefficient of $\mu(DD)$ in \eqref{w_{0}_ineq_9_q=0} (using \eqref{inverse_2} with $q=0$):
\begin{align}
{}&-\{1-(1+p+pr-p^{2}r)(1-p)(1-r)(1+r)\}=-(1-p)(1-r)\{(1-p)(1-r)\}^{-1}\nonumber\\&+(1+p+pr-p^{2}r)(1-p)(1-r)(1+r)\nonumber\\
\leqslant{}&-(1-p)(1-r)(1+p+r+p^{2}+r^{2}+pr-2p^{2}r-2pr^{2}+p^{2}r^{2})\nonumber\\&+(1-p)(1-r)(1+p+r+2pr-p^{2}r+pr^{2}-p^{2}r^{2})\nonumber\\
={}&-(1-p)(1-r)(p^{2}+r^{2}-pr-p^{2}r-3pr^{2}+2p^{2}r^{2})\nonumber\\
={}&-(1-p)(1-r)\{p^{2}+r^{2}-2pr+pr(1-p-3r+2pr)\} \leqslant -(1-p)(1-r)(p-r)^{2},\label{updated_DD_coeff_q=0}
\end{align}
where the last inequality is true for $p$ and $r$ sufficiently small. Incorporating \eqref{WWDD_coeff_final_q=0}, \eqref{WWDW_coeff_final_q=0}, \eqref{WWWD_coeff_final_q=0} and \eqref{updated_DD_coeff_q=0} into \eqref{w_{0}_ineq_9_q=0}, we get the updated weight function inequality
\begin{align}
w_{1}(\G_{p,0,r}\mu)\leqslant{}&w_{1}(\mu)+(1-p)(1-r)\{-r-2p^{2}-3r^{2}-2pr+3p^{2}r+6pr^{2}-3p^{2}r^{2}\}\{\mu(WD)\nonumber\\&-\mu(WWDD)-\mu(DWDD)-\mu(WDL)\}-(1-p)(1-r)(p-r)^{2}\mu(DD)\nonumber\\&-r^{2}(1-r)^{2}(1-p)^{2}\mu(WLD)-r^{2}(1-r)^{2}(1-p)^{2}\mu(DLD)\nonumber\\&+\left(\frac{9p^{3}}{2}+16p^{2}r-\frac{3p^{2}}{2}+\frac{43pr^{2}}{2}-8pr+5r^{3}-\frac{11r^{2}}{2}+2r\right)\mu(WWWD)\nonumber\\&+\frac{1}{2}(1-p)(1-r)\left(\frac{9p^{3}}{2}+16p^{2}r-\frac{3p^{2}}{2}+\frac{43pr^{2}}{2}-8pr+5r^{3}-\frac{11r^{2}}{2}+2r\right)\mu(WWDD)\nonumber\\&(1-p)^{2}(1-r)\Big[-p^{4}r^{4}-p^{4}r(6-3r-r^{2})-p^{3}r^{2}(11-5r-4r^{2})-\frac{p^{3}}{2}(1-5r-6p)\nonumber\\&-5p^{2}r^{4}-3p^{2}r^{3}-\frac{p^{2}}{2}(3-11r-8r^{2})-\frac{pr^{3}}{2}(11-6r)-\frac{pr}{2}(6-11r)-\frac{r^{2}}{2}(3-5r+2r^{2})\Big]\nonumber\\&\mu(WWDW).\nonumber
\end{align} 
At this point, we remove all terms, except the one involving $\mu(DD)$, from the right side of the inequality above, since the coefficient of each of these terms is non-positive as long as $p$ and $r$ are sufficiently small and \eqref{two_regime_1_eq_transformed} holds. The final weight function inequality obtained is exactly what we have in \eqref{final_wt_fn_ineq_q=0} once we set $p$, $q$ and $r$ to be as in \eqref{transform_combined}, and we replace $w_{1}$ by $w$. This concludes the construction of our weight function for the envelope PCA $\E_{r',s'}$ (correspondingly, for $\G_{p,q,r}$ with $p$, $q$ and $r$ are as given by \eqref{transform_combined}), when $(r',s')$ belong to the regime in \ref{bond_regime_1}.

\subsection{Detailed construction of the weight function for the bond percolation game when $(r',s')$ belongs to the regime given by \ref{bond_regime_2}}\label{sec:bond_2_wt_fn_steps} In the regime described by \ref{bond_regime_2}, we set $s'=0$ and $r' > 0$. We begin again, as in \S\ref{sec:generalized_wt_fn_steps} and \S\ref{sec:bond_1_wt_fn_steps}, with $c_{1}=1$ and $\mathcal{C}_{1}=(D)_{0}$, followed by $c_{2}=1$ and $\mathcal{C}_{2}=(W,D)_{0,1}$. We skip some of the details in the computation of the pushforward measures (i.e.\ $\E_{r',0}\mu(D)$, $\E_{r',0}\mu(WD)$ etc.)\ as these are special cases (obtained by setting $s'=0$) of what we have already computed in \S\ref{sec:bond_1_wt_fn_steps}. In each computation that follows, we indicate by underbraces, wherever necessary, which terms are being combined together to proceed to the next step:
\begin{align}
\E_{r',0}\mu(D) ={}& 2(1-r')\mu(WD) + (1+r')(1-r')\mu(DD) + 2r'(1-r')\mu(LD),\label{pushforward_D_q=0}\\
\E_{r',0}\mu(WD)={}&\underbrace{(1-r')^{2}\mu(LDW)+r'(1-r')^{2}\mu(LDL)+(1-r')^{2}(1+r')\mu(LDD)}_{\text{combine using } \mu(LD)=\mu(LDW)+\mu(LDL)+\mu(LDD)}+(1-r')^{2}\mu(LWD)\nonumber\\&+\underbrace{r'(1-r')^{2}\mu(WLD)+r'(1-r')^{2}(1+r')\mu(LLD)+r'(1-r')^{2}\mu(DLD)}_{\text{combine using } \mu(LD) = \mu(WLD)+\mu(LLD)+\mu(DLD)}\label{pushforward_WD_alternate}\\
={}& \underbrace{(1-r')^{2}(1+r')\mu(LD)-r'(1-r')^{2}\mu(LDW)-(1-r')^{2}\mu(LDL)}_{\text{obtained from the first combination}}+(1-r')^{2}\mu(LWD)\nonumber\\&+\underbrace{r'(1+r')(1-r')^{2}\mu(LD)-r'^{2}(1-r')^{2}\mu(WLD)-r'^{2}(1-r')^{2}\mu(DLD)}_{\text{obtained from the second combination}}\nonumber\\
={}&(1+r')^{2}(1-r')^{2}\mu(LD)+(1-r')^{2}\mu(LWD)-r'(1-r')^{2}\mu(LDW)-(1-r')^{2}\mu(LDL)\nonumber\\&-r'^{2}(1-r')^{2}\mu(WLD)-r'^{2}(1-r')^{2}\mu(DLD),\label{pushforward_WD_q=0}
\end{align}
and we can write $\E_{r',0}\mu(LWD)=C_{1}-C_{2}$, where
\begin{align}
C_{1} ={}& r'(1-r')^{2}\mu(LDW)+r'(1-r')^{2}(1+r')\mu(LDD)+r'^{2}(1-r')^{2}\mu(LDL)+r'(1-r')^{2}\mu(LWD)\nonumber\\&+r'(1-r')^{2}\mu(WLD)+r'^{2}(1-r')^{2}(1+r')\mu(LLD)+r'^{2}(1-r')^{2}\mu(DLD)\label{C_{1}}
\end{align}
and 
\begin{align}
C_{2} ={}& r'(1-r')^{3}\mu(DLDW) + r'(1-r')^{3}\mu(LLDW) + r'(1-r')^{3}(1+r')\mu(DLDD) + r'(1-r')^{3}(1+r')\mu(LLDD) \nonumber\\&+ r'^{2}(1-r')^{3}\mu(DLDL) + r'^{2}(1-r')^{3}\mu(LLDL) + r'(1-r')^{3}\mu(LLWD) + r'(1-r')^{3}\mu(DLWD) \nonumber\\&+ r'(1-r')^{3}\mu(LWLD) + r'(1-r')^{3}\mu(DWLD) + r'^{2}(1-r')^{3}(1+r')\mu(LLLD) + r'^{2}(1-r')^{3}(1+r')\mu(DLLD) \nonumber\\&+ r'^{2}(1-r')^{3}\mu(LDLD) + r'^{2}(1-r')^{3}\mu(DDLD).\label{C_{2}}
\end{align}
It is worthwhile to note here that \emph{each} term of \emph{each} of $C_{1}$ and $C_{2}$ is non-negative. Another crucial observation is that, in $\E_{r',0}\mu(LWD) = C_{1}-C_{2}$, the coefficient corresponding to \emph{each} term is of the order of $r'$ (or smaller), so that when $r'$ is small, each term ought to be negligible. We now set our initial choice of the weight function to be (just as in \eqref{w_{0}} of \S\ref{sec:generalized_wt_fn_steps}, as well as our initial choice in \S\ref{sec:bond_1_wt_fn_steps}) $w_{0}(\mu)=\mu(D)+\mu(WD)+\mu(LWD)$. This yields the weight function inequality (which, in this case, is an identity):
\begin{align}
w_{0}\left(\E_{r',0}\mu\right) ={}& \E_{r',0}\mu(D) + \E_{r',0}\mu(WD) + \E_{r',0}\mu(LWD)\nonumber\\
={}& 2(1-r')\mu(WD) + (1+r')(1-r')\mu(DD) + 2r'(1-r')\mu(LD) + (1+r')^{2}(1-r')^{2}\mu(LD) \nonumber\\&+ (1-r')^{2}\mu(LWD) - r'(1-r')^{2}\mu(LDW) - (1-r')^{2}\mu(LDL) - r'^{2}(1-r')^{2}\mu(WLD) \nonumber\\&- r'^{2}(1-r')^{2}\mu(DLD) + r'(1-r')^{2}\mu(LDW)+r'(1-r')^{2}(1+r')\mu(LDD)+r'^{2}(1-r')^{2}\nonumber\\&\mu(LDL)+r'(1-r')^{2}\mu(LWD)+r'(1-r')^{2}\mu(WLD)+r'^{2}(1-r')^{2}(1+r')\mu(LLD)\nonumber\\&+r'^{2}(1-r')^{2}\mu(DLD)-C_{2}\nonumber\\
={}& \underbrace{2\mu(WD)} - 2r'\mu(WD) + \underbrace{\mu(DD)} - r'^{2}\mu(DD) + \underbrace{\mu(LD)} + (2r'-4r'^{2}+r'^{4})\mu(LD) \nonumber\\&+ \underbrace{\mu(LWD)} - (r'+r'^{2}-r'^{3})\mu(LWD) - (1-r')^{3}(1+r')\mu(LDL) + r'(1-r')^{3}\mu(WLD)\nonumber\\&+r'(1-r')^{2}(1+r')\mu(LDD)+r'^{2}(1-r')^{2}(1+r')\mu(LLD)-C_{2}\nonumber\\
={}& \underbrace{w_{0}(\mu)}_{\text{combining underbraced terms above}} - 2r'\mu(WD) - r'^{2}\mu(DD) + \underbrace{(2r'-4r'^{2}+r'^{4})\mu(LD)} \nonumber\\&- (r'+r'^{2}-r'^{3})\mu(LWD) - (1-r')^{3}(1+r')\mu(LDL) + \underbrace{r'(1-r')^{3}\mu(WLD)}\nonumber\\&+\underbrace{r'(1-r')^{2}(1+r')\mu(LDD)}+\underbrace{r'^{2}(1-r')^{2}(1+r')\mu(LLD)}-C_{2}.\label{wt_ineq_0}  
\end{align}
While of the same form as \eqref{gen_ineq_form}, the inequality in \eqref{wt_ineq_0} does not satisfy \eqref{desired_criterion}: in the underbraced terms in the final expression of \eqref{wt_ineq_0}, the coefficients $r'(1-r')^{3}$, $r'(1-r')^{2}(1+r')$ and $r'^{2}(1-r')^{2}(1+r')$ are non-negative (in fact, strictly positive for $r' \in (0,1)$), and the coefficient $(2r'-4r'^{2}+r'^{4})$ is also strictly positive for $r' \in (0,0.53918)$. Therefore, further adjustments are necessary to our initial weight function.

Note, here, that $-2r'\mu(WD)$ is one of the non-positive terms in the right side of \eqref{wt_ineq_0}, and it is worthwhile to check whether $-2r'\E_{r',0}\mu(WD)$ can be used to negate some of the positive terms (indicated by underbraces) in the final expression of \eqref{wt_ineq_0}. From \eqref{pushforward_WD_alternate}, we see that $-2r'\E_{r',0}\mu(WD)$ contributes the terms
\begin{equation}
-2r'(1-r')^{2}\mu(LDW)-2r'^{2}(1-r')^{2}\mu(LDL)-2r'(1-r')^{2}(1+r')\mu(LDD),\nonumber
\end{equation}
as well as the terms
\begin{align}
{}&-2r'^{2}(1-r')^{2}\mu(WLD)-2r'^{2}(1-r')^{2}(1+r')\mu(LLD)-2r'^{2}(1-r')^{2}\mu(DLD)\nonumber\\
={}& -2r'^{2}(1-r')^{2}\mu(LD) - 2r'^{3}(1-r')^{2}\mu(LLD),\label{combination_1}
\end{align}
and these can be used to negate, partially, the term $(2r'-4r'^{2}+r'^{4})\mu(LD)$ on the right side of \eqref{wt_ineq_0} when the coefficient $(2r'-4r'^{2}+r'^{4})$ is positive. This serves as a good motivation for us to set the first adjustment to be $w_{1}(\mu)= w_{0}(\mu)-2r'\mu(WD)$. This adjustment updates the weight function inequality of \eqref{wt_ineq_0} to:
\begin{align}
w_{1}\left(\E_{r',0}\mu\right)={}& w_{1}(\mu) + 2r'\mu(WD) - 2r'\mu(WD) - r'^{2}\mu(DD) + (2r'-4r'^{2}+r'^{4})\mu(LD) - (r'+r'^{2}-r'^{3})\mu(LWD) \nonumber\\&- (1-r')^{3}(1+r')\mu(LDL) + r'(1-r')^{3}\mu(WLD) + r'(1-r')^{2}(1+r')\mu(LDD) \nonumber\\&+ r'^{2}(1-r')^{2}(1+r')\mu(LLD) - C_{2} - 2r(1-r')^{2}\mu(LDW) - 2r'^{2}(1-r')^{2}\mu(LDL) \nonumber\\&- 2r'(1-r')^{2}(1+r')\mu(LDD) - 2r'(1-r')^{2}\mu(LWD) \underbrace{- 2r'^{2}(1-r')^{2}\mu(WLD)} \nonumber\\&\underbrace{- 2r'^{2}(1-r')^{2}(1+r')\mu(LLD) - 2r'^{2}(1-r')^{2}\mu(DLD)}\nonumber\\
={}& w_{1}(\mu) - r'^{2}\mu(DD) \underbrace{+ (2r'-4r'^{2}+r'^{4})\mu(LD)} \underbrace{- (r'+r'^{2}-r'^{3})\mu(LWD)}_{\text{(i)}} \underbrace{- (1-r')^{3}(1+r')\mu(LDL)}_{\text{(ii)}} \nonumber\\&+ r'(1-r')^{3}\mu(WLD) \underbrace{+ r'(1-r')^{2}(1+r')\mu(LDD)}_{\text{(iii)}} \underbrace{+ r'^{2}(1-r')^{2}(1+r')\mu(LLD)} - C_{2} \nonumber\\&- 2r'(1-r')^{2}\mu(LDW) \underbrace{- 2r'^{2}(1-r')^{2}\mu(LDL)}_{\text{(ii)}} \underbrace{- 2r'(1-r')^{2}(1+r')\mu(LDD)}_{\text{(iii)}} \underbrace{- 2r'(1-r')^{2}\mu(LWD)}_{\text{(i)}}\nonumber\\& \underbrace{- 2r'^{2}(1-r')^{2}\mu(LD) - 2r'^{3}(1-r')^{2}\mu(LLD)}_{\text{combining the underbraced terms in the previous step using \eqref{combination_1}}}\nonumber\\
={}& w_{1}(\mu) - r'^{2}\mu(DD) \underbrace{+ (2r'-6r'^{2}+4r'^{3}-r'^{4})\mu(LD)}_{\text{combining underbraced terms involving } \mu(LD)} \underbrace{- (3r'-3r'^{2}+r'^{3})\mu(LWD)}_{\text{combining underbraced terms labeled (i)}}\nonumber\\& \underbrace{-(1-r')^{2}(1+r'^{2})\mu(LDL)}_{\text{combining underbraced terms labeled (ii)}} + r'(1-r')^{3}\mu(WLD) \underbrace{-r'(1-r')^{2}(1+r')\mu(LDD)}_{\text{combining underbraced terms labeled (iii)}}\nonumber\\& \underbrace{+r'^{2}(1-r')^{3}\mu(LLD)}_{\text{combining underbraced terms involving } \mu(LLD)} - 2r'(1-r')^{2}\mu(LDW) - C_{2}\nonumber\\
={}& w_{1}(\mu) - r'^{2}\mu(DD) + (2r'-6r'^{2}+4r'^{3}-r'^{4})\mu(LD) - (3r'-3r'^{2}+r'^{3})\mu(LWD)\nonumber\\& \underbrace{- (1-r')^{2}(1+r'^{2})\mu(LDL) - r'(1-r')^{2}(1+r')\mu(LDD) - 2r'(1-r')^{2}\mu(LDW)}_{\text{obtain } - r'(1-r')^{3}\mu(LD) \text{ from these}} \nonumber\\& \underbrace{+ r'(1-r')^{3}\mu(WLD) + r'^{2}(1-r')^{3}\mu(LLD)}_{\text{obtain } r'(1-r')^{3}\mu(LD) \text{ from these}} - C_{2}\nonumber\\
={}& w_{1}(\mu) - r'^{2}\mu(DD) + (2r'-6r'^{2}+4r'^{3}-r'^{4})\mu(LD) - (3r'-3r'^{2}+r'^{3})\mu(LWD)\nonumber\\& \underbrace{- r'(1-r')^{3}\mu(LD)} - (1-r')^{2}(1-r'+2r'^{2})\mu(LDL) - 2r'^{2}(1-r')^{2}\mu(LDD) \nonumber\\&- r'(1-r')^{2}(1+r')\mu(LDW) \underbrace{+ r'(1-r')^{3}\mu(LD)} - r'(1-r')^{4}\mu(LLD) \nonumber\\&- r'(1-r')^{3}\mu(DLD) - C_{2}\nonumber\\ 
={}& w_{1}(\mu) - r'^{2}\mu(DD) \underbrace{+ (2r'-6r'^{2}+4r'^{3}-r'^{4})\mu(LD)} - (3r'-3r'^{2}+r'^{3})\mu(LWD)\nonumber\\& - (1-r')^{2}(1-r'+2r'^{2})\mu(LDL) - 2r'^{2}(1-r')^{2}\mu(LDD) - r'(1-r')^{2}(1+r')\mu(LDW) \nonumber\\& - r'(1-r')^{4}\mu(LLD) - r'(1-r')^{3}\mu(DLD) - C_{2}.\label{wt_ineq_1_explained}
\end{align}
One can see that \eqref{wt_ineq_1_explained} becomes the inequality in \eqref{wt_ineq_1} if $C_{2}$, which consists of non-negative summands (as remarked right after \eqref{C_{2}}), is removed. The only problematic term on the right side of \eqref{wt_ineq_1_explained}, which \emph{may} prevent it from satisfying \eqref{desired_criterion}, has been indicated by an underbrace: the coefficient $(2r'-6r'^{2}+4r'^{3}-r'^{4})$ of $\mu(LD)$ is strictly negative for $r' \in [0.4564,1]$, so that if we were to consider the regime where $s'=0$ and $r' \in [0.4564,1]$, no further adjustments to our weight function would be required. Since, however, we \emph{do} consider $r' < 0.4564$ in the regime described by \ref{bond_regime_2}, we choose to make use of the term $-(3r'-3r'^{2}+r'^{3})\mu(LWD)$, on the right side of \eqref{wt_ineq_1_explained}, to negate, \emph{partially}, the problematic, underbraced term $(2r'-6r'^{2}+4r'^{3}-r'^{4})\mu(LD)$, following the idea outlined in \S\ref{subsubsec:adjust_gen}.

We define our second adjustment to the weight function by setting $w_{2}(\mu) = w_{1}(\mu) - (3r'-3r'^{2}+r'^{3})\mu(LWD)$, which is exactly the same as \eqref{adjust_2}. Recall that $\E_{r',0}\mu(LWD) = C_{1}-C_{2}$, with $C_{1}$ and $C_{2}$ as defined in \eqref{C_{1}} and \eqref{C_{2}}. As explained in \eqref{ith_wt_fn_ineq}, incorporating the new adjustment into \eqref{wt_ineq_1_explained} yields
\begin{align}
w_{2}\left(\E_{r',0}\mu\right) ={}& w_{2}(\mu) \underbrace{+ (3r'-3r'^{2}+r'^{3})\mu(LWD)} - r'^{2}\mu(DD) + (2r'-6r'^{2}+4r'^{3}-r'^{4})\mu(LD) \nonumber\\&\underbrace{- (3r'-3r'^{2}+r'^{3})\mu(LWD)} - (1-r')^{2}(1-r'+2r'^{2})\mu(LDL) - 2r'^{2}(1-r')^{2}\mu(LDD) \nonumber\\& - r'(1-r')^{2}(1+r')\mu(LDW) - r'(1-r')^{4}\mu(LLD) - r'(1-r')^{3}\mu(DLD) \underbrace{- C_{2}}_{\text{(i)}} \nonumber\\& - (3r'-3r'^{2}+r'^{3})C_{1} \underbrace{+ (3r'-3r'^{2}+r'^{3})C_{2}}_{\text{(i)}}\nonumber\\
={}& w_{2}(\mu) - r'^{2}\mu(DD) + (2r'-6r'^{2}+4r'^{3}-r'^{4})\mu(LD) \underbrace{- (1-r')^{2}(1-r'+2r'^{2})\mu(LDL)}_{\text{(i)}} \nonumber\\& \underbrace{- 2r'^{2}(1-r')^{2}\mu(LDD)}_{\text{(ii)}} \underbrace{- r'(1-r')^{2}(1+r')\mu(LDW)}_{\text{(iii)}} \underbrace{- r'(1-r')^{4}\mu(LLD)}_{\text{(iv)}} \underbrace{- r'(1-r')^{3}\mu(DLD)}_{\text{(v)}} \nonumber\\& \underbrace{- (3r'-3r'^{2}+r'^{3})r'(1-r')^{2}\mu(LDW)}_{\text{(iii)}} \underbrace{- (3r'-3r'^{2}+r'^{3})r'(1-r')^{2}(1+r')\mu(LDD)}_{\text{(ii)}} \nonumber\\& \underbrace{- (3r'-3r'^{2}+r'^{3})r'^{2}(1-r')^{2}\mu(LDL)}_{\text{(i)}} - (3r'-3r'^{2}+r'^{3})r'(1-r')^{2}\mu(LWD) \nonumber\\&- (3r'-3r'^{2}+r'^{3})r'(1-r')^{2}\mu(WLD) \underbrace{- (3r'-3r'^{2}+r'^{3})r'^{2}(1-r')^{2}(1+r')\mu(LLD)}_{\text{(iv)}} \nonumber\\& \underbrace{- (3r'-3r'^{2}+r'^{3})r'^{2}(1-r')^{2}\mu(DLD)}_{\text{(v)}} \underbrace{- (1-r')^{3}C_{2}}_{\text{combining underbraced terms labeled (i)}} \ \text{(using \eqref{C_{1}})}\nonumber\\
={}& w_{2}(\mu) - r'^{2}\mu(DD) + (2r'-6r'^{2}+4r'^{3}-r'^{4})\mu(LD) \nonumber\\&\underbrace{- (1-r')^{2}(r'^{5}-3r'^{4}+3r'^{3}+2r'^{2}-r'+1)\mu(LDL)}_{\text{combining underbraced terms labeled (i)}} \underbrace{- r'^{2}(1-r')^{2}(r'^{3}-2r'^{2}+5)\mu(LDD)}_{\text{combining underbraced terms labeled (ii)}}\nonumber\\& \underbrace{- r'(1-r')^{2}(r'^{3}-3r'^{2}+4r'+1)\mu(LDW)}_{\text{combining underbraced terms labeled (iii)}} \underbrace{- r'(1-r')^{2}(r'^{5}-2r'^{4}+4r'^{2}-2r'+1)\mu(LLD)}_{\text{combining underbraced terms labeled (iv)}}\nonumber\\& \underbrace{- r'(1-r')^{2}(r'^{4}-3r'^{3}+3r'^{2}-r'+1)\mu(DLD)}_{\text{combining underbraced terms labeled (v)}} - (3r'-3r'^{2}+r'^{3})r'(1-r')^{2}\mu(LWD) \nonumber\\&- (3r'-3r'^{2}+r'^{3})r'(1-r')^{2}\mu(WLD) - (1-r')^{3}C_{2}\nonumber\\
={}& w_{2}(\mu) - r'^{2}\mu(DD) + (2r'-6r'^{2}+4r'^{3}-r'^{4})\mu(LD) \nonumber\\&\underbrace{- (1-r')^{2}(r'^{5}-3r'^{4}+3r'^{3}+2r'^{2}-r'+1)\mu(LDL) - r'^{2}(1-r')^{2}(r'^{3}-2r'^{2}+5)\mu(LDD)}_{\text{(vi)}}\nonumber\\& \underbrace{- r'(1-r')^{2}(r'^{3}-3r'^{2}+4r'+1)\mu(LDW)}_{\text{(vi)}} \underbrace{- r'(1-r')^{2}(r'^{5}-2r'^{4}+4r'^{2}-2r'+1)\mu(LLD)}_{\text{(vii)}}\nonumber\\& \underbrace{- r'(1-r')^{2}(r'^{4}-3r'^{3}+3r'^{2}-r'+1)\mu(DLD) - (3r'-3r'^{2}+r'^{3})r'(1-r')^{2}\mu(WLD)}_{\text{(vii)}} \nonumber\\&- (3r'-3r'^{2}+r'^{3})r'(1-r')^{2}\mu(LWD) - (1-r')^{3}C_{2}.\label{intermediate_1} 
\end{align}
Combining the terms that are grouped together using underbraces labeled (vi) in the final expression for \eqref{intermediate_1}, we obtain:
\begin{align}
{}&- (1-r')^{2}(r'^{5}-3r'^{4}+3r'^{3}+2r'^{2}-r'+1)\mu(LDL) - r'^{2}(1-r')^{2}(r'^{3}-2r'^{2}+5)\mu(LDD) \nonumber\\&- r'(1-r')^{2}(r'^{3}-3r'^{2}+4r'+1)\mu(LDW)\nonumber\\
={}& - r'^{2}(1-r')^{2}(r'^{3}-2r'^{2}+5)\mu(LD) \nonumber\\&- \left\{(1-r')^{2}(r'^{5}-3r'^{4}+3r'^{3}+2r'^{2}-r'+1) - r'^{2}(1-r')^{2}(r'^{3}-2r'^{2}+5)\right\}\mu(LDL) \nonumber\\&- \left\{r'(1-r')^{2}(r'^{3}-3r'^{2}+4r'+1) - r'^{2}(1-r')^{2}(r'^{3}-2r'^{2}+5)\right\}\mu(LDW)\nonumber\\ 
={}& - r'^{2}(1-r')^{2}(r'^{3}-2r'^{2}+5)\mu(LD) - (1-r')^{2}(1-r'-3r'^{2}+3r'^{3}-r'^{4})\mu(LDL)\nonumber\\& - r'(1-r')^{2}(1-r'-3r'^{2}+3r'^{3}-r'^{4})\mu(LDW).\label{intermediate_2}
\end{align}
Combining the terms that are grouped together using underbraces labeled (vii) in the final expression for \eqref{intermediate_1}, we obtain:
\begin{align}
{}&- r'(1-r')^{2}(r'^{5}-2r'^{4}+4r'^{2}-2r'+1)\mu(LLD) - r'(1-r')^{2}(r'^{4}-3r'^{3}+3r'^{2}-r'+1)\mu(DLD) \nonumber\\&- (3r'-3r'^{2}+r'^{3})r'(1-r')^{2}\mu(WLD)\nonumber\\
={}& - (3r'-3r'^{2}+r'^{3})r'(1-r')^{2}\mu(LD) \nonumber\\&- \left\{r'(1-r')^{2}(r'^{5}-2r'^{4}+4r'^{2}-2r'+1) - (3r'-3r'^{2}+r'^{3})r'(1-r')^{2}\right\}\mu(LLD) \nonumber\\&- \left\{r'(1-r')^{2}(r'^{4}-3r'^{3}+3r'^{2}-r'+1) - (3r'-3r'^{2}+r'^{3})r'(1-r')^{2}\right\}\mu(DLD)\nonumber\\
={}& - (3r'-3r'^{2}+r'^{3})r'(1-r')^{2}\mu(LD) - r'(1-r')^{2}(r'^{5}-2r'^{4}-r'^{3}+7r'^{2}-5r'+1)\mu(LLD)\nonumber\\& - r'(1-r')^{6}\mu(DLD).\label{intermediate_3}
\end{align}
Incorporating \eqref{intermediate_2} and \eqref{intermediate_3} into \eqref{intermediate_1} yields:
\begin{align}
w_{2}\left(\E_{r',0}\mu\right) ={}& w_{2}(\mu) - r'^{2}\mu(DD) \underbrace{+ (2r'-6r'^{2}+4r'^{3}-r'^{4})\mu(LD) - r'^{2}(1-r')^{2}(r'^{3}-2r'^{2}+5)\mu(LD)}\nonumber\\& - (1-r')^{2}(1-r'-3r'^{2}+3r'^{3}-r'^{4})\mu(LDL) - r'(1-r')^{2}(1-r'-3r'^{2}+3r'^{3}-r'^{4})\mu(LDW) \nonumber\\&\underbrace{- (3r'-3r'^{2}+r'^{3})r'(1-r')^{2}\mu(LD)} - r'(1-r')^{2}(r'^{5}-2r'^{4}-r'^{3}+7r'^{2}-5r'+1)\mu(LLD)\nonumber\\& - r'(1-r')^{6}\mu(DLD) - (3r'-3r'^{2}+r'^{3})r'(1-r')^{2}\mu(LWD) - (1-r')^{3}C_{2}\nonumber\\
={}& w_{2}(\mu) - r'^{2}\mu(DD) \underbrace{+ (2r'-14r'^{2}+23r'^{3}-14r'^{4}+3r'^{6}-r'^{7})\mu(LD)}_{\text{combining the underbraced terms above}} \nonumber\\&- (1-r')^{2}(1-r'-3r'^{2}+3r'^{3}-r'^{4})\mu(LDL) - r'(1-r')^{2}(1-r'-3r'^{2}+3r'^{3}-r'^{4})\mu(LDW) \nonumber\\&- r'(1-r')^{2}(r'^{5}-2r'^{4}-r'^{3}+7r'^{2}-5r'+1)\mu(LLD) - r'(1-r')^{6}\mu(DLD) \nonumber\\&- (3r'-3r'^{2}+r'^{3})r'(1-r')^{2}\mu(LWD) - (1-r')^{3}C_{2}.\label{wt_ineq_2_explained}
\end{align}
Observe that if we were to remove the term $-(1-r')^{3}C_{2}$, keeping in mind that $C_{2}$ consists of non-negative summands (as mentioned after \eqref{C_{2}}), then \eqref{wt_ineq_2_explained} yields the inequality in \eqref{adjust_2}. Next, we note that 
\begin{enumerate}
\item the polynomial $2r'-14r'^{2}+23r'^{3}-14r'^{4}+3r'^{6}-r'^{7}$ is strictly negative for $r' \in (0.201383,1]$,
\item the polynomial $1-r'-3r'^{2}+3r'^{3}-r'^{4}$ is strictly positive for all $r' \in [0,0.52779)$,
\item the polynomial $r'^{5}-2r'^{4}-r'^{3}+7r'^{2}-5r'+1$ is strictly positive for all $r' \geqslant 0$,
\end{enumerate}
so that \eqref{wt_ineq_2_explained} satisfies the criterion in \eqref{desired_criterion} as long as we have $r' \in (0.201383,0.52779)$. However, we wish to go beyond this range of values of $r'$, as indicated by \ref{bond_regime_2}. 

When $r' \leqslant 0.201382$, the only term we need be concerned about is the one indicated by an underbrace in the final expression of \eqref{wt_ineq_2_explained}, i.e.\ $(2r'-14r'^{2}+23r'^{3}-14r'^{4}+3r'^{6}-r'^{7})\mu(LD)$. We now motivate our approach for partially negating this term, though, as emphasized earlier to the reader, there is, truly, no \emph{unique} way of proceeding. It is quite plausible that other, \emph{similar} approaches yield somewhat better bounds, but intuitively, we do not see how our bounds can be \emph{significantly} improved. It is the order of magnitude of the coefficient of $\mu(LD)$ in \eqref{wt_ineq_2_explained}, i.e.\ $2r'$ (when $r'$ is small), that prevents us from covering a regime larger than that in \eqref{bond_regime_2}, since the existing negative terms on the right side of \eqref{wt_ineq_2_explained} are collectively unable to negate this coefficient when $r'$ is ``too small". We explain this in the following paragraph.

On the right side of \eqref{wt_ineq_2_explained}, we have, on one hand, negative terms involving $\mu(LDW)$ and $\mu(LDL)$, and on the other, negative terms involving $\mu(DLD)$ and $\mu(LLD)$. At a glance, what seem to be missing are negative terms involving $\mu(LDD)$ and $\mu(WLD)$ (had there been a couple of negative terms involving $\mu(LDD)$ and $\mu(WLD)$, with coefficients whose order of magnitude is $r'$, for $r'$ is small, on the right side of \eqref{wt_ineq_2_explained}, these could have helped take care of $(2r'-14r'^{2}+23r'^{3}-14r'^{4}+3r'^{6}-r'^{7})\mu(LD)$, since $\mu(LDW)+\mu(LDL)+\mu(LDD)=\mu(DLD)+\mu(LLD)+\mu(WLD)=\mu(LD)$). Next, among the terms of $C_{2}$ in \eqref{C_{2}}, we have
\begin{enumerate}
\item the terms $2r'(1-r')^{3}\mu(DWLD)$ and $r'(1-r')^{3}\mu(LWLD)$, but no term involving $\mu(WWLD)$, which is why we cannot directly obtain from $-(1-r')^{3}C_{2}$ a negative term involving $\mu(WLD)$;
\item the terms $r'(1-r')^{3}(1+r')\mu(DLDD)$ and $r'(1-r')^{3}(1+r')\mu(LLDD)$, but no term involving $\mu(WLDD)$, which is why we cannot directly obtain from $-(1-r')^{3}C_{2}$ a negative term involving $\mu(LDD)$.
\end{enumerate}
We shall, therefore, avail the aid of $-r'^{2}(3-3r'+r'^{2})(1-r')^{2}\mu(LWD)$ and $-r'^{2}\mu(DD)$ that are present in the right side of \eqref{wt_ineq_2_explained}, and the terms $-r'(1-r')^{6}\mu(LLWD)$ and $-r'(1-r')^{6}\mu(LLDW)$ from the expansion of $-(1-r')^{3}C_{2}$, to define the third adjustment to our weight function. It can be checked that the remaining terms of $C_{2}$, namely those involving $\mu(DLDW)$, $\mu(DLDL)$, $\mu(DDLD)$, $\mu(DDLD)$, $\mu(LLDL)$, $\mu(LDLD)$ and $\mu(LLLD)$, are either of no use in generating (via the probabilities of the corresponding cylinder sets induced by the pushforward measure $\E_{r',0}\mu$) the terms that are missing above, or generate the missing terms with coefficients that are of an order of magnitude far too small to be of any significant use to us. 

We set the new, adjusted weight function to be
\begin{align}\label{adjust_3}
w_{3}(\mu) ={}& w_{2}(\mu) - r'(1-r')^{6}\{\mu(LLWD)+\mu(LLDW)\} - r'^{2}\mu(WDD) - (3r'-3r'^{2}+r'^{3})r'(1-r')^{2}\mu(LWD).
\end{align}
To understand how \eqref{adjust_3} impacts the weight function inequality in \eqref{wt_ineq_2_explained}, we need to compute, \emph{partially}, the pushforward measures $\E_{r',0}\mu(LLWD)$, $\E_{r',0}\mu(LLDW)$ and $\E_{r',0}\mu(WDD)$. Recall that $\E_{r',0}\mu(LLWD)$ equals the probability of the event $\{\E_{r',0}\eta(0)=\E_{r',0}\eta(1)=L,\E_{r',0}\eta(2)=W,\E_{r',0}\eta(3)=D\}$ when $\eta$ is a random configuration with law $\mu$, and we consider the contributions to $\E_{r',0}\mu(LLWD)$ arising from the following events \emph{only}:
\begin{enumerate}
\item the contribution of the event $\{\eta(0)=\eta(1)=W,\eta(2)=L,\eta(3)=D\}$ to $\E_{r',0}\mu(LLWD)$ is
\begin{align}
{}&r(1-r')^{2}\mu(WWLDW)+r'(1-r')^{2}(1+r')\mu(WWLDD)+r'^{2}(1-r')^{2}\mu(WWLDL)\nonumber\\
={}& r'(1-r')^{2}\mu(WWLD)+r'^{2}(1-r')^{2}\mu(WWLDD)-r'(1-r')^{3}\mu(WWLDL);\label{LLWD_contribution_1}
\end{align}
\item the contribution of the event $\{\eta(1)=\eta(2)=W,\eta(3)=L,\eta(4)=D\}$ to $\E_{r',0}\mu(LLWD)$ is
\begin{align}
{}& r'(1-r')^{2}\mu(WWWLD)+r'^{2}(1-r')^{2}\mu(LWWLD)+r'^{2}(1-r')^{2}\mu(DWWLD)\nonumber\\
={}& r'^{2}(1-r')^{2}\mu(WWLD)+r'(1-r')^{3}\mu(WWWLD);\label{LLWD_contribution_2}
\end{align}
\item the contribution of the event $\{\eta(0) \in \{D,L\}, \eta(1)=W, \eta(2)=L, \eta(3)=\eta(4)=D\}$ to $\E_{r',0}\mu(LLWD)$ is
\begin{align}
{}&r'^{2}(1-r')^{2}(1+r')\mu(DWLDD)+r'^{2}(1-r')^{2}(1+r')\mu(LWLDD).\label{LLWD_contribution_3}
\end{align}
\end{enumerate}
Combining \eqref{LLWD_contribution_1}, \eqref{LLWD_contribution_2} and \eqref{LLWD_contribution_3}, and letting $C'_{LLWD}$ denote the contribution from all cases \emph{not} considered above, we obtain:
\begin{align}
\E_{r',0}\mu(LLWD) ={}& \underbrace{r'(1-r')^{2}\mu(WWLD)}_{\text{(i)}}+\underbrace{r'^{2}(1-r')^{2}\mu(WWLDD)}_{\text{(ii)}}-r'(1-r')^{3}\mu(WWLDL)\nonumber\\&+\underbrace{r'^{2}(1-r')^{2}\mu(WWLD)}_{\text{(i)}}+r'(1-r')^{3}\mu(WWWLD)+\underbrace{r'^{2}(1-r')^{2}(1+r')\mu(DWLDD)}_{\text{(ii)}}\nonumber\\&+\underbrace{r'^{2}(1-r')^{2}(1+r')\mu(LWLDD)}_{\text{(ii)}}+C'_{LLWD}\nonumber\\
={}& \underbrace{r'(1-r')^{2}(1+r')\mu(WWLD)}_{\text{summing underbraced terms labeled (i)}} - r'(1-r')^{3}\mu(WWLDL) \nonumber\\&+ \underbrace{r'^{2}(1-r')^{2}(1+r')\mu(WLDD) - r'^{3}(1-r')^{2}\mu(WWLDD)}_{\text{summing underbraced terms labeled (ii)}} + C_{LLWD},\label{pushforward_LLWD_q=0}
\end{align}
where we set $C_{LLWD} = r'(1-r')^{3}\mu(WWWLD) + C'_{LLWD}$. Next, for computing $\E_{r',0}\mu(LLDW)$, which is the probability of the event $\{\eta(0)=\eta(1)=L,\eta(2)=D,\eta(3)=W\}$ where $\eta$ follows the distribution $\mu$, we consider only the contribution arising from the event $\{\eta(0)=\eta(1)=W,\eta(2)=L,\eta(3)=D,\eta(4)=L\}$, allowing us to write
\begin{align}
\E_{r',0}\mu(LLDW) ={}& r'^{2}(1-r')^{2}\mu(WWLDL)+C_{LLDW},\label{pushforward_LLDW_q=0}
\end{align}
where $C_{LLDW}$ denotes the contribution from all the cases \emph{not} considered above. For the computation of $\E_{r',0}\mu(WDD)$, which is the probability of the event $\{\E_{r',0}\eta(0)=W,\E_{r',0}\eta(1)=\E_{r',0}\eta(2)=D\}$ where $\eta$ follows the distribution $\mu$, we consider  
\begin{enumerate}
\item the contribution from the event $\{\eta(0)=L,\eta(1)=\eta(2)=D\}$, given by
\begin{align}
{}&(1-r')^{3}(1+r)\mu(LDDW)+(1-r')^{3}(1+r')^{2}\mu(LDDD)+r'(1-r')^{3}(1+r')\mu(LDDL)\nonumber\\
={}& r'(1-r')^{3}(1+r')\mu(LDD)+(1-r')^{4}(1+r')\mu(LDDW)+(1-r')^{3}(1+r')\mu(LDDD);\label{WDD_contribution_1}
\end{align}
\item and the contribution from the event $\{\eta(1)=L,\eta(2)=\eta(3)=D\}$, given by
\begin{align}
{}& r'(1-r')^{3}(1+r')\mu(WLDD)+r'(1-r')^{3}(1+r')\mu(DLDD)+r'(1-r')^{3}(1+r')^{2}\mu(LLDD)\nonumber\\
={}& r'(1-r')^{3}(1+r')\mu(LDD)+r'^{2}(1-r')^{3}(1+r')\mu(LLDD).\label{WDD_contribution_2}
\end{align}
\end{enumerate}
Combining \eqref{WDD_contribution_1} and \eqref{WDD_contribution_2}, we obtain
\begin{align}
\E_{r',0}\mu(WDD) ={}& r'(1-r')^{3}(1+r')\mu(LDD)+(1-r')^{4}(1+r')\mu(LDDW)+(1-r')^{3}(1+r')\mu(LDDD) \nonumber\\&+ r'(1-r')^{3}(1+r')\mu(LDD)+r'^{2}(1-r')^{3}(1+r')\mu(LLDD) + C'_{WDD}\nonumber\\
={}& 2r'(1-r')^{3}(1+r')\mu(LDD)+C_{WDD},\label{pushforward_WDD_q=0}
\end{align}
where $C'_{WDD}$ is the contribution to $\E_{r',0}\mu(WDD)$ of all the cases \emph{not} considered above, and we set $C_{WDD} = C'_{WDD}+(1-r')^{4}(1+r')\mu(LDDW)+(1-r')^{3}(1+r')\mu(LDDD)+r'^{2}(1-r')^{3}(1+r')\mu(LLDD)$. Finally, we write
\begin{align}\label{LWD_pushforward_2_q=0}
\E_{r',0}\mu(LWD) ={}& r'(1-r')^{2}(1+r')\mu(WLDD)+r'(1-r')^{2}\mu(WWLD)+C_{LWD},
\end{align}
where $C_{LWD}$ captures the rest of the expression for $\E_{r',0}\mu(LWD)$ that we have already explicitly computed earlier. It is important to keep in mind here that each term of $C_{LWD}$ is non-negative.

From \eqref{wt_ineq_2_explained}, \eqref{adjust_3}, \eqref{pushforward_LLWD_q=0}, \eqref{pushforward_LLDW_q=0}, \eqref{pushforward_WDD_q=0} and \eqref{LWD_pushforward_2_q=0}, setting $C_{3}=(1-r')^{3}C_{2} - r'(1-r')^{6}\mu(LLWD) - r'(1-r')^{6}\mu(LLDW) - 2r'(1-r')^{6}\mu(DWLD) - r'(1-r')^{6}\mu(LWLD) - r'(1-r')^{6}(1+r')\mu(DLDD) - r'(1-r')^{6}(1+r')\mu(LLDD)$, and implementing the same idea as that used to derive \eqref{ith_wt_fn_ineq}, we obtain:
\begin{align}
w_{3}\left(\E_{r',0}\mu\right) ={}& w_{3}(\mu) \underbrace{+ r'(1-r')^{6}\mu(LLWD) + r'(1-r')^{6}\mu(LLDW)}_{\text{(i)}} \underbrace{+ r'^{2}\mu(WDD)}_{\text{(ii)}} \nonumber\\&\underbrace{+ (3r'-3r'^{2}+r'^{3})r'(1-r')^{2}\mu(LWD)}_{\text{(iii)}} \underbrace{- r'^{2}\mu(DD)}_{\text{(ii)}} + (2r'-14r'^{2}+23r'^{3}-14r'^{4}+3r'^{6}-r'^{7})\mu(LD) \nonumber\\&- (1-r')^{2}(1-r'-3r'^{2}+3r'^{3}-r'^{4})\mu(LDL) - r'(1-r')^{2}(1-r'-3r'^{2}+3r'^{3}-r'^{4})\mu(LDW) \nonumber\\&- r'(1-r')^{2}(r'^{5}-2r'^{4}-r'^{3}+7r'^{2}-5r'+1)\mu(LLD) - r'(1-r')^{6}\mu(DLD) \nonumber\\&\underbrace{- (3r'-3r'^{2}+r'^{3})r'(1-r')^{2}\mu(LWD)}_{\text{(iii)}} \underbrace{- r'(1-r')^{6}\mu(LLWD) - r'(1-r')^{6}\mu(LLDW)}_{\text{(i)}} \nonumber\\&- 2r'(1-r')^{6}\mu(DWLD) - r'(1-r')^{6}\mu(LWLD) - r'(1-r')^{6}(1+r')\mu(DLDD) \nonumber\\&- r'(1-r')^{6}(1+r')\mu(LLDD) - C_{3} - r'(1-r')^{6}\E_{r',0}\mu(LLWD) - r'(1-r')^{6}\E_{r',0}\mu(LLDW) \nonumber\\&- r'^{2}\E_{r',0}\mu(WDD) - (3r'-3r'^{2}+r'^{3})r'(1-r')^{2}\E_{r',0}\mu(LWD)\nonumber\\
={}& w_{3}(\mu) + (2r'-14r'^{2}+23r'^{3}-14r'^{4}+3r'^{6}-r'^{7})\mu(LD) \underbrace{- r'^{2}\mu(LDD) - r'^{2}\mu(DDD)}_{\text{combining underbraced terms labeled (ii)}}\nonumber\\&\underbrace{- (1-r')^{2}(1-r'-3r'^{2}+3r'^{3}-r'^{4})\mu(LDL) - r'(1-r')^{2}(1-r'-3r'^{2}+3r'^{3}-r'^{4})\mu(LDW)}_{\text{(iv)}} \nonumber\\&\underbrace{- r'(1-r')^{2}(r'^{5}-2r'^{4}-r'^{3}+7r'^{2}-5r'+1)\mu(LLD) - r'(1-r')^{6}\mu(DLD)}_{\text{(v)}} \nonumber\\&\underbrace{- 2r'(1-r')^{6}\mu(DWLD) - r'(1-r')^{6}\mu(LWLD)}_{\text{(v)}} \underbrace{- r'(1-r')^{6}(1+r')\mu(DLDD)}_{\text{(iv)}} \nonumber\\&\underbrace{- r'(1-r')^{6}(1+r')\mu(LLDD)}_{\text{(iv)}} \underbrace{- r'^{2}(1-r')^{8}(1+r')\mu(WWLD)}_{\text{(v)}} \underbrace{+ r'^{2}(1-r')^{9}\mu(WWLDL)}_{\text{(iv)}} \nonumber\\&\underbrace{- r'^{3}(1-r')^{8}(1+r')\mu(WLDD)}_{\text{(iv)}} + r'^{4}(1-r')^{8}\mu(WWLDD) - r'(1-r')^{6}C_{LLWD} \nonumber\\&\underbrace{- r'^{3}(1-r')^{8}\mu(WWLDL)}_{\text{(iv)}} - r'(1-r')^{6}C_{LLDW} \underbrace{- 2r'^{3}(1-r')^{3}(1+r')\mu(LDD)}_{\text{(iv)}} - r'^{2}C_{WDD} \nonumber\\&\underbrace{- r'^{3}(3-3r'+r'^{2})(1-r')^{4}(1+r')\mu(WLDD)}_{\text{(iv)}} \underbrace{- r'^{3}(3-3r'+r'^{2})(1-r')^{4}\mu(WWLD)}_{\text{(v)}} \nonumber\\&- r'^{2}(3-3r'+r'^{2})(1-r')^{2}C_{LWD} - C_{3},\label{intermediate_6}
\end{align}
where the last step is obtained by adding terms grouped by underbraces labeled (i) and (iii) in the previous step, and substituting from \eqref{pushforward_LLWD_q=0}, \eqref{pushforward_LLDW_q=0}, \eqref{pushforward_WDD_q=0} and \eqref{LWD_pushforward_2_q=0}. Adding the terms indicated by underbraces labeled (iv) in the final expression of \eqref{intermediate_6}, we obtain
\begin{align}
{}& - (1-r')^{2}(1-r'-3r'^{2}+3r'^{3}-r'^{4})\mu(LDL) - r'(1-r')^{2}(1-r'-3r'^{2}+3r'^{3}-r'^{4})\mu(LDW) \nonumber\\&\underbrace{- r'(1-r')^{6}(1+r')\mu(DLDD) - r'(1-r')^{6}(1+r')\mu(LLDD) - r'^{3}(1-r')^{8}(1+r')\mu(WLDD)}_{\text{(a)}} \nonumber\\&- 2r'^{3}(1-r')^{3}(1+r')\mu(LDD) \underbrace{- r'^{3}(3-3r'+r'^{2})(1-r')^{4}(1+r')\mu(WLDD)}_{\text{(a)}} \underbrace{+ r'^{2}(1-r')^{9}\mu(WWLDL)}_{\text{(b)}} \nonumber\\&\underbrace{- r'^{3}(1-r')^{8}\mu(WWLDL)}_{\text{(b)}} \nonumber\\
={}& - (1-r')^{2}(1-r'-3r'^{2}+3r'^{3}-r'^{4})\mu(LDL) - r'(1-r')^{2}(1-r'-3r'^{2}+3r'^{3}-r'^{4})\mu(LDW) \nonumber\\&\underbrace{-r'^{3}(1-r')^{4}(1+r')(4-7r'+7r'^{2}-4r'^{3}+r'^{4})\mu(LDD)}_{\text{combining underbraced terms labeled (a)}} \nonumber\\&\underbrace{- r'(1-r')^{4}(1+r')(1-2r'-3r'^{2}+7r'^{3}-7r'^{4}+4r'^{5}-r'^{6})\{\mu(DLDD)+\mu(LLDD)\}}_{\text{combining underbraced terms labeled (a)}} \nonumber\\& - 2r'^{3}(1-r')^{3}(1+r')\mu(LDD) \underbrace{+ r'^{2}(1-r')^{8}(1-2r')\mu(WWLDL)}_{\text{combining underbraced terms labeled (b)}}\nonumber\\
={}& - (1-r')^{2}(1-r'-3r'^{2}+3r'^{3}-r'^{4})\mu(LDL) - r'(1-r')^{2}(1-r'-3r'^{2}+3r'^{3}-r'^{4})\mu(LDW) \nonumber\\&\underbrace{- r'^{3}(1-r')^{3}(1+r')(2-r')(3-4r'+5r'^{2}-3r'^{3}+r'^{4})\mu(LDD)}_{\text{summing the two terms involving } \mu(LDD) \text{ above}}\nonumber\\& - r'(1-r')^{4}(1+r')(1-2r'-3r'^{2}+7r'^{3}-7r'^{4}+4r'^{5}-r'^{6})\{\mu(DLDD)+\mu(LLDD)\}\nonumber\\& + r'^{2}(1-r')^{8}(1-2r')\mu(WWLDL)\nonumber\\
={}& \underbrace{- r'^{3}(1-r')^{3}(1+r')(2-r')(3-4r'+5r'^{2}-3r'^{3}+r'^{4})\mu(LD)}\nonumber\\&
\underbrace{- (1-r')^{2}(1-r'-3r'^{2}-3r'^{3}+10r'^{4}-8r'^{5}+9r'^{7}-10r'^{8}+5r'^{9}-r'^{10})\mu(LDL)}\nonumber\\&
\underbrace{- r'(1-r')^{2}(1-r'-9r'^{2}+14r'^{3}-9r'^{4}+9r'^{6}-10r'^{7}+5r'^{8}-r'^{9})\mu(LDW)}\nonumber\\&- r'(1-r')^{4}(1+r')(1-2r'-3r'^{2}+7r'^{3}-7r'^{4}+4r'^{5}-r'^{6})\{\mu(DLDD)+\mu(LLDD)\}\nonumber\\& + r'^{2}(1-r')^{8}(1-2r')\mu(WWLDL)\nonumber\\
{}&(\text{where the underbraced terms are obtained by combining the terms involving }\nonumber\\& \mu(LDL), \mu(LDW) \text{ and } \mu(LDD) \text{ from the previous step})\nonumber\\ 
={}& - r'^{3}(1-r')^{3}(1+r')(2-r')(3-4r'+5r'^{2}-3r'^{3}+r'^{4})\mu(LD) - \alpha_{1}\mu(LDL) - \alpha_{2}\mu(LDW) \nonumber\\&- \alpha_{3}\{\mu(DLDD)+\mu(LLDD)\} + r'^{2}(1-r')^{8}(1-2r')\mu(WWLDL),\label{intermediate_7} 
\end{align}
where
\begin{enumerate}
\item the polynomial $\alpha_{1}=(1-r')^{2}(1-r'-3r'^{2}-3r'^{3}+10r'^{4}-8r'^{5}+9r'^{7}-10r'^{8}+5r'^{9}-r'^{10})$ is strictly positive for all $r' \in [0,0.435029)$,
\item the polynomial $\alpha_{2}=r'(1-r')^{2}(1-r'-9r'^{2}+14r'^{3}-9r'^{4}+9r'^{6}-10r'^{7}+5r'^{8}-r'^{9})$ is strictly positive for all $r' \in [0,0.35678)$,
\item and the polynomial $\alpha_{3}=r'(1-r')^{4}(1+r')(1-2r'-3r'^{2}+7r'^{3}-7r'^{4}+4r'^{5}-r'^{6})$ is strictly positive for all $r' \in [0,0.410819)$.
\end{enumerate}
It is thus evident that, when we consider $0 < r' \leqslant 0.201382$, each $\alpha_{i} > 0$. Next, combining the terms indicated by underbraces labeled (v) in \eqref{intermediate_6}, we obtain:
\begin{align}
{}& - r'(1-r')^{2}(r'^{5}-2r'^{4}-r'^{3}+7r'^{2}-5r'+1)\mu(LLD) - r'(1-r')^{6}\mu(DLD) \underbrace{- 2r'(1-r')^{6}\mu(DWLD)}_{\text{(c)}} \nonumber\\&\underbrace{- r'(1-r')^{6}\mu(LWLD) - r'^{2}(1-r')^{8}(1+r')\mu(WWLD) - r'^{3}(3-3r'+r'^{2})(1-r')^{4}\mu(WWLD)}_{\text{(c)}}\nonumber\\
={}& -r'(1-r')^{2}(r'^{5}-2r'^{4}-r'^{3}+7r'^{2}-5r'+1)\mu(LLD) - r'(1-r')^{6}\mu(DLD) \nonumber\\& \underbrace{- r'^{2}(1-r')^{4}(1-r'^{2}+3r'^{3}-3r'^{4}+r'^{5})\mu(WLD)}_{\text{combining underbraced terms labeled (c)}}\nonumber\\& \underbrace{- r'(1-r')^{4}(2-5r'+2r'^{2}+r'^{3}-3r'^{4}+3r'^{5}-r'^{6})\mu(DWLD)}_{\text{combining underbraced terms labeled (c)}}\nonumber\\& \underbrace{- r'(1-r')^{4}(1-3r'+r'^{2}+r'^{3}-3r'^{4}+3r'^{5}-r'^{6})\mu(LWLD)}_{\text{combining underbraced terms labeled (c)}}\nonumber\\ 
={}& \underbrace{- r'(1-r')^{2}(r'^{5}-2r'^{4}-r'^{3}+7r'^{2}-5r'+1)\mu(LLD) - r'(1-r')^{6}\mu(DLD)}_{\text{(d)}} \nonumber\\& \underbrace{- r'^{2}(1-r')^{4}(1-r'^{2}+3r'^{3}-3r'^{4}+r'^{5})\mu(WLD)}_{\text{(d)}} - \beta_{1}\mu(DWLD) - \beta_{2}\mu(LWLD)\nonumber\\
={}& \underbrace{- r'^{2}(1-r')^{4}(1-r'^{2}+3r'^{3}-3r'^{4}+r'^{5})\mu(LD)}_{\text{combining underbraced terms labeled (d)}}\nonumber\\& \underbrace{- r'(1-r')^{2}(1-6r'+9r'^{2}-r'^{3}-7r'^{4}+11r'^{5}-10r'^{6}+5r'^{7}-r'^{8})\mu(LLD)}_{\text{combining underbraced terms labeled (d)}}\nonumber\\& \underbrace{- r'(1-r')^{4}(1-3r'+r'^{2}+r'^{3}-3r'^{4}+3r'^{5}-r'^{6})\mu(DLD)}_{\text{combining underbraced terms labeled (d)}} - \beta_{1}\mu(DWLD) - \beta_{2}\mu(LWLD)\nonumber\\
={}& -r'^{2}(1-r')^{4}(1-r'^{2}+3r'^{3}-3r'^{4}+r'^{5})\mu(LD) - \beta_{3}\mu(LLD) - \beta_{2}\{\mu(DLD)+\mu(LWLD)\} \nonumber\\&- \beta_{1}\mu(DWLD),\label{intermediate_8}
\end{align}
in which
\begin{enumerate}
\item the polynomial $\beta_{1}=r'(1-r')^{4}(2-5r'+2r'^{2}+r'^{3}-3r'^{4}+3r'^{5}-r'^{6})$ is strictly positive for all $r' \in (0,0.505225)$, 
\item \label{beta_{2}} the polynomial $\beta_{2}=r'(1-r')^{4}(1-3r'+r'^{2}+r'^{3}-3r'^{4}+3r'^{5}-r'^{6})$ is strictly positive for all $r' \in (0,0.387969)$, 
\item the polynomial $\beta_{3}=r'(1-r')^{2}(1-6r'+9r'^{2}-r'^{3}-7r'^{4}+11r'^{5}-10r'^{6}+5r'^{7}-r'^{8})$ is strictly positive for all $r' \in (0,0.265137)$.
\end{enumerate}
Therefore, for $0 < r' \leqslant 0.201382$, each $\beta_{i} > 0$. Incorporating \eqref{intermediate_7} and \eqref{intermediate_8} into \eqref{intermediate_6}, we obtain
\begin{align}
w_{3}(\E_{r',0}\mu) ={}& w_{3}(\mu) + (2r'-14r'^{2}+23r'^{3}-14r'^{4}+3r'^{6}-r'^{7})\mu(LD) \underbrace{- r'^{2}\mu(LDD)} - r'^{2}\mu(DDD) \nonumber\\&- r'^{3}(1-r')^{3}(1+r')(2-r')(3-4r'+5r'^{2}-3r'^{3}+r'^{4})\mu(LD) \underbrace{- \alpha_{1}\mu(LDL)}\nonumber\\& \underbrace{- \alpha_{2}\mu(LDW)} - \alpha_{3}\{\mu(DLDD)+\mu(LLDD)\} + r'^{2}(1-r')^{8}(1-2r')\mu(WWLDL) \nonumber\\&- r'^{2}(1-r')^{4}(1-r'^{2}+3r'^{3}-3r'^{4}+r'^{5})\mu(LD) - \beta_{3}\mu(LLD) - \beta_{2}\{\mu(DLD)+\mu(LWLD)\} \nonumber\\&- \beta_{1}\mu(DWLD) + r'^{4}(1-r')^{8}\mu(WWLDD) - r'(1-r')^{6}C_{LLWD} - r'(1-r')^{6}C_{LLDW} \nonumber\\& - r'^{2}C_{WDD} - r'^{2}(3-3r'+r'^{2})(1-r')^{2}C_{LWD} - C_{3}\nonumber\\
={}& w_{3}(\mu) + (2r'-14r'^{2}+23r'^{3}-14r'^{4}+3r'^{6}-r'^{7})\mu(LD) \underbrace{- r'^{2}\mu(LD)} - r'^{2}\mu(DDD) \nonumber\\&- r'^{3}(1-r')^{3}(1+r')(2-r')(3-4r'+5r'^{2}-3r'^{3}+r'^{4})\mu(LD)\nonumber\\& \underbrace{- (1-3r'-r'^{2}+2r'^{3}+13r'^{4}-31r'^{5}+26r'^{6}+r'^{7}-28r'^{8}+34r'^{9}-21r'^{10}+7r'^{11}-r'^{12})\mu(LDL)}\nonumber\\& \underbrace{- r'(1-4r'-6r'^{2}+31r'^{3}-46r'^{4}+32r'^{5}-28r'^{7}+34r'^{8}-21r'^{9}+7r'^{10}-r'^{11})\mu(LDW)} \nonumber\\&- \alpha_{3}\{\mu(DLDD)+\mu(LLDD)\} + r'^{2}(1-r')^{8}(1-2r')\mu(WWLDL) \nonumber\\&- r'^{2}(1-r')^{4}(1-r'^{2}+3r'^{3}-3r'^{4}+r'^{5})\mu(LD) - \beta_{3}\mu(LLD) - \beta_{2}\{\mu(DLD)+\mu(LWLD)\} \nonumber\\&- \beta_{1}\mu(DWLD) + r'^{4}(1-r')^{8}\mu(WWLDD) - r'(1-r')^{6}C_{LLWD} - r'(1-r')^{6}C_{LLDW} - r'^{2}C_{WDD} \nonumber\\&- r'^{2}(3-3r'+r'^{2})(1-r')^{2}C_{LWD} - C_{3}\nonumber\\
{}&(\text{the underbraced terms in the step above are obtained by combining the underbraced terms}\nonumber\\&\text{from the previous step})\nonumber\\
={}& w_{3}(\mu) \underbrace{+ (2r'-14r'^{2}+23r'^{3}-14r'^{4}+3r'^{6}-r'^{7})\mu(LD) - r'^{2}\mu(LD)} - r'^{2}\mu(DDD) \nonumber\\&\underbrace{- r'^{3}(1-r')^{3}(1+r')(2-r')(3-4r'+5r'^{2}-3r'^{3}+r'^{4})\mu(LD)} - \gamma_{1}\mu(LDL) - \gamma_{2}\mu(LDW) \nonumber\\&- \alpha_{3}\{\mu(DLDD)+\mu(LLDD)\} \underbrace{- r'^{2}(1-r')^{4}(1-r'^{2}+3r'^{3}-3r'^{4}+r'^{5})\mu(LD)} \nonumber\\&- \beta_{3}\mu(LLD) - \beta_{2}\mu(DLD) - \beta_{1}\mu(DWLD) - \beta_{2}\mu(LWLD) + r'^{4}(1-r')^{8}\mu(WWLDD) \nonumber\\&+ r'^{2}(1-r')^{8}(1-2r')\mu(WWLDL) - r'(1-r')^{6}C_{LLWD} - r'(1-r')^{6}C_{LLDW} - r'^{2}C_{WDD} \nonumber\\&- r'^{2}(3-3r'+r'^{2})(1-r')^{2}C_{LWD} - C_{3}\nonumber\\
={}& w_{3}(\mu) \underbrace{+ r'(2-16r'+21r'^{2}+4r'^{3}-39r'^{4}+50r'^{5}-35r'^{6}+7r'^{7}+13r'^{8}-14r'^{9}+6r'^{10}-r'^{11})\mu(LD)} \nonumber\\& - r'^{2}\mu(DDD) - \gamma_{1}\mu(LDL) - \gamma_{2}\mu(LDW) - \alpha_{3}\{\mu(DLDD)+\mu(LLDD)\} - \beta_{3}\mu(LLD) - \beta_{2}\{\mu(DLD)\nonumber\\&+\mu(LWLD)\} - \beta_{1}\mu(DWLD) \underbrace{+ r'^{4}(1-r')^{8}\mu(WWLDD) + r'^{2}(1-r')^{8}(1-2r')\mu(WWLDL)} \nonumber\\&- r'(1-r')^{6}C_{LLWD} - r'(1-r')^{6}C_{LLDW} - r'^{2}C_{WDD} - r'^{2}(3-3r'+r'^{2})(1-r')^{2}C_{LWD} - C_{3}\nonumber\\
{}&(\text{once again, the underbraced terms in the step above are obtained by combining the}\nonumber\\&\text{underbraced terms from the previous step})\label{wt_ineq_3}
\end{align}
where
\begin{enumerate}
\item the polynomial $\gamma_{1}=(1-3r'-r'^{2}+2r'^{3}+13r'^{4}-31r'^{5}+26r'^{6}+r'^{7}-28r'^{8}+34r'^{9}-21r'^{10}+7r'^{11}-r'^{12})$ is strictly positive for all $r' \in [0,0.345627)$,
\item the polynomial $\gamma_{2}=r'(1-4r'-6r'^{2}+31r'^{3}-46r'^{4}+32r'^{5}-28r'^{7}+34r'^{8}-21r'^{9}+7r'^{10}-r'^{11})$ is strictly positive for all $r' \in (0,0.238556)$,
\item and the polynomial $r'(2-16r'+21r'^{2}+4r'^{3}-39r'^{4}+50r'^{5}-35r'^{6}+7r'^{7}+13r'^{8}-14r'^{9}+6r'^{10}-r'^{11})$ is strictly negative for all $r' > 0.157175$.
\end{enumerate}
Among the problematic terms, indicated by underbraces, on the right side of the final expression of \eqref{wt_ineq_3}, there is $r'^{4}(1-r')^{8}\mu(WWLDD)$, which we now take care of by introducing the adjustment $w_{4}(\mu) = w_{3}(\mu) - \beta_{2}\mu(LWLD)$. It can be seen, by expanding $w_{4}(\mu)$ and substituting the expressions for $w_{3}(\mu)$ and $\beta_{2}$ from \eqref{adjust_3} and \eqref{beta_{2}} respectively, that the resulting weight function is exactly the same as that defined in \eqref{adjust_4}.

We partially compute $\E_{r',0}\mu(LWLD)$, which is the probability of the event $\{\E_{r',0}\eta(0)=L,\E_{r',0}\eta(1)=W,\E_{r',0}\eta(2)=L,\E_{r',0}\eta(3)=D\}$:
\begin{enumerate}
\item Suppose $\eta(0)=\eta(1)=W$, $\eta(2)=L$ and $\eta(3)=\eta(4)=D$. Then the event $\{\E_{r',0}\eta(0)=L\}$ happens with probability $1$, the event $\{\E_{r',0}\eta(1)=W\}$ happens with probability $(1-r')$, the event $\{\E_{r',0}\eta(2)=L\}$ happens with probability $r'^{2}$, and the event $\{\E_{r',0}\eta(3)=D\}$ happens with probability $(1-r')(1+r')$. Thus, the contribution of this case to $\E_{r',0}\mu(LWLD)$ is $r'^{2}(1-r')^{2}(1+r')\mu(WWLDD)$.
\end{enumerate} 
Therefore, letting $C_{LWLD}$ denote the contribution from the cases not considered above, we obtain
\begin{align}\label{pushforward_LWLD_q=0}
\E_{r',0}\mu(LWLD) = r'^{2}(1-r')^{2}(1+r')\mu(WWWLD) + C_{LWLD}.
\end{align}
Incorporating the new adjustment into \eqref{wt_ineq_3}, and emulating \eqref{ith_wt_fn_ineq}, we obtain:
\begin{align}
w_{4}\left(\E_{r',0}\mu\right) ={}& w_{4}(\mu) + \beta_{2}\mu(LWLD) + r'(2-16r'+21r'^{2}+4r'^{3}-39r'^{4}+50r'^{5}-35r'^{6}+7r'^{7}+13r'^{8}-14r'^{9}\nonumber\\&+6r'^{10}-r'^{11})\mu(LD) - r'^{2}\mu(DDD) - \gamma_{1}\mu(LDL) - \gamma_{2}\mu(LDW) - \alpha_{3}\{\mu(DLDD)+\mu(LLDD)\} \nonumber\\&- \beta_{3}\mu(LLD) - \beta_{2}\mu(DLD) - \beta_{1}\mu(DWLD) - \beta_{2}\mu(LWLD) \underbrace{+ r'^{4}(1-r')^{8}\mu(WWLDD)} \nonumber\\&+ r'^{2}(1-r')^{8}(1-2r')\mu(WWLDL) - r'(1-r')^{6}C_{LLWD} - r'(1-r')^{6}C_{LLDW} - r'^{2}C_{WDD} \nonumber\\&- r'^{2}(3-3r'+r'^{2})(1-r')^{2}C_{LWD} - C_{3} \underbrace{- \beta_{2}r'^{2}(1-r')^{2}(1+r')\mu(WWLDD)} - \beta_{2}C_{LWLD}\nonumber\\
={}& w_{4}(\mu) + r'(2-16r'+21r'^{2}+4r'^{3}-39r'^{4}+50r'^{5}-35r'^{6}+7r'^{7}+13r'^{8}-14r'^{9}+6r'^{10}-r'^{11})\nonumber\\&\mu(LD) - r'^{2}\mu(DDD) - \gamma_{1}\mu(LDL) - \gamma_{2}\mu(LDW) - \alpha_{3}\{\mu(DLDD)+\mu(LLDD)\} - \beta_{3}\mu(LLD) \nonumber\\&- \beta_{2}\mu(DLD) - \beta_{1}\mu(DWLD) \underbrace{- r'^{3}(1-r')^{6}(1-3r'+r'^{3}-2r'^{4}+2r'^{6}-r'^{7})\mu(WWLDD)}\nonumber\\&+ r'^{2}(1-r')^{8}(1-2r')\mu(WWLDL) - r'(1-r')^{6}C_{LLWD} - r'(1-r')^{6}C_{LLDW} - r'^{2}C_{WDD} \nonumber\\&- r'^{2}(3-3r'+r'^{2})(1-r')^{2}C_{LWD} - C_{3} - \beta_{2}C_{LWLD}\nonumber\\
={}& w_{4}(\mu) + r'(2-16r'+21r'^{2}+4r'^{3}-39r'^{4}+50r'^{5}-35r'^{6}+7r'^{7}+13r'^{8}-14r'^{9}+6r'^{10}-r'^{11})\nonumber\\&\mu(LD) - r'^{2}\mu(DDD) \underbrace{- \gamma_{1}\mu(LDL)} - \gamma_{2}\mu(LDW) - \alpha_{3}\{\mu(DLDD)+\mu(LLDD)\} - \beta_{3}\mu(LLD) \nonumber\\&- \beta_{2}\mu(DLD) - \beta_{1}\mu(DWLD) - \gamma_{3}\mu(WWLDD) \underbrace{+ r'^{2}(1-r')^{8}(1-2r')\mu(WWLDL)}_{\text{use } \mu(WWLDL) \leqslant \mu(LDL) \text{ in the next step}} \nonumber\\&- r'(1-r')^{6}C_{LLWD} - r'(1-r')^{6}C_{LLDW} - r'^{2}C_{WDD} - r'^{2}(3-3r'+r'^{2})(1-r')^{2}C_{LWD} - C_{3} \nonumber\\&- \beta_{2}C_{LWLD}\nonumber\\
\leqslant{}& w_{4}(\mu) + r'(2-16r'+21r'^{2}+4r'^{3}-39r'^{4}+50r'^{5}-35r'^{6}+7r'^{7}+13r'^{8}-14r'^{9}+6r'^{10}-r'^{11})\nonumber\\&\mu(LD) - r'^{2}\mu(DDD) \underbrace{- (1-3r'-2r'^{2}+12r'^{3}-31r'^{4}+81r'^{5}-156r'^{6}+197r'^{7}-168r'^{8}+98r'^{9}}\nonumber\\&\underbrace{-38r'^{10}+9r'^{11}-r'^{12})\mu(LDL)} - \gamma_{2}\mu(LDW) - \alpha_{3}\{\mu(DLDD)+\mu(LLDD)\} - \beta_{3}\mu(LLD)\nonumber\\& - \beta_{2}\mu(DLD) - \beta_{1}\mu(DWLD) - \gamma_{3}\mu(WWLDD) - r'(1-r')^{6}C_{LLWD} - r'(1-r')^{6}C_{LLDW} \nonumber\\&- r'^{2}C_{WDD} - r'^{2}(3-3r'+r'^{2})(1-r')^{2}C_{LWD} - C_{3} - \beta_{2}C_{LWLD}\nonumber\\
={}& w_{4}(\mu) + r'(2-16r'+21r'^{2}+4r'^{3}-39r'^{4}+50r'^{5}-35r'^{6}+7r'^{7}+13r'^{8}-14r'^{9}+6r'^{10}-r'^{11})\nonumber\\&\mu(LD) - r'^{2}\mu(DDD) - \gamma_{4}\mu(LDL)  - \gamma_{2}\mu(LDW) - \alpha_{3}\{\mu(DLDD)+\mu(LLDD)\} - \beta_{3}\mu(LLD) \nonumber\\&- \beta_{2}\mu(DLD) - \beta_{1}\mu(DWLD) - \gamma_{3}\mu(WWLDD) - r'(1-r')^{6}C_{LLWD} - r'(1-r')^{6}C_{LLDW} \nonumber\\&- r'^{2}C_{WDD} - r'^{2}(3-3r'+r'^{2})(1-r')^{2}C_{LWD} - C_{3} - \beta_{2}C_{LWLD},\label{wt_ineq_4_explained}
\end{align}
where
\begin{enumerate}
\item the polynomial $\gamma_{4}=1-3r'-2r'^{2}+12r'^{3}-31r'^{4}+81r'^{5}-156r'^{6}+197r'^{7}-168r'^{8}+98r'^{9}-38r'^{10}+9r'^{11}-r'^{12}$ is strictly positive for all $r' \in [0,0.345094)$,
\item and the polynomial $\gamma_{3} = r'^{3}(1-r')^{6}(1-3r'+r'^{3}-2r'^{4}+2r'^{6}-r'^{7})$ is strictly positive for all $r' \in (0,0.338338)$.
\end{enumerate}
We now have our weight function inequality for $0.157175 < r' \leqslant 0.201382$, since the coefficient of each term in the right side of \eqref{wt_ineq_4_explained}, apart from $w_{4}(\mu)$, is strictly negative for $r' \in (0.157175,0.201382]$, thereby allowing \eqref{wt_ineq_4_explained} to satisfy \eqref{desired_criterion}. Note that \eqref{wt_ineq_4_explained} reduces to the inequality in \eqref{wt_ineq_4} if we remove, apart from $w_{4}(\mu)$, $- r'^{2}\mu(DDD)$ and $r'(2-16r'+20r'^{2}+7r'^{3}-42r'^{4}+51r'^{5}-35r'^{6}+7r'^{7}+13r'^{8}-14r'^{9}+6r'^{10}-r'^{11})\mu(LD)$, all other terms from the right side of \eqref{wt_ineq_4_explained}. This brings us to the end of our construction of the weight functions for the regime described in \ref{bond_regime_2}.

\subsection{Detailed construction of the weight function for the bond percolation game when $(r',s')$ belongs to the regime given by \ref{bond_regime_3}}\label{sec:bond_3_wt_fn_steps}
We now demonstrate the step-by-step construction of our weight function for the regime given by \ref{bond_regime_3}, i.e.\ for all $(r',s')$ that satisfy the constraints $r' = s' > 0.10883$. The approach is the same the general idea outlined in \S\ref{subsec:central_ideas_weight_functions}, and the steps, while differing from those presented in \S\ref{sec:bond_2_wt_fn_steps} in their details, follow a similar line of reasoning. As in each of \S\ref{sec:bond_1_wt_fn_steps} and \S\ref{sec:bond_2_wt_fn_steps}, we begin by setting $c_{1}=1$, $\mathcal{C}_{1}=(D)_{0}$, $c_{2}=1$ and $\mathcal{C}_{2}=(W,D)_{0,1}$ in \eqref{gen_form}. For $r'=s'$, we obtain, using reflection-invariance,
\begin{align}
\E_{r',r'}\mu(D) ={}& (1-r')(1-2r')\{\mu(WD)+\mu(DW)\} + (1-2r')\mu(DD) + r'(1-2r')\{\mu(LD)+\mu(DL)\}\nonumber\\
={}& 2(1-r')(1-2r')\mu(WD)+(1-2r')\mu(DD)+2r'(1-2r')\mu(LD)\label{F_{p,p}mu(D)}
\end{align}
and
\begin{align}
    \E_{r', r'}\mu(WD) ={}& (2r' - r'^2) (1-r') (1-2r') \mu(WWD) + (1-r'+r'^2)r'(1-2r') \mu(WLD) \nonumber\\& + (1-r'+r'^2)(1-r')(1-2r') \mu(LWD) + (1-r'^2)r'(1-2r') \mu(LLD) \nonumber\\&+ (2r'-r'^2)(1-2r') \mu(WDD) + (2r'-r'^2)(1-r')(1-2r') \mu(DWD) \nonumber\\&+ (2r'-r'^2)(1-2r') \mu(DDD) + (1-r'+r'^2)(1-2r') \mu(LDD) \nonumber\\& + (1-r'+r'^2)r'(1-2r') \mu(DLD) + (2r'-r'^2)(1-r')(1-2r') \mu(WDW) \nonumber\\&+ (2r'-r'^2)r'(1-2r') \mu(WDL) + (1-r'+r'^2)(1-r')(1-2r') \mu(LDW) \nonumber\\& + (1-r'+r'^2)r'(1-2r') \mu(LDL) + (2r'-r'^2)(1-r')(1-2r') \mu(DDW) \nonumber\\& + (2r'-r'^2)r'(1-2r') \mu(DDL).\label{F_{p,p}mu(WD)}
\end{align}
Starting with our initial guess for the weight function, namely $w_0(\mu) = \mu(D) + \mu(WD)$, we obtain, using \eqref{F_{p,p}mu(D)} and \eqref{F_{p,p}mu(WD)}, the following weight function inequality:
\begin{align}
    w_0(\E_{r', r'}\mu)={}& w_0(\mu) \underbrace{- \mu(D)  + \E_{r', r'}\mu(D)}_{\text{use \eqref{F_{p,p}mu(D)} and simplify}} - \mu(WD) + \E_{r', r'}\mu(WD) \nonumber\\
    ={}& w_0(\mu) - (3r' - 2r'^2) \mu(DW) - 2r' \mu(DD) - (1-r'+2r'^2) \mu(DL) + (1-r')(1-2r') \mu(WD) \nonumber\\
    & + r'(1-2r') \mu(LD) - \mu(WD)  \underbrace{+ \E_{r', r'}\mu(WD)}_{\text{substitute using \eqref{F_{p,p}mu(WD)}}} \nonumber\\
    &= w_0(\mu) \underbrace{- (3r' - 2r'^2) \mu(DW)}_{\text{substitute } \mu(DW) = \mu(WD)} - 2r' \mu(DD) \underbrace{- (1-r' + 2r'^2) \mu(DL)}_{\text{(i)}} \underbrace{+ (1-r')(1-2r') \mu(WD)}_{\text{(ii)}} \nonumber \\
    & \underbrace{+ r'(1-2r') \mu(LD)}_{\text{(i)}} \underbrace{- \mu(WD)}_{\text{(ii)}} + (2r' - r'^2) (1-r') (1-2r') \mu(WWD) \nonumber\\
    &  + (1-r'+r'^2)r'(1-2r') \mu(WLD) + (1-r'+r'^2)(1-r')(1-2r') \mu(LWD)\nonumber \\
    &  + (1-r'^2)r'(1-2r') \mu(LLD) + (2r'-r'^2)(1-2r') \mu(WDD) + (2r'-r'^2)(1-r')(1-2r') \mu(DWD) \nonumber\\
    &  + (2r'-r'^2)(1-2r') \mu(DDD) + (1-r'+r'^2)(1-2r') \mu(LDD) + (1-r'+r'^2)r'(1-2r') \mu(DLD) \nonumber\\
    & + (2r'-r'^2)(1-r')(1-2r') \mu(WDW) + (2r'-r'^2)r'(1-2r') \mu(WDL) \nonumber\\
    & + (1-r'+r'^2)(1-r')(1-2r') \mu(LDW) + (1-r'+r'^2)r'(1-2r') \mu(LDL) \nonumber\\
    & + (2r'-r'^2)(1-r')(1-2r') \mu(DDW) + (2r'-r'^2)r'(1-2r') \mu(DDL)\nonumber \\
    ={}& w_0(\mu)  \underbrace{- (3r' - 2r'^2) \mu(WD)} - 2r' \mu(DD) \underbrace{- (1-2r' + 4r'^2) \mu(LD)}_{\text{combining terms labelled (i)}}   \underbrace{- (3r'-2r'^2) \mu(WD)}_{\text{combining terms labelled (ii)}} \nonumber\\
    & + (2r' - r'^2) (1-r') (1-2r') \mu(WWD) + (1-r'+r'^2)r'(1-2r') \mu(WLD) \nonumber\\
    & + (1-r'+r'^2)(1-r')(1-2r') \mu(LWD) + (1-r'^2)r'(1-2r') \mu(LLD) \nonumber\\
    & + (2r'-r'^2)(1-2r') \mu(WDD) + (2r'-r'^2)(1-r')(1-2r') \mu(DWD)\nonumber \\
    & + (2r'-r'^2)(1-2r') \mu(DDD) + (1-r'+r'^2)(1-2r') \mu(LDD)  + (1-r'+r'^2)r'(1-2r') \mu(DLD) \nonumber\\
    & + (2r'-r'^2)(1-r')(1-2r') \mu(WDW) + (2r'-r'^2)r'(1-2r') \mu(WDL)\nonumber \\
    & + (1-r'+r'^2)(1-r')(1-2r') \mu(LDW) + (1-r'+r'^2)r'(1-2r') \mu(LDL) \nonumber\\
    & + (2r'-r'^2)(1-r')(1-2r') \mu(DDW) + (2r'-r'^2)r'(1-2r') \mu(DDL)\nonumber \\
    ={}& w_0(\mu) \underbrace{- (2r'-r'^2)(1-2r') \mu(WD)}_{\text{expand using }\mu(WD) = \mu(WDW) + \mu(WDL) + \mu(WDD)\text{ }} \underbrace{- 2r' \mu(DD)}_{\text{ expand using }\mu(DD) = \mu(DDW) + \mu(DDL) + \mu(DDD)} \nonumber \\
    &\underbrace{- (1-r'+r'^2)(1-2r') \mu(LD)}_{\text{expand using }\mu(LD) = \mu(LDW) + \mu(LDL) + \mu(LDD)}  \underbrace{- ((1-2r' +4r'^2) - (1-r'+r'^2)(1-2r')) \mu(LD)}_{\text{expand using }\mu(LD) = \mu(WLD) + \mu(LLD) + \mu(DLD)} \nonumber\\
    &\underbrace{- (6r'-4r'^2-(2r'-r'^2)(1-2r')) \mu(WD)}_{\text{expand using }\mu(WD) = \mu(WWD) + \mu(LWD) + \mu(DWD)} + (2r' - r'^2) (1-r') (1-2r') \mu(WWD) \nonumber\\
    &+ (1-r'+r'^2)r'(1-2r') \mu(WLD)  + (1-r'+r'^2)(1-r')(1-2r') \mu(LWD) \nonumber\\
    &+ (1-r'^2)r'(1-2r') \mu(LLD) + (2r'-r'^2)(1-2r') \mu(WDD) \nonumber\\
    &+ (2r'-r'^2)(1-r')(1-2r') \mu(DWD) + (2r'-r'^2)(1-2r') \mu(DDD) \nonumber\\
    &+ (1-r'+r'^2)(1-2r') \mu(LDD) + (1-r'+r'^2)r'(1-2r') \mu(DLD) \nonumber\\
    & + (2r'-r'^2)(1-r')(1-2r') \mu(WDW) + (2r'-r'^2)r'(1-2r') \mu(WDL)\nonumber \\
    & + (1-r'+r'^2)(1-r')(1-2r') \mu(LDW) + (1-r'+r'^2)r'(1-2r') \mu(LDL) \nonumber\\
    & + (2r'-r'^2)(1-r')(1-2r') \mu(DDW) + (2r'-r'^2)r'(1-2r') \mu(DDL)\nonumber\\
    ={}& w_0(\mu) \underbrace{- (2r'-r'^2)(1-2r') (\mu(WDW) + \mu(WDL) + \mu(WDD))} \nonumber\\
    & \underbrace{-  2r' (\mu(DDW) + \mu(DDL) + \mu(DDD))} \underbrace{- (1-r'+r'^2)(1-2r') (\mu(LDW) + \mu(LDL) + \mu(LDD))} \nonumber\\
    & \underbrace{- ((1-2r' +4r'^2) - (1-r'+r'^2)(1-2r')) (\mu(WLD) + \mu(LLD) + \mu(DLD))}\nonumber \\
    & \underbrace{- (6r'-4r'^2-(2r'-r'^2)(1-2r')) (\mu(WWD) + \mu(LWD) + \mu(DWD))} \nonumber\\
    & + (2r' - r'^2) (1-r') (1-2r') \mu(WWD) + (1-r'+r'^2)r'(1-2r') \mu(WLD) \nonumber\\
    & + (1-r'+r'^2)(1-r')(1-2r') \mu(LWD) + (1-r'^2)r'(1-2r') \mu(LLD) \nonumber\\
    & + (2r'-r'^2)(1-2r') \mu(WDD) + (2r'-r'^2)(1-r')(1-2r') \mu(DWD) + (2r'-r'^2)(1-2r') \mu(DDD)\nonumber \\
    & + (1-r'+r'^2)(1-2r') \mu(LDD) + (1-r'+r'^2)r'(1-2r') \mu(DLD)\nonumber \\
    & + (2r'-r'^2)(1-r')(1-2r') \mu(WDW) + (2r'-r'^2)r'(1-2r') \mu(WDL) \nonumber\\
    & + (1-r'+r'^2)(1-r')(1-2r') \mu(LDW) + (1-r'+r'^2)r'(1-2r') \mu(LDL)\nonumber \\
    & + (2r'-r'^2)(1-r')(1-2r') \mu(DDW) + (2r'-r'^2)r'(1-2r') \mu(DDL)\label{Fpp_inital_1}\\
    ={}&w_0(\mu) - (2r'^2-5r'^3+2r'^4) \mu(WDW) \underbrace{- (2r' - 7r'^2 + 7r'^3 - 2r'^4) \mu(WDL)} \nonumber \\
    & - (7r'^2 - 7r'^3 +2r'^4) \mu(DDW) - (2r'-2r'^2+5r'^3-2r'^4) \mu(DDL) - (5r'^2 - 2r'^3) \mu(DDD) \nonumber\\
    & \underbrace{- (r' - 3r'^2 + 3r'^3 - 2r'^4) \mu(LDW)}_{\text{substituting }\mu(LDW) = \mu(WDL)} - (1 - 4r' + 6r'^2 - 5r'^3 + 2r'^4) \mu(LDL) \nonumber\\
    & - (4r'^2 - r'^3 + 2r'^4) \mu(WLD) - (3r'^2 + 3r'^3 -2r'^4)\mu(LLD) - (4r'^2 - r'^3 + 2r'^4)\mu(DLD) \nonumber\\
    & - (2r' + 8r'^2 - 9r'^3 + 2r'^4) \mu(WWD) - (-1 + 8r' - 5r'^2 + 3r'^3 - 2r'^4) \mu(LWD) \nonumber\\
    & - (2r' + 8r'^2 - 9r'^3 + 2r'^4) \mu(DWD)\label{Fpp_initial_2}\\
    ={}& w_0(\mu) - (2r'^2-5r'^3+2r'^4) \mu(WDW) \underbrace{- (3r' - 10r'^2 + 10r'^3 - 4r'^4) \mu(WDL)}_{\text{combining the highlighted terms}} \nonumber \\
    & - (7r'^2 - 7r'^3 +2r'^4) \mu(DDW) - (2r'-2r'^2+5r'^3-2r'^4) \mu(DDL) - (5r'^2 - 2r'^3) \mu(DDD) \nonumber\\
    & - (1 - 4r' + 6r'^2 - 5r'^3 + 2r'^4) \mu(LDL) - (4r'^2 - r'^3 + 2r'^4) \mu(WLD) - (3r'^2 + 3r'^3 -2r'^4)\mu(LLD)\nonumber \\
    & - (4r'^2 - r'^3 + 2r'^4)\mu(DLD) - (2r' + 8r'^2 - 9r'^3 + 2r'^4) \mu(WWD) \nonumber\\
    & - (-1 + 8r' - 5r'^2 + 3r'^3 - 2r'^4) \mu(LWD) - (2r' + 8r'^2 - 9r'^3 + 2r'^4) \mu(DWD).\label{wt_fn_ineq_1_regime_2}
\end{align}
The only problematic term in the above expression is $-(-1 + 8r' - 5r'^2 + 3r'^3 - 2r'^4) \mu(LWD)$, since the coefficient of this term is strictly positive for sufficiently small values of $r'$. In order to take care of this term, we implement the idea outlined in \S\ref{subsubsec:adjust_gen} by updating our weight function as follows:
\begin{align}
    w_1(\mu) ={}& w_0(\mu) - (2r'^2 - 5r'^3 + 2r'^4) \mu(WDW) - (3r' - 10r'^2 + 10r'^3 - 4r'^4) \mu(WDL) \nonumber\\&- (7r'^2 - 7r'^3 + 2r'^4) \mu(DDW) - (2r' - 2r'^2 + 5r'^3 - 2r'^4) \mu(DDL) \nonumber\\&- (1 - 4r' + 6r'^2 - 5r'^3 + 2r'^4) \mu(LDL) - (4r'^2 - r'^3 + 2r'^4) \mu(WLD) - (3r'^2 + 3r'^3 -2r'^4) \mu(LLD) \nonumber\\&- (2r' + 8r'^2 - 9r'^3 + 2r'^4) \mu(WWD), \label{adjust_1_p=q}
\end{align}
which is exactly what we stated in \eqref{weight_fn_p=q}. 

In order to incorporate this adjustment into our current weight function inequality, following the idea used in the deduction of \eqref{ith_wt_fn_ineq}, we need to compute the pushforward probabilities $\E_{r', r'}\mu(WDW)$, $\E_{r', r'}\mu(WDL)$, $\E_{r', r'}\mu(DDW)$, $\E_{r', r'}\mu(DDL)$, $\E_{r', r'}\mu(LDL)$, $\E_{r', r'}\mu(WLD)$, $\E_{r', r'}\mu(LLD)$ and $\E_{r', r'}\mu(WWD)$. However, we need only \emph{partially} compute these expressions, since we only care about the contributions arising from cylinder sets that are able to negate, to as large an extent as possible, the term $-(-1 + 8r' - 5r'^2 + 3r'^3 - 2r'^4) \mu(LWD)$. For instance, $\E_{r', r'}\mu(WDW)$ is the probability of the event $\{\E_{r', r'}\eta(0) = W, \E_{r', r'}\eta(1) = D, \E_{r', r'}\eta(2) = W\}$ where $\eta$ is a random configuration with law $\mu$, and we need only consider the contributions to this pushforward probability that arise from the event $\{\eta(1)=D, \eta(2)=W, \eta(3)=L\}$. In fact, this is precisely the event whose contributions we consider to each of the pushforward probabilities mentioned above, and we thereby obtain the following lower bounds: 
\begin{align}
    \E_{r', r'}\mu(WDW) \geqslant{}& (1-r'+r'^2)(1-r')(1-2r')\{(2r'-r'^2) \mu(WDWL) + (1-r'+r'^2)\mu(LDWL) \nonumber\\&+ (2r'-r'^2) \mu(DDWL)\} \nonumber\\
    \geqslant{}& (1-r'+r'^2)(1-r')(1-2r')(2r'-r'^2) \mu(DWL) \nonumber \\
    ={}& \underbrace{(1-r'+r'^2)(1-r')(1-2r')(2r'-r'^2) \mu(LWD)}_{\text{putting $\mu(DWL) = \mu(LWD)$, using reflection-invariance}};\label{Fpp_WDW}
\end{align}
\begin{align}
    \E_{r', r'}\mu(WDL) \geqslant&{}  (1-r')(1-2r')(r'-r'^2)\{(2r'-r'^2) \mu(WDWL) + (1-r'+r'^2) \mu(LDWL) \nonumber\\&+ (2r'-r'^2) \mu(DDWL)\} 
    \geqslant (1-r')(1-2r')(r'-r'^2)(2r'-r'^2) \mu(LWD);\label{Fpp_WDL}
\end{align}
\begin{align}
    \E_{r', r'}\mu(DDW)  \geqslant{}& (1-r'+r'^2)(1-r')(1-2r')\{(1-r')(1-2r') \mu(WDWL) + (r'-2r'^2) \mu(LDWL) \nonumber\\& + (1-2r') \mu(DDWL)\}
     \geqslant (1-r'+r'^2)(1-r')(1-2r')(r'-2r'^2) \mu(LWD);\label{Fpp_DDW}
\end{align}
\begin{align}
    \E_{r', r'}\mu(DDL)  \geqslant{}& (r'-r'^2)(1-r')(1-2r')\{(1-r')(1-2r') \mu(WDWL) + (r'-2r'^2) \mu(LDWL) \nonumber\\&+ (1-2r') \mu(DDWL)\} 
     \geqslant (r'-r'^2)(1-r')(1-2r')(r'-2r'^2) \mu(LWD);\label{Fpp_DDL} 
\end{align}
\begin{align}
    \E_{r', r'}\mu(LDL)  \geqslant{}& (r'-r'^2)(1-r')(1-2r')\{(r'-r'^2) \mu(WDWL) + r'^2 \mu(LDWL) + r'^2 \mu(DDWL)\} \nonumber \\
     \geqslant{}& (r'-r'^2)(1-r')(1-2r')r'^2 \mu(LWD);\label{Fpp_LDL}
\end{align}
\begin{align}
    \E_{r', r'}\mu(WLD)  \geqslant{}& (1-r')(1-2r')(r'-r'^2)\{(1-r'+r'^2) \mu(WLWD) + (1-r'^2) \mu(LLWD) \nonumber\\&+ (1-r'+r'^2) \mu(DLWD)\}
     \geqslant (1-r')(1-2r')(r'-r'^2)(1-r'+r'^2) \mu(LWD);\label{Fpp_WLD}
\end{align}
\begin{align}
    \E_{r', r'}\mu(WWD)  \geqslant{}& (1-r')(1-2r')(1-r'+r'^2)\{(1-r'+r'^2) \mu(WLWD) + (1-r'^2) \mu(LLWD) \nonumber \\
    &+ (1-r'+r'^2) \mu(DLWD)\}
     \geqslant (1-r')(1-2r')(1-r'+r'^2)^2 \mu(LWD);\label{Fpp_WWD}
\end{align}
and finally, 
\begin{align}
    \E_{r', r'}\mu(LLD) \geqslant{}& (1-r')(1-2r')(r'-r'^2)\{(r'-r'^2) \mu(WLWD) + r'^2 \mu(LLWD) + r'^2 \mu(DLWD)\} \nonumber\\
     \geqslant{}& (1-r')(1-2r')(r'-r'^2)r'^2 \mu(LWD). \label{Fpp_LLD}
\end{align}
Incorporating the adjustment introduced in \eqref{adjust_1_p=q} into the weight function inequality in \eqref{wt_fn_ineq_1_regime_2} via the idea demonstrated in \eqref{ith_wt_fn_ineq}, and using the lower bounds obtained in \eqref{Fpp_WDW}, \eqref{Fpp_WDL}, \eqref{Fpp_DDW}, \eqref{Fpp_DDL}, \eqref{Fpp_LDL}, \eqref{Fpp_WLD}, \eqref{Fpp_WWD}, \eqref{Fpp_LLD}, we obtain
\begin{align}
    w_1(\E_{r', r'}\mu) =&{} w_1(\mu) \underbrace{+ (w_0(\E_{r', r'}\mu) - w_0(\mu))}_{\text{substitute using \eqref{wt_fn_ineq_1_regime_2}}} - (2r'^2 - 5r'^3 + 2r'^4) \E_{r', r'}\mu(WDW) \nonumber \\
    & - (3r' - 10r'^2 + 10r'^3 - 4r'^4) \E_{r', r'}\mu(WDL) - (7r'^2 - 7r'^3 + 2r'^4) \E_{r', r'}\mu(DDW) \nonumber \\
    & - (2r' - 2r'^2 + 5r'^3 - 2r'^4) \E_{r', r'}\mu(DDL) - (1 - 4r' + 6r'^2 - 5r'^3 + 2r'^4) \E_{r', r'}\mu(LDL) \nonumber \\
    &- (4r'^2 - r'^3 + 2r'^4) \E_{r', r'}\mu(WLD)  - (3r'^2 + 3r'^3 -2r'^4) \E_{r', r'}\mu(LLD) \nonumber \\
    &- (2r' + 8r'^2 - 9r'^3 + 2r'^4) \E_{r', r'}\mu(WWD) + (2r'^2 - 5r'^3 + 2r'^4) \mu(WDW) \nonumber \\
    & + (3r' - 10r'^2 + 10r'^3 - 4r'^4) \mu(WDL) + (7r'^2 - 7r'^3 + 2r'^4) \mu(DDW) \nonumber \\
    &+ (2r' - 2r'^2 + 5r'^3 - 2r'^4) \mu(DDL) + (1 - 4r' + 6r'^2 - 5r'^3 + 2r'^4) \mu(LDL) \nonumber \\
    & + (4r'^2 - r'^3 + 2r'^4) \mu(WLD) + (3r'^2 + 3r'^3 -2r'^4) \mu(LLD) \nonumber \\
    &+ (2r' + 8r'^2 - 9r'^3 + 2r'^4) \mu(WWD) \nonumber \\ 
    =&{} w_1(\mu) \underbrace{- (2r'^2-5r'^3+2r'^4) \mu(WDW)}_\text{(i)} \underbrace{- (3r' - 10r'^2 + 10r'^3 - 4r'^4) \mu(WDL)}_\text{(ii)} \nonumber\\
    & \underbrace{- (7r'^2 - 7r'^3 +2r'^4) \mu(DDW)}_{\text{(iii)}} \underbrace{- (2r'-2r'^2+5r'^3-2r'^4) \mu(DDL)}_{\text{(iv)}} \nonumber \\
    & - (5r'^2 - 2r'^3) \mu(DDD) \underbrace{- (1 - 4r' + 6r'^2 - 5r'^3 + 2r'^4) \mu(LDL)}_\text{(v)}  \nonumber\\
    &\underbrace{- (4r'^2 - r'^3 + 2r'^4) \mu(WLD)}_{\text{(vi)}} \underbrace{- (3r'^2 + 3r'^3 -2r'^4)\mu(LLD)}_{\text{(vii)}} \nonumber\\
    & - (4r'^2 - r'^3 + 2r'^4)\mu(DLD) \underbrace{- (2r' + 8r'^2 - 9r'^3 + 2r'^4) \mu(WWD)}_{\text{(viii)}} \nonumber \\
    &  - (-1 + 8r' - 5r'^2 + 3r'^3 - 2r'^4) \mu(LWD) - (2r' + 8r'^2 - 9r'^3 + 2r'^4) \mu(DWD) \nonumber \\
    & - (2r'^2 - 5r'^3 + 2r'^4) \E_{r', r'}\mu(WDW) - (3r' - 10r'^2 + 10r'^3 - 4r'^4) \E_{r', r'}\mu(WDL) \nonumber \\
    & - (7r'^2 - 7r'^3 + 2r'^4) \E_{r', r'}\mu(DDW) - (2r' - 2r'^2 + 5r'^3 - 2r'^4) \E_{r', r'}\mu(DDL) \nonumber \\
    & - (1 - 4r' + 6r'^2 - 5r'^3 + 2r'^4) \E_{r', r'}\mu(LDL) - (4r'^2 - r'^3 + 2r'^4) \E_{r', r'}\mu(WLD) \nonumber \\
    & - (3r'^2 + 3r'^3 -2r'^4) \E_{r', r'}\mu(LLD) - (2r' + 8r'^2 - 9r'^3 + 2r'^4) \E_{r', r'}\mu(WWD) \nonumber\\
    & \underbrace{+ (2r'^2 - 5r'^3 + 2r'^4) \mu(WDW)}_{\text{(i)}} \underbrace{+ (3r' - 10r'^2 + 10r'^3 - 4r'^4) \mu(WDL)}_{\text{(ii)}} \nonumber \\
    &\underbrace{+ (7r'^2 - 7r'^3 + 2r'^4) \mu(DDW)}_{\text{(iii)}} \underbrace{+ (2r' - 2r'^2 + 5r'^3 - 2r'^4) \mu(DDL)}_{\text{(iv)}} \nonumber \\
    & \underbrace{+ (1 - 4r' + 6r'^2 - 5r'^3 + 2r'^4) \mu(LDL)}_\text{(v)} \underbrace{+ (4r'^2 - r'^3 + 2r'^4) \mu(WLD)}_{\text{(vi)}} \nonumber \\
    & \underbrace{+ (3r'^2 + 3r'^3 -2r'^4) \mu(LLD)}_{\text{(vii)}} \underbrace{+ (2r' + 8r'^2 - 9r'^3 + 2r'^4) \mu(WWD)}_{\text{(viii)}} \nonumber \\
    =&{} w_1(\mu) - (5r'^2 - 2r'^3) \mu(DDD) - (4r'^2 - r'^3 + 2r'^4)\mu(DLD) \nonumber \\
    & - (-1 + 8r' - 5r'^2 + 3r'^3 - 2r'^4) \mu(LWD) - (2r' + 8r'^2 - 9r'^3 + 2r'^4) \mu(DWD) \nonumber \\
    & \underbrace{- (2r'^2 - 5r'^3 + 2r'^4) \E_{r', r'}\mu(WDW)}_{\text{use \eqref{Fpp_WDW}}} \underbrace{- (3r' - 10r'^2 + 10r'^3 - 4r'^4) \E_{r', r'}\mu(WDL)}_{\text{use \eqref{Fpp_WDL}}} \nonumber \\
    & \underbrace{- (7r'^2 - 7r'^3 + 2r'^4) \E_{r', r'}\mu(DDW)}_{\text{use \eqref{Fpp_DDW}}} \underbrace{- (2r' - 2r'^2 + 5r'^3 - 2r'^4) \E_{r', r'}\mu(DDL)}_{\text{use \eqref{Fpp_DDL}}} \nonumber \\
    & \underbrace{- (1 - 4r' + 6r'^2 - 5r'^3 + 2r'^4) \E_{r', r'}\mu(LDL)}_{\text{use \eqref{Fpp_LDL}}} \underbrace{- (4r'^2 - r'^3 + 2r'^4) \E_{r', r'}\mu(WLD)}_{\text{use \eqref{Fpp_WLD}}}  \nonumber\\
    & \underbrace{- (3r'^2 + 3r'^3 -2r'^4) \E_{r', r'}\mu(LLD)}_{\text{use \eqref{Fpp_LLD}}} \underbrace{- (2r' + 8r'^2 - 9r'^3 + 2r'^4) \E_{r', r'}\mu(WWD)}_{\text{use \eqref{Fpp_WWD}}} \nonumber \\
    \leqslant&{} w_1(\mu) - (5r'^2 - 2r'^3) \mu(DDD) - (4r'^2 - r'^3 + 2r'^4)\mu(DLD) \nonumber \\
    & \underbrace{- (-1 + 8r' - 5r'^2 + 3r'^3 - 2r'^4) \mu(LWD)} - (2r' + 8r'^2 - 9r'^3 + 2r'^4) \mu(DWD) \nonumber \\
    &\underbrace{- (2r'^2 - 5r'^3 + 2r'^4) ((1-r'+r'^2)(1-r')(1-2r')(2r'-r'^2) \mu(LWD))} \nonumber \\
    &\underbrace{- (3r' - 10r'^2 + 10r'^3 - 4r'^4) ((1-r')(1-2r')(r'-r'^2)(2r'-r'^2) \mu(LWD))} \nonumber \\
    & \underbrace{- (7r'^2 - 7r'^3 + 2r'^4) ((1-r'+r'^2)(1-r')(1-2r')(r'-2r'^2) \mu(LWD))} \nonumber \\
    & \underbrace{- (2r' - 2r'^2 + 5r'^3 - 2r'^4) ((r'-r'^2)(1-r')(1-2r')(r'-2r'^2) \mu(LWD))} \nonumber \\
    & \underbrace{- (1 - 4r' + 6r'^2 - 5r'^3 + 2r'^4) ((r'-r'^2)(1-r')(1-2r')r'^2 \mu(LWD))} \nonumber \\
    & \underbrace{- (4r'^2 - r'^3 + 2r'^4) ((1-r')(1-2r')(r'-r'^2)(1-r'+r'^2) \mu(LWD))} \nonumber \\
    & \underbrace{- (3r'^2 + 3r'^3 -2r'^4) ((1-r')(1-2r')(r'-r'^2)r'^2 \mu(LWD))} \nonumber \\
    & \underbrace{- (2r' + 8r'^2 - 9r'^3 + 2r'^4) ((1-r')(1-2r')(1-r'+r'^2)^2 \mu(LWD))} \nonumber \\
    =&{} w_1(\mu) - (5r'^2 - 2r'^3) \mu(DDD) - (4r'^2 - r'^3 + 2r'^4)\mu(DLD) - (2r' + 8r'^2 - 9r'^3 + 2r'^4) \mu(DWD) \nonumber \\
    & \underbrace{+ (1 - 10r' + 7r'^2 + 64r'^4 - 292r'^5 + 583r'^6 - 663r'^7 + 447r'^8 - 168r'^9 + 28r'^{10}) \mu(LWD)}_{\text{obtained by combining the terms highlighted above}} \nonumber \\
    \leqslant &{} w_1(\mu) + (1 - 10r' + 7r'^2 + 64r'^4 - 292r'^5 + 583r'^6 - 663r'^7 + 447r'^8 - 168r'^9 + 28r'^{10}) \mu(LWD),\nonumber
\end{align}
which is exactly what we stated in \eqref{weight_fn_ineq_p=q}. This brings us to the conclusion of the construction of our weight function for $(r',s')$ belonging to \eqref{bond_regime_3}.

\bibliography{Percolation_games_bibliography}
\end{document}